\numberwithin{equation}{section}
\newcommand*{\centerfloat}{%
  \parindent \z@
  \leftskip \z@ \@plus 1fil \@minus \textwidth
  \rightskip\leftskip
  \parfillskip \z@skip}
\newcounter{ctr}
\theoremstyle{plain}
\newtheorem{theorem}{Theorem}[section]
\newtheorem{lemma}[theorem]{Lemma}
\newtheorem{corollary}[theorem]{Corollary}
\newtheorem{proposition}[theorem]{Proposition}
\newtheorem{conjecture}[theorem]{Conjecture}
\newtheorem{problem}[theorem]{Problem}
\theoremstyle{definition}
\newtheorem{definition}[theorem]{Definition}
\newtheorem{remark}[theorem]{Remark}
\newtheorem{example}[theorem]{Example}
\newcommand{\ignore}[1]{}
\newcommand{\G}{\ensuremath{\mathcal{G}}}
\newcommand{\QQ}{\ensuremath{\mathbb{Q}}}
\newcommand{\NN}{\ensuremath{\mathbb{N}}}
\newcommand{\sgn}{\text{\rm sgn}}
\newcommand{\U}{\ensuremath{\mathcal{U}}}
\newcommand{\ZZ}{\ensuremath{\mathbb{Z}}}
\newcommand{\be}{\begin{equation}}
\newcommand{\ee}{\end{equation}}
\renewcommand{\S}{\ensuremath{\mathcal{S}}}
\newcommand{\tsr}{\ensuremath{\otimes}}
\newcommand{\creading}{\text{\rm colword}}
\newcommand{\sqread}{\text{\rm sqread}}
\def\Tiny{\fontsize{6pt}{6pt}\selectfont}
\newcommand{\crc}[1]{\ensuremath{\overline{#1}}\vphantom{\underline{\overline{#1}}}}
\newcommand{\norotation}{\text{\hspace{0.5pt}\rotatebox{90}{$\!\varnothing$}\hspace{-1.3pt}}}
\newcommand{\Iplac}{\ensuremath{{{I_\text{\rm plac}}}}}
\newcommand{\Iplacperp}{\ensuremath{{{I_\text{\rm plac}^\perp}}}}
\newcommand{\Inplac}{\ensuremath{{{I_\text{\rm nplac}}}}}
\newcommand{\Inplacperp}{\ensuremath{{{I_\text{\rm nplac}^\perp}}}}
\newcommand{\Incox}{\ensuremath{{{I_\text{\rm nCox}}}}}
\newcommand{\Incoxperp}{\ensuremath{{{I_\text{\rm nCox}^\perp}}}}
\newcommand{\Iweakh}{\ensuremath{{{I_\text{\rm H}}}}}
\newcommand{\Iweakhperp}{\ensuremath{{{I_\text{\rm H}^\perp}}}}
\newcommand{\Icomm}{\ensuremath{{{I_{\text{\rm C}}}}}}
\newcommand{\Icommperp}{\ensuremath{{{I_{\text{\rm C}}^\perp}}}}
\newcommand{\Ifgp}{\ensuremath{{{I_{\text{\rm B}}}}}}
\newcommand{\Ifgpperp}{\ensuremath{{{I_{\text{\rm B}}^\perp}}}}
\newcommand{\Ifg}{\ensuremath{{{I_{\norotation}}}}}
\newcommand{\Ifgperp}{\ensuremath{{{I_{\norotation}^\perp}}}}
\newcommand{\Ilam}[1]{\ensuremath{{{I_{\text{\rm L}, #1}}}}}
\newcommand{\Ilamleperp}[1]{\ensuremath{{{I_{\text{\rm L}, \le #1}^\perp}}}}
\newcommand{\Jlam}[1]{\ensuremath{{{J_{\text{\rm L}, #1} }}}}
\newcommand{\Jlamperp}[1]{\ensuremath{{{J_{\text{\rm L}, #1}^\perp }}}}
\newcommand{\Iaba}[1]{\ensuremath{{{I_{\text{\rm R}, #1}}}}}
\newcommand{\Iabaperp}[1]{\ensuremath{{{I_{\text{\rm R}, #1}^\perp}}}}
\newcommand{\Ist}{\ensuremath{{{I_{\text{\rm st}}}}}}
\newcommand{\Irk}{\ensuremath{{{I_{\text{\rm S}}}}}}
\newcommand{\Irkperp}{\ensuremath{{{I_{\text{\rm S}}^\perp}}}}
\newcommand{\Iassaf}[1]{\ensuremath{{I_{\text{\rm A}, #1}}}}
\newcommand{\Iassafperp}[1]{\ensuremath{{I_{\text{\rm A}, #1}^\perp}}}
\newcommand{\Des}{\ensuremath{{\text{\rm Des}}}}
\newcommand{\invi}[1]{\ensuremath{\operatorname{inv}_{#1}'}}
\newcommand{\Wi}[1]{\ensuremath{{\text{\rm W$_{#1}'$}}}}
\newcommand{\qlam}{\ensuremath{{\hat{q}}}}
\newcommand{\e}{\mathsf}
\newcommand{\diagread}{\text{\rm diagread}}
\newcommand{\Dzero}{\operatorname{D}_0}
\newcommand{\SYT}{\text{\rm SYT}}
\newcommand{\RSST}{\text{\rm RSST}}
\newcommand{\SSYT}{\text{\rm SSYT}}
\newcommand{\Jnot}[1]{\ensuremath{{\hspace{1pt}\text{\rm:}~\raisebox{1pt}{$#1$}~\text{\rm:}\hspace{1pt}}}}
\newcommand*\encircle[1]{\tikz[baseline=(char.base)]{
    \node[shape=circle,draw,inner sep=.74pt] (char) {\ensuremath{#1}};}}
\newlength{\cellsize}
\newcommand\tableau[1]{
\vcenter{
\let\\=\cr
\baselineskip=-16000pt \lineskiplimit=16000pt \lineskip=0pt
\halign{&\tableaucell{##}\cr#1\crcr}}}
\newcommand{\tableaucell}[1]{{%
\def \arg{#1}\def \void{}%
\ifx \void \arg
\vbox to \cellsize{\vfil \hrule width \cellsize height 0pt}%
\else \unitlength=\cellsize
\begin{picture}(1,1)
\put(0,0){\makebox(1,1){$#1\vphantom{\crc{#1}}$}}
\put(0,0){\line(1,0){1}}
\put(0,1){\line(1,0){1}}
\put(0,0){\line(0,1){1}}
\put(1,0){\line(0,1){1}}
\end{picture}%
\fi}}
\newcommand\boldtableau[1]{
\vcenter{
\let\\=\cr
\baselineskip=-16000pt \lineskiplimit=16000pt \lineskip=0pt
\halign{&\boldtableaucell{##}\cr#1\crcr}}}
\newcommand{\boldtableaucell}[1]{{%
\def \arg{#1}\def \void{}%
\ifx \void \arg
\vbox to \cellsize{\vfil \hrule width \cellsize height 0pt}%
\else \unitlength=\cellsize
\begin{picture}(1,1)
\put(0,0){\makebox(1,1){$\mathbf{#1\vphantom{\crc{#1}}}$}}
\put(0,0){\line(1,0){1}}
\put(0,1){\line(1,0){1}}
\put(0,0){\line(0,1){1}}
\put(1,0){\line(0,1){1}}
\end{picture}%
\fi}}
\newcommand{\partition}[1]{{\setlength{\cellsize}{1ex}  \tableau{#1}}}
\title[Noncommutative Schur functions, switchboards, and Schur positivity]{Noncommutative Schur functions,\\[.05in] switchboards, and Schur positivity}
\keywords{Noncommutative Schur function, switchboard, D~graph, LLT polynomial, Schur positivity}
\begin{document}

\author{Jonah Blasiak}
\address{Department of Mathematics, Drexel University, Philadelphia, PA 19104}
\email{jblasiak@gmail.com}

\author{Sergey Fomin}
\address{Department of Mathematics, University of Michigan, Ann Arbor,
MI 48109}
\email{fomin@umich.edu}

\thanks{The authors were supported by NSF Grants DMS-14071174 (J.~B.)
and DMS-1361789 (S.~F.).}

\date{October~1, 2015}

\subjclass[2010]{Primary 05E05. 
Secondary 05A15, 
16S99. 
}

\begin{abstract}
We review and further develop a general approach to Schur positivity of
symmetric functions based on the machinery of noncommutative Schur
functions. This approach unifies ideas of Assaf \cite{SamiOct13, SamiForum},
Lam \cite{LamRibbon}, and Greene and the second author~\cite{FG}.
\end{abstract}%

\ \vspace{.2in}

\maketitle


\section{Introduction}
\label{s intro}

Schur functions form the most important basis in the ring of symmetric
functions,
and the problem of expanding various families of symmetric functions
with respect to this basis arises in many mathematical contexts.
One is particularly interested in obtaining manifestly positive
combinatorial descriptions for the coefficients in various Schur
expansions,
the celebrated Littlewood-Richardson Rule being the prototypical
example.

One powerful approach to this class of problems relies on the idea of
generalizing the notion
of a Schur function to the noncommutative case, or more precisely to
noncommutative rings whose generators (the ``noncommuting variables'')
satisfy some carefully chosen relations,
making it possible to retain many key
features of the classical theory of symmetric functions.
The first implementation of this idea goes back to the work of
Lascoux and Sch\"utzenberger in the 1970's
\cite{LS, Sch}. They showed that the plactic algebra
(i.e., the ring defined by the Knuth relations)
contains a subalgebra isomorphic to the ring of symmetric functions,
and moreover this subalgebra comes naturally equipped
with a basis of noncommutative versions of Schur functions.
In the 1990's, Greene and the second author~\cite{FG} generalized this
approach by replacing some of the plactic relations
by weaker four-term relations.
By studying noncommutative versions of Schur functions in the
algebra thus defined, they obtained, in a uniform fashion,
positive Schur expansions for a large class of
(ordinary) symmetric functions that includes the Stanley symmetric functions
and stable Grothendieck polynomials.
As explained below, the construction in~\cite{FG} can be further
generalized to include such important examples as
LLT and Macdonald polynomials.

\pagebreak[3]

LLT polynomials are a family of symmetric functions
introduced by
Lascoux, Leclerc, and Thibon~\cite{LLT}.
While LLT polynomials were shown to be Schur positive \cite{GH, LT00}
using Kazhdan-Lusztig theory, it remains a
fundamental open problem to find an explicit positive combinatorial
formula for the coefficients in their Schur expansions. \linebreak[3]
A~solution to this problem would also settle the analogous
problem for Macdonald polynomials, since
the Haglund-Haiman-Loehr
formula~\cite{HHL} expresses an arbitrary transformed Macdonald polynomial
as a positive sum of LLT polynomials.

We are optimistic that the noncommutative Schur function approach
can be successfully implemented
to obtain positive combinatorial formulas for the Schur
expansions of LLT and Macdonald polynomials.
The first step in this direction was made by Lam~\cite{LamRibbon}
who used a variation of this approach to produce
formulas---combinatorial but not manifestly positive---for the
coefficients appearing in the Schur expansions of LLT polynomials.
In a separate development, Assaf~\cite{SamiOct13}
introduced variants of
Knuth equivalence as a combinatorial tool to study the
Schur positivity phenomenon for LLT polynomials.

\medskip

This paper extends the setup of~\cite{FG}
to encompass Lam's work and give
an algebraic framework for Assaf's equivalences.
Specifically, we
\begin{itemize}[leftmargin=6.2mm,itemsep=3.5pt] 
\item
strengthen the main result of \cite{FG}
(Theorem~\ref{th:FG'-positivity};
proof in Section~\ref{sec:proof-of-FG'-positivity});
\item
introduce combinatorial gadgets called switchboards (Definition~\ref{def-switchboard}),
which were inspired by the
D~graphs of Assaf~\cite{SamiOct13};
\item
associate a symmetric function to each switchboard
(Definitions~\ref{def:FGamma}/\ref{def:cauchy-product}) and show that
the class of symmetric functions so defined
includes many important
families such as LLT polynomials and all examples from~\cite{FG}
(Sections~\ref{sec:switchboards}--\ref{s bijectivizations and
  examples});
\item
give a (counter)example of a switchboard whose symmetric function is
not Schur positive (Corollary \ref{cor-nopos});
\item
recall the main result of \cite{BLamLLT},
a positive combinatorial formula for the Schur expansion of an LLT
polynomial indexed by a $3$-tuple of skew shapes (Theorem~\ref{t
  sqread});
\item
investigate the phenomenon of monomial positivity of
noncommutative Schur functions
(see~\eqref{eq:ncschur-via-e})
in various rings
(Sections~\ref{s semimatched ideals}--\ref{s Schur positivity}).
\end{itemize}
We also obtain a ``tightness'' result (Theorem~\ref{t intro
  positivity}; proof in Section~\ref{s monomial positivity})
which states that in a given ring 
satisfying the basic requirements of our setup,
the noncommutative Schur functions
are monomial positive if and only
if  the symmetric functions naturally associated to this ring are Schur positive.
In other words,  the noncommutative Schur function
approach gives an equivalent reformulation of the Schur positivity problem.

Section~\ref{s main results} of the paper serves as its ``extended abstract.''

This paper can be viewed as a ``prequel'' to (and, partly, a review of)
the closely related papers~\cite{BLamLLT, BD0graph} by the
first author, and to his paper~\cite{BLKronecker} with R.~Liu.

\subsection*{Acknowledgments}
We thank Sami Assaf, Anna Blasiak, Thomas Lam, and Bernard Leclerc for helpful discussions,
and Elaine So and Xun Zhu for help typing and typesetting figures.
This project began while the first author was a postdoc at the
University of Michigan.
He is grateful to John Stembridge for his generous~advice and many detailed discussions.


\section{Noncommutative Schur functions and Schur positivity}
\label{s main results}

Let $\U= \ZZ\langle u_1,u_2,\dots,u_N \rangle$ be the free associative ring
 with generators $u_1, u_2, \dots, u_N$, which we regard as ``noncommuting variables.''
For $S\subset \{1,\dots,N\}$ and $k\in\mathbb{Z}$, we define
the \emph{noncommutative elementary symmetric function} $e_k(\mathbf{u}_S)$ by
\begin{align}\label{e ek def}
e_k(\mathbf{u}_S)=\sum_{\substack{i_1 > i_2 > \cdots > i_k \\ i_1,\dots,i_k \in S}}u_{i_1}u_{i_2}\cdots u_{i_k}\,;
\end{align}
by convention,
$e_0(\mathbf{u}_S)=1$ and $e_k(\mathbf{u}_S) = 0$ for $k<0$ or $k>|S|$
(cardinality of~$S$).
For $S=\{1,\dots,N\}$, we use the notation $e_k(\mathbf{u})$.

Throughout this paper an ideal will always mean a two-sided ideal.
\begin{theorem} 
\label{th:quad-Icomm}
For an ideal $I$ of\,~$\U$, the following are equivalent:
\begin{itemize}
\item the ideal $I$ includes the elements
\begin{alignat}{3}
&u_b^2 u_a + u_a u_b u_a - u_b u_a u_b - u_b u_a^2 & (a <
  b),\quad\ \ \,\label{quad uu 2vars} \\
&u_b u_c u_a + u_a u_c u_b - u_b u_a u_c - u_c u_a u_b &  \qquad(a < b < c), \label{quad uuu 3vars}\\
&u_c u_b u_c u_a + u_b u_c u_a u_c - u_c u_b u_a u_c - u_b u_c^2 u_a
  & \qquad(a < b < c); \label{quad uuuu 3vars}
\end{alignat}
\item
the noncommutative elementary symmetric functions $e_k(\mathbf{u}_S)$ and
$e_\ell(\mathbf{u}_S)$ commute with each other modulo~$I$, for any $k,\ell$ and
any~$S$:
\begin{equation}
\label{eq:e's-commute}
e_k(\mathbf{u}_S)\,e_\ell(\mathbf{u}_S)\equiv e_\ell(\mathbf{u}_S)\,e_k(\mathbf{u}_S) \bmod I.
\end{equation}
\end{itemize}
\end{theorem}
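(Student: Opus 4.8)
The plan is to prove the two implications separately, and in both to work with the generating function $E_S(t)=\sum_{k\ge 0}e_k(\mathbf{u}_S)\,t^k$; expanding the product shows that $E_S(t)$ equals $\prod(1+tu_i)$, the product taken over $i\in S$ in decreasing order. Comparing coefficients of $t^ks^\ell$, condition \eqref{eq:e's-commute} for a fixed $S$ (and all $k,\ell$) is equivalent to $E_S(t)\,E_S(s)\equiv E_S(s)\,E_S(t)\bmod I$. I would prove the implication ``\eqref{quad uu 2vars}--\eqref{quad uuuu 3vars} lie in $I$ $\Rightarrow$ \eqref{eq:e's-commute}'' by induction on $|S|$, and the reverse implication by a short direct computation with $|S|\le 3$.

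For the reverse implication, assume \eqref{eq:e's-commute}. Expanding $e_1(\mathbf{u}_{\{a,b\}})e_2(\mathbf{u}_{\{a,b\}})-e_2(\mathbf{u}_{\{a,b\}})e_1(\mathbf{u}_{\{a,b\}})$ gives exactly the left-hand side of \eqref{quad uu 2vars}, so \eqref{quad uu 2vars}$\in I$. Expanding $e_1(\mathbf{u}_{\{a,b,c\}})e_2(\mathbf{u}_{\{a,b,c\}})-e_2(\mathbf{u}_{\{a,b,c\}})e_1(\mathbf{u}_{\{a,b,c\}})$ and grouping monomials by their support, one finds it equals the left-hand side of \eqref{quad uuu 3vars} plus the three copies of \eqref{quad uu 2vars} for the pairs $\{a,b\},\{a,c\},\{b,c\}$; since those already lie in $I$, so does \eqref{quad uuu 3vars}. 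Finally I would extract \eqref{quad uuuu 3vars} from the degree-$4$ instance $e_1(\mathbf{u}_{\{a,b,c\}})e_3(\mathbf{u}_{\{a,b,c\}})\equiv e_3(\mathbf{u}_{\{a,b,c\}})e_1(\mathbf{u}_{\{a,b,c\}})$ of \eqref{eq:e's-commute}, by reducing the difference of its two sides modulo the two-sided ideal generated by \eqref{quad uu 2vars} and \eqref{quad uuu 3vars} (now available inside $I$) and checking that the residue is $\pm$(left-hand side of \eqref{quad uuuu 3vars}). This last reduction is the only part of the reverse implication that is not a one-line computation.

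For the forward implication, assume $I$ contains \eqref{quad uu 2vars}--\eqref{quad uuuu 3vars}; the base case $|S|\le 2$ of the induction is literally \eqref{quad uu 2vars}. For the inductive step, let $c=\max S$, put $S'=S\setminus\{c\}$, and write $E_S(t)=(1+tu_c)\,E_{S'}(t)$. The crucial ingredient is the commutation formula
\[
E_{S'}(t)\,(1+s\,u_c)\ \equiv\ (1+s\,u_c)\,E_{S'}(t)\ -\ st\!\!\sum_{i\in S'}[u_c,u_i]\,E_{S'\setminus\{i\}}(t)\pmod I ,
\]
which I would prove by pushing the factor $(1+su_c)$ leftward through $E_{S'}(t)$ one factor at a time (each swap leaving behind a correction $[u_c,u_i]$) and then using \eqref{quad uuu 3vars}, in the form ``$u_j$ commutes with $[u_c,u_i]$ modulo $I$ whenever $i<j<c$'', to slide each correction $[u_c,u_i]$ out past the larger factors of $E_{S'}(t)$. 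Substituting this into $E_S(t)E_S(s)=(1+tu_c)E_{S'}(t)(1+su_c)E_{S'}(s)$ and symmetrically into $E_S(s)E_S(t)$, the leading terms match because $(1+tu_c)(1+su_c)=(1+su_c)(1+tu_c)$ and $E_{S'}(t)E_{S'}(s)\equiv E_{S'}(s)E_{S'}(t)$ by the inductive hypothesis, so one is left to prove that the two resulting sums of correction terms are congruent modulo $I$.

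This last step is where I expect the real difficulty to lie. Each correction $[u_c,u_i]$ commutes modulo $I$ only with the factors $(1+tu_j)$ for which $j>i$ (again by \eqref{quad uuu 3vars}), so moving it further creates new terms, which one must control using \eqref{quad uu 2vars}, in the form $u_c\,[u_c,u_i]\equiv [u_c,u_i]\,u_i\bmod I$, to move a leftover $u_c$ past a commutator, and \eqref{quad uuuu 3vars}, in the form $[u_c,u_j]\,[u_c,u_i]\equiv 0\bmod I$ for $i<j<c$, to annihilate products of two corrections. Arranging the bookkeeping so that all leftover terms pair off and cancel — precisely the place where all three of \eqref{quad uu 2vars}--\eqref{quad uuuu 3vars} are needed at once — is the technical core of the argument.
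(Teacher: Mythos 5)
Your reverse implication is solid: I checked that $e_1(\mathbf{u}_S)e_2(\mathbf{u}_S)-e_2(\mathbf{u}_S)e_1(\mathbf{u}_S)$ equals \eqref{quad uu 2vars} exactly for $|S|=2$, equals \eqref{quad uuu 3vars} plus the three copies of \eqref{quad uu 2vars} for $|S|=3$, and that $e_1(\mathbf{u}_S)e_3(\mathbf{u}_S)-e_3(\mathbf{u}_S)e_1(\mathbf{u}_S)$ for $|S|=3$ does reduce to \eqref{quad uuuu 3vars} modulo the ideal generated by \eqref{quad uu 2vars}--\eqref{quad uuu 3vars}. (The paper itself gives no proof of this theorem, attributing it to Kirillov, so the comparison is necessarily to the underlying mathematics rather than to a written argument.)

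The forward implication, however, has a genuine gap, and it is not just a matter of bookkeeping. Your commutation formula is correct, and your reinterpretations of the generators are correct: \eqref{quad uu 2vars} is $u_c[u_c,u_a]\equiv[u_c,u_a]u_a$, \eqref{quad uuu 3vars} is $u_b[u_c,u_a]\equiv[u_c,u_a]u_b$ for $a<b<c$, and \eqref{quad uuuu 3vars} is $[u_c,u_b][u_c,u_a]\equiv 0$ for $a<b<c$. But when one actually carries out the cancellation in the first nontrivial case $|S|=3$, the residual term that must vanish is not of the shape $[u_c,u_j][u_c,u_i]$ that you plan to kill directly with \eqref{quad uuuu 3vars}. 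Concretely: after applying the commutation formula and then the $r_1$-identity $X_c(t)[u_c,u_i]\equiv[u_c,u_i]X_i(t)$ to each summand, the correction sum collapses to $([u_c,u_b]+[u_c,u_a])E_{S'}(t)E_{S'}(s)$ (which is symmetric by induction) plus the stray term $t^2[u_c,u_a][u_a,u_b]E_{S'}(s)$. So what you actually need is $[u_c,u_a][u_a,u_b]\equiv 0\pmod I$ for $a<b<c$ — a product in which only one factor involves $u_c$. This does follow from the three generators, but only after a several-step computation that uses \eqref{quad uu 2vars} to kill the $u_a^2$, then \eqref{quad uuu 3vars} to commute $[u_c,u_a]$ past $u_b$, and only at the last step \eqref{quad uuuu 3vars}. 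In other words, you need a derived commutator lemma ($[u_c,u_a][u_a,u_b]\equiv 0$) that is not among the three forms you list, and your sketch does not produce it.

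For $|S|\ge 4$ the situation only gets worse: pushing $X_i(t)$ through $E_{S'\setminus\{i\}}(t)$ spawns yet more commutators $[u_i,u_j]$ with $i<j<c$, and $[u_c,u_i]$ does \emph{not} commute modulo $I$ with $u_j$ for $j<i$, so you cannot freely slide the correction factors where you like. Your outline (generating functions plus induction on $|S|$) is the right kind of argument, and the reverse direction as you set it up is correct, but the cancellation step of the forward direction needs either the auxiliary lemma $[u_c,u_a][u_a,u_b]\equiv 0$ (plus possibly further derived identities for larger $|S|$) or a smarter reorganization; as written, the claim that ``all leftover terms pair off and cancel'' using just the three primitive forms is not substantiated and, in the form you describe, would fail even at $|S|=3$.
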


\begin{remark}
Theorem~\ref{th:quad-Icomm} is equivalent to a result by
A.~N.~Kirillov~\cite[Theorem~2.24]{Kirillov-Notes}.
It generalizes similar results obtained in \cite{BD0graph, FG, LamRibbon, NS}.
Specifically, \cite[Lemma~3.1]{FG},
\hbox{\cite[Theorem~3]{LamRibbon}},
\cite[Lemma~2.2]{BD0graph},
and \cite[Theorem~2]{NS}
assert, in the notation to be introduced below,
that the  $e_k(\mathbf{u})$ commute modulo the ideals $\Ifgp$,
$\Jlam{k}$, $\Irk+\Ist$, and the triples ideal with all rotation triples, respectively; cf.\ Definitions \ref{d Ifg'},
\ref{d Lam algebra}, \ref{d Irk}/\ref{def:Ist}, and \ref{d triple biject}.
\end{remark}

Theorem~\ref{th:quad-Icomm} has the following elegant reformulation.

\begin{theorem}
\label{t intro ed commute}
The commutation relations \eqref{eq:e's-commute}
hold for all
$k,\ell$, and $S \subset \{1,\dots N\}$ provided
they hold for $k=1$, $\ell\in\{2,3\}$, and $|S|\le 3$.
\end{theorem}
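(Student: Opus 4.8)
The plan is to deduce this from Theorem~\ref{th:quad-Icomm}. Let $I$ be an ideal of $\U$ for which \eqref{eq:e's-commute} holds modulo $I$ whenever $k=1$, $\ell\in\{2,3\}$, and $|S|\le 3$; we must show that \eqref{eq:e's-commute} then holds for all $k,\ell,S$. By Theorem~\ref{th:quad-Icomm} it is enough to prove that $I$ contains the three families of elements \eqref{quad uu 2vars}, \eqref{quad uuu 3vars}, \eqref{quad uuuu 3vars}. Since each $e_k(\mathbf{u}_S)$ involves only the generators indexed by $S$, we may assume $S=\{a\}$, $S=\{a,b\}$ with $a<b$, or $S=\{a,b,c\}$ with $a<b<c$; when $|S|\le 1$ the hypothesis is vacuous because $e_2(\mathbf{u}_S)=e_3(\mathbf{u}_S)=0$.

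\emph{Degree $3$.} For $S=\{a,b\}$ we have $e_1(\mathbf{u}_S)=u_a+u_b$ and $e_2(\mathbf{u}_S)=u_bu_a$, and a direct expansion shows that $e_1(\mathbf{u}_S)\,e_2(\mathbf{u}_S)-e_2(\mathbf{u}_S)\,e_1(\mathbf{u}_S)$ equals the left-hand side of \eqref{quad uu 2vars}; hence \eqref{quad uu 2vars} lies in $I$ for every pair $a<b$. For $S=\{a,b,c\}$ we have $e_2(\mathbf{u}_S)=u_bu_a+u_cu_a+u_cu_b$, and expanding $e_1(\mathbf{u}_S)\,e_2(\mathbf{u}_S)-e_2(\mathbf{u}_S)\,e_1(\mathbf{u}_S)$ yields the sum of the three two-variable expressions \eqref{quad uu 2vars} for the pairs $\{a,b\},\{a,c\},\{b,c\}$ together with the expression \eqref{quad uuu 3vars}. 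Combined with the previous sentence, this forces \eqref{quad uuu 3vars} into $I$ for every triple $a<b<c$.

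\emph{Degree $4$.} It remains to treat $k=1$, $\ell=3$, $S=\{a,b,c\}$, where $e_3(\mathbf{u}_S)=u_cu_bu_a$. Expansion gives
\begin{align*}
e_1(\mathbf{u}_S)\,e_3(\mathbf{u}_S)-e_3(\mathbf{u}_S)\,e_1(\mathbf{u}_S)
&=u_au_cu_bu_a+u_bu_cu_bu_a+u_c^2u_bu_a\\
&\qquad-u_cu_bu_a^2-u_cu_bu_au_b-u_cu_bu_au_c,
\end{align*}
and we call this element $X$; it lies in $I$. Working modulo $I$, and using that $I$ is two-sided and already contains \eqref{quad uu 2vars} and \eqref{quad uuu 3vars}, I would rewrite the term $u_au_cu_bu_a$ in $X$ by means of \eqref{quad uuu 3vars} multiplied on the right by $u_a$, and then clear the patterns $u_au_bu_a$ and $u_au_cu_a$ that appear by using \eqref{quad uu 2vars} for the pairs $\{a,b\}$ and $\{a,c\}$. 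After cancellation $X$ becomes the left-hand side of \eqref{quad uuuu 3vars} plus \eqref{quad uu 2vars} for the pair $\{b,c\}$ multiplied on the right by $u_a$; since the latter summand lies in $I$, it follows that \eqref{quad uuuu 3vars} $\in I$.

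Once $I$ is known to contain \eqref{quad uu 2vars}, \eqref{quad uuu 3vars}, and \eqref{quad uuuu 3vars}, Theorem~\ref{th:quad-Icomm} delivers \eqref{eq:e's-commute} for all $k,\ell,S$, which completes the argument. The only delicate point is the degree-$4$ reduction: the rewrites must be carried out in an order that leaves a residue visibly of the form ``(relation already in $I$)$\,\cdot u_a$'', which is why one applies the cubic relation first and only then clears the quadratic patterns; everything else is a finite expand-and-cancel verification. (One could instead attempt a self-contained induction on $(k,\ell,|S|)$ that bypasses Theorem~\ref{th:quad-Icomm}, but routing through it is much cleaner.)
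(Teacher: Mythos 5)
Your proposal is correct and takes essentially the route the paper intends: Theorem~\ref{t intro ed commute} is presented there as a reformulation of Theorem~\ref{th:quad-Icomm}, with no separate proof given, and the expected argument is exactly yours --- translate the hypothesis into membership of the elements \eqref{quad uu 2vars}--\eqref{quad uuuu 3vars} in $I$ and invoke Theorem~\ref{th:quad-Icomm}. I have checked the three expansions: for $|S|=2$, $e_1e_2-e_2e_1$ is literally \eqref{quad uu 2vars}; for $|S|=3$, $\ell=2$ it is the sum of the three pairwise copies of \eqref{quad uu 2vars} plus \eqref{quad uuu 3vars} (the $u_cu_bu_a$ terms cancel); and your degree-$4$ reduction of $X=e_1e_3-e_3e_1$ --- rewrite $u_au_cu_bu_a$ using \eqref{quad uuu 3vars}$\cdot u_a$, then clear $u_au_bu_a$ and $u_au_cu_a$ via the quadratic relations --- does leave exactly \eqref{quad uuuu 3vars} plus \eqref{quad uu 2vars} for $\{b,c\}$ times $u_a$. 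All steps check out.
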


It is quite miraculous that the special cases $|S|\le 3$ of the
commutation relations~\eqref{eq:e's-commute}
(i.e., the cases involving two or three variables only)
are sufficient to get all of them.
An~additional miracle is that the relation
$e_2(\mathbf{u}_S)e_3(\mathbf{u}_S) \equiv e_3(\mathbf{u}_S)e_2(\mathbf{u}_S)$
for $|S|=3$ is not needed: all instances of \eqref{eq:e's-commute},
including the latter, are a consequence of
$e_1(\mathbf{u}_S)e_2(\mathbf{u}_S) \equiv
e_2(\mathbf{u}_S)e_1(\mathbf{u}_S)$ and
$e_1(\mathbf{u}_S)e_3(\mathbf{u}_S) \equiv
e_3(\mathbf{u}_S)e_1(\mathbf{u}_S)$ for
$|S|\le 3$.

We denote by $\Icomm$ the ideal in~$\U$ generated by the elements
\eqref{quad uu 2vars}--\eqref{quad uuuu 3vars} in
Theorem~\ref{th:quad-Icomm}.
All ideals~$I\subset\U$ considered in this paper contain~$\Icomm$.

Let $x$ be a formal variable that commutes with $u_1,\dots,u_N$.
The elements $e_k(\mathbf{u})$ can be packaged into the generating function
\begin{equation}
\label{eq:E(x)}
E(x)=\sum_{k = 0}^N x^k e_k(\mathbf{u})
=(1+x u_N)\cdots (1+x u_1)\in\U[x].
\end{equation}
The condition that the $e_k(\mathbf{u})$
commute pairwise modulo~$\Icomm$ is equivalent to the generating function congruence 
$E(x)E(y)\equiv E(y)E(x)$ modulo~$\Icomm$,
or more precisely modulo the corresponding ideal $\Icomm[x,y]\subset\U[x,y]$.
Here $y$ is another formal variable that commutes with~$x,
u_1,\dots,u_N$.

The \emph{noncommutative complete homogeneous
  symmetric functions} are defined by
\[
h_\ell(\mathbf{u})=\sum_{1 \le i_1 \le i_2 \le \cdots \le i_\ell \le N}u_{i_1}u_{i_2}\cdots u_{i_\ell},
\]
for $\ell>0$; by convention, $h_0(\mathbf{u})=1$
and $h_{\ell}(\mathbf{u}) = 0$ for $\ell<0$.
Similarly to~\eqref{eq:E(x)}, we set
\begin{equation}
\label{eq:H(x)}
H(x)=\sum_{\ell = 0}^\infty x^\ell h_\ell(\mathbf{u})
=(1-x u_1)^{-1}\cdots (1-x u_N)^{-1}\in\U[[x]].
\end{equation}

\begin{corollary}
\label{cor:h's-commute}
Let $I\subset\U$ be an ideal containing~$\Icomm$.
Then the elements $h_1(\mathbf{u}), h_2(\mathbf{u}),\dots$
commute pairwise modulo~$I$.
Consequently,
\[
H(x)H(y)\equiv H(y) H(x) \, \bmod I[[x,y]]
\]
where $x$ and $y$ are formal variables commuting with each other and with $u_1,\dots,u_N$.
\end{corollary}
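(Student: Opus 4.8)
The plan is to derive the corollary from Theorem~\ref{th:quad-Icomm} by means of the classical identity relating the generating functions $E$ and $H$. It is cleanest to prove the (formally stronger) generating-function congruence first, and then recover the pairwise commutation of the $h_\ell(\mathbf{u})$ by extracting the coefficient of $x^k y^\ell$.

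\emph{Step 1: the $E$--$H$ inversion.} Writing $A_i = 1 - x\,u_i \in \U[[x]]$, the product formulas \eqref{eq:E(x)} and \eqref{eq:H(x)} give $E(-x) = A_N A_{N-1}\cdots A_1$ and $H(x) = A_1^{-1} A_2^{-1}\cdots A_N^{-1}$, so that telescoping yields $H(x)\,E(-x) = 1 = E(-x)\,H(x)$ in $\U[[x]]$; that is, $H(x)$ is the two-sided inverse of $E(-x)$. This identity is preserved under reduction modulo $I$ and under adjoining a second formal variable, hence holds in the power series ring $(\U/I)[[x,y]]$ as well.

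\emph{Step 2: commutation of the $E$'s, and inversion.} Since $I \supseteq \Icomm$, Theorem~\ref{th:quad-Icomm} applies to $I$ and, taking $S = \{1,\dots,N\}$, tells us that the $e_k(\mathbf{u})$ commute pairwise modulo $I$; packaging these congruences into generating functions (as noted in the text following \eqref{eq:E(x)}) gives $E(x)E(y) \equiv E(y)E(x) \bmod I[x,y]$. Substituting $x \mapsto -x$ and $y \mapsto -y$ yields $E(-x)E(-y) \equiv E(-y)E(-x) \bmod I[x,y]$. Both sides, viewed in $(\U/I)[[x,y]]$, have constant term $1$, hence are invertible (invert the positive-degree part, which lies in the ideal $(x,y)$, via a geometric series). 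Taking inverses of both sides and invoking Step~1 gives
\[
H(y)\,H(x) = \bigl(E(-x)\,E(-y)\bigr)^{-1} = \bigl(E(-y)\,E(-x)\bigr)^{-1} = H(x)\,H(y)
\]
in $(\U/I)[[x,y]]$, i.e.\ $H(x)H(y) \equiv H(y)H(x) \bmod I[[x,y]]$. Extracting the coefficient of $x^k y^\ell$ then gives $h_k(\mathbf{u})\,h_\ell(\mathbf{u}) \equiv h_\ell(\mathbf{u})\,h_k(\mathbf{u}) \bmod I$ for all $k,\ell \ge 0$, which is the first assertion of the corollary, the second being the displayed congruence itself.

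There is no genuine obstacle here: all the depth resides in Theorem~\ref{th:quad-Icomm}. The only points needing a sentence of care are the telescoping identity $H(x) = E(-x)^{-1}$ in Step~1, and the legitimacy of inverting a power series over the noncommutative ring $\U/I$ with constant term $1$; I would spell out each briefly.
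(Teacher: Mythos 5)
Your proof is correct and rests on the same two ingredients as the paper's: the telescoping identity $H(x)E(-x)=1$ and Theorem~\ref{th:quad-Icomm}. The difference is purely one of formalism. The paper extracts the coefficient-level Newton relation~\eqref{e E(-x)H(x)} from $H(x)E(-x)=1$ and uses it to express each $h_m(\mathbf{u})$ recursively as a polynomial in the $e_k(\mathbf{u})$; the $h_m$'s then commute modulo~$I$ because they all lie in the commutative-mod-$I$ subalgebra generated by the $e_k$'s. You instead stay at the generating-function level: $H(x)=E(-x)^{-1}$ in $(\U/I)[[x,y]]$, so $E(-x)E(-y)\equiv E(-y)E(-x)$ yields $H(x)H(y)\equiv H(y)H(x)$ by inverting both sides. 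This is a marginally slicker packaging that replaces the paper's induction with a one-line inversion of power series (justified, as you note, by the constant term being $1$); the underlying content is identical.
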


\begin{proof}
The identity $H(x)E(-x)=1$ implies that for any~$m>0$,
\begin{align}\label{e E(-x)H(x)}
h_m(\mathbf{u})-h_{m-1}(\mathbf{u})\,e_1(\mathbf{u})+h_{m-2}(\mathbf{u})\,e_2(\mathbf{u})-\cdots
+(-1)^m e_m(\mathbf{u})=0.
\end{align}
Thus one can recursively express $h_1(\mathbf{u}), h_2(\mathbf{u}), \dots$ in terms
of $e_1(\mathbf{u}), e_2(\mathbf{u}), \dots$, 
and the claim follows from Theorem~\ref{th:quad-Icomm}.
\end{proof}

We denote by $\U^*$ the $\mathbb{Z}$-module freely spanned by the set of words
in the alphabet $\{1,\dots,N\}$.
There is a natural pairing $\langle\cdot ,\cdot  \rangle$
between the spaces $\U$ and~$\U^*$ in which
the basis of noncommutative monomials is dual to the basis of words.
More precisely, if $\mathbf{u}_{\e{w}}=u_{\e{w}_1}\cdots u_{\e{w}_n}\in\U$
is a monomial corresponding to the word
$\e{w} = \e{w}_1 \cdots \e{w}_n\in\U^*$,
and $\e{v}\in\U^*$ is another word, then
$\langle \mathbf{u}_{\e{w}},\e{v}\rangle =\delta_{\e{vw}}$. 

For an ideal  $I\subset\U$, we denote by $I^\perp$ the orthogonal complement
\[
I^\perp = \big\{ \gamma \in \U^* \mid \langle z, \gamma \rangle = 0 \text{ for all }z
\in I \big\}.
\]
Informally speaking, pairing with the elements of $I^\perp$ is tantamount to
working modulo~$I$:
for $f,g\in\U$, the congruence $f\equiv g \bmod I$ implies
$\langle f,\gamma \rangle=\langle g,\gamma \rangle$
for all $\gamma\in I^\perp$.
Put another way,
any element of $\U/I$ has a well-defined pairing with any
element of~$I^\perp$.


\pagebreak[3]

As shown in~\cite{FG}, noncommutative symmetric functions
$e_k(\mathbf{u})$ and $h_\ell(\mathbf{u})$ can be
used to define and study a large class of symmetric functions in the usual
sense, i.e., formal power series (in \emph{commuting} variables)
which are symmetric under permutations of these variables.
This is done as follows.

\begin{definition}
\label{def:cauchy-product}
Let $x_1,x_2,\dots$ be formal variables which commute with each other
and with $u_1,\dots,u_N$.
Define the \emph{noncommutative Cauchy product} $\Omega(\mathbf{x},
\mathbf{u})\in\U[[x_1,x_2,\dots]]$~by
\begin{equation}
\label{eq:cauchy-product}
\Omega(\mathbf{x}, \mathbf{u})
= H(x_1) H(x_2) \cdots
=\prod_{j=1}^\infty\prod_{i=1}^{N}(1-x_ju_i)^{-1}
=\prod_{j=1}^\infty \sum_{\ell = 0}^\infty x_j^\ell h_\ell(\mathbf{u})
\end{equation}
(cf.~\eqref{eq:H(x)}); here the notation $\prod$ always means that the factors are
multiplied left-to-right.
Now,~for $\gamma\in\U^*$, we define the formal
power series $F_\gamma(\mathbf{x})\in \mathbb{Z}[[x_1,x_2,\dots]]$ by
\begin{equation}
\label{eq:F-gamma}
F_\gamma(\mathbf{x})=\big\langle \Omega(\mathbf{x}, \mathbf{u}) ,\gamma\big\rangle,
\end{equation}
where  $\langle\cdot ,\cdot  \rangle$ is the extension of the pairing between~$\U$ and~$\U^*$ to
the bilinear pairing between $\U[[x_1,x_2,\dots]]$ and~$\U^*$ in which $\langle f \, \mathbf{u}_{\e{w}},\e{v}\rangle = f \, \delta_{\e{vw}}$ for
any  $f \in \ZZ[[x_1,x_2,\dots]]$.
\end{definition}

An alternative definition of~$F_\gamma(\mathbf{x})$ can be given in
terms of \emph{quasisymmetric functions}.
Collecting the terms in~\eqref{eq:cauchy-product} involving each noncommutative monomial~$\mathbf{u}_{\e{w}}$, we
obtain 
\[
\Omega(\mathbf{x}, \mathbf{u})=\sum_{\e{w}} Q_{\Des(\e{w})}(\mathbf{x}) \mathbf{u}_{\e{w}}\,,
\]
where $\Des(\e{w})= \{i \in \{1,\dots,n-1\} \mid \e{w}_i > \e{w}_{i+1} \}$ denotes  the \emph{descent set} of
a word $\e{w} = \e{w}_1 \cdots \e{w}_n$, and
\[
Q_{\Des(\e{w})}(\mathbf{x}) = \sum_{\substack{1 \le i_1 \le \,
    \cdots \, \le i_n\\j \in \Des(\e{w}) \implies i_j < i_{j+1} }}
x_{i_1}\cdots x_{i_n}
\]
is the corresponding \emph{fundamental quasisymmetric function} (I. Gessel \cite{GesselPPartition}).
Consequently, for a vector $\gamma=\displaystyle\sum_{\e{w}} \gamma_{\e{w}} \,\e{w}\in\U^*$, we have
\begin{equation}
\label{eq:Fgamma-via-Q}
F_\gamma(\mathbf{x})=\displaystyle\sum_{\e{w}} \gamma_{\e{w}}\,
Q_{\Des(\e{w})}(\mathbf{x}).
\end{equation}
Thus the map $\gamma\mapsto F_\gamma(\mathbf{x})$
is simply the linear map $\U^*\to \ZZ[[x_1, x_2, \dots]]$
that sends each word~$\e{w}$ to the fundamental
quasisymmetric function associated with the descent set of~$\e{w}$.

\begin{proposition}
\label{Icomm-symmetric}
If $\gamma\in \Icommperp$, then $F_\gamma(\mathbf{x})$
is symmetric in $x_1, x_2,\dots$
\end{proposition}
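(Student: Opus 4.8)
The plan is to reduce the symmetry of $F_\gamma(\mathbf{x})$ to invariance under adjacent transpositions of the $x_i$, and to deduce the latter from Corollary~\ref{cor:h's-commute} together with the defining property of $\Icommperp$. Since the group of finitary permutations of $\{x_1, x_2, \dots\}$ is generated by the simple transpositions $s_j$ that swap $x_j$ and $x_{j+1}$, it is enough to show that $F_\gamma$ is $s_j$-invariant for each $j \ge 1$.

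First I would describe the effect of $s_j$ on the noncommutative Cauchy product: applying the substitution $x_j \leftrightarrow x_{j+1}$ to \eqref{eq:cauchy-product} merely transposes the factors $H(x_j)$ and $H(x_{j+1})$, so, writing $A = H(x_1) \cdots H(x_{j-1})$ and $B = H(x_{j+2}) H(x_{j+3}) \cdots$, one has
\[
\Omega(\mathbf{x}, \mathbf{u}) - s_j\bigl(\Omega(\mathbf{x}, \mathbf{u})\bigr) = A \cdot \bigl(H(x_j) H(x_{j+1}) - H(x_{j+1}) H(x_j)\bigr) \cdot B .
\]
By Corollary~\ref{cor:h's-commute} applied to $I = \Icomm$, the noncommutative complete homogeneous symmetric functions $h_k(\mathbf{u})$ commute pairwise modulo $\Icomm$; equivalently, each coefficient $h_k(\mathbf{u}) h_\ell(\mathbf{u}) - h_\ell(\mathbf{u}) h_k(\mathbf{u})$ of the bracketed factor lies in the ideal $\Icomm$. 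Multiplying on the left by $A$ and on the right by $B$ keeps every coefficient inside the two-sided ideal $\Icomm$, so the coefficient of each monomial in $x_1, x_2, \dots$ occurring in $\Omega(\mathbf{x}, \mathbf{u}) - s_j(\Omega(\mathbf{x}, \mathbf{u}))$ belongs to $\Icomm$.

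Finally, I would pair with $\gamma$. Because the pairing in \eqref{eq:F-gamma} is taken coefficientwise in the $x_i$ and because $\gamma \in \Icommperp$ annihilates $\Icomm$, we obtain $\langle \Omega(\mathbf{x}, \mathbf{u}) - s_j(\Omega(\mathbf{x}, \mathbf{u})), \gamma \rangle = 0$, i.e.\ $F_\gamma(\mathbf{x}) = s_j F_\gamma(\mathbf{x})$; as $j$ is arbitrary, $F_\gamma$ is symmetric. (One could instead try to argue from \eqref{eq:Fgamma-via-Q}, but that would require a concrete description of $\Icommperp$, whereas the generating-function route uses only the abstract duality.)

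The one point that deserves care is the interaction of the congruence $H(x)H(y) \equiv H(y)H(x)$ with the \emph{infinite} product defining $\Omega$ and with the extension of the pairing to power series rings. I would dispose of this by arguing coefficientwise: fix a monomial in the $x_i$; only finitely many factors $H(x_i)$ contribute to its coefficient, so the whole computation takes place in $\U$ (equivalently, in a polynomial ring over $\U$) and involves no convergence issue, making the assertion ``this coefficient lies in $\Icomm$'' an honest finite statement. This is the main, and rather mild, obstacle; the rest is bookkeeping.
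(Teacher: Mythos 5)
Your argument is correct and follows essentially the same route as the paper's: the paper likewise reduces to swapping $x_j$ and $x_{j+1}$ in the Cauchy product, invokes Corollary~\ref{cor:h's-commute} to conclude that the swapped product is congruent to $\Omega(\mathbf{x},\mathbf{u})$ modulo $\Icomm[[x_1,x_2,\dots]]$, and then pairs with $\gamma\in\Icommperp$. You merely spell out the decomposition $A\cdot(H(x_j)H(x_{j+1})-H(x_{j+1})H(x_j))\cdot B$ and the coefficientwise reading of the congruence, both of which the paper leaves implicit.
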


\begin{proof}
Let $\Omega_j(\mathbf{x}, \mathbf{u})=\cdots H(x_{j-1}) H(x_{j+1}) H(x_j)H(x_{j+2})\cdots$
denote the result of switching $x_j$ and $x_{j+1}$
in~$\Omega(\mathbf{x}, \mathbf{u})$.
By Corollary~\ref{cor:h's-commute},
$\Omega_j(\mathbf{x}, \mathbf{u})\equiv
\Omega(\mathbf{x}, \mathbf{u}) \bmod \Icomm[[x_1,x_2,\dots]]$.
Hence
$\langle \Omega_j(\mathbf{x}, \mathbf{u}),\gamma\rangle
=\langle\Omega(\mathbf{x}, \mathbf{u}),\gamma\rangle$, and the
claim follows.
%
\end{proof}

As explained in  Sections~\ref{s LLT} and~\ref{s bijectivizations and
  examples},
the class of symmetric functions $F_\gamma$ includes several important families
such as Stanley symmetric functions and LLT polynomials.
The following construction provides a powerful tool for studying
the coefficients of the expansions of the symmetric functions~$F_\gamma$
in the basis of Schur functions.

\pagebreak[3]

\begin{definition}[\emph{Noncommutative Schur functions}]
\label{d noncommutative Schur functions}
Let $\lambda=(\lambda_1,\lambda_2,\dots)$ be an integer partition.
Let $\lambda'$ be the conjugate partition,
and let  $t=\lambda_1$ be the  number of parts of~$\lambda'$.
The \emph{noncommutative Schur function}
$\mathfrak{J}_\lambda(\mathbf{u}) \in \U$ is given by the following
noncommutative version of the classical Kostka-Naegelsbach determinantal formula  $s_\lambda = \det(e_{\lambda'_{i}+j-i})$\,:
\begin{equation}
\label{eq:ncschur-via-e}
\mathfrak{J}_\lambda(\mathbf{u}) = \sum_{\pi\in \S_{t}}
\sgn(\pi) \, e_{\lambda'_1+\pi(1)-1}(\mathbf{u})\,
e_{\lambda'_2+\pi(2)-2}(\mathbf{u}) \cdots
e_{\lambda'_{t}+\pi(t)-t}(\mathbf{u}).
\end{equation}
\end{definition}

\begin{remark}
Alternatively, 
$\mathfrak{J}_\lambda(\mathbf{u})$  can be defined by the
noncommutative version of the Jacobi-Trudi formula:
\begin{equation}
\label{eq:ncschur-via-h}
\mathfrak{J}_\lambda(\mathbf{u}) = \sum_{\pi\in \S_{\ell}}
\sgn(\pi) \, h_{\lambda_1+\pi(1)-1}(\mathbf{u})
h_{\lambda_2+\pi(2)-2}(\mathbf{u}) \cdots
h_{\lambda_{\ell}+\pi(\ell)-\ell}(\mathbf{u});
\end{equation}
here $\ell=\lambda_1'$ is the number of parts of~$\lambda$.
The fact that the right-hand sides of \eqref{eq:ncschur-via-e} and~\eqref{eq:ncschur-via-h}
are congruent to each other modulo~$\Icomm$
can be established by the argument given in the proof of
Proposition~\ref{Icomm-Schur} below.

The closely related noncommutative Schur functions~$s_\lambda(\mathbf{u})$ introduced and
studied in~\cite{FG} were defined there
in terms of
semistandard Young tableaux, cf.\ Theorem~\ref{t FG}.
While 
$s_\lambda(\mathbf{u})\equiv \mathfrak{J}_\lambda(\mathbf{u})$
modulo the ideals considered in~\cite{FG}, the two definitions
may diverge for other ideals containing~$\Icomm$.
\end{remark}

The next result is a version of a simple, yet important, observation made in~\cite{FG}.

\begin{proposition}
\label{Icomm-Schur}
For any $\gamma\in \Icommperp$, we have
\begin{equation}
\label{eq:Delta-f-via-schurs}
F_\gamma(\mathbf{x}) 
= \sum_{\lambda} s_\lambda(\mathbf x) \big\langle \mathfrak
J_{\lambda}(\mathbf{u}) , \gamma \big\rangle.
\end{equation}
\end{proposition}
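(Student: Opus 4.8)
The plan is to expand the noncommutative Cauchy product $\Omega(\mathbf{x},\mathbf{u})$ in the basis of Schur functions $s_\lambda(\mathbf{x})$, with noncommutative coefficients in $\U$, and then pair with $\gamma$. The starting point is the classical dual Cauchy-type identity: if we could treat the $u_i$ as honest commuting variables, then $\prod_{i,j}(1-x_j u_i)^{-1} = \sum_\lambda s_\lambda(\mathbf{x})\, s_\lambda(\mathbf{u})$. We do not have commutativity, but we have something almost as good — by Corollary~\ref{cor:h's-commute}, the $h_\ell(\mathbf{u})$ commute pairwise modulo $\Icomm$, so modulo $\Icomm[[x_1,x_2,\dots]]$ the element $\Omega(\mathbf{x},\mathbf{u})$ behaves like a power series in genuinely commuting variables.

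The key steps, in order. First I would recall that $\Omega(\mathbf{x},\mathbf{u}) = \prod_{j\ge 1} H(x_j)$ where $H(x) = \sum_{\ell\ge 0} x^\ell h_\ell(\mathbf{u})$. Second, I would invoke the \emph{combinatorial definition of Schur polynomials via the Jacobi--Trudi / Lindström--Gessel--Viennot expansion}: in any commutative ring, $\prod_{i}\prod_{j}(1-x_j t_i)^{-1} = \sum_\lambda s_\lambda(\mathbf{x})\, s_\lambda(\mathbf{t})$, and expanding $s_\lambda(\mathbf{t})$ by Jacobi--Trudi gives an expression purely in the $h_\ell$. More precisely, the symmetric-function identity $\sum_j Q_{\Des(\e{w})}$ bookkeeping already realized in the excerpt shows $\Omega(\mathbf{x},\mathbf{u}) = \sum_{\e{w}} Q_{\Des(\e{w})}(\mathbf{x})\,\mathbf{u}_{\e{w}}$; pairing with $\gamma = \sum_{\e{w}}\gamma_{\e{w}}\,\e{w}$ yields $F_\gamma(\mathbf{x}) = \sum_{\e{w}} \gamma_{\e{w}}\, Q_{\Des(\e{w})}(\mathbf{x})$, which is \eqref{eq:Fgamma-via-Q}. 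Since $\gamma\in\Icommperp$, Proposition~\ref{Icomm-symmetric} tells us $F_\gamma$ is symmetric, so it has a well-defined Schur expansion $F_\gamma(\mathbf{x}) = \sum_\lambda c_\lambda\, s_\lambda(\mathbf{x})$. Third, and this is the heart of it, I would identify $c_\lambda = \langle \mathfrak{J}_\lambda(\mathbf{u}),\gamma\rangle$. For this I would work in $\U[[x_1,x_2,\dots]]$ modulo $\Icomm$: since the $h_\ell(\mathbf{u})$ commute there, the manipulations that prove $\prod(1-x_j t_i)^{-1} = \sum_\lambda s_\lambda(\mathbf{x}) s_\lambda(\mathbf{t})$ in the commutative setting go through verbatim with $t_i \rightsquigarrow u_i$, giving $\Omega(\mathbf{x},\mathbf{u}) \equiv \sum_\lambda s_\lambda(\mathbf{x})\, \mathfrak{J}_\lambda(\mathbf{u}) \bmod \Icomm[[x_1,x_2,\dots]]$, where $\mathfrak{J}_\lambda(\mathbf{u})$ is exactly the noncommutative Jacobi--Trudi expression \eqref{eq:ncschur-via-h} — note the ordering of factors in \eqref{eq:ncschur-via-h} is forced by, and compatible with, the left-to-right product convention, and the commutation modulo $\Icomm$ is what lets the determinant be unambiguous. (This also establishes, as remarked after Definition~\ref{d noncommutative Schur functions}, that \eqref{eq:ncschur-via-e} and \eqref{eq:ncschur-via-h} agree modulo $\Icomm$, via the identity $H(x)E(-x)=1$ used exactly as in the proof of Corollary~\ref{cor:h's-commute}.) Finally, pairing this congruence with $\gamma\in\Icommperp$ — legitimate because, as explained in the paragraph preceding Definition~\ref{def:cauchy-product}, congruence modulo $\Icomm$ implies equality of pairings against $\Icommperp$, and this extends coefficientwise to $\U[[x_1,x_2,\dots]]$ — we obtain $F_\gamma(\mathbf{x}) = \langle \Omega(\mathbf{x},\mathbf{u}),\gamma\rangle = \sum_\lambda s_\lambda(\mathbf{x})\,\langle \mathfrak{J}_\lambda(\mathbf{u}),\gamma\rangle$, which is \eqref{eq:Delta-f-via-schurs}.

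The main obstacle I anticipate is making the second and third steps fully rigorous in the completed ring $\U[[x_1,x_2,\dots]]$ rather than in a polynomial ring: one must check that the infinite product $\prod_{j\ge1}H(x_j)$ and its Schur reexpansion converge in the natural (total-degree in the $x_j$) topology, that the Schur expansion is coefficientwise finite, and that pairing with $\gamma$ commutes with these infinite sums. In practice this is routine — each monomial $x_1^{a_1}x_2^{a_2}\cdots$ receives contributions from only finitely many terms — but it is the only place where care beyond bookkeeping is needed. A secondary point worth a sentence is verifying that the \emph{specific} noncommutative lift of the Jacobi--Trudi determinant one gets from the commutative computation is literally \eqref{eq:ncschur-via-h} and not some reordering of it; this follows because all reorderings of the $h_\ell(\mathbf{u})$ agree modulo $\Icomm$, which is all we need since we immediately pair against $\Icommperp$.
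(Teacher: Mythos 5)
Your proposal is correct and follows essentially the same route as the paper: reduce to the congruence $\Omega(\mathbf{x},\mathbf{u}) \equiv \sum_\lambda s_\lambda(\mathbf{x}) \mathfrak{J}_\lambda(\mathbf{u}) \bmod \Icomm[[\mathbf{x}]]$ and transfer the classical Cauchy identity to $\U/\Icomm$ using the pairwise commutation of the $e_k(\mathbf{u})$ (equivalently $h_\ell(\mathbf{u})$) modulo $\Icomm$. The paper makes the transfer rigorous via an explicit ring homomorphism $\psi\colon\Lambda(\mathbf{y})\to\U/\Icomm$ with $e_k(\mathbf{y})\mapsto e_k(\mathbf{u})$, proving $\psi(h_m(\mathbf{y}))=h_m(\mathbf{u})$ by induction from $H(x)E(-x)=1$; this is precisely the mechanism you invoke informally when you say the commutative manipulations ``go through verbatim'' and when you note that \eqref{eq:ncschur-via-e} and \eqref{eq:ncschur-via-h} agree modulo $\Icomm$.
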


\begin{proof} 
Comparing \eqref{eq:Delta-f-via-schurs} to~\eqref{eq:F-gamma}, we see
that it suffices to show
\begin{equation}
\label{eq:f-gamma-schur-claim}
\Omega(\mathbf{x}, \mathbf{u})\equiv
\sum_{\lambda} s_\lambda(\mathbf x) \mathfrak
J_{\lambda}(\mathbf{u})\bmod \Icomm[[\mathbf{x}]].
\end{equation}
Let $\Lambda(\mathbf{y})$ denote the ordinary ring of symmetric polynomials in
$N$ commuting variables $\mathbf{y}=(y_1,\dots,y_N)$.
This is a polynomial ring with algebraically independent generators
$e_1(\mathbf{y}),\dots,e_N(\mathbf{y})$.
The ring homomorphism $\psi: \Lambda(\mathbf{y})\to \U/\Icomm$
defined by $e_k(\mathbf{y})\mapsto e_k(\mathbf{u})$ sends each Schur
polynomial~$s_\lambda(\mathbf{y})$ to
$\mathfrak{J}_\lambda(\mathbf{u})$ and
sends $h_m(\mathbf{y})$ to $h_m(\mathbf{u})$. To see the latter,
apply  $\psi$ to the classical identity  $\sum_{\ell+k=m} (-1)^k h_\ell(\mathbf{y}) e_k(\mathbf{y})=0$ and then subtract
\eqref{e E(-x)H(x)} to obtain
$\sum_{\ell+k=m} (-1)^k \left( \, \psi(h_\ell(\mathbf{y}))- h_\ell(\mathbf{u}) \, \right) e_k(\mathbf{u})=0$;
the desired fact $\psi(h_m(\mathbf{y})) = h_m(\mathbf{u})$ then follows by induction on  $m$.
Now applying $\psi$ to the classical Cauchy identity
\[
\prod_{j=1}^\infty \sum_{\ell = 0}^\infty x_j^\ell \,h_\ell(\mathbf{y})
=\sum_{\lambda} s_\lambda(\mathbf x) s_{\lambda}(\mathbf{y})
\]
yields~\eqref{eq:f-gamma-schur-claim}.
\end{proof}

As mentioned earlier, many important families of symmetric functions
arise from the above construction.
One begins by choosing a particular ideal~$I\subset\U$
which contains $\Icomm$ and is homogeneous with respect to the grading
$\operatorname{deg}(u_i)=1$.
Typically, one is interested in the subclass of symmetric functions
$F_\gamma(\mathbf{x})$ labeled by the elements
$\gamma\in I^\perp\subset \Icommperp$ which are
\emph{nonnegative} integer combinations of words.
The goal is to show that any such symmetric function $F_\gamma(\mathbf{x})$
is \emph{Schur positive}, i.e., its expansion in the basis of Schur functions
has nonnegative coefficients.
An even more ambitious goal is to obtain a manifestly positive combinatorial rule for these
coefficients.

\pagebreak[3]

Let $\U_{\ge 0}\subset\U$ (resp.,~$\U^*_{\ge 0}\subset\U^*$) denote
the $\ZZ_{\ge 0}$-cone (that is, the additive monoid)
generated by the noncommutative monomials in~$\U$ (resp., words in~$\U^*$).
To rephrase, these cones consist of all nonnegative integer
combinations of monomials (resp., words).
The symmetric functions $F_\gamma$ of interest to us are labeled by
$\gamma\in \U^*_{\ge 0}\cap I^\perp$.

An element $f\in\U$ is called \emph{$\ZZ$-monomial positive}
  modulo an ideal~$I$ if $f\in \U_{\ge0}+I$, i.e.,
if $f$ can be written, modulo~$I$, as a nonnegative integer
combination of noncommutative monomials.
We say that $f\in\U$ is  \emph{$\QQ$-monomial positive} modulo~$I$
if some positive integer multiple of $f$ is $\ZZ$-monomial positive
  modulo~$I$. 

\pagebreak[3]

%

\begin{theorem}
\label{t intro positivity}
For a homogeneous ideal $I\subset\U$ containing~$\Icomm$,
the following are equivalent:
\begin{itemize}
\item[\emph{(i)}]
all symmetric functions $F_\gamma(\mathbf{x})$, for $\gamma \in
\U^*_{\ge 0}\cap I^\perp$, are Schur positive;
\item[\emph{(ii)}]
all noncommutative Schur functions
$\mathfrak{J}_\lambda(\mathbf{u})$ are $\QQ$-monomial positive
modulo~$I$.
\end{itemize}
In fact, a stronger statement holds: given $I\supset\Icomm$ and an integer
partition~$\lambda$,
the  coefficient of $s_\lambda(\mathbf{x})$ in $F_\gamma(\mathbf{x})$
is nonnegative for all $\gamma \in \U^*_{\ge 0}\cap I^\perp$
if and only if $\mathfrak{J}_\lambda(\mathbf{u})$ is $\QQ$-monomial
positive modulo~$I$.
\end{theorem}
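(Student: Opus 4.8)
The plan is to prove the refined (per-$\lambda$) statement, from which the equivalence of (i) and (ii) follows immediately by quantifying over all partitions~$\lambda$. Fix $I\supset\Icomm$ homogeneous and a partition~$\lambda$. By Proposition~\ref{Icomm-Schur}, for any $\gamma\in\Icommperp$ (in particular any $\gamma\in I^\perp$), the coefficient of $s_\lambda(\mathbf{x})$ in $F_\gamma(\mathbf{x})$ equals the pairing $\langle\mathfrak{J}_\lambda(\mathbf{u}),\gamma\rangle$. So the assertion to be proved is: $\langle\mathfrak{J}_\lambda(\mathbf{u}),\gamma\rangle\ge 0$ for all $\gamma\in\U^*_{\ge0}\cap I^\perp$ if and only if $\mathfrak{J}_\lambda(\mathbf{u})$ is $\QQ$-monomial positive modulo~$I$.

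The direction ``$\QQ$-monomial positive $\Rightarrow$ nonnegative pairings'' is the easy one. Suppose $m\,\mathfrak{J}_\lambda(\mathbf{u}) \equiv \sum_{\e{w}} c_{\e{w}}\,\mathbf{u}_{\e{w}} \bmod I$ with $m\in\ZZ_{>0}$ and all $c_{\e{w}}\in\ZZ_{\ge 0}$. For $\gamma=\sum_{\e{w}}\gamma_{\e{w}}\,\e{w}\in\U^*_{\ge0}\cap I^\perp$, all $\gamma_{\e{w}}\ge 0$, and since $\gamma$ annihilates $I$ we get $m\,\langle\mathfrak{J}_\lambda(\mathbf{u}),\gamma\rangle = \langle\sum_{\e{w}}c_{\e{w}}\mathbf{u}_{\e{w}},\gamma\rangle = \sum_{\e{w}}c_{\e{w}}\gamma_{\e{w}}\ge 0$, whence $\langle\mathfrak{J}_\lambda(\mathbf{u}),\gamma\rangle\ge 0$.

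The reverse direction is the main obstacle and calls for a separation/duality argument. Work in the homogeneous component $\U_n$ of degree $n=|\lambda|$, a finite-dimensional space, and in its dual $\U_n^*$; let $I_n = I\cap\U_n$. Consider the rational cone $C = \U_{n,\ge0} + I_n \subset \U_n$ (nonnegative combinations of monomials, modulo $I$, extended to $\QQ$-coefficients), a closed polyhedral cone over $\QQ$ (equivalently over $\RR$). Its dual cone is $C^\vee = \{\gamma\in\U_n^* : \langle z,\gamma\rangle\ge 0 \text{ for all } z\in C\}$. Since $I_n$ is a linear subspace contained in $C$, any $\gamma\in C^\vee$ must vanish on $I_n$, i.e.\ $C^\vee\subset I_n^\perp = (I^\perp)_n$; and the condition $\langle \mathbf{u}_{\e{w}},\gamma\rangle\ge 0$ for all monomials $\mathbf{u}_{\e{w}}$ says exactly $\gamma\in\U_{n,\ge0}^*$. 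Hence $C^\vee = \U_{n,\ge0}^*\cap I_n^\perp$, which is precisely the set of (degree-$n$) labels $\gamma$ appearing in the theorem. Now the hypothesis ``$\langle\mathfrak{J}_\lambda(\mathbf{u}),\gamma\rangle\ge 0$ for all $\gamma\in\U_{n,\ge0}^*\cap I^\perp$'' says that $\mathfrak{J}_\lambda(\mathbf{u})$, viewed in $\U_n$, pairs nonnegatively with every element of $C^\vee$, i.e.\ $\mathfrak{J}_\lambda(\mathbf{u})\in (C^\vee)^\vee$. By the biduality theorem for closed convex cones (Farkas' lemma), $(C^\vee)^\vee = \overline{C} = C$ (the cone is polyhedral hence closed). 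Therefore $\mathfrak{J}_\lambda(\mathbf{u})\in C = \U_{n,\ge0}+I_n$ over $\QQ$; clearing denominators, some positive integer multiple of $\mathfrak{J}_\lambda(\mathbf{u})$ lies in $\U_{\ge0}+I$, which is exactly $\QQ$-monomial positivity modulo~$I$.

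The one point requiring care is that the cone $C$ is genuinely polyhedral (finitely generated), so that $C=\overline C$ and Farkas applies without a closure subtlety: this holds because $\U_{n,\ge0}$ is generated by the finitely many degree-$n$ monomials and $I_n$ is a finite-dimensional subspace, so $C$ is the sum of a finitely generated cone and a subspace, hence finitely generated. With that noted, combining the two directions gives the per-$\lambda$ equivalence, and taking the conjunction over all partitions~$\lambda$ (note $F_\gamma$ is Schur positive iff the coefficient of $s_\lambda(\mathbf{x})$ in $F_\gamma$ is nonnegative for every~$\lambda$) yields the equivalence of (i) and (ii).
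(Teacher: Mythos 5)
Your proof is correct and takes essentially the same approach as the paper: both reduce to a fixed $\lambda$ via Proposition~\ref{Icomm-Schur}, prove the forward direction by the same direct pairing computation, and establish the reverse direction by working in the degree-$|\lambda|$ homogeneous component and applying cone duality (the paper cites Minkowski--Farkas' lemma for the finitely generated cone spanned by the degree-$d$ monomials together with $\pm$ a basis of $I_d$; your biduality $(C^\vee)^\vee=C$ for the polyhedral cone $C=\U_{n,\ge0}+I_n$ is the same statement).
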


Theorem \ref{t intro positivity} can be viewed as a simplified version of
\cite[Theorem~1.4]{BD0graph}, which gave a slightly stronger
conclusion in a more restricted~setting.

\begin{proof}[Proof of the implication {\rm (ii)}$\Rightarrow${\rm
      (i)}]
By assumption~(ii), there is a monomial expansion
\begin{equation}
\label{eq:cJ=...}
c_\lambda\,\mathfrak{J}_\lambda(\mathbf{u})\equiv
\sum_{\e{w}
} a_{\lambda,\e{w}}\, \mathbf{u}_{\e{w}}
\,(\bmod \, I),
\end{equation}
for some positive integer~$c_\lambda$ and nonnegative integer coefficients~$a_{\lambda,\e{w}}$.
Let
\begin{equation}
\gamma=\sum_{\e{w}} \gamma_{\e{w}}\, \e{w} \in I^\perp,
\end{equation}
where
all coefficients $\gamma_{\e{w}}$ are nonnegative integers.
It follows that
\begin{equation}
\label{eq:cJ,gamma=...}
c_\lambda\,\big\langle \mathfrak J_{\lambda}(\mathbf{u}) , \gamma \big\rangle
=\Big\langle \,\sum_{\e{w}} a_{\lambda,\e{w}}\, \mathbf{u}_{\e{w}}
, \gamma \,\Big\rangle
=\sum_{\e{w}} a_{\lambda,\e{w}}\, \gamma_{\e{w}}
\ge 0.
\end{equation}
It remains to recall that by Proposition~\ref{Icomm-Schur},
the coefficient of~$s_\lambda(\mathbf{x})$ in the Schur expansion
of~$F_\gamma(\mathbf{x})$ is equal to $\langle \mathfrak J_{\lambda}(\mathbf{u}) , \gamma \rangle$.
\end{proof}

The converse implication {\rm (i)}$\Rightarrow${\rm (ii)} is proved
in Section~\ref{s monomial positivity}
using Farkas' Lemma from convex analysis.
%
Informally,
this implication shows that the noncommutative Schur function
approach to proving Schur positivity is a powerful one because it only
fails if the symmetric functions to which it is applied are not
actually all Schur positive.

\begin{remark}
\label{rem:manifestly-positive-formulas}
We note that $\QQ$-monomial positivity \eqref{eq:cJ=...} of
noncommutative Schur functions
$\mathfrak{J}_\lambda(\mathbf{u})$
modulo a given ideal~$I$ implies more than ``just'' Schur positivity
of the corresponding class of symmetric functions~$F_\gamma(\mathbf{x})$.
It yields a \emph{manifestly positive} formula for the Schur
expansion of~$F_\gamma(\mathbf{x})$,
cf.\ \eqref{eq:Delta-f-via-schurs} and~\eqref{eq:cJ,gamma=...}:
\begin{equation}
\label{eq:F-gamma=c^{-1}...}
F_\gamma(\mathbf{x})
=\sum_\lambda s_\lambda(\mathbf{x}) \,c_\lambda^{-1} \sum_{\e{w}}
a_{\lambda,\e{w}}\, \gamma_{\e{w}}\,.
\end{equation}
%
If moreover all 
$\mathfrak{J}_\lambda(\mathbf{u})$ are $\ZZ$-monomial positive
modulo~$I$, so we can take $c_\lambda=1$,
then~\eqref{eq:F-gamma=c^{-1}...} becomes a positive
\emph{combinatorial} rule for the Schur expansions of~$F_\gamma$.
(We imagine that a monomial positive expansion for
$\mathfrak{J}_\lambda(\mathbf{u})$ is established by an explicit
formula in~which the numbers $a_{\lambda,\e{w}}$ count combinatorial objects of some kind.)
We suspect that it can happen that
$\mathfrak{J}_\lambda(\mathbf{u})$ is  $\QQ$-monomial positive
modulo~$I$, but not  $\ZZ$-monomial positive
modulo~$I$.
\end{remark}

\pagebreak[3]

Unfortunately, and contrary to what extensive computational
experiments initially  led us to believe,
not all symmetric functions~$F_\gamma(\mathbf{x})$, for $\gamma\in  \U^*_{\ge 0}\cap \Icommperp$,
are Schur~positive.
(Equivalently, $\QQ$-monomial positivity of
$\mathfrak{J}_{\lambda}(\mathbf{u})$ may fail modulo~$\Icomm$.)
We~present a counterexample in Section~\ref{sec:switchboards},
cf.\ Corollary~\ref{cor-nopos}.
%
This negative result must not however discourage us from pursuing the current
line of inquiry:
even though Schur positivity may fail in the most general case
$I=\Icomm$,
it still holds for many important ideals $I\supset\Icomm$, \linebreak[3]
or equivalently for the nonnegative vectors $\gamma\in I^\perp\subset\Icommperp$.

In each application,
the challenge is to find a  monomial positive expression for
$\mathfrak{J}_{\lambda}(\mathbf{u})$ that holds modulo an
appropriately chosen ideal  $I$.
In doing so, the goal is to choose an ideal $I$ which is ``exactly the right size.''
First of all,  $I$ must be small enough so that the symmetric
functions of interest to us are of the form
 $F_\gamma$ for  $\gamma\in I^\perp$.
On the other hand, if  $I$ is too small, e.g.,  $I = \Icomm$, then
$\mathfrak{J}_{\lambda}(\mathbf{u})$ is not monomial positive modulo
$I$.
But there is yet the further subtle consideration:
if  $I$ is too large, then there are many ways to
write $\mathfrak{J}_\lambda(\mathbf{u})$ as a positive sum of
monomials (modulo~$I$),
making it extremely hard to single out a ``canonical one'' from this multitude;
this is what essentially happens in Lam's approach~\cite{LamRibbon}
to LLT polynomials.
In~conclusion, identifying the ``smallest'' ideal $I$ such
that~$\mathfrak{J}_\lambda(\mathbf{u})$ is monomial positive
modulo~$I$ is likely to be of help in finding (and proving)
the desired combinatorial rule for such monomial positive expression,
and would establish Schur positivity for a wider class of symmetric
functions.


Matters are complicated by the fact that the subset of
ideals~$I\supset \Icomm$ for which monomial positivity of
$\mathfrak{J}_\lambda(\mathbf{u})$ holds modulo~$I$ does not have a unique
minimal element, see Corollary~\ref{c no unique smallest}.
Hence it might be too optimistic to hope for an all-encompassing ``master
Schur positivity theorem,'' as different symmetric
functions~$F_\gamma$ may be Schur positive for different reasons.


On the bright side, monomial positivity of
$\mathfrak{J}_\lambda(\mathbf{u})$ can be established for many
important ideals~$I\supset\Icomm$, as demonstrated in~\cite{FG}.
(The first example of this phenomenon, namely the case of the plactic
algebra, goes back to Lascoux and Sch\"utzenberger~\cite{LS, Sch}.)
Our next theorem can be viewed as a strengthening of the main result of~\cite{FG}.

\begin{definition}
\label{d Ifg'}
Let $\Ifgp$ denote the ideal in~$\U$ generated by the elements
\begin{alignat}{2}
&u_b^2 u_a + u_a u_b u_a - u_b u_a u_b - u_b u_a^2 \qquad && (a <
  b), \label{quad uu 2vars again}\\
&u_b u_c u_a - u_b u_a u_c \qquad&& (a < b < c), \label{i knuth1}\\
&u_a u_c u_b - u_c u_a u_b \qquad&& (a < b < c). \label{i knuth2}
\end{alignat}
\end{definition}

\begin{lemma}
$\Ifgp\supset\Icomm$.
\end{lemma}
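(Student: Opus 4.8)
The plan is to show that each of the three generators \eqref{quad uu 2vars}--\eqref{quad uuuu 3vars} of $\Icomm$ lies in $\Ifgp$. The generator \eqref{quad uu 2vars}, namely $u_b^2 u_a + u_a u_b u_a - u_b u_a u_b - u_b u_a^2$ for $a<b$, coincides verbatim with \eqref{quad uu 2vars again}, so it is manifestly in $\Ifgp$. Thus the work is to express \eqref{quad uuu 3vars} and \eqref{quad uuuu 3vars} as $\ZZ$-linear combinations (with ring coefficients, i.e.\ left/right multiples) of the Knuth-type generators \eqref{i knuth1} and \eqref{i knuth2}.

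\textbf{Reducing the $3$-variable quadratic relation.} For $a<b<c$, I would write \eqref{quad uuu 3vars} as
\[
u_b u_c u_a + u_a u_c u_b - u_b u_a u_c - u_c u_a u_b
=\big(u_b u_c u_a - u_b u_a u_c\big) + \big(u_a u_c u_b - u_c u_a u_b\big),
\]
and observe that the first bracket is exactly \eqref{i knuth1} and the second is exactly \eqref{i knuth2}. Hence \eqref{quad uuu 3vars} $\in \Ifgp$ immediately, with no multiplication needed.

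\textbf{Reducing the degree-$4$ relation.} For \eqref{quad uuuu 3vars}, $u_c u_b u_c u_a + u_b u_c u_a u_c - u_c u_b u_a u_c - u_b u_c^2 u_a$ with $a<b<c$, the idea is to pair up the four terms into two ``commutator'' pieces and recognize each as a left- or right-multiple of a Knuth generator. Grouping gives
\[
u_c u_b u_c u_a + u_b u_c u_a u_c - u_c u_b u_a u_c - u_b u_c^2 u_a
=u_c\big(u_b u_c u_a - u_b u_a u_c\big) - u_b\big(u_c u_c u_a - u_c u_a u_c\big),
\]
and while the first factor $u_b u_c u_a - u_b u_a u_c$ is \eqref{i knuth1}, the second expression $u_c u_c u_a - u_c u_a u_c$ is the specialization of \eqref{i knuth1} with the roles $(a,b,c)\rightsquigarrow(a,c,c)$ — but \eqref{i knuth1} is only stated for a strict chain $a<b<c$, so this regrouping is not literally valid and one must instead route everything through genuine instances of \eqref{i knuth1}--\eqref{i knuth2}. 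The correct approach is to regroup as
\[
\big(u_c u_b u_c u_a - u_c u_b u_a u_c\big) - \big(u_b u_c^2 a - u_b u_c u_a u_c\big)
=u_c(u_b u_c u_a - u_b u_a u_c) - u_b(u_c u_c u_a - u_c u_a u_c),
\]
which again runs into the same issue; the clean fix is to use \eqref{i knuth1} in the form $u_b u_c u_a \equiv u_b u_a u_c$ to rewrite $u_c u_b u_c u_a$ and $u_b u_c^2 u_a = u_b u_c u_c u_a$, then \eqref{i knuth2} to handle the resulting $u_c u_a u_b$-type terms, carefully tracking which letters play the roles of the strict chain. Concretely I expect the verification to take the shape: apply \eqref{i knuth1} twice and \eqref{i knuth2} twice (as left/right multiples by single generators $u_a, u_b, u_c$), and check that the telescoping sum of the four resulting elements of $\Ifgp$ equals \eqref{quad uuuu 3vars}.

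\textbf{Main obstacle.} The only delicate point is bookkeeping in the degree-$4$ case \eqref{quad uuuu 3vars}: one must avoid the temptation to treat $u_c u_c$ as an instance of a ``two-letter Knuth move,'' since \eqref{i knuth1}--\eqref{i knuth2} require three distinct letters in strictly increasing order. The resolution is to insist that every reduction step is a left- or right-multiple of an honest instance of \eqref{i knuth1} or \eqref{i knuth2} with a genuine strict chain among $\{a,b,c\}$, and then to verify the resulting identity in $\U$ by a direct (if slightly tedious) comparison of monomials. Once that identity is written down, the lemma follows since $\Ifgp$ is by definition closed under left and right multiplication.
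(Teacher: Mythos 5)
The parts for \eqref{quad uu 2vars} and \eqref{quad uuu 3vars} are correct and are exactly the paper's argument. But the treatment of \eqref{quad uuuu 3vars} has a genuine gap, and your proposed ``clean fix'' cannot work.

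You correctly notice that the naive regrouping produces the spurious term $u_c u_c u_a - u_c u_a u_c$, which is not an instance of \eqref{i knuth1}. But your suggested repair---route everything through honest instances of \eqref{i knuth1}--\eqref{i knuth2} with genuinely strict chains---is impossible, and here is a concrete obstruction. The monomial $u_b u_c^2 u_a = u_b u_c u_c u_a$ has no window of three consecutive \emph{distinct} letters: positions $1,2,3$ read $b,c,c$ and positions $2,3,4$ read $c,c,a$. So neither \eqref{i knuth1} nor \eqref{i knuth2} applies anywhere inside it, and conversely no product of the form $u_x \cdot(\text{generator \eqref{i knuth1} or \eqref{i knuth2}})$ or $(\text{generator})\cdot u_x$ with $x\in\{a,b,c\}$ ever produces the monomial $u_b u_c^2 u_a$. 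Since this monomial occurs with nonzero coefficient in \eqref{quad uuuu 3vars}, that element is \emph{not} in the two-sided ideal generated by \eqref{i knuth1} and \eqref{i knuth2} alone. The quadratic generator \eqref{quad uu 2vars again} is indispensable here: one must use it for the pair $a<c$ (i.e.\ the element $u_c^2 u_a + u_a u_c u_a - u_c u_a u_c - u_c u_a^2$). The paper's decomposition is
\[
u_c u_b u_c u_a + u_b u_c u_a u_c - u_c u_b u_a u_c - u_b u_c^2 u_a
= u_c\big(u_b u_c u_a - u_b u_a u_c\big)
- u_b\big(u_c^2 u_a + u_a u_c u_a - u_c u_a u_c - u_c u_a^2\big)
+ \big(u_b u_a u_c - u_b u_c u_a\big)u_a,
\]
which is a left-multiple of \eqref{i knuth1}, a left-multiple of \eqref{quad uu 2vars again} (with $b$ replaced by $c$), and a right-multiple of $-\eqref{i knuth1}$, all honest generators of $\Ifgp$. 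This is the ingredient your outline is missing: you restricted yourself to the Knuth-type generators, and that restriction is fatal.
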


\begin{proof}
Note that \eqref{quad uu 2vars again} coincides with~\eqref{quad uu 2vars},
while the sum of \eqref{i knuth1} and~\eqref{i knuth2} is~\eqref{quad uuu 3vars}.
Finally, we get the elements~\eqref{quad uuuu 3vars} as follows:
$u_c u_b u_c u_a + u_b u_c u_a u_c - u_c u_b u_a u_c - u_b u_c^2 u_a
=
u_c(u_b u_c u_a-u_b u_a u_c)
-u_b(u_c^2 u_a + u_au_cu_a -u_c u_a u_c - u_c u_a^2) +(u_b u_au_c -
u_bu_c u_a)u_a\in\Ifgp$.
\end{proof}


The ideal $\Ifgp$ is the enlargement of~$\Icomm$ obtained
by replacing the generators \eqref{quad uuu 3vars} by
the ``3-letter Knuth elements'' \eqref{i knuth1}--\eqref{i
  knuth2}.
Curiously, this makes the degree~$4$ generators~\eqref{quad uuuu 3vars} superfluous, as the above calculation shows.

\pagebreak[3]

\begin{theorem}
\label{th:FG'-positivity}
The noncommutative Schur functions
$\mathfrak{J}_\lambda(\mathbf{u})$ are
\hbox{$\ZZ$-monomial} positive modulo~$\Ifgp$.
Consequently, the symmetric functions $F_\gamma(\mathbf{x})$
labeled by $\gamma\in  \U^*_{\ge 0}\cap \Ifgpperp$ are Schur positive.
\end{theorem}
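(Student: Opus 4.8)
The plan is to prove the $\ZZ$-monomial positivity of $\mathfrak{J}_\lambda(\mathbf{u})$ modulo $\Ifgp$ directly, and then deduce Schur positivity of the $F_\gamma$ from Theorem~\ref{t intro positivity} (or more simply from Proposition~\ref{Icomm-Schur} together with nonnegativity of $\langle\mathfrak{J}_\lambda(\mathbf{u}),\gamma\rangle$, exactly as in the proof of (ii)$\Rightarrow$(i) given above). So the entire content is the first sentence of the theorem. Since $\Ifgp\supset\Icomm$, working modulo $\Ifgp$ is legitimate, and I will freely use that the $e_k(\mathbf{u})$ commute modulo $\Icomm$ (Theorem~\ref{th:quad-Icomm}), so the determinantal formula~\eqref{eq:ncschur-via-e} and the Jacobi--Trudi formula~\eqref{eq:ncschur-via-h} agree modulo $\Ifgp$; in particular $\mathfrak{J}_\lambda(\mathbf{u})$ is well-defined modulo $\Ifgp$ independently of the choice. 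The key reduction is that $\Ifgp$ contains the ``3-letter Knuth elements'' \eqref{i knuth1}--\eqref{i knuth2}, which together with \eqref{quad uu 2vars again} are precisely the defining relations of the algebra studied in \cite{FG}: the quotient $\U/\Ifgp$ is (a version of) the algebra in which Greene and the second author established monomial positivity of their tableau-defined noncommutative Schur functions $s_\lambda(\mathbf{u})$.

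First I would recall, or re-derive, the combinatorial definition of $s_\lambda(\mathbf{u})$ from~\cite{FG}: $s_\lambda(\mathbf{u}) = \sum_{T} \mathbf{u}_{\reading(T)}$, the sum over semistandard Young tableaux $T$ of shape $\lambda$ (in the alphabet $\{1,\dots,N\}$) of the monomial associated to the reading word of $T$. This is manifestly $\ZZ$-monomial positive on the nose, with no need to pass to a quotient. The heart of the matter is then the identity $\mathfrak{J}_\lambda(\mathbf{u}) \equiv s_\lambda(\mathbf{u}) \bmod \Ifgp$. Granting this, $\mathfrak{J}_\lambda(\mathbf{u})$ is $\ZZ$-monomial positive modulo $\Ifgp$ with $a_{\lambda,\e{w}}$ equal to the number of SSYT of shape $\lambda$ with reading word $\e{w}$, and we are done. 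For the Schur-positivity consequence: given $\gamma=\sum_{\e w}\gamma_{\e w}\,\e w\in\U^*_{\ge 0}\cap\Ifgpperp$, Proposition~\ref{Icomm-Schur} gives $F_\gamma(\mathbf{x})=\sum_\lambda s_\lambda(\mathbf x)\langle\mathfrak{J}_\lambda(\mathbf{u}),\gamma\rangle$, and $\langle\mathfrak{J}_\lambda(\mathbf{u}),\gamma\rangle=\sum_{\e w}a_{\lambda,\e w}\gamma_{\e w}\ge 0$ since both factors are nonnegative.

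The congruence $\mathfrak{J}_\lambda(\mathbf{u})\equiv s_\lambda(\mathbf{u})\bmod\Ifgp$ is the main obstacle, and it is essentially the main theorem of~\cite{FG}, so the real work is to reprove it (cleanly, and for the slightly modified ideal $\Ifgp$) rather than merely cite it. The argument I would give is the standard Bender--Knuth / lattice-path involution argument transplanted into $\U/\Ifgp$. One expands the determinant~\eqref{eq:ncschur-via-e} as an alternating sum over permutations $\pi\in\S_t$ of products $e_{\lambda'_1+\pi(1)-1}(\mathbf{u})\cdots e_{\lambda'_t+\pi(t)-t}(\mathbf{u})$; each such product is itself a positive sum of monomials $\mathbf{u}_{\e w}$ indexed by column-strict fillings of a shifted collection of columns (equivalently, by families of decreasing sequences). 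One then constructs a sign-reversing involution on the non-tableau terms that commutes, modulo the Knuth-type and quadratic relations \eqref{quad uu 2vars again}--\eqref{i knuth2}, with passing to the relevant monomials: the point is that whenever two adjacent columns fail the SSYT inequalities, the offending entries can be swapped between columns, and the corresponding change in the reading word is absorbed by relations \eqref{i knuth1}--\eqref{i knuth2} (and \eqref{quad uu 2vars again} in the degenerate cases). The surviving fixed points are exactly the genuine SSYT of shape $\lambda$, and the reading words one reads off are exactly the reading words appearing in $s_\lambda(\mathbf{u})$. The delicate points are (a) checking that the involution is well-defined and the relations suffice to identify the swapped-word with the original modulo $\Ifgp$ in every case, including when columns have unequal length or when entries repeat, and (b) matching the reading-word convention used in the expansion of the determinant with the reading-word convention in the definition of $s_\lambda(\mathbf{u})$; both are bookkeeping, but the second is where the specific choice of generators \eqref{i knuth1}--\eqref{i knuth2} (as opposed to the symmetric combination \eqref{quad uuu 3vars}) is what makes the superfluousness of \eqref{quad uuuu 3vars} — noted after Definition~\ref{d Ifg'} — actually usable.
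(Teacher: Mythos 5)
Your approach is genuinely different from the paper's. The paper introduces noncommutative column-flagged Schur functions $J_\alpha(\mathbf{n})$ together with an ``augmented'' variant that carries an auxiliary word in a fixed position of the product, and then runs a delicate induction on the number of columns and the flag parameters $n_i$, peeling off one diagonal of the shape at a time via the recursion $e_k(\mathbf{u}_{[m]}) = u_m\, e_{k-1}(\mathbf{u}_{[m-1]}) + e_k(\mathbf{u}_{[m-1]})$ and a local cancellation lemma (Lemma~\ref{l elem sym J0}). You instead propose a global Gessel--Viennot / Bender--Knuth sign-reversing involution on the full determinantal expansion, directly in the style of~\cite{FG}.

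The gap is at the crux, and you flag it without closing it. In~\cite{FG}, realizing the tail-swap of the involution as a chain of noncommutative rewritings uses the far-commutation relations $u_au_c \equiv u_cu_a$ ($c-a\ge 2$), which lie in $\Ifg$ but \emph{not} in $\Ifgp$. Whether the Knuth generators \eqref{i knuth1}--\eqref{i knuth2} together with the quadratic generators \eqref{quad uu 2vars again} alone can absorb such a swap is precisely what this theorem adds beyond \cite[Lemma~3.2]{FG}, and your sketch asserts it (``the corresponding change in the reading word is absorbed by relations \eqref{i knuth1}--\eqref{i knuth2}'') without proof. The paper's flagged/inductive organization is not cosmetic: it is designed so that every rewriting needed is an instance of the single move in Lemma~\ref{l plactic inc word}---a letter $b$ sliding a strictly increasing word past a strictly decreasing word that lies entirely below~$b$---which can be realized using only \eqref{i knuth1} on three distinct letters. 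A global involution's moves do not obviously decompose into that restricted shape, and demonstrating that they do (or finding an alternative decomposition) is not bookkeeping; it is the theorem. A secondary issue: you write $s_\lambda(\mathbf{u})=\sum_T\mathbf{u}_{\reading(T)}$ using the row reading word, whereas the formula established here and in~\cite{FG} uses the column reading word $\creading(T)$ (or equivalently, modulo $\Ifgp$, the diagonal reading word $\diagread(T)$). Row and column reading words are Knuth-equivalent, hence agree modulo the plactic ideal $\Iplac$, but their agreement modulo the smaller $\Ifgp$ would require a separate argument.
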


Theorem~\ref{th:FG'-positivity} generalizes \cite[Lemma~3.2]{FG}, which used a
larger ideal~$\Ifg$ \linebreak[3]
obtained by adding ``Knuth elements''
involving two non-adjacent indices,
cf.~\eqref{eq:Ifg}.

Theorem~\ref{th:FG'-positivity} is proved in
Section~\ref{sec:proof-of-FG'-positivity}.
The $\ZZ$-monomial positive formula that we give for
$\mathfrak{J}_\lambda(\mathbf{u})$ (modulo~$\Ifgp$)
leads to a manifestly positive combinatorial formula for
the Schur expansions of the symmetric functions
$F_\gamma(\mathbf{x})$ labeled by $\gamma\in  \U^*_{\ge 0}\cap \Ifgpperp$,
extending the ``generalized Littlewood-Richardson rule''
of~\cite[Theorem~1.2]{FG} for $\gamma\in  \U^*_{\ge 0}\cap
\Ifgperp$.
As shown in~\cite{FG}, the latter subclass of $F_\gamma$'s
includes Stanley symmetric functions and, more generally,
homogeneous components of
stable Grothendieck polynomials.

\begin{definition}
\label{def:I_k}
For $k$ a positive integer, let $\Iaba{k}$ denote
the ideal in~$\U$ generated~by 
\begin{alignat}{3}
&u_a^2 &&\text{for all $a$,} \label{eq:aa} \\
&u_a u_b u_a &&\text{for all $a$ and $b$,} \label{eq:aba} \\
&u_bu_au_c - u_bu_cu_a \qquad &&\text{for $c-a > k$ and $a<b<c$,} \label{e Iassaf1} \\
&u_au_cu_b - u_cu_au_b \qquad &&\text{for $c-a > k$ and $a<b<c$,} \label{e Iassaf2}\\
&u_bu_au_c - u_au_cu_b \qquad &&\text{for $c-a \leq k$ and $a<b<c$,} \label{e Iassaf3}\\
&u_bu_cu_a - u_cu_au_b \qquad &&\text{for $c-a \leq k$ and $a<b<c$.} \label{e Iassaf4}
\end{alignat}
\end{definition}

It is straightforward to verify that
$\Iaba{k}\supset\Icomm$.

In Section~\ref{s LLT}, by recasting the work of Assaf~\cite{SamiOct13, SamiForum}
through the perspective outlined in this paper,
we explain that the class of symmetric
functions~$F_\gamma$ labeled by nonnegative vectors $\gamma\in \Iabaperp{k}$
includes the LLT polynomials of Lascoux, Leclerc, and
Thibon~\cite{LLT}; more precisely, it includes the coefficients of
powers of $q$ in LLT polynomials indexed by an arbitrary $k$-tuple of skew shapes.
Furthermore this class of symmetric functions~$F_\gamma$ includes the
(coefficients of  $q^it^j$ in) transformed
Macdonald polynomials (cf.~\cite{GarsiaHaiman}, \cite[Definition~3.5.2]{H3});
this is because Haglund, Haiman, and Loehr~\cite{HHL} showed that
any transformed Macdonald polynomial is a nonnegative integer
combination of LLT polynomials.
(Recall that the map $\gamma\mapsto F_\gamma$ is linear.)

The following statement is inspired by (and is implicit in) the work of
Assaf.

\begin{conjecture}
\label{conj:monomial-positivity-modIk}
For any integer partition~$\lambda$ and any positive integer~$k$,
the noncommutative Schur function
$\mathfrak{J}_\lambda(\mathbf{u})$ is
\hbox{$\ZZ$-monomial} positive modulo~$\Iaba{k}$.
Consequently, the symmetric functions $F_\gamma(\mathbf{x})$
labeled by $\gamma\in  \U^*_{\ge 0}\cap \Iaba{k}$ are Schur positive.
\end{conjecture}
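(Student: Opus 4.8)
The plan is to produce an explicit $\ZZ$-monomial positive expansion $\mathfrak{J}_\lambda(\mathbf{u})\equiv\sum_{\e{w}}a_{\lambda,\e{w}}\,\mathbf{u}_{\e{w}}\pmod{\Iaba{k}}$ with all $a_{\lambda,\e{w}}\in\ZZ_{\ge0}$. By Proposition~\ref{Icomm-Schur} together with the proof of the implication~(ii)$\Rightarrow$(i) in Theorem~\ref{t intro positivity}, any such formula simultaneously establishes Schur positivity of every $F_\gamma$ with $\gamma\in\U^*_{\ge0}\cap\Iabaperp{k}$ and yields a manifestly positive combinatorial rule for the corresponding Schur expansions. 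Since $\Iaba{k}$ contains the generators \eqref{eq:aa} and \eqref{eq:aba}, any monomial $\mathbf{u}_{\e{w}}$ whose word $\e{w}$ contains two equal adjacent letters or a factor $aba$ vanishes modulo $\Iaba{k}$, so one should look for a formula supported on ``standard-type'' words; guided by Assaf's work, a natural guess is that $a_{\lambda,\e{w}}$ counts standard Young tableaux of shape $\lambda$ whose standardized reading words are $\mathcal{D}_k$-equivalent (i.e.\ related by the moves \eqref{e Iassaf1}--\eqref{e Iassaf4}) to $\e{w}$.

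To build such a formula I would imitate the proofs of Theorem~\ref{th:FG'-positivity} and of the $k=3$ case in~\cite{BLamLLT}: work in $\U/\Iaba{k}$ and straighten. The homomorphism $\psi\colon\Lambda(\mathbf{y})\to\U/\Icomm$ of Proposition~\ref{Icomm-Schur} carries $s_\lambda(\mathbf{y})$ to $\mathfrak{J}_\lambda(\mathbf{u})$, so $\mathfrak{J}_\lambda(\mathbf{u})$ inherits, modulo $\Icomm$ and hence modulo $\Iaba{k}$, all classical Schur-function identities, in particular the Pieri rule $\mathfrak{J}_\lambda(\mathbf{u})\,e_1(\mathbf{u})\equiv\sum_{\mu=\lambda+\square}\mathfrak{J}_\mu(\mathbf{u})$. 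Inducting on $|\lambda|$, fix a removable corner, write $\mathfrak{J}_\lambda\equiv\mathfrak{J}_{\lambda^-}\,e_1(\mathbf{u})-\sum_{\mu\ne\lambda}\mathfrak{J}_\mu$, append each letter of the alphabet to each representative word occurring in the (inductively known) monomial formula for $\mathfrak{J}_{\lambda^-}$, and run an ``$\Iaba{k}$-insertion'' procedure -- a replacement for Schensted/RSK insertion adapted to the relations \eqref{e Iassaf1}--\eqref{e Iassaf4} -- that rewrites each word, using only moves in $\Iaba{k}$, into a distinguished surviving monomial indexed by a standard tableau. This reduces Conjecture~\ref{conj:monomial-positivity-modIk} to: (a) well-definedness of the insertion modulo $\Iaba{k}$, i.e.\ independence of the order in which straightening moves are applied; and (b) the claim that the outputs falling in shapes $\mu\ne\lambda$ cancel the correction sum $\sum_{\mu\ne\lambda}\mathfrak{J}_\mu$ term by term.

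I expect (a)--(b), in this form, to be the main obstacle. The ``close'' relations \eqref{e Iassaf3}--\eqref{e Iassaf4} are not of Knuth type, the quotient $\U/\Iaba{k}$ is not a plactic-style algebra, and it has no known confluent rewriting system or RSK-type normal form; worse, Corollary~\ref{c no unique smallest} shows that the set of ideals modulo which $\mathfrak{J}_\lambda(\mathbf{u})$ is monomial positive has no minimum, so there is no canonical normal form to target and the insertion must involve essentially arbitrary choices. Designing it so that both (a) and (b) hold is precisely the content of the conjecture, and it appears open already for $k\ge4$: although LLT polynomials are known to be Schur positive by Kazhdan--Lusztig theory \cite{SamiOct13,SamiForum}, that provides no combinatorial control and does not by itself give even $\QQ$-monomial positivity of $\mathfrak{J}_\lambda(\mathbf{u})$ modulo $\Iaba{k}$, since $\Iabaperp{k}$ contains nonnegative vectors $\gamma$ that are not of LLT type. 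A reasonable first target would be $k=4$, pushing the techniques of~\cite{BLamLLT} one step further; a weaker fallback would be to settle for $\QQ$-monomial positivity via Theorem~\ref{t intro positivity}, but even that requires Schur positivity of every $F_\gamma$ with $\gamma\in\U^*_{\ge0}\cap\Iabaperp{k}$, which is not known beyond the LLT subclass.
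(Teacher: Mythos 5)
You are being asked to ``prove'' something that the paper itself explicitly labels as a \emph{conjecture} (Conjecture~\ref{conj:monomial-positivity-modIk}); the paper offers no proof, and the text following the conjecture states only that it is ``implicit in the work of Assaf'' and is a slightly stronger variant of Conjecture~\ref{cj sami}. There is therefore no proof in the paper to compare against, and your response---treating the statement as open, sketching what a proof might look like, and explaining where it breaks down---is the appropriate one. Your reduction of the ``Consequently'' clause to the implication (ii)$\Rightarrow$(i) of Theorem~\ref{t intro positivity} matches the paper's own logic, and your observation that Kazhdan--Lusztig positivity of LLT polynomials does \emph{not} yield monomial positivity modulo~$\Iaba{k}$ (because $\U^*_{\ge0}\cap\Iabaperp{k}$ is strictly larger than the set of LLT-type vectors) is exactly the point the paper is making by stating the conjecture at all.

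Two small inaccuracies are worth flagging. First, your claim that the conjecture ``appears open already for $k\ge 4$'' understates the situation: the partial results cited in the paper are modulo the \emph{larger} ideals $\Iassaf{k}$ (Assaf, $k\le 2$) and $\Jlam{3}$, $\Ilam{\le 3}$ (\cite{BLamLLT}, $k=3$), and since $\Iaba{k}\subset\Iassaf{k}\subset\Ilam{k}$, monomial positivity modulo the larger ideal does not imply it modulo $\Iaba{k}$. Only $k=1$ is genuinely settled, via $\Iaba{1}\supset\Ifgp$ and Theorem~\ref{th:FG'-positivity}. Second, your proposed combinatorial guess---that $a_{\lambda,\e{w}}$ counts SYT whose reading words are $\mathcal{D}_k$-equivalent to $\e{w}$---is not well-posed in the intended form: since any two $\mathcal{D}_k$-equivalent words give the same monomial in $\U/\Iaba{k}$, such a rule would force $a_{\lambda,\e{w}}$ to be constant on equivalence classes, which is not the shape of the known formulas (compare Theorem~\ref{t sqread}, where a single distinguished representative $\sqread(T)$ is chosen from each class). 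The right target is an explicit \emph{set} of representative words $S_\lambda$ with $\mathfrak{J}_\lambda(\mathbf{u})\equiv\sum_{\e{w}\in S_\lambda}\mathbf{u}_{\e{w}}$, which is exactly what your ``$\Iaba{k}$-insertion'' is meant to produce; the obstacles (a), (b) you identify---confluence and term-by-term cancellation of the Pieri correction---are indeed where such attempts stall, and Corollary~\ref{c no unique smallest} is a good heuristic explanation of why no canonical normal form has emerged.
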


As explained above, the statement in Conjecture~\ref{conj:monomial-positivity-modIk}
is stronger than Schur positivity of LLT and Macdonald polynomials.
More importantly (cf.\ Remark~\ref{rem:manifestly-positive-formulas}),
an explicit formula expressing $\mathfrak{J}_\lambda(\mathbf{u})$ as a
sum of noncommutative monomials modulo~$\Iaba{k}$
would immediately yield a Littlewood-Richardson-type rule for LLT
and Macdonald polynomials.

Thus far, the main new result arising from the noncommutative Schur
function approach to LLT positivity
is an explicit $\ZZ$-monomial positive expression for
$\mathfrak{J}_\lambda(\mathbf{u})$ modulo $\Ilam{3}$
which was obtained in~\cite{BLamLLT}.
(Here $\Ilam{k}\supset \Iaba{k}$ is the ideal associated with
Lam's algebra of ribbon Schur operators, cf.\ \cite{LamRibbon} and Definition~\ref{d Lam algebra}.)
This result yields a positive combinatorial formula for the Schur
expansion of LLT polynomials indexed by 3-tuples of skew shapes and
for the Schur expansion of
transformed Macdonald polynomials indexed by shapes with 3 columns;
see Theorem \ref{t sqread} and \eqref{e qLRcoefs}.

As the above discussions indicate,
our efforts to create a general framework for the study
of Schur positivity of symmetric functions~$F_\gamma$
have been driven by potential applications.
Since these symmetric functions are labeled by
nonnegative vectors~$\gamma$
chosen from the orthogonal complement~$I^\perp$
of some ideal~$I\supset\Icomm$,
our goal is to establish Schur positivity for $I^\perp$ as large as possible,
or equivalently for $I$~as small as possible.

It turns out that all ideals~$I\supset\Icomm$ arising in important applications known
to us contain the following ideal.
(The choice of its name will become clear soon.)


\begin{definition}
\label{d Irk}
The \emph{switchboard ideal} $\Irk$ is the ideal in~$\U$ generated by the elements
\begin{alignat}{2}
&u_b^2 u_a + u_a u_b u_a - u_b u_a u_b - u_b u_a^2  && (b-a=1), \label{i fg close}\\
&u_b^2 u_a - u_b u_a u_b && (b - a \ge 2), \label{i fg far1}\\
&u_b u_a^2 - u_a u_b u_a  && (b - a \ge 2), \label{i fg far2}\\
&u_b u_c u_a + u_a u_c u_b - u_b u_a u_c - u_c u_a u_b \qquad &&  (a < b
  < c). \label{quad uuu 3vars again}
\end{alignat}
\end{definition}

\begin{lemma}
$\Irk\supset\Icomm$.
\end{lemma}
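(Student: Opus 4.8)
The plan is to show directly that each of the generators \eqref{quad uu 2vars}--\eqref{quad uuuu 3vars} of $\Icomm$ lies in $\Irk$, by exhibiting it as an explicit $\U$-linear (in fact, $\ZZ$-linear, two-sided) combination of the generators \eqref{i fg close}--\eqref{quad uuu 3vars again} of $\Irk$. Since $\Irk$ is a two-sided ideal, it suffices to produce, for each generator $g$ of $\Icomm$, a finite expression $g = \sum_i p_i\, r_i\, q_i$ with $r_i$ a generator of $\Irk$ and $p_i, q_i$ monomials (or $1$). There are exactly three families to handle.

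\textbf{Step 1: the degree-3 two-variable relation \eqref{quad uu 2vars}.} For $a<b$ split into the case $b-a=1$ and the case $b-a\ge 2$. When $b-a=1$, the element \eqref{quad uu 2vars} is literally the generator \eqref{i fg close}, so there is nothing to prove. When $b-a\ge 2$, write
\[
u_b^2 u_a + u_a u_b u_a - u_b u_a u_b - u_b u_a^2
= \big(u_b^2 u_a - u_b u_a u_b\big) - \big(u_b u_a^2 - u_a u_b u_a\big),
\]
which exhibits \eqref{quad uu 2vars} as the difference of \eqref{i fg far1} and \eqref{i fg far2}, both generators of $\Irk$.

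\textbf{Step 2: the degree-3 three-variable relation \eqref{quad uuu 3vars}.} This is immediate: for $a<b<c$, the element
$u_b u_c u_a + u_a u_c u_b - u_b u_a u_c - u_c u_a u_b$
of \eqref{quad uuu 3vars} is exactly the generator \eqref{quad uuu 3vars again} of $\Irk$.

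\textbf{Step 3: the degree-4 three-variable relation \eqref{quad uuuu 3vars}.} This is the only place where a genuine (though still short) computation is needed, and it is the main obstacle — one must absorb a degree-$4$ element using degree-$3$ relations, so the left and right multipliers must be chosen carefully, and the argument must work uniformly for all $a<b<c$ including the subcases $c-a=1$, $c-a=2$, and $c-a\ge 3$. The idea is to peel off a $u_c$ on the left and a $u_c$ on the right using the three-variable relation \eqref{quad uuu 3vars again} (equivalently \eqref{quad uuu 3vars}, already known to be in $\Irk$), reducing to the two-variable relation from Step 1. Concretely, I expect an identity of the shape
\[
u_c u_b u_c u_a + u_b u_c u_a u_c - u_c u_b u_a u_c - u_b u_c^2 u_a
= u_c\big(u_b u_c u_a + u_a u_c u_b - u_b u_a u_c - u_c u_a u_b\big) - u_b\,\mathcal{E}\,u_a + (\cdots)u_c,
\]
where $\mathcal{E}$ is (a scalar multiple of) the two-variable element $u_c^2 u_a + u_a u_c u_a - u_c u_a u_c - u_c u_a^2$ of type \eqref{quad uu 2vars} (in the variables $a<c$, handled by Step 1), the leading term $u_c(\cdots)$ uses the already-established \eqref{quad uuu 3vars}, and the trailing $(\cdots)u_c$ collects correction monomials that themselves form a type-\eqref{quad uuu 3vars} element right-multiplied by $u_c$. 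After matching coefficients of the nine possible length-$4$ monomials on both sides, the stray terms cancel and one is left with the desired membership. A clean way to organize this — and the one I would actually write up — is to reuse the displayed three-line computation that already appears in the excerpt's proof of $\Ifgp\supset\Icomm$, since the relations \eqref{i knuth1}, \eqref{i knuth2} used there sum to \eqref{quad uuu 3vars again}, which is available in $\Irk$; thus the identity
\[
u_c u_b u_c u_a + u_b u_c u_a u_c - u_c u_b u_a u_c - u_b u_c^2 u_a
= u_c(u_b u_c u_a - u_b u_a u_c)
- u_b\big(u_c^2 u_a + u_a u_c u_a - u_c u_a u_c - u_c u_a^2\big) + (u_b u_a u_c - u_b u_c u_a)u_a
\]
places the left-hand side in $\Irk$: the first and third terms are multiples of the single three-variable Knuth-type element $u_b u_c u_a - u_b u_a u_c$, which together with $u_a u_c u_b - u_c u_a u_b$ spans the same space as \eqref{quad uuu 3vars again} only up to the second Knuth piece — so in $\Irk$ one must instead group $u_c(u_b u_c u_a - u_b u_a u_c) + (u_b u_a u_c - u_b u_c u_a)u_a$ and add and subtract the complementary Knuth combinations to realize everything through \eqref{quad uuu 3vars again} alone. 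Carrying out this bookkeeping is routine; the payoff is that all of \eqref{quad uu 2vars}--\eqref{quad uuuu 3vars} lie in $\Irk$, hence $\Icomm\subset\Irk$.

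\textbf{Conclusion.} Combining Steps 1--3, every generator of $\Icomm$ belongs to $\Irk$, and since $\Irk$ is a two-sided ideal we conclude $\Irk\supset\Icomm$. $\qed$
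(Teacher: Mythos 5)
Your Steps 1 and 2 are correct and coincide with the paper's argument. The issue is Step 3, which you acknowledge is the real content but then leave unfinished: you import the identity used in the paper's proof of $\Ifgp\supset\Icomm$, correctly observe that its first and third terms involve the single Knuth element $u_bu_cu_a-u_bu_au_c$ (i.e.~\eqref{i knuth1}), which is \emph{not} by itself a generator of $\Irk$, and then assert that the remaining "bookkeeping" is routine and can be pushed "through \eqref{quad uuu 3vars again} alone." Both halves of that assertion need repair. The bookkeeping is not a formality --- one must check that the residual pieces actually land in $\Irk$ --- and it is \emph{not} achievable via \eqref{quad uuu 3vars again} alone: the degree-3 two-variable generators \eqref{i fg far1} and \eqref{i fg far2} (in the pair $a<c$, legitimate since $a<b<c$ forces $c-a\ge 2$) are essential. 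Concretely, writing $K_1 = u_bu_cu_a-u_bu_au_c$, $K_2 = u_au_cu_b-u_cu_au_b$, and $R = K_1+K_2$ for the generator \eqref{quad uuu 3vars again}, the terms $u_cK_1$ and $K_1u_a$ in your identity need the factorizations $K_1u_a = u_b(u_cu_a^2-u_au_cu_a)$ (an instance of \eqref{i fg far2}) and $u_cK_1 = u_cR - u_cK_2$ with $u_cK_2 = (u_cu_au_c-u_c^2u_a)u_b$ (an instance of \eqref{i fg far1}). Without these observations the membership in $\Irk$ is not established, so as written the proof has a genuine gap.

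For comparison, the paper avoids this detour entirely by producing a decomposition of \eqref{quad uuuu 3vars} tailored to $\Irk$'s generators:
\[
u_cu_bu_cu_a + u_bu_cu_au_c - u_cu_bu_au_c - u_bu_c^2u_a
= u_b(u_cu_au_c - u_c^2u_a) + u_c\,R + (u_c^2u_a - u_cu_au_c)u_b,
\]
where each of the three summands is visibly a left- or right-multiple of \eqref{i fg far1} or of \eqref{quad uuu 3vars again}. (If you carry your patch-up through to the end, expanding $E$ as the difference $(u_c^2u_a-u_cu_au_c)-(u_cu_a^2-u_au_cu_a)$, the intermediate terms cancel and you recover exactly this decomposition; but that cancellation is precisely the unproved bookkeeping.) The moral is that the $\Ifgp$-identity is the wrong starting point here because it is built around the individual Knuth pieces $K_1,K_2$, which $\Irk$ does not contain separately; it is cleaner to assemble the degree-4 element directly from $R$ together with the two-variable relations \eqref{i fg far1}, \eqref{i fg far2}.
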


\begin{proof}
The span of \eqref{i fg close}--\eqref{i fg far2}
contains \eqref{quad uu 2vars},
while \eqref{quad uuu 3vars again} is the same as~\eqref{quad uuu 3vars}.
Finally, the elements~\eqref{quad uuuu 3vars} are obtained as follows:
$u_c u_b u_c u_a + u_b u_c u_a u_c - u_c u_b u_a u_c - u_b u_c^2 u_a
=u_b(u_c u_a u_c - u_c^2 u_a)
+u_c (u_b u_c u_a + u_a u_c u_b - u_b u_a u_c - u_c u_a u_b)
+(u_c^2 u_a - u_c u_a u_c) u_b
\in \Irk$.
\end{proof}

\begin{remark}
We believe that the ideals $\Ifgp$ and~$\Irk$ are the smallest ``natural''
ideals containing~$\Icomm$ which are generated in degrees~$\le 3$.
(There is a precise statement along these lines, which we omit.)
Note that among the generators of~$\Icomm$,
the only ones of degree~$>3$ are those listed in~\eqref{quad uuuu 3vars}.
\end{remark}

The inclusions between the main ideals studied in this paper are
summarized in Figure~\ref{fig:some-ideals}
(several of these ideals are yet to be defined).
Except for $\Ifgp$ and  $\Icomm$, all these ideals contain the switchboard ideal~$\Irk$.
Our main focus henceforth will be on the study of the latter.

\begin{figure}[ht]
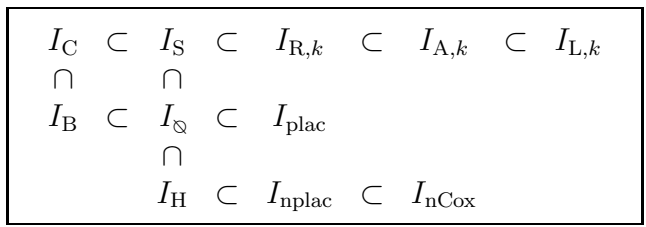

\begin{equation*}
\begin{array}{|ccccccccccc|}
\hline
\ &&&&&&&&&& \\[-.12in]
&\Icomm &\subset &\Irk & \subset & \Iaba{k} & \subset & \Iassaf{k} & \subset & \Ilam{k} &\\
&\cap      &             & \cap &&&&&&&\\
&\Ifgp      &\subset & \Ifg &\subset& \Iplac  &&&&& \\
&              &            & \cap &&&&&&&\\
&             &             & \Iweakh&\subset&\Inplac &\subset&\Incox &&&
\\[.05in]
\hline
\end{array}
\end{equation*}
\caption{\label{fig:some-ideals}The main
ideals studied in this
paper. All of them contain~$\Icomm$\,.
Schur positivity of the symmetric functions~$F_\gamma$, for $\gamma\in\U^*_{\ge 0}\cap I^\perp$,
is known to hold for~$I\supset\Ifgp$ and $I=\Ilam{k}$; is known to fail for $I\subset\Irk$;
and is conjectured for $\Iaba{k}$ and~$\Iassaf{k}$.
}
\end{figure}

\section{Switchboards}
\label{sec:switchboards}

In this section, we introduce a class of labeled graphs called
switchboards.
They serve as a combinatorial tool for studying
the symmetric functions~$F_\gamma$
associated to the switchboard ideal~$\Irk\,$, that is, those
labeled by vectors $\gamma\in\U^*_{\ge 0} \cap \Irkperp$.
This notion is crucial for unifying the perspectives of~\cite{SamiOct13}, \cite{FG},
and~\cite{LamRibbon}.

\begin{definition}[\emph{Switchboards}]
\label{def-switchboard}
Let $\e{w}=\e{w}_1\cdots \e{w}_n$ and $\e{w'}=\e{w}_1'\cdots
  \e{w}_n'$
be two words of the same length~$n$ in the
alphabet $\{1,\dots,N\}$.
We say that $\e{w}$ and $\e{w'}$  are related by a \emph{switch} in
position~$i$ if
$\e{w}_j=\e{w}_j'$ for $j\notin\{i-1,i,i+1\}$,
while the unordered pair 
$\{\e{w_{i-1}w_iw_{i+1}},
\e{w}'_{i-1}\e{w}'_i\e{w}'_{i+1}\}$
fits one of the following patterns (cf.\ Figure~\ref{f switches}):
\begin{itemize}
\item
$\{\e{bac}, \e{bca}\}$ or  $\{\e{acb}, \e{cab}\}$, with $a < b < c$
  (a \emph{Knuth switch});
\item
$\{\e{bac}, \e{acb}\}$ or  $\{\e{bca}, \e{cab}\}$, with $a < b < c$
  (a \emph{rotation switch});
\item
$\{\e{bab}, \e{bba}\}$ or $\{\e{aba}, \e{baa}\}$, with $a < b$
  (a \emph{Knuth switch});
\item
$\{\e{bab}, \e{aba}\}$ or $\{\e{bba}, \e{baa}\}$, with $b = a+1$
  (a \emph{braid/idempotent switch}).
\end{itemize}
A \emph{switchboard}
is an edge-labeled graph $\Gamma$ on a vertex set of words of fixed length~$n$
in the alphabet $\{1,\dots,N\}$
with edge labels from the set  $\{2,3,\dots,n-1\}$ such that
each $i$-edge (i.e., an edge labeled~$i$) corresponds to a switch in position~$i$,
and each vertex in~$\Gamma$ 
which has exactly one descent in positions $i-1$ and~$i$
belongs to exactly one $i$-edge.
\end{definition}

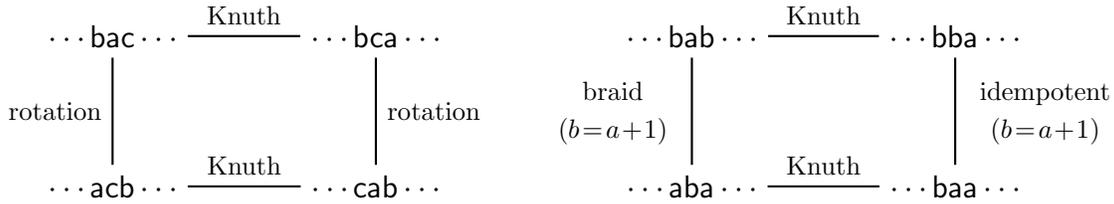
\begin{figure}[ht]
        \centerfloat
\ \\[-.1in]
\begin{tikzpicture}[xscale = 3.5,yscale = 2]
\tikzstyle{vertex}=[inner sep=0pt, outer sep=4pt]
\tikzstyle{framedvertex}=[inner sep=3pt, outer sep=4pt, draw=gray]
\tikzstyle{aedge} = [draw, thin, ->,black]
\tikzstyle{edge} = [draw, thick, -,black]
\tikzstyle{doubleedge} = [draw, thick, double distance=1pt, -,black]
\tikzstyle{hiddenedge} = [draw=none, thick, double distance=1pt, -,black]
\tikzstyle{dashededge} = [draw, very thick, dashed, black]
\tikzstyle{LabelStyleH} = [text=black, anchor=south]
\tikzstyle{LabelStyleHn} = [text=black, anchor=north]
\tikzstyle{LabelStyleV} = [text=black, anchor=east]
\tikzstyle{LabelStyleVw} = [text=black, anchor=west]

\node[vertex] (v1) at (0,1){$\e{\cdots bac\cdots} $};
\node[vertex] (v2) at (1,1){$\e{\cdots bca\cdots} $};
\node[vertex] (v3) at (0,0){$\e{\cdots acb\cdots} $};
\node[vertex] (v4) at (1,0){$\e{\cdots cab\cdots} $};
\draw[edge] (v1) to node[LabelStyleH]{\Small Knuth} (v2);
\draw[edge] (v3) to node[LabelStyleH]{\Small Knuth} (v4);
\draw[edge] (v1) to node[LabelStyleV]{\Small rotation} (v3);
\draw[edge] (v2) to node[LabelStyleVw]{\Small rotation} (v4);

\node[vertex] (v5) at (2.2,1){$\e{\cdots bab \cdots} $};
\node[vertex] (v6) at (3.2,1){$\e{\cdots bba \cdots} $};
\node[vertex] (v7) at (2.2,0){$\e{\cdots aba \cdots} $};
\node[vertex] (v8) at (3.2,0){$\e{\cdots baa \cdots} $};
\draw[edge] (v5) to node[LabelStyleH]{\Small Knuth} (v6);
\draw[edge] (v7) to node[LabelStyleH]{\Small Knuth} (v8);
\draw[edge] (v5) to node[LabelStyleV]{\begin{tabular}{cc}\Small
    braid\\ \Small ($b\!=\!a\!+\!1$)\end{tabular}} (v7);
\draw[edge] (v6) to node[LabelStyleVw]{\begin{tabular}{cc}\Small
    idempotent\\ \Small ($b\!=\!a\!+\!1$)\end{tabular}} (v8);
\end{tikzpicture}
\\[.1in]
\caption{\label{f switches}
Switches of different types.
}
\end{figure}

Examples of switchboards can be found in Figures \ref{f CKR graph}--\ref{f triples}.
In all these figures, we label the Knuth $i$-edges by~$i$,
and the non-Knuth $i$-edges by~$\tilde i$.

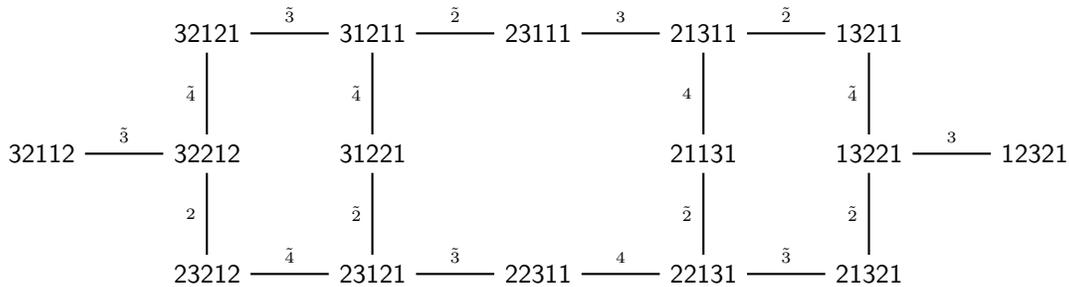
\begin{figure}[ht]
        \centerfloat
\begin{tikzpicture}[xscale = 2.2,yscale = 1.6]
\tikzstyle{vertex}=[inner sep=0pt, outer sep=4pt]
\tikzstyle{framedvertex}=[inner sep=3pt, outer sep=4pt, draw=gray]
\tikzstyle{aedge} = [draw, thin, ->,black]
\tikzstyle{edge} = [draw, thick, -,black]
\tikzstyle{doubleedge} = [draw, thick, double distance=1pt, -,black]
\tikzstyle{hiddenedge} = [draw=none, thick, double distance=1pt, -,black]
\tikzstyle{dashededge} = [draw, very thick, dashed, black]
\tikzstyle{LabelStyleH} = [text=black, anchor=south]
\tikzstyle{LabelStyleHn} = [text=black, anchor=north]
\tikzstyle{LabelStyleV} = [text=black, anchor=east]

\node[vertex] (v1) at (-2,-1){\footnotesize$\e{23212} $};
\node[vertex] (v2) at (-2,0){\footnotesize$\e{32212} $};
\node[vertex] (v3) at (-1,0){\footnotesize$\e{31221} $};
\node[vertex] (v4) at (1,-1){\footnotesize$\e{22131} $};
\node[vertex] (v5) at (0,-1){\footnotesize$\e{22311} $};
\node[vertex] (v6) at (2,0){\footnotesize$\e{13221} $};
\node[vertex] (v7) at (2,-1){\footnotesize$\e{21321} $};
\node[vertex] (v8) at (3,0){\footnotesize$\e{12321} $};
\node[vertex] (v9) at (-3,0){\footnotesize$\e{32112} $};
\node[vertex] (v10) at (-1,-1){\footnotesize$\e{23121} $};
\node[vertex] (v11) at (-2,1){\footnotesize$\e{32121} $};
\node[vertex] (v12) at (0,1){\footnotesize$\e{23111} $};
\node[vertex] (v13) at (1,0){\footnotesize$\e{21131} $};
\node[vertex] (v14) at (2,1){\footnotesize$\e{13211} $};
\node[vertex] (v15) at (-1,1){\footnotesize$\e{31211} $};
\node[vertex] (v16) at (1,1){\footnotesize$\e{21311} $};
\draw[edge] (v1) to node[LabelStyleV]{\Tiny$2 $} (v2);
\draw[edge] (v1) to node[LabelStyleH]{\Tiny$\tilde{4} $} (v10);
\draw[edge] (v2) to node[LabelStyleH]{\Tiny$\tilde{3} $} (v9);
\draw[edge] (v2) to node[LabelStyleV]{\Tiny$\tilde{4} $} (v11);
\draw[edge] (v3) to node[LabelStyleV]{\Tiny$\tilde{4} $} (v15);
\draw[edge] (v3) to node[LabelStyleV]{\Tiny$\tilde{2} $} (v10);
\draw[edge] (v4) to node[LabelStyleH]{\Tiny$\tilde{3} $} (v7);
\draw[edge] (v4) to node[LabelStyleH]{\Tiny$4 $} (v5);
\draw[edge] (v4) to node[LabelStyleV]{\Tiny$\tilde{2} $} (v13);
\draw[edge] (v5) to node[LabelStyleH]{\Tiny$\tilde{3} $} (v10);
\draw[edge] (v6) to node[LabelStyleV]{\Tiny$\tilde{4} $} (v14);
\draw[edge] (v6) to node[LabelStyleH]{\Tiny$3 $} (v8);
\draw[edge] (v6) to node[LabelStyleV]{\Tiny$\tilde{2} $} (v7);
\draw[edge] (v11) to node[LabelStyleH]{\Tiny$\tilde{3} $} (v15);
\draw[edge] (v12) to node[LabelStyleH]{\Tiny$3 $} (v16);
\draw[edge] (v12) to node[LabelStyleH]{\Tiny$\tilde{2} $} (v15);
\draw[edge] (v13) to node[LabelStyleV]{\Tiny$4 $} (v16);
\draw[edge] (v14) to node[LabelStyleH]{\Tiny$\tilde{2} $} (v16);
\end{tikzpicture}
\caption{\label{f CKR graph}
A switchboard with $N=3$ and $n=5$. 
}
\end{figure}

\begin{remark}
The notion of a switchboard was inspired by the D~graphs of
Assaf~\cite{SamiOct13, Sami2, SamiForum}.
More specifically, switchboards are a generalization of the D~graphs $\G^{(k)}_{c,D}$
of~\cite[\textsection4.2]{SamiOct13},
which Assaf introduced to study LLT polynomials; cf.\ Definition~\ref{def-assaf-llt-k}.
Switchboards also generalize the $\operatorname{D}_0$~graphs
of~\cite{BD0graph},
which were in turn motivated by~\cite{SamiOct13,Sami2, SamiForum}.
\end{remark}

The definition of a switchboard is justified by the following
statement, whose verification is straightforward
(cf.\ the proof of \cite[Proposition-Definition 3.2]{BD0graph}). 

\begin{proposition}
\label{pr:01Irk=switchboard}
Let $W$ be a set of words in~\,$\U^*$ of the same length.
Then $\sum_{\e{w} \in W} \e{w} \in \Irkperp$ if and only if $W$ is the vertex
set of a switchboard.
\end{proposition}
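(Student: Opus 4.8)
The plan is to turn the condition $\sum_{\e{w}\in W}\e{w}\in\Irkperp$ into a system of elementary linear equations, one for each generator of $\Irk$ together with each ``context,'' and then to match each of these equations with the local perfect-matching requirement built into the definition of a switchboard.

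First I would note that, since $\Irk$ is the two-sided ideal generated by the elements \eqref{i fg close}--\eqref{quad uuu 3vars again}, it is spanned as a $\ZZ$-module by the products $\mathbf{u}_{\e{p}}\,g\,\mathbf{u}_{\e{s}}$ with $g$ one of these generators and $\e{p},\e{s}$ arbitrary words. Because noncommutative monomials are dual to words under $\langle\cdot,\cdot\rangle$ and the generators are homogeneous of degree $3$, for $\gamma=\sum_{\e{w}\in W}\e{w}$ (supported on words of length~$n$) the membership $\gamma\in\Irkperp$ is equivalent to the following: for every generator $g=\sum_\ell\epsilon_\ell\,\e{m}_\ell$ (the $\e{m}_\ell$ being its three-letter monomials, $\epsilon_\ell=\pm1$) and every pair $(\e{p},\e{s})$ with $|\e{p}|+|\e{s}|=n-3$, one has $\sum_\ell\epsilon_\ell\,\gamma_{\e{p}\,\e{m}_\ell\,\e{s}}=0$, where $\gamma_{\e{w}}\in\{0,1\}$ records whether $\e{w}\in W$. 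Writing $i=|\e{p}|+2$, this equation only involves the words with prefix~$\e{p}$, suffix~$\e{s}$, and middle triple $\e{w}_{i-1}\e{w}_i\e{w}_{i+1}$ among finitely many options. I will call the \emph{candidates} for a context $(\e{p},\e{s})$ and a \emph{letter configuration} (a set of two or three letters) the length-$n$ words with prefix~$\e{p}$, suffix~$\e{s}$ whose middle triple uses exactly those letters and has exactly one descent among positions $i-1,i$ (i.e.\ exactly one of $\e{w}_{i-1}>\e{w}_i$, $\e{w}_i>\e{w}_{i+1}$).

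Next comes the bookkeeping that I expect to be the delicate part: lining up the generators, the switch patterns, and the candidates. Enumerating three-letter words $xyz$ with exactly one of $x>y$, $y>z$ produces precisely $\{\e{bac},\e{bca},\e{acb},\e{cab}\}$ for each triple $a<b<c$ and $\{\e{bab},\e{bba},\e{aba},\e{baa}\}$ for each pair $a<b$ --- exactly the triples occurring in the switch patterns of Definition~\ref{def-switchboard}. A position-$i$ switch changes neither the prefix nor the suffix nor the set of letters in the middle triple, so it acts within the candidates for a fixed context and configuration, and under the switch relation these four candidates form: the $4$-cycle on $\e{bac},\e{bca},\e{cab},\e{acb}$ (its four edges being the two Knuth and two rotation switches) for three distinct letters; the $4$-cycle on $\e{bab},\e{bba},\e{baa},\e{aba}$ (two Knuth, one braid, one idempotent switch) for two letters with $b=a+1$; and the two disjoint edges $\{\e{bba},\e{bab}\}$, $\{\e{aba},\e{baa}\}$ (Knuth switches only) when $b-a\ge2$. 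One then checks directly that, in these coordinates, the generator \eqref{quad uuu 3vars again} (resp.\ \eqref{i fg close}) expresses exactly the equality of the two sums of $\gamma$-coordinates over the two pairs of opposite vertices of the corresponding $4$-cycle, while the pair \eqref{i fg far1}, \eqref{i fg far2} expresses exactly that $\gamma$ is constant on each of the two disjoint edges; conversely, every switch pattern occurs as an edge of one of these switch-graphs. Everything else is a short computation; it is this catalog that requires care.

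The combinatorial core is then the following elementary lemma, proved by inspecting support sizes $0,1,2$ (on each opposite pair, resp.\ on each edge): a $0/1$ vector on the vertices of a $4$-cycle has equal opposite-pair sums if and only if its support admits a perfect matching using cycle edges, and a $0/1$ vector on two disjoint edges is constant on each edge if and only if its support is a union of edges. Combined with the reduction above, this shows that $\sum_{\e{w}\in W}\e{w}\in\Irkperp$ if and only if, for every position~$i$, every context, and every letter configuration, the set of candidates lying in $W$ admits a perfect matching inside the associated switch-graph. For the ``if'' direction I would start from a switchboard $\Gamma$ with vertex set~$W$: every endpoint of an $i$-edge has exactly one descent in positions $i-1,i$ and lies within a single configuration, so the switchboard axiom forces the $i$-edges of $\Gamma$ inside a given context and configuration to be a perfect matching of the candidates in~$W$, whence all the linear equations hold and $\gamma\in\Irkperp$. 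For the ``only if'' direction, given $\gamma\in\Irkperp$, for each position~$i$, context, and configuration I would choose (using the lemma) a perfect matching of the candidates in~$W$ and declare its edges to be $i$-edges; since every length-$n$ word with exactly one descent in positions $i-1,i$ is a candidate for exactly one context and configuration at position~$i$, each such vertex then lies in exactly one $i$-edge, and every $i$-edge is a switch by construction, so the resulting edge-labeled graph is a switchboard with vertex set~$W$.
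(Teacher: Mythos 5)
Your proof is correct and, as far as can be told from the paper's one-line pointer to \cite[Proposition--Definition~3.2]{BD0graph}, follows the same route: reduce $\gamma\in\Irkperp$ to one equation per generator and per ``context'' $(\e{p},\e{s})$, observe that these equations are localized to the four candidate words sharing a context and letter configuration (and exhibiting exactly one descent among positions $i-1,i$), identify each generator with the opposite-pair-sum (resp.\ edge-constancy) condition on the corresponding $4$-cycle or pair of disjoint edges, and close with the elementary lemma characterizing when a $\{0,1\}$-vector on such a graph admits a perfect matching by cycle edges. Your catalog matches the paper's generators exactly: \eqref{quad uuu 3vars again} and \eqref{i fg close} give the two $4$-cycles, and \eqref{i fg far1}--\eqref{i fg far2} give constancy on the two Knuth edges when $b-a\ge 2$. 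The one place that deserves the care you flag is the switch $\{\e{bab},\e{aba}\}$ (resp.\ $\{\e{bba},\e{baa}\}$), which preserves the \emph{set} but not the multiset of middle letters; your choice of ``letter configuration'' as a set rather than a multiset handles this correctly.
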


\begin{definition}
\label{def:FGamma}
For a switchboard~$\Gamma$ with vertex set~$W$, we set
$F_\Gamma(\mathbf{x})=F_\gamma(\mathbf{x})$ where $\gamma=\sum_{\e{w} \in  W} \e{w}$.
Thus $F_\Gamma(\mathbf{x})$ is a symmetric function in the variables
$\mathbf{x}\!=\!(x_1,x_2,\dots)$ defined as the sum of fundamental quasisymmetric
functions associated with descent sets of the vertices~of~$\Gamma$.
\end{definition}

\begin{example}
For the switchboard~$\Gamma$ in Figure~\ref{f CKR graph},
formula~\eqref{eq:Fgamma-via-Q} gives
\begin{align*}
F_\Gamma&=Q_{2}+Q_{3}+Q_{12}+3Q_{13}+2Q_{14}+2Q_{23}+3Q_{24}+Q_{34}+Q_{124}+Q_{134}\\
&=s_{32} + s_{311} + s_{221}\,.
\end{align*}
\end{example}

The rest of this paper is mostly concerned with the study of
switchboards and the associated symmetric functions~$F_\Gamma$.
Comparing Proposition~\ref{pr:01Irk=switchboard} to
Definitions~\ref{def:cauchy-product} and~\ref{def:FGamma},
we see that symmetric functions~$F_\Gamma$ associated to switchboards
are precisely the~$F_\gamma$'s
labeled by $(0,1)$-vectors $\gamma\in\Irkperp$.
(A~$(0,1)$-vector is simply a sum of a subset of words in~$\U^*$.)
Restricting the treatment to $(0,1)$-vectors is not much of an imposition
since in all important applications the labeling vectors~$\gamma$ have this form.

One advantage of switchboards 
is that one can consider their \emph{connected components}.
This notion involves the edges of a switchboard, not just its
vertices, and thus has no direct counterpart for the integer vectors~$\gamma$.
%
The following statement is easy to check.

\begin{proposition}
\label{p CKR set ops}
Connected components of a switchboard are switchboards.
If two switchboards have disjoint vertex sets consisting of words of the same length,
then their union is a switchboard.
\end{proposition}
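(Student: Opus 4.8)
The plan is to verify, in each case, the two conditions in Definition~\ref{def-switchboard}: the \emph{edge condition} (every $i$-edge corresponds to a switch in position~$i$) and the \emph{vertex condition} (every vertex having exactly one descent among positions $i-1$ and~$i$ lies on exactly one $i$-edge). Both checks are routine, and the only point requiring a moment's thought is how the $i$-edges incident to a fixed vertex behave when one passes to a component or to a union.

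For connected components: let $\Gamma$ be a switchboard on a set $W$ of words of length~$n$, and let $C$ be a connected component, regarded as an edge-labeled graph with the labels induced from~$\Gamma$. Its vertex set lies in~$W$, so it consists of words of length~$n$, and its edge labels lie in $\{2,\dots,n-1\}$; since every edge of~$C$ is an edge of~$\Gamma$, the edge condition is inherited. For the vertex condition, fix $i$ and a vertex $\e{w}$ of~$C$ with exactly one descent among positions $i-1$ and~$i$. Because $\e{w}\in W$, it lies on a unique $i$-edge $\{\e{w},\e{w}'\}$ of~$\Gamma$; since $\e{w}$ and $\e{w}'$ are joined by an edge, $\e{w}'$ lies in the same component~$C$, so this edge belongs to~$C$, while there is no other $i$-edge at~$\e{w}$ in~$\Gamma$, hence none in~$C$. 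Thus $C$ is a switchboard.

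For disjoint unions: let $\Gamma_1,\Gamma_2$ be switchboards with disjoint vertex sets $W_1,W_2$, both consisting of words of the same length~$n$, and let $\Gamma$ be the graph with vertex set $W_1\cup W_2$ and edge set the union of the two edge sets, with labels inherited. Since any edge of~$\Gamma_j$ joins two vertices of~$W_j$ and $W_1\cap W_2=\varnothing$, the two edge sets are disjoint and $\Gamma$ is well defined, with no edge running between $W_1$ and~$W_2$. Its vertices are words of length~$n$, its labels lie in $\{2,\dots,n-1\}$, and every $i$-edge of~$\Gamma$ is an $i$-edge of~$\Gamma_1$ or of~$\Gamma_2$ and so is a switch in position~$i$. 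For the vertex condition, fix $i$ and a vertex $\e{w}$, say $\e{w}\in W_1$, with exactly one descent among positions $i-1$ and~$i$: every edge of~$\Gamma$ incident to~$\e{w}$ already lies in~$\Gamma_1$, so the $i$-edges at~$\e{w}$ in~$\Gamma$ are exactly those in~$\Gamma_1$, of which there is exactly one. Hence $\Gamma$ is a switchboard.

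I expect no genuine obstacle: the proposition is ``easy to check'' precisely because a connected component is closed under the edges of~$\Gamma$ (so the unique $i$-edge at a vertex cannot escape the component) and because disjointness of the vertex sets guarantees that forming the union creates no new incidences at any vertex. The only routine sanity check is that the switch patterns in Definition~\ref{def-switchboard} all involve a triple with exactly one descent among positions $i-1$ and~$i$, so that a vertex failing this descent condition lies on no $i$-edge in any of the graphs considered; this keeps the edge and vertex conditions mutually compatible throughout.
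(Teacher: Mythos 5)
Your proof is correct, and the paper itself offers no proof of this proposition (it merely remarks that the statement "is easy to check"), so your careful verification of both the edge condition and the vertex condition from Definition~\ref{def-switchboard} supplies exactly the routine argument the authors had in mind. The two observations you isolate --- that a connected component is edge-closed so the unique $i$-edge at a qualifying vertex cannot leave it, and that disjointness of vertex sets means a union introduces no new incidences --- are precisely what makes the claim immediate.
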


\begin{example}
Figure~\ref{f intro} shows two different switchboards $\Gamma$
and~$\Gamma'$ (one disconnected, the other connected) which have the same set
of vertices~$W$; consequently $F_\Gamma=F_{\Gamma'}$.
This is an instance of a general phenomenon: if
$W$ contains four words of the form shown in Figure~\ref{f switches}
on the left, then one can choose either the two Knuth edges or the two
rotation edges.
A similar phenomenon occurs with 4-tuples of the form shown in Figure \ref{f switches} on the right (for $b=a+1$).
\end{example}

\begin{figure}[ht]
\ \\[-.1in]
\begin{tikzpicture}[xscale = 1.8,yscale = 1.6]
\tikzstyle{vertex}=[inner sep=0pt, outer sep=4pt]
\tikzstyle{aedge} = [draw, thin, ->,black]
\tikzstyle{edge} = [draw, thick, -,black]
\tikzstyle{dashededge} = [draw, very thick, dashed, black]
\tikzstyle{LabelStyleH} = [text=black, anchor=south]
\tikzstyle{LabelStyleV} = [text=black, anchor=east]
\tikzstyle{LabelStyleH2} = [text=black, anchor=north]
\tikzstyle{doubleedge} = [draw, thick, double distance=1pt, -,black]
\tikzstyle{hiddenedge} = [draw=none, thick, double distance=1pt, -,black]

\begin{scope} [xshift = -2cm]
 \node[vertex] (v1) at (1,2){\footnotesize$\e{2134}$};
 \node[vertex] (v2) at (2,2){\footnotesize$\e{2314}$};
 \node[vertex] (v3) at (3,2){\footnotesize$\e{2341}$};
 \node[vertex] (v4) at (2,1){\footnotesize$\e{2143}$};
 \node[vertex] (v5) at (3,1){\footnotesize$\e{2413}$};
\end{scope}
 \draw[edge] (v1) to node[LabelStyleH]{{\Tiny 2}} (v2);
 \draw[edge] (v2) to node[LabelStyleH]{{\Tiny 3}} (v3);
 \draw[doubleedge] (v4) to node[LabelStyleH]{{\Tiny 2}} (v5);
 \draw[hiddenedge] (v4) to node[LabelStyleH2]{{\Tiny 3}} (v5);

\begin{scope} [xshift = 2cm]
 \node[vertex] (v1) at (1,2){\footnotesize$\e{2134}$};
 \node[vertex] (v2) at (2,2){\footnotesize$\e{2314}$};
 \node[vertex] (v3) at (3,2){\footnotesize$\e{2341}$};
 \node[vertex] (v4) at (2,1){\footnotesize$\e{2143}$};
 \node[vertex] (v5) at (3,1){\footnotesize$\e{2413}$};
\end{scope}
 \draw[edge] (v1) to node[LabelStyleH]{{\Tiny 2}} (v2);
 \draw[edge] (v2) to node[LabelStyleV]{{\Tiny $\tilde{3}$}} (v4);
 \draw[edge] (v4) to node[LabelStyleH]{{\Tiny 2}} (v5);
 \draw[edge] (v3) to node[LabelStyleV]{{\Tiny $\tilde{3}$}} (v5);
 \end{tikzpicture}
\caption{\label{f intro} Different switchboards on the same set of
  vertices, with $N=n=4$. The switchboard $\Gamma$ on the left has two connected
  components $\Gamma_{\operatorname{top}}$
  and~$\Gamma_{\operatorname{bottom}}$, with
$F_{\Gamma_{\operatorname{top}}}=s_{31}$ and $F_{\Gamma_{\operatorname{bottom}}}=s_{22}$.
The switchboard~$\Gamma'$ on the right is connected,
with $F_{\Gamma'} = F_\Gamma = s_{31}+s_{22}$.
}
\end{figure}

\pagebreak[3]

The results of \cite{SamiOct13, BLamLLT, FG},
when translated into the language of switchboards,
show that in many important cases, the symmetric
functions~$F_\Gamma(\mathbf{x})$
associated with switchboards~$\Gamma$ are Schur positive.
Computer experiments supply an extensive list of additional instances
of Schur positivity.
Unfortunately, there exist switchboards whose symmetric functions are not Schur
positive.
They are not easy to find, but once a counterexample has been
discovered, its verification is a straightforward calculation.

\begin{proposition}
\label{pr:456213+...}
The symmetric function $F_\Gamma$ associated with the switchboard
shown in Figure~\ref{f schur pos counterexample} is not Schur
positive:
$F_\Gamma(\mathbf{x})=s_{321}(\mathbf{x})+s_{2211}(\mathbf{x})-s_{222}(\mathbf{x})$.
\end{proposition}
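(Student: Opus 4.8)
The assertion is a finite verification, and the plan is as follows. First I would read the vertex set $W$ of the switchboard $\Gamma$ off of Figure~\ref{f schur pos counterexample}; here the vertex words have length $6$ in the alphabet $\{1,\dots,6\}$, and $|W| = 20$. As a preliminary check I would confirm that the edge-labeled graph in the figure genuinely satisfies Definition~\ref{def-switchboard}: each labeled edge is one of the switches listed there, and every vertex having exactly one descent among positions $i-1, i$ lies on exactly one $i$-edge. (This is the ``straightforward calculation'' alluded to just above the statement.) Granting this, $\gamma = \sum_{\e{w} \in W} \e{w}$ lies in $\Irkperp \subset \Icommperp$ by Proposition~\ref{pr:01Irk=switchboard} (recall $\Irk \supset \Icomm$), so by Proposition~\ref{Icomm-symmetric} the power series $F_\Gamma = \sum_{\e{w} \in W} Q_{\Des(\e{w})}$, cf.\ \eqref{eq:Fgamma-via-Q} and Definition~\ref{def:FGamma}, is symmetric and hence has a well-defined Schur expansion.

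Next I would compute the descent set $\Des(\e{w}) \subseteq \{1,2,3,4,5\}$ of each of the $20$ vertex words, obtaining a multiset $\mathcal{D}$ of subsets with $F_\Gamma = \sum_{S \in \mathcal{D}} Q_S$. Rather than expand $F_\Gamma$ from scratch, I would verify the equivalent identity $F_\Gamma + s_{222} = s_{321} + s_{2211}$: since both sides now have only nonnegative coefficients, the standard expansion $s_\lambda = \sum_{T \in \SYT(\lambda)} Q_{\Des(T)}$ together with linear independence of the $Q_S$ reduces the claim to the multiset equality
\[
\mathcal{D} \uplus \{\Des(T) : T \in \SYT(222)\} = \{\Des(T) : T \in \SYT(321)\} \uplus \{\Des(T) : T \in \SYT(2211)\},
\]
a bookkeeping check consistent with the cardinality count $20 + 5 = 16 + 9$. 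Alternatively one can ignore the stated answer and solve for the coefficients $c_\lambda$ in $F_\Gamma = \sum_\lambda c_\lambda s_\lambda$ directly --- e.g.\ by matching coefficients of the $Q_S$, or by peeling off Schur functions in decreasing dominance order --- finding $c_{321} = c_{2211} = 1$, $c_{222} = -1$, and $c_\lambda = 0$ otherwise.

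The negative coefficient of $s_{222}$ then shows that $F_\Gamma$ is not Schur positive, as claimed. There is no conceptual obstacle here --- the whole argument is a tabulation --- so the step I expect to be most error-prone, and hence the one demanding the most care, is the faithful transcription of the $20$ vertices (and their descent sets) from the figure; once that is done, the passage to the Schur basis is routine arithmetic.
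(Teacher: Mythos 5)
Your proposal is correct and coincides with what the paper intends: the paper offers no written proof of Proposition~\ref{pr:456213+...}, remarking only just before it that ``once a counterexample has been discovered, its verification is a straightforward calculation.'' Your plan---transcribe the $20$ vertex words from Figure~\ref{f schur pos counterexample}, tabulate their descent sets, and check the quasisymmetric identity $F_\Gamma + s_{222} = s_{321} + s_{2211}$ via $s_\lambda = \sum_{T\in\SYT(\lambda)} Q_{\Des(T)}$---is exactly that calculation, and the cardinality sanity check $20+5 = 16+9$ is consistent.
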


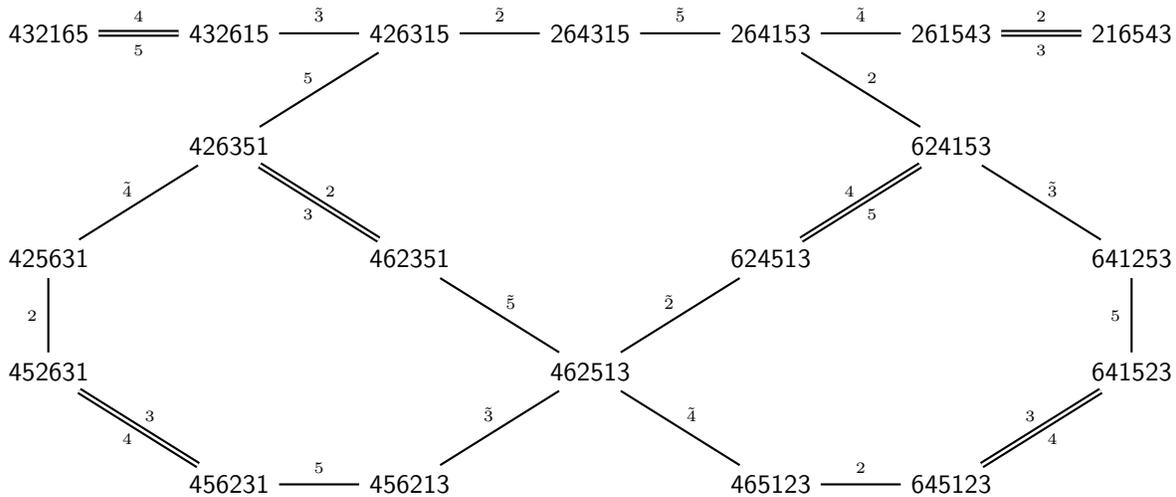
\begin{figure}[ht]
\centerfloat
\ \\[-.05in]
\begin{tikzpicture}[xscale = 2.4,yscale = 1.5]
\tikzstyle{vertex}=[inner sep=0pt, outer sep=4pt]
\tikzstyle{framedvertex}=[inner sep=3pt, outer sep=4pt, draw=gray]
\tikzstyle{aedge} = [draw, thin, ->,black]
\tikzstyle{edge} = [draw, thick, -,black]
\tikzstyle{doubleedge} = [draw, thick, double distance=1pt, -,black]
\tikzstyle{hiddenedge} = [draw=none, thick, double distance=1pt, -,black]
\tikzstyle{dashededge} = [draw, very thick, dashed, black]
\tikzstyle{LabelStyleH} = [text=black, anchor=south]
\tikzstyle{LabelStyleHn} = [text=black, anchor=north]
\tikzstyle{LabelStyleNE} = [text=black, anchor=south west, inner sep=2pt]
\tikzstyle{LabelStyleNEn} = [text=black, anchor=north east, inner sep=2pt]
\tikzstyle{LabelStyleV} = [text=black, anchor=east]
\tikzstyle{LabelStyleVn} = [text=black, anchor=west]
\tikzstyle{LabelStyleNW} = [text=black, anchor=south east, inner sep=2pt]
\tikzstyle{LabelStyleNWn} = [text=black, anchor=north west, inner sep=2pt]

\node[vertex] (v2) at (3,1){\footnotesize$\e{456213}$};
\node[vertex] (v17) at (5,1){\footnotesize$\e{465123}$};
\node[vertex] (v8) at (6,1){\footnotesize$\e{645123}$};
\node[vertex] (v6) at (7,2){\footnotesize$\e{641523}$};
\node[vertex] (v20) at (7,3){\footnotesize$\e{641253}$};
\node[vertex] (v13) at (2,1){\footnotesize$\e{456231}$};
\node[vertex] (v5) at (4,2){\footnotesize$\e{462513}$};
\node[vertex] (v16) at (5,3){\footnotesize$\e{624513}$};
\node[vertex] (v19) at (6,4){\footnotesize$\e{624153}$};
\node[vertex] (v12) at (1,2){\footnotesize$\e{452631}$};
\node[vertex] (v15) at (3,3){\footnotesize$\e{462351}$};
\node[vertex] (v10) at (5,5){\footnotesize$\e{264153}$};
\node[vertex] (v18) at (6,5){\footnotesize$\e{261543}$};
\node[vertex] (v9) at (7,5){\footnotesize$\e{216543}$};
\node[vertex] (v3) at (1,3){\footnotesize$\e{425631}$};
\node[vertex] (v7) at (2,4){\footnotesize$\e{426351}$};
\node[vertex] (v11) at (3,5){\footnotesize$\e{426315}$};
\node[vertex] (v1) at (4,5){\footnotesize$\e{264315}$};
\node[vertex] (v14) at (2,5){\footnotesize$\e{432615}$};
\node[vertex] (v4) at (1,5){\footnotesize$\e{432165}$};

\draw[edge] (v1) to node[LabelStyleH]{\Tiny$\tilde{2} $} (v11);
\draw[edge] (v1) to node[LabelStyleH]{\Tiny$\tilde{5} $} (v10);
\draw[edge] (v2) to node[LabelStyleH]{\Tiny$5 $} (v13);
\draw[edge] (v2) to node[LabelStyleNW]{\Tiny$\tilde{3} $} (v5);
\draw[edge] (v3) to node[LabelStyleNW]{\Tiny$\tilde{4} $} (v7);
\draw[edge] (v3) to node[LabelStyleV]{\Tiny$2 $} (v12);
\draw[doubleedge] (v4) to node[LabelStyleH]{\Tiny$4$} (v14);
\draw[hiddenedge] (v4) to node[LabelStyleHn]{\Tiny$5$} (v14);
\draw[edge] (v5) to node[LabelStyleNE]{\Tiny$\tilde{4} $} (v17);
\draw[edge] (v5) to node[LabelStyleNW]{\Tiny$\tilde{2} $} (v16);
\draw[edge] (v5) to node[LabelStyleNE]{\Tiny$\tilde{5} $} (v15);
\draw[edge] (v6) to node[LabelStyleV]{\Tiny$5 $} (v20);
\draw[doubleedge] (v6) to node[LabelStyleNW]{\Tiny$3$} (v8);
\draw[hiddenedge] (v6) to node[LabelStyleNWn]{\Tiny$4$} (v8);
\draw[edge] (v7) to node[LabelStyleNW]{\Tiny$5 $ } (v11);
\draw[doubleedge] (v7) to node[LabelStyleNE]{\Tiny$2$} (v15);
\draw[hiddenedge] (v7) to node[LabelStyleNEn]{\Tiny$3$} (v15);
\draw[edge] (v8) to node[LabelStyleH]{\Tiny$2 $} (v17);
\draw[doubleedge] (v9) to node[LabelStyleH]{\Tiny$2$} (v18);
\draw[hiddenedge] (v9) to node[LabelStyleHn]{\Tiny$3$} (v18);
\draw[edge] (v10) to node[LabelStyleH]{\Tiny$\tilde{4} $} (v18);
\draw[edge] (v10) to node[LabelStyleNE]{\Tiny$2 $} (v19);
\draw[edge] (v11) to node[LabelStyleH]{\Tiny$\tilde{3} $} (v14);
\draw[doubleedge] (v12) to node[LabelStyleNE]{\Tiny$3$} (v13);
\draw[hiddenedge] (v12) to node[LabelStyleNEn]{\Tiny$4$} (v13);
\draw[doubleedge] (v16) to node[LabelStyleNW]{\Tiny$4$} (v19);
\draw[hiddenedge] (v16) to node[LabelStyleNWn]{\Tiny$5$} (v19);
\draw[edge] (v19) to node[LabelStyleNE]{\Tiny$\tilde{3} $} (v20);
\end{tikzpicture}
\ \\[.15in]
\caption{\label{f schur pos counterexample}
Switchboard $\Gamma$ with $N=n=6$, $F_\Gamma=s_{321}+s_{2211}-s_{222}$.}
\end{figure}

\pagebreak[3]

Proposition~\ref{pr:456213+...} leads to the following counterexamples.

\begin{corollary}
\label{cor-nopos}
Conditions {\rm (i)-(ii)} in Theorem~\ref{t intro positivity} fail for
$I=\Irk$ (hence for $I=\Icomm$), for any $N\ge 6$.
Specifically:
\begin{itemize}
\item[{\rm (i)}]
the nonnegative vector
\begin{align}
\notag
\gamma =
\e{432165}+\e{432615}+\e{426315}+\e{264315}+\e{264153}+\e{261543}+\e{216543}&
\\
+\e{426351}+\e{624153}+\e{425631}+\e{462351}+\e{624513}+\e{641253} \qquad
\label{eq:gamma-counterexample}
\\
\notag
+\e{452631}+\e{462513}+\e{641523}+\e{456231}+\e{456213}+\e{465123}+\e{645123}
&
\end{align}
lies in $
\Irkperp$
but the symmetric function $F_\gamma=s_{321}+s_{2211}-s_{222}$ is not Schur positive;
\item[{\rm (ii)}]
for 
$\lambda=(2,2,2)$,
the noncommutative Schur function $\mathfrak{J}_{\lambda}(\mathbf{u})$
is not $\QQ$-monomial positive modulo~$\Irk$, as witnessed by the fact
that the coefficient of  $s_\lambda$ in  $F_\gamma$ is  $-1$
for the vector $\gamma\in\U^*_{\ge 0}\cap\Irkperp$
defined by~\eqref{eq:gamma-counterexample}.
\end{itemize}
\end{corollary}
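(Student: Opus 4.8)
The plan is to derive Corollary~\ref{cor-nopos} entirely from Proposition~\ref{pr:456213+...} together with the dictionary between switchboards and $(0,1)$-vectors in~$\Irkperp$. First I would observe that the twenty words appearing in~\eqref{eq:gamma-counterexample} are exactly the vertices of the switchboard~$\Gamma$ depicted in Figure~\ref{f schur pos counterexample}; reading them off the figure is purely mechanical. Writing $W$ for this vertex set, Proposition~\ref{pr:01Irk=switchboard} gives $\gamma=\sum_{\e{w}\in W}\e{w}\in\Irkperp$, and since $\Icomm\subset\Irk$ we also have $\gamma\in\Icommperp$. By Definition~\ref{def:FGamma} and Proposition~\ref{pr:456213+...}, $F_\gamma=F_\Gamma=s_{321}+s_{2211}-s_{222}$, whose $s_{222}$-coefficient is $-1<0$; hence $F_\gamma$ is not Schur positive. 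This proves part~(i) of the corollary, and since $\gamma\in\U^*_{\ge 0}\cap\Irkperp\subset\U^*_{\ge 0}\cap\Icommperp$, it shows that condition~(i) of Theorem~\ref{t intro positivity} fails both for $I=\Irk$ and for $I=\Icomm$.

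For part~(ii), set $\lambda=(2,2,2)$. Since $\gamma\in\Icommperp$, Proposition~\ref{Icomm-Schur} applies and the coefficient of $s_\lambda(\mathbf{x})$ in $F_\gamma(\mathbf{x})$ equals $\langle\mathfrak{J}_\lambda(\mathbf{u}),\gamma\rangle$; comparing with $F_\gamma=s_{321}+s_{2211}-s_{222}$ yields $\langle\mathfrak{J}_\lambda(\mathbf{u}),\gamma\rangle=-1$. Now suppose toward a contradiction that $\mathfrak{J}_\lambda(\mathbf{u})$ were $\QQ$-monomial positive modulo~$\Irk$. Then, exactly as in the computation~\eqref{eq:cJ=...}--\eqref{eq:cJ,gamma=...} from the proof of the implication {\rm (ii)}$\Rightarrow${\rm (i)} of Theorem~\ref{t intro positivity}, applied to the present~$\gamma$ (all of whose coefficients $\gamma_{\e{w}}$ are $0$ or~$1$, hence nonnegative), we would obtain $c_\lambda\langle\mathfrak{J}_\lambda(\mathbf{u}),\gamma\rangle=\sum_{\e{w}}a_{\lambda,\e{w}}\,\gamma_{\e{w}}\ge 0$, contradicting $\langle\mathfrak{J}_\lambda(\mathbf{u}),\gamma\rangle=-1$. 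Thus $\mathfrak{J}_{(2,2,2)}(\mathbf{u})$ is not $\QQ$-monomial positive modulo~$\Irk$; and because $\U_{\ge 0}+\Icomm\subseteq\U_{\ge 0}+\Irk$, it is not $\QQ$-monomial positive modulo~$\Icomm$ either. This is condition~(ii) of Theorem~\ref{t intro positivity} failing for $I=\Irk$ and for $I=\Icomm$.

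Finally, to get arbitrary $N\ge 6$ rather than only $N=6$, I would note that the switchboard axioms of Definition~\ref{def-switchboard} are local: whether two words are related by a switch in position~$i$, and whether a given word must lie on an $i$-edge, depend only on the three consecutive entries in positions $i-1,i,i+1$ and their relative order. Since every word in~$W$ uses only the letters $1,\dots,6$, enlarging the alphabet to $\{1,\dots,N\}$ neither creates a new descent at any vertex nor forces a new edge, so $W$ is still the vertex set of a switchboard over $\{1,\dots,N\}$, and the same $\gamma$ witnesses the failure. The one genuinely laborious step lies not in this corollary but in the preceding Proposition~\ref{pr:456213+...}: verifying that Figure~\ref{f schur pos counterexample} obeys the switchboard axioms and that the associated sum of twenty fundamental quasisymmetric functions collapses to $s_{321}+s_{2211}-s_{222}$. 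Granting that, the corollary is pure bookkeeping; the only point requiring a moment's care is the logical one in part~(ii), namely that a \emph{single} nonnegative $(0,1)$-vector~$\gamma$ with $\langle\mathfrak{J}_\lambda(\mathbf{u}),\gamma\rangle<0$ already rules out $\QQ$-monomial positivity of $\mathfrak{J}_\lambda(\mathbf{u})$.
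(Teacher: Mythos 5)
Your proof is correct and follows essentially the same route as the paper: reading the vertex set off Figure~\ref{f schur pos counterexample}, applying Propositions~\ref{pr:01Irk=switchboard} and~\ref{pr:456213+...} to get $\gamma\in\Irkperp$ with $F_\gamma=s_{321}+s_{2211}-s_{222}$, and then invoking Theorem~\ref{t intro positivity} (via the contrapositive of the proven implication (ii)$\Rightarrow$(i)). The extra remarks you add---the explicit contradiction argument for part~(ii), the inclusion $\U_{\ge 0}+\Icomm\subset\U_{\ge 0}+\Irk$ to pass to~$\Icomm$, and the locality observation for $N\ge 6$---are all sound and merely make explicit steps the paper leaves implicit.
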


\begin{proof}
The vector~$\gamma$ given in~\eqref{eq:gamma-counterexample}
is the sum of vertices of the switchboard~$\Gamma$ in
Figure~\ref{f schur pos counterexample}.
Propositions \ref{pr:01Irk=switchboard} and~\ref{pr:456213+...} imply that $\gamma\in\Irkperp$
and $F_\gamma = F_\Gamma=s_{321} + s_{2211} - s_{222}$.
Theorem~\ref{t intro positivity} then yields~{\rm (ii)}.
\end{proof}

Given that Schur positivity of $F_\Gamma$ fails for general
switchboards~$\Gamma$, one wonders which additional restrictions
imposed on~$\Gamma$ would ensure that $F_\Gamma$ is Schur positive.
One way to obtain results of this kind is to take an ideal~$I\supset\Irk$ such
that $\mathfrak{J}_\lambda(\mathbf{u})$ is known to be $\QQ$-monomial positive
modulo~$I$, and consider the corresponding class of switchboards.
Various instances of this approach are discussed in
subsequent sections of this paper.

An alternative strategy is to examine the
counterexamples of switchboards for which Schur positivity fails,
recognize their distinctive combinatorial features,
and impose restrictions which would rule them out.
The following remarks illustrate this approach.

\begin{remark}
\label{r axiom 5}
A natural subclass of switchboards is defined by the property that
``non-overlapping switches commute:''
\begin{equation}
\label{eq:axiom5}
\parbox{14.4cm}{if a switchboard has an $i$-edge $\{\e{v,w}\}$
and a $j$-edge $\{\e{v,w'}\}$, with $|i-j| \geq 3$, then
it must contain a vertex  $\e{v'}$ such that
 $\{\e{w, v'}\}$ is a $j$-edge and $\{\e{w',v'}\}$ is an $i$-edge.
}
\end{equation}
(This is the same as D graph axiom~5 from~\cite{SamiOct13}.)
The switchboard in Figure~\ref{f schur pos counterexample} does not
satisfy condition~\eqref{eq:axiom5}:
starting at $\e{v} \!=\! \e{426315}$ and following the $2$-edge and then
the $5$-edge,
we arrive at a different vertex \emph{vs.}\ following the  $5$-edge and then the  $2$-edge.
It is natural to ask whether all switchboards satisfying
condition~\eqref{eq:axiom5} have Schur positive symmetric functions.
Unfortunately this is also false by the example from \cite[Theorem~1.5]{BD0graph}.
\end{remark}

\begin{remark}
Another natural restriction is the following ``locality'' property:
\begin{equation}
\label{eq:axiom-locality}
\begin{array}{l}
\text{if two words $\e{v}$ and $\e{w}$ appearing in the same switchboard coincide in}\\
\text{positions $(i-1,i,i+1)$, then the $i$-edges incident to $\e{v}$ and $\e{w}$ (if present)}\\
\text{are of the same type (i.e., simultaneously Knuth or non-Knuth).}
\end{array}
\end{equation}
It is easy to see that \eqref{eq:axiom-locality} implies~\eqref{eq:axiom5}.
Hence the switchboard in Figure~\ref{f schur pos counterexample} must
violate~\eqref{eq:axiom-locality}. \linebreak[3]
%
(To see this directly, take $i = 5$, $\e{v} = \e{264315}$, and
$\e{w} = \e{426315}$.)
A~further strengthening of \eqref{eq:axiom-locality} is the following ``strong locality'' property:
\begin{equation}
\label{eq:axiom-strong-locality}
\text{for each $i$, all $i$-edges 
are of the same type, i.e., all Knuth or all non-Knuth.}
\end{equation}

\end{remark}

\begin{problem}
{\rm Is it true that for any switchboard~$\Gamma$
satisfying condition~\eqref{eq:axiom-locality}
(resp., the stronger condition~\eqref{eq:axiom-strong-locality}),
the symmetric function $F_\Gamma$ is Schur positive?}
\end{problem}

\begin{definition}
\label{def:dzero-graph}
A \emph{$\Dzero$~graph} is a switchboard
whose words have no repeated letters.
\end{definition}

$\Dzero$~graphs were~the main objects of study in~\cite{BD0graph},
and were studied, with slightly different conventions,
in~\cite{Sami2, SamiForum}.
Since switchboards arising in many applications are $\Dzero$~graphs,
it is tempting to speculate that passing from general switchboards
in a certain subclass
to $\Dzero$~graphs might help establish Schur positivity.
Our investigations show that typically this is not the case. \linebreak[3]
For example, the switchboard~$\Gamma$ in Figure~\ref{f
  schur pos counterexample} is a $\Dzero$~graph,
yet $F_\Gamma$ is not Schur positive.
The existence of $\Dzero$~graphs for which Schur positivity fails
was already shown in~\cite{BD0graph}, but the example given here is
smaller (20~vertices only).

\begin{definition}
\label{def:Ist}
Let  $\Ist$ denote the ideal in~$\U$ generated by the monomials
$\mathbf{u}_{\e{w}}$ whose associated word~$\e{w}$ has a repeated letter.
\end{definition}

Roughly speaking, passing from switchboards to
  $\Dzero$~graphs corresponds to passing from an ideal~$I$ to
  $I+\Ist$.
As noted above, this enlargement rarely affects Schur positivity.


\section{LLT polynomials}
\label{s LLT}
\vspace{-2mm}

LLT polynomials are certain  $q$-analogs of products of skew Schur
functions introduced by Lascoux, Leclerc, and Thibon~\cite{LLT}.
In this section,
by recasting the work of Lam~\cite{LamRibbon} and Assaf~\cite{SamiOct13},
we define a class of switchboards
whose symmetric functions are the coefficients of powers of~$q$ in an LLT polynomial.
We explain how one of the ideas of~\cite{SamiOct13} can be formulated as
a conjectural strengthening
of LLT positivity (Conjecture \ref{cj sami}).
We~conclude by reviewing related work done in~\cite{BLamLLT}.

There are two versions of LLT polynomials, which we distinguish
following the notation of~\cite{GH}: the combinatorial LLT polynomials
of \cite{LLT} defined using spin, and the new variant combinatorial
LLT polynomials of \cite{HHLRU} defined using inversion numbers. 
Although the theory of noncommutative Schur functions is well suited
to studying the former (see \cite{LamRibbon}), we prefer to work with
the latter, in order to be consistent with~\cite{SamiOct13,BLamLLT},
and  also because inversion numbers are easier to calculate
than~spin.

We begin by recalling from  \cite{SamiOct13} a formula defining LLT polynomials via expansions
in fundamental quasisymmetric functions.
This will require some preparation.

We adopt the English (matrix-style) convention for drawing
skew shapes (=skew Young diagrams) and tableaux, so that row (resp., column)
labels start with~1 and increase from north to south (resp., from west to
east).
For a cell~$z$ located in row~$a$ and column~$b$~in a skew
shape~$\beta$,
the \emph{content} $c(z)$ of~$z$ is defined by $c(z)=b-a$. \linebreak[3]
Here we view~$\beta$ as a subset of $\ZZ_{\ge 1}\times\ZZ_{\ge 1}$,
i.e., we do distinguish between skew shapes that differ by translations.

Throughout this section, $k$ denotes a positive integer.
Let
\[
\bm{\beta} =(\beta^{(0)},\dots,\beta^{(k-1)})
\]
be a $k$-tuple of skew shapes. 
The \emph{shifted content} $\tilde{c}(z)=\tilde{c}_{\bm{\beta}}(z)$
of a cell $z \in \beta^{(r)}$ is defined~by
\begin{align} \label{e shifted content}
\tilde{c}(z) = c(z)\,k+r.
\end{align}

\begin{example}
\label{ex:2-33-33/1-11-21}
Let $k=3$ and
$\bm{\beta}=\left(\partition{&~\\&}\,,
\partition{&~&~\\&~&~}\,,
\partition{&&~\\&~&~}\,\right) =
(2,33,33)/(1,11,21)$.
The only cell $z\in\beta^{(0)}$ is located in row~$1$ and
column~$2$, so $c(z)=1$ and $\tilde{c}(z)=3$.
The shifted contents of the cells in $\bm{\beta}$ are shown below:
\setlength{\cellsize}{2.1ex}
\vspace{-1mm}
\[
{\footnotesize
\left(\tableau{&3\\&}\,,\tableau{&4&7\\&1&4}\,,\tableau{&&8\\&2&5}\,\right)}.
\]
\end{example}

Let $\mathbb{Y}$ denote the set of all Young diagrams.
Define a right action of  $\U$ on  $\ZZ \mathbb{Y}$
(the free $\ZZ$-module with basis~$\mathbb{Y}$)
by
\[
\nu \cdot u_c = \begin{cases}
\mu \qquad & \text{if $\mu/\nu$ is a cell of content $c$;} \\
0 \qquad & \text{otherwise.}
\end{cases}
\]
Next define a right action of $\U$ on  $\ZZ \mathbb{Y}^k$ as follows:
for $\bm{\nu} =(\nu^{(0)},\dots,\nu^{(k-1)}) \in \mathbb{Y}^k$, set
\[
\bm{\nu} \cdot u_{ck+r}
= (\ldots, \nu^{(r-1)},\, \nu^{(r)} \cdot u_{c}\,, \, \nu^{(r+1)},\ldots),
\]
for $c\in\ZZ$ and $r\in\{0,1,\dots,k-1\}$.
Now, for a $k$-tuple $\bm{\beta} = \bm{\mu}/\bm{\nu}$ of skew shapes, set
\begin{equation}
\label{eq:Wik-beta}
\Wi{k}(\bm{\beta})  = \{\e{v} \text{ a word in } \U^*
\mid \bm{\nu} \cdot \mathbf{u}_{\e{v}}=  \bm{\mu}\}.
\end{equation}
It is not hard to see that every word in  $\Wi{k}(\bm{\beta})$ is a
rearrangement of the shifted contents of
the cells in~$\bm{\beta}$.

\pagebreak[3]

The last ingredient we shall need is the \emph{$k$-inversion
  statistic} $\invi{k}\,$.
This is the function on words  $\e{v}=\e{v}_1\e{v}_2\cdots\in\U^*$
defined as the cardinality of the set of inversions in~$\e{v}$
formed by pairs of entries which differ by less than~$k$:
\begin{align*}
\invi{k}(\e{v}) &= \big|\big\{(i,j)\mid \text{$i<j$ and $0<\e{v_i}-\e{v_j}<k$}\big\} \big|.
\end{align*}

\begin{example}
\label{ex LLT}
We continue with Example~\ref{ex:2-33-33/1-11-21}.
Some elements of  $\Wi{3}(\bm{\beta})$, together with their  $3$-inversion numbers, are:
\[
\begin{array}{l|ccccc}
\e{v} &
\e{42173845} & \e{34174285}& \e{83412745}& \e{48714235} & \e{28534174}\\[1.4mm]
\invi{3}(\e{v}) & 4 & 5 & 5 & 6 & 6
\end{array}
\]
\end{example}

\begin{definition}
\label{def:new-variant-LLT}
Let $q$ be a formal parameter.
Let  $\bm{\beta}$ be a  $k$-tuple of skew shapes 
in which the shifted contents of all cells lie in $\{1,\dots,N\}$.
The \emph{new variant combinatorial LLT polynomial} indexed by
$\bm{\beta}$ is defined by
\begin{align}
\label{eq:new-variant-LLT}
\mathcal{G}_{\bm{\beta}}(\mathbf{x};q) =
\sum_{\e{v}\in\Wi{k}(\bm{\beta})}q^{\invi{k}(\e{v})}Q_{\Des(\e{v})}(\mathbf{x}).
\end{align}
(We note that no generality is lost by the restriction on the shifted
contents of  $\bm{\beta}$
because $\mathcal{G}_{\bm{\beta}}(\mathbf{x};q)$ is unchanged by
translating all the $\beta^{(i)}$ horizontally by the same constant.)
\end{definition}

\begin{remark}
This family of polynomials was originally defined in
\cite{HHLRU}  as a sum over  $k$-tuples of semistandard tableaux with
an inversion statistic.
An expression in terms of quasisymmetric functions was given by Assaf
\cite[Corollary~4.3]{SamiOct13}.
Formula~\eqref{eq:new-variant-LLT}, which appeared in
\cite[Proposition 2.8]{BLamLLT},
is an adaptation of Assaf's description.
Be aware that the words~$\e{v}$ used herein are inverses of those used in
\cite{SamiOct13}; see \cite[\textsection2.5--2.6]{BLamLLT} for
details.
\end{remark}

\begin{remark}\label{r spin LLT}
The \emph{combinatorial LLT polynomials}
$G^{(k)}_{\mu/\nu}(\mathbf{x};\qlam)$ are defined as generating
functions for $k$-ribbon tableaux weighted by spin.
(To be precise, \cite{GH, LamRibbon, LT00} use spin whereas \cite{LLT} uses cospin.)
By a result of \cite{HHLRU}, setting $q = \qlam^{-2}$
identifies the two types of LLT polynomials
up to a power of~$\qlam$,
once the indexing shapes $\mu/\nu$ and~$\bm{\beta}$ are related
via the $k$-quotient correspondence.
See \cite[Proposition 6.17]{GH} for details.
\end{remark}

The main open problem in this subfield of algebraic combinatorics
concerns the coefficients $\mathfrak{c}_{\bm{\beta}}^\lambda(q)$
appearing in the Schur expansions of LLT polynomials:
\begin{equation}
\label{eq:LLT-LR}
\mathcal{G}_{\bm{\beta}}(\mathbf{x};q) =
\sum_{\lambda}\mathfrak{c}_{\bm{\beta}}^\lambda(q)s_\lambda(\mathbf{x}).
\end{equation}
Each $\mathfrak{c}_{\bm{\beta}}^\lambda(q)$ is known to be a polynomial in $q$
with nonnegative integer coefficients.
In the case that $\bm{\beta}$ is a tuple of partition shapes,
this was established by Leclerc-Thibon~\cite{LT00} and Kashiwara-Tanisaki~\cite{KT}; the former showed that
the coefficients $\mathfrak{c}_{\bm{\beta}}^\lambda(q)$ are essentially parabolic Kazhdan-Lusztig polynomials,
and the latter proved geometrically that these polynomials
have nonnegative integer coefficients.
The general case was established by Grojnowski and Haiman~\cite{GH}, also
using Kazhdan-Lusztig theory.
Unfortunately, none of the existing approaches
produces an explicit positive combinatorial interpretation of
$\mathfrak{c}_{\bm{\beta}}^\lambda(q)$, not even a conjectural one.
(The approach of \cite{SamiOct13}, though combinatorial, hinges on an
intricate algorithm for transforming a  D~graph into a dual equivalence
graph, and has yet to produce explicit formulas for  $k > 2$.)

\pagebreak[3]

The problem of finding a Littlewood-Richardson-type rule for
LLT polynomials is particularly important because its solution would
immediately yield a similar rule for transformed Macdonald polynomials:
as mentioned earlier, the
Haglund-Haiman-Loehr formula~\cite{HHL} expresses the transformed
Macdonald polynomials $\tilde{H}_\mu(\mathbf{x};q,t)$ as positive
sums of LLT polynomials.


In~\cite{LamRibbon}, Lam introduced his algebra of ribbon Schur
operators,
providing an elegant algebraic framework for LLT polynomials.
This set the stage for applying the theory of noncommutative Schur
functions \cite{FG}  to the study of Schur expansions of
LLT polynomials.
Following \cite{BLamLLT}, we work with the following slight variant
of Lam's construction.

\begin{definition} 
\label{d Lam algebra}
Let $\U_q = \QQ(q) \tsr_{\ZZ} \U$ where $\QQ(q)$ is the field of
rational functions in one variable~$q$.
\emph{Lam's ideal} $\Jlam{k}$ is the ideal in $\U_q$ generated by the elements
\begin{alignat}{3}
&u_a^2 &&  \text{for all $a$,} \\
&u_{a+k}u_au_{a+k} &&  \text{for all $a$,}  \\
&u_au_{a+k}u_a && \text{for all $a$,}  \label{e no repeat Lam}\\
&u_au_b - u_bu_a \qquad && \text{for $b-a > k$}, \label{e far commute} \\
&u_au_b - q^{-1} u_bu_a \qquad && \text{for $0<b-a<k$.} \label{e q commute}
\end{alignat}
We denote $\Ilam{k}\!=\!\Jlam{k} \cap\,\U$
(viewing $\U$ as the $\ZZ$-subalgebra of $\U_q$ generated by 1 and the~$u_i$).
In what follows, we mostly work with~$\Jlam{k}$ rather
than~$\Ilam{k}$.
Even though it is the latter that is contained in~$\U$,
we feel that the former is a more natural and important concept.
\end{definition}

Lam's ideal $\Jlam{k}$ contains $\QQ(q) \tsr_\ZZ \Irk$;
this will follow from Proposition \ref{p lam and assaf} below.

A word $\e{v}\in\U^*$ is called a \emph{nonzero $k$-word} if
for every pair $i<j$ such that $\e{v}_i = \e{v}_j$, there
exist  $s,t$
satisfying $i<s<t<j$ and $\{\e{v}_s,\e{v}_t\} =
\{\e{v}_i-k,\e{v}_i+k\}$.
The relevance of this notion in our current context
becomes clear from the following lemmas,
which can be found in \cite[Proposition-Definition~2.2 and
  Proposition~2.6(i)]{BLamLLT}.

\begin{lemma} 
\label{lem:nonzero-k-words}
A word $\e{v}$ is a nonzero $k$-word if and only if $\mathbf{u}_{\e{v}}\notin\Jlam{k}$.
\end{lemma}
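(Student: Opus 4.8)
I would prove the two implications separately. For ``$\mathbf{u}_{\e{v}}\in\Jlam{k}$ whenever $\e{v}$ is not a nonzero $k$-word'' I argue by contraposition. Call a pair of positions $i<j$ in $\e{v}$ \emph{bad} if $\e{v}_i=\e{v}_j$ but there is no pair $i<s<t<j$ with $\{\e{v}_s,\e{v}_t\}=\{\e{v}_i-k,\e{v}_i+k\}$; by hypothesis $\e{v}$ has a bad pair, and I fix one with $j-i$ minimal, writing $a=\e{v}_i=\e{v}_j$. A short appeal to minimality gives three facts about the subword $\e{v}_{i+1}\cdots\e{v}_{j-1}$: (i) it contains no letter $a$ (such a letter would yield a strictly shorter bad pair); (ii) it contains at most one of $a-k$ and $a+k$ (both occurring would themselves witness that $(i,j)$ is not bad); and (iii) whichever of $a\pm k$ occurs, occurs only once (two consecutive occurrences would form a shorter bad pair, since no $a$ lies between them by (i)). Granting (i)--(iii), every letter strictly between positions $i$ and $j$, other than the at most one occurrence of some $c\in\{a-k,a+k\}$, differs from $a$ by an amount outside $\{0,k\}$, hence commutes with $u_a$ up to a nonzero scalar modulo $\Jlam{k}$ by \eqref{e far commute} or \eqref{e q commute}. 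Sliding $u_a$ inward accordingly, $\mathbf{u}_{\e{v}}$ becomes, up to a nonzero scalar and modulo $\Jlam{k}$, a monomial with two adjacent factors $u_a$ (if no such $c$ occurs) or a factor $u_a u_c u_a$ with $c=a\pm k$; the first is killed by $u_a^2\in\Jlam{k}$, and in the second the factor $u_au_cu_a$ is one of the generators $u_au_{a+k}u_a$ (from \eqref{e no repeat Lam}) or $u_{a+k}u_au_{a+k}$, according as $c=a+k$ or $c=a-k$. Either way $\mathbf{u}_{\e{v}}\in\Jlam{k}$.

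For the converse --- $\e{v}$ a nonzero $k$-word implies $\mathbf{u}_{\e{v}}\notin\Jlam{k}$ --- straightening alone cannot work, and I would detect $\mathbf{u}_{\e{v}}$ using a representation of $\U_q/\Jlam{k}$: Lam's Fock space \cite{LamRibbon}, a $q$-weighted refinement of the action on $k$-tuples of Young diagrams recalled just before \eqref{eq:Wik-beta}. On it $u_{ck+r}$ adds a cell of content $c$ to the $r$-th component, scaled by a power of $q$ chosen precisely so that \eqref{e far commute} and \eqref{e q commute} hold, while $u_a^2$, $u_au_{a+k}u_a$, $u_{a+k}u_au_{a+k}$ act as $0$ because the corresponding chains of cell-additions are combinatorially impossible. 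Since every scalar occurring is a power of $q$, hence never $0$, one has $\bm{\nu}\cdot\mathbf{u}_{\e{v}}\neq 0$ in this module as soon as the sequence of cell-additions prescribed by $\e{v}$ can be performed starting from $\bm{\nu}$. It therefore remains to exhibit, for a nonzero $k$-word $\e{v}$, a $k$-tuple $\bm{\nu}$ of Young diagrams realizing $\e{v}$ --- equivalently, a $k$-tuple of skew shapes $\bm{\beta}$ with $\e{v}\in\Wi{k}(\bm{\beta})$. I would build $\bm{\nu}$ by reading $\e{v}$ from right to left and keeping the components large enough that each prescribed cell removal is available; the ``witness'' condition is exactly what is needed here, since it allows the two cells of content $c$ arising from a repeated letter $ck+r$ to be separated, within component $r$, by cells of contents $c\pm 1$, so that no $u_a^2$- or $u_au_{a+k}u_a$-type obstruction is forced.

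I expect the converse to be the crux, and within it the construction and verification of the $q$-weighted Fock space action --- choosing the exponents of $q$ and checking the defining relations of $\Jlam{k}$ --- to be the real work; the alternative of a purely combinatorial proof (a Bergman diamond-lemma analysis of the rewriting system formed by \eqref{e far commute}, \eqref{e q commute} and the three monomial relations, followed by a check that normal forms of nonzero $k$-words do not vanish) trades this for a confluence verification of comparable difficulty. The first implication, by contrast, is a bounded computation once (i)--(iii) are established. Together the two implications show the notion is sharp: the words that become $0$ modulo $\Jlam{k}$ are precisely those possessing a bad pair.
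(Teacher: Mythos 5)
Your forward implication is correct. The paper itself cites the lemma from \cite[Proposition-Definition~2.2, Proposition~2.6(i)]{BLamLLT} rather than proving it, so there is no in-paper argument to compare against; but your contrapositive argument stands on its own: facts (i)--(iii) do follow from minimality of $j-i$ (in (iii), any two occurrences of $c\in\{a-k,a+k\}$ strictly between $i$ and $j$ would form a pair whose only possible witness involves the letter $a$, which (i) forbids, giving a shorter bad pair), and after (i)--(iii) every letter $b$ strictly between $i$ and $j$ other than the at most one occurrence of $c$ satisfies $0<|b-a|$ and $|b-a|\neq k$, so $u_a$ slides past $u_b$ at the cost of a power of $q$. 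One lands on a monomial divisible by $u_a^2$, $u_au_{a+k}u_a$, or $u_{a+k}u_au_{a+k}$, all in $\Jlam{k}$, and since $q$ is a unit in $\U_q$ the original monomial is in $\Jlam{k}$ as well.

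The converse, however, is where the content is, and your write-up does not close it. You reduce it to (a) a $q$-weighted Fock space representation of $\U_q$ annihilating $\Jlam{k}$, which you can reasonably cite from \cite{LamRibbon}; and (b) the claim that every nonzero $k$-word lies in $\Wi{k}(\bm{\beta})$ for some $\bm{\beta}$. Claim (b) is exactly the paper's second (unnumbered) lemma directly below the one under discussion, likewise cited to \cite{BLamLLT}, and it is a genuine combinatorial theorem rather than a routine observation: the witness condition is clearly \emph{necessary} for realizability by a sequence of cell additions, but \emph{sufficiency} needs an argument. Your one-sentence sketch---``build $\bm{\nu}$ by reading $\e{v}$ right to left, keeping the components large enough''---does not specify the construction nor verify that the needed corner exists at each step, which is precisely where the difficulty lives. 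To illustrate that some care is required: for a single component, the content sequence $-1,0,-2,1,-1$ satisfies the witness condition, yet several natural small starting partitions fail, and one has to hunt a bit before finding, say, $(4,4,1,1)$. Until (b) is actually proved---by an explicit construction of $\bm{\nu}$ with a verified invariant, or by induction on the length of $\e{v}$---the converse direction is a route, not a proof. You flag this yourself as ``the real work,'' and I agree; it is the piece that is missing.
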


\begin{lemma} 
The nonzero  $k$-words are precisely the words which appear in the
sets $\Wi{k}(\bm{\beta})$,
as $\bm{\beta}$ ranges over all $k$-tuples
of skew shapes.  
\end{lemma}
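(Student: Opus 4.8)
The plan is to prove the two inclusions of the claimed set equality separately, working directly from the right action of $\U$ on $\ZZ\mathbb{Y}^k$. For ``$\Wi{k}(\bm{\beta})\subseteq\{\text{nonzero }k\text{-words}\}$'', I would fix $\e{v}=\e{v}_1\cdots\e{v}_n\in\Wi{k}(\bm{\beta})$ with $\bm{\beta}=\bm{\mu}/\bm{\nu}$ and note that reading $\e{v}$ left to right builds $\bm{\mu}$ from $\bm{\nu}$ by successively adjoining addable cells, the letter $\e{v}_a=ck+r$ ($0\le r\le k-1$) adjoining a cell of content $c$ to the $r$-th component. Suppose $\e{v}_i=\e{v}_j=ck+r$ with $i<j$; it suffices to treat consecutive occurrences (witnesses found for a consecutive pair also lie between the members of any wider pair). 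The key step is to combine the addability conditions imposed at steps $i$ and $j$ with the elementary inequalities $m_{c+1}\le m_c$ and $m_{c-1}\le m_c+1$ (valid for $c\ge 0$, with convexity of Young diagrams handling $c=0$) for the numbers $m_c$ of content-$c$ cells of a Young diagram, to force the content-$(c+1)$ and content-$(c-1)$ multiplicities of the $r$-th component to each increase strictly between steps $i$ and $j$; equivalently, cells of those two contents, i.e.\ with shifted contents $\e{v}_i+k$ and $\e{v}_i-k$, are adjoined to the $r$-th component in that interval. Hence both letters $\e{v}_i\pm k$ occur at distinct positions strictly between $i$ and $j$, which is exactly the defining property of a nonzero $k$-word.

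For the reverse inclusion, I would first observe that $\e{v}$ occurs in some $\Wi{k}(\bm{\beta})$ precisely when $\bm{\nu}\cdot\mathbf{u}_{\e{v}}\ne 0$ for some $k$-tuple $\bm{\nu}$ of Young diagrams: since each $u_c$ maps a basis diagram tuple to a basis diagram tuple or to $0$, a nonzero $\bm{\nu}\cdot\mathbf{u}_{\e{v}}$ is a single $k$-tuple $\bm{\mu}\supseteq\bm{\nu}$, and then $\e{v}\in\Wi{k}(\bm{\mu}/\bm{\nu})$. Both the action and the nonzero-$k$-word condition decouple over residue classes modulo $k$, so the problem reduces to a single Young diagram: any finite sequence of nonnegative integers in which, between any two equal entries, both of their $\pm 1$ neighbors also occur is realizable as the sequence of contents of addable cells successively adjoined to some Young diagram. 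I would prove this by passing to bead (Maya) configurations, where adjoining a content-$c$ cell is the slide of a bead from site $c-1$ to site $c$, legal precisely when site $c-1$ is occupied and site $c$ is empty. For each site $v$, the subsequence of moves having $v$ as source or target alternates in type precisely because of the hypothesis; one then sets the initial occupancy of $v$ according to the type of its first such move (leaving the finitely many untouched sites free, and finally shifting finitely many of them to normalize the total charge to $0$, so the configuration is an honest partition) and checks that every move is legal for this configuration. Running the moves produces the desired $\bm{\nu}$, hence $\bm{\beta}$.

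The first inclusion is a short exercise in the combinatorics of Young diagrams; the main obstacle is the second, and inside it the bead-configuration argument — justifying the reduction to a single component, verifying that sitewise alternation of the move subsequences really makes all moves simultaneously legal, and completing the configuration to a genuine partition via charge normalization. A shorter route to the full lemma would be to combine Lemma~\ref{lem:nonzero-k-words} with the faithfulness of Lam's ribbon Schur operator representation, trading the explicit combinatorial construction for that faithfulness result; in any case a complete proof is given in~\cite{BLamLLT}.
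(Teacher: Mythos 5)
The paper itself does not prove this lemma; right before the two lemmas it says that they ``can be found in \cite[Proposition-Definition~2.2 and Proposition~2.6(i)]{BLamLLT}.'' So there is no in-text proof to compare against, and the value of your proposal is as a self-contained argument. Your overall strategy is sound and, in my reading, does give a proof, but two points deserve sharpening.

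For the forward inclusion, your argument hinges on strict growth of $m_{c\pm1}$ between consecutive occurrences of a letter $ck+r$. The bound you quote, $m_{c-1}\le m_c+1$ for $c\ge 0$, is the correct one-off bound for $c\ge 1$, but it is not strong enough at $c=0$: there you need the sharper $m_{-1}\le m_0$ (which holds because $(a+1,a)\in\lambda$ forces $(a,a)\in\lambda$, as $\lambda_a\ge\lambda_{a+1}\ge a$). Your parenthetical ``with convexity handling $c=0$'' signals that you noticed this, but the case should be spelled out, since the strict-increase conclusion depends on having the tight inequality on whichever side crosses the main diagonal; symmetrically the $c<0$ case needs $m_{c+1}\le m_c+1$ but $m_1\le m_0$ at $c=0$. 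With those inequalities in place, the bookkeeping (compare $m_{c\pm 1}$ just after step $i$ with what addability forces just before step $j$, using that the content-$c$ cell added at step $j$ sits in row $m_c+1$) does go through.

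For the reverse inclusion, the reduction to a single component and then to Maya configurations is exactly the right move, and the heart of the argument is the one you identify: the moves touching a fixed site $v$ are precisely the occurrences of the letters $v$ and $v+1$, the nonzero-$1$-word condition forces these to alternate in the merged subsequence, and then setting the initial occupancy of $v$ from the type of the first move touching it makes every local constraint at $v$ hold. Because a move's legality is checked solely at its source and target sites, and you have made each touched site locally consistent, all moves are simultaneously legal; this decoupling is worth stating explicitly. The charge normalization is unproblematic since only finitely many sites are constrained. Your closing remark that one could instead combine Lemma~\ref{lem:nonzero-k-words} with faithfulness of Lam's ribbon Schur operator action is a genuine shortcut, but note that faithfulness and Lemma~\ref{lem:nonzero-k-words} are two halves of the same statement in \cite{BLamLLT}, so that route risks circularity; the direct bead argument is the cleaner way to a self-contained proof.
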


Lam \cite{LamRibbon} used the above construction
to obtain formulas (not manifestly positive) for the coefficients of
Schur expansions of LLT polynomials.
In our language, his result can be stated as follows.
\pagebreak[3]

\begin{proposition}
\label{p llt formula}
Let $\bm{\beta}$ be a  $k$-tuple of skew shapes 
with shifted contents in $\{1,\dots,N\}$.
Then the corresponding new variant combinatorial LLT polynomial can be written as
\begin{equation}
\label{eq:LLT=Fgamma}
\mathcal{G}_{\bm{\beta}}(\mathbf{x};q)
= F_\gamma(\mathbf{x})
\end{equation}
where
\[
\gamma= \sum_{\e{v}\in\Wi{k}(\bm{\beta})} q^{\invi{k}(\e{v})}\,\e{v}\in
\Jlamperp{k}\,.
\]
Consequently,
$\mathfrak{c}_{\bm{\beta}}^\lambda(q) = \langle
\mathfrak{J}_{\lambda}(\mathbf{u}), \gamma \rangle$.
(Here we use the obvious modifications of 
our basic notions for $\gamma\in \QQ(q)\otimes\U^*$.)
\end{proposition}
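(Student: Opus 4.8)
The plan is to prove the three assertions of the proposition in turn. First, the identity $\mathcal{G}_{\bm{\beta}}(\mathbf{x};q)=F_\gamma(\mathbf{x})$ is just a matter of unwinding definitions: by \eqref{eq:new-variant-LLT} the left-hand side equals $\sum_{\e{v}\in\Wi{k}(\bm{\beta})}q^{\invi{k}(\e{v})}Q_{\Des(\e{v})}(\mathbf{x})$, and by the $\QQ(q)$-linear extension of \eqref{eq:Fgamma-via-Q} applied to $\gamma=\sum_{\e{v}\in\Wi{k}(\bm{\beta})}q^{\invi{k}(\e{v})}\,\e{v}$, the right-hand side is the very same sum (note $\gamma$ is a finite combination of words since all words in $\Wi{k}(\bm{\beta})$ have the same length). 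The last assertion $\mathfrak{c}_{\bm{\beta}}^\lambda(q)=\langle\mathfrak{J}_\lambda(\mathbf{u}),\gamma\rangle$ will then be a formal consequence of $\gamma\in\Jlamperp{k}$: since $\Jlam{k}\supset\QQ(q)\tsr_\ZZ\Irk$ (noted above, cf.\ Proposition~\ref{p lam and assaf}) and $\Irk\supset\Icomm$, the vector $\gamma$ pairs to zero with $\QQ(q)\tsr_\ZZ\Icomm$, so the $\QQ(q)$-linear extension of Proposition~\ref{Icomm-Schur} gives $F_\gamma(\mathbf{x})=\sum_\lambda s_\lambda(\mathbf{x})\langle\mathfrak{J}_\lambda(\mathbf{u}),\gamma\rangle$; comparing this with the Schur expansion \eqref{eq:LLT-LR} together with the identity $\mathcal{G}_{\bm{\beta}}=F_\gamma$, and using linear independence of Schur functions over $\QQ(q)$, yields the claimed formula for $\mathfrak{c}_{\bm{\beta}}^\lambda(q)$.

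Thus the real content is to prove $\gamma\in\Jlamperp{k}$. Since $\Jlam{k}$ is a two-sided ideal, it is enough to check that $\langle\mathbf{u}_{\e{p}}\,g\,\mathbf{u}_{\e{s}},\gamma\rangle=0$ for every generator $g$ from Definition~\ref{d Lam algebra} and all words $\e{p},\e{s}$; here $\langle\mathbf{u}_{\e{w}},\gamma\rangle$ equals $q^{\invi{k}(\e{w})}$ if $\e{w}\in\Wi{k}(\bm{\beta})$ and $0$ otherwise. For the monomial generators $u_a^2$, $u_{a+k}u_au_{a+k}$, and $u_au_{a+k}u_a$, the product $\mathbf{u}_{\e{p}}\,g\,\mathbf{u}_{\e{s}}$ is a single monomial $\mathbf{u}_{\e{w}}$ in which $\e{w}$ has either two equal consecutive letters $a,a$, or three consecutive letters $a+k,\,a,\,a+k$, or three consecutive letters $a,\,a+k,\,a$. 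In each case the two occurrences of the repeated letter are separated by too few positions to accommodate the intermediate letters required by the definition of a nonzero $k$-word, so $\e{w}$ is not a nonzero $k$-word; by the lemma identifying the nonzero $k$-words with the words that appear in the sets $\Wi{k}(\cdot)$, it follows that $\e{w}\notin\Wi{k}(\bm{\beta})$ and hence $\langle\mathbf{u}_{\e{w}},\gamma\rangle=0$.

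For the binomial generators $u_au_b-u_bu_a$ $(b-a>k)$ and $u_au_b-q^{-1}u_bu_a$ $(0<b-a<k)$, the product $\mathbf{u}_{\e{p}}\,g\,\mathbf{u}_{\e{s}}$ equals $\mathbf{u}_{\e{w}}-c\,\mathbf{u}_{\e{w}'}$, where $c\in\{1,q^{-1}\}$ and the words $\e{w},\e{w}'$ differ only by transposing two adjacent entries equal to $a$ and $b$, say in positions $m,m+1$. Two things must be verified. First, $\e{w}\in\Wi{k}(\bm{\beta})$ if and only if $\e{w}'\in\Wi{k}(\bm{\beta})$: writing $\bm{\beta}=\bm{\mu}/\bm{\nu}$ and $a=c_ak+r_a$, $b=c_bk+r_b$ with $r_a,r_b\in\{0,\dots,k-1\}$, the operators $u_a$ and $u_b$ commute in the right action of $\U$ on $\ZZ\mathbb{Y}^k$ in both cases --- if $0<b-a<k$ then $r_a\ne r_b$, so they modify distinct components; if $b-a>k$ then either $r_a\ne r_b$, or $r_a=r_b$ with $|c_a-c_b|\ge 2$, in which latter case $u_a$ and $u_b$ add cells on non-adjacent diagonals of a single component --- so $\bm{\nu}\cdot\mathbf{u}_{\e{w}}=\bm{\nu}\cdot\mathbf{u}_{\e{w}'}$ and the two membership conditions are equivalent. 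Second, the effect on $\invi{k}$: for each position $j\notin\{m,m+1\}$, passing from $\e{w}$ to $\e{w}'$ merely interchanges the roles of positions $m$ and $m+1$, so the combined contribution of the position-pairs $\{j,m\}$ and $\{j,m+1\}$ to $\invi{k}$ is unchanged; hence only the pair $\{m,m+1\}$ itself can switch inversion status. That pair is never an $\invi{k}$-inversion of $\e{w}$ (its entries $a<b$ appear in increasing order) and is one for $\e{w}'$ exactly when $0<b-a<k$. Therefore $\invi{k}(\e{w}')-\invi{k}(\e{w})$ equals $0$ when $b-a>k$ and $1$ when $0<b-a<k$, matching the coefficient $c\in\{1,q^{-1}\}$; combining the two points gives $\langle\mathbf{u}_{\e{w}},\gamma\rangle=c\,\langle\mathbf{u}_{\e{w}'},\gamma\rangle$, hence $\langle\mathbf{u}_{\e{p}}\,g\,\mathbf{u}_{\e{s}},\gamma\rangle=0$.

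The only genuinely delicate part is the bookkeeping in the previous paragraph: keeping straight the dictionary $a=ck+r$ between indices, contents, and components, and tracking $\invi{k}$ under an adjacent transposition. This is elementary but must be done with care. A more conceptual alternative for the step $\gamma\in\Jlamperp{k}$ would be to invoke the $q$-deformed right action of $\U_q$ on $\QQ(q)\mathbb{Y}^k$ due to Lam~\cite{LamRibbon}, in the new-variant normalization of~\cite{BLamLLT}: there every generator of $\Jlam{k}$ acts as zero, and the coefficient of $\bm{\mu}$ in $\bm{\nu}\cdot\mathbf{u}_{\e{v}}$ is $q^{\invi{k}(\e{v})}$ when $\e{v}\in\Wi{k}(\bm{\beta})$ and $0$ otherwise; then $\gamma$ is literally the functional ``coefficient of $\bm{\mu}$ in $\bm{\nu}\cdot(\,\cdot\,)$,'' which annihilates $\Jlam{k}$ by construction. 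I would write out the elementary argument for self-containedness and note this alternative as the conceptual reason behind it.
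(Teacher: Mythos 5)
Your proof is correct, and it fills in precisely the step that the paper defers to a reference. The paper's own proof handles the two easy parts exactly as you do — the identity $\mathcal{G}_{\bm{\beta}}=F_\gamma$ by comparing \eqref{eq:new-variant-LLT} with \eqref{eq:Fgamma-via-Q}, and the last assertion by a $\QQ(q)$-coefficient variant of Proposition~\ref{Icomm-Schur} — but for the core claim $\gamma\in\Jlamperp{k}$ it simply cites \cite[Proposition~5.4]{BD0graph} and says the argument carries over. Your generator-by-generator verification is the natural way to reconstruct that deferred step, and all the bookkeeping is sound: the monomial generators yield words failing the nonzero-$k$-word criterion (the two repeated letters are at distance $\le 2$, leaving no room for the required intermediate pair), and for the $q$-commutators you correctly show (a)~$u_a$ and $u_b$ commute in the shifted-content action — either because $r_a\ne r_b$ (distinct components) or because $r_a=r_b$ forces $|c_a-c_b|=(b-a)/k\ge 2$ so the two added cells lie on diagonals at distance $\ge 2$ — and (b)~an adjacent $ab\leftrightarrow ba$ swap changes $\invi{k}$ by exactly the indicator of $0<b-a<k$, which cancels against $c\in\{1,q^{-1}\}$. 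The ``conceptual alternative'' you mention (Lam's $q$-deformed action in which every generator of $\Jlam{k}$ acts as zero and $\gamma$ is the coefficient-extraction functional) is indeed the viewpoint of \cite{LamRibbon} and of the cited \cite{BD0graph} result, so the two routes are really the same calculation organized differently; your direct version has the advantage of being fully self-contained.
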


\pagebreak[3]

\begin{proof}
The proof of the fact that $\gamma\in \Jlamperp{k}$
is exactly the same as the proof of
the first statement of  \cite[Proposition 5.4]{BD0graph} (even though
the $\Wi{k}(\bm{\beta})$ defined here is more general than that
defined in~\cite{BD0graph}).
The formula~\eqref{eq:LLT=Fgamma} is immediate from
comparing~\eqref{eq:new-variant-LLT} with~\eqref{eq:Fgamma-via-Q}.
The last statement then follows from a straightforward modification of Proposition \ref{Icomm-Schur}
to the coefficient field~$\QQ(q)$ (as opposed to working over~$\ZZ$).
\end{proof}


The switchboards studied in this paper were partially motivated by the
D~graphs of Assaf~\cite{SamiOct13}.
These graphs were introduced to give a combinatorial explanation of LLT and Macdonald positivity.
We next explain some of Assaf's ideas using the language
developed in Sections~\ref{s main results} and~\ref{sec:switchboards},
and connect them to Lam's construction reviewed above.

\begin{definition}
\label{d I assaf}
The \emph{Assaf ideal}  $\Iassaf{k}$ is the ideal in~$\U$ generated by
all monomials $\mathbf{u}_{\e{w}}\in\Jlam{k}$
(cf.\ Lemma~\ref{lem:nonzero-k-words})
together with the elements listed in \eqref{e Iassaf1}--\eqref{e Iassaf4}.
\end{definition}

It is easy to see that
$\Iassaf{k}\supset \Iaba{k}\supset \Irk$.




\begin{proposition}
\label{p lam and assaf}
$\Jlam{k}\supset \QQ(q) \tsr_\ZZ \Iassaf{k}$.
\end{proposition}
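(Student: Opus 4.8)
The plan is to reduce the inclusion $\Jlam{k}\supset\QQ(q)\tsr_\ZZ\Iassaf{k}$ to a finite check on the generators of $\Iassaf{k}$. Since $\Jlam{k}$ is a two-sided ideal of $\U_q=\QQ(q)\tsr_\ZZ\U$, and since $\QQ(q)\tsr_\ZZ\Iassaf{k}$ is exactly the two-sided ideal of $\U_q$ generated by the (images under $\U\hookrightarrow\U_q$ of the) generators of $\Iassaf{k}$ in Definition~\ref{d I assaf}, it suffices to show that each of those generators lies in $\Jlam{k}$. There are two families: the monomials $\mathbf{u}_{\e{w}}$ that already belong to $\Jlam{k}$ (cf.\ Lemma~\ref{lem:nonzero-k-words}), for which there is nothing to prove; and the degree-$3$ elements \eqref{e Iassaf1}--\eqref{e Iassaf4}, which are the substance of the argument.

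For \eqref{e Iassaf1} and \eqref{e Iassaf2}, with $a<b<c$ and $c-a>k$, I would observe that $u_au_c-u_cu_a\in\Jlam{k}$ by the far-commutation relation \eqref{e far commute}; multiplying on the left, resp.\ on the right, by $u_b$ then gives $u_bu_au_c-u_bu_cu_a\in\Jlam{k}$ and $u_au_cu_b-u_cu_au_b\in\Jlam{k}$, as desired.

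For \eqref{e Iassaf3} and \eqref{e Iassaf4}, with $a<b<c$ and $c-a\le k$, the key point is that these conditions force $0<b-a<k$ and $0<c-b<k$, both inequalities being strict because $a<b<c$ gives $b-a<c-a\le k$ and $c-b<c-a\le k$. Hence the $q$-commutation relation \eqref{e q commute} yields $u_bu_a\equiv q\,u_au_b$ and $u_bu_c\equiv q^{-1}u_cu_b$ modulo $\Jlam{k}$, and composing these congruences gives
\[
u_bu_au_c\ \equiv\ q\,u_au_bu_c\ \equiv\ u_au_cu_b
\qquad\text{and}\qquad
u_bu_cu_a\ \equiv\ q^{-1}u_cu_bu_a\ \equiv\ u_cu_au_b
\]
modulo $\Jlam{k}$, which is precisely what is required. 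Note that this computation never invokes a relation directly between $u_a$ and $u_c$, so it is insensitive to whether $c-a<k$ or $c-a=k$; in particular the ``no repeated letter'' relations $u_a^2$, $u_{a+k}u_au_{a+k}$, $u_au_{a+k}u_a$ of $\Jlam{k}$ play no role in this part.

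There is no genuine obstacle in this argument: it is a direct verification, and it automatically also re-proves the already-noted inclusions $\Iassaf{k}\supset\Iaba{k}\supset\Irk$ at the level of $q$-deformed ideals. The one thing to be careful about is the bookkeeping of index differences — tracking which of $b-a$, $c-b$, $c-a$ fall in the ranges $<k$, $=k$, or $>k$ — and, for \eqref{e Iassaf3}--\eqref{e Iassaf4}, making sure the argument does not secretly require $u_a$ and $u_c$ to commute or $q$-commute, which it does not.
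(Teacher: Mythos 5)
Your proof is correct and follows essentially the same route as the paper: reduce to generators, dispatch the monomial generators and the far-commutation cases \eqref{e Iassaf1}--\eqref{e Iassaf2} immediately, and handle \eqref{e Iassaf3}--\eqref{e Iassaf4} by applying the $q$-commutation relation \eqref{e q commute} twice. You are merely more explicit than the paper about the index bookkeeping (noting that $a<b<c$ and $c-a\le k$ force $0<b-a<k$ and $0<c-b<k$, so \eqref{e q commute} genuinely applies), which is a worthwhile clarification rather than a deviation.
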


\begin{proof}
To ease notation in our argument, we identify each element
$g\in\Iassaf{k}$ with 
$1 \tsr g \in \QQ(q) \tsr_\ZZ
\Iassaf{k}$.
It suffices to show that the generators of $\Iassaf{k}$
vanish
modulo~$\Jlam{k}$.
This clearly holds for \eqref{e Iassaf1}--\eqref{e
  Iassaf2}, cf.~\eqref{e far commute}.
To check that the generators \eqref{e Iassaf3} vanish modulo
$\Jlam{k}$, we compute, using \eqref{e q commute} twice:
\begin{align*}
&&u_bu_au_c &\equiv qu_au_bu_c \equiv u_au_cu_b \ \bmod \Jlam{k}
&&\quad \text{for $c-a \leq k$ and $a<b<c$.}
\end{align*}
The argument for the generators \eqref{e Iassaf4} is similar.
\end{proof}

Proposition~\ref{p lam and
  assaf} implies that $\Ilam{k}=\Jlam{k}\cap\U \supset \Iassaf{k}$.

\begin{definition} 
\label{def-assaf-llt-k}
An \emph{Assaf switchboard} of level~$k$
is a switchboard in which every vertex is a nonzero
$k$-word, and every $i$-edge with endpoints
$\e{w}$ and $\e{w}'$ corresponds to a switch where
$\{\e{w}_{i-1}\e{w}_i\e{w}_{i+1}, \e{w}'_{i-1}\e{w}'_i\e{w}'_{i+1}\}$
fits one of the patterns
\begin{align}
&\text{$\{\e{bac}, \e{bca}\}$ or  $\{\e{acb}, \e{cab}\}$, with $c-a > k$
    and $a<b<c$;} \label{e assaf switchboard 1}\\
&\text{$\{\e{bac}, \e{acb}\}$ or  $\{\e{bca}, \e{cab}\}$, with $c-a \leq
    k$ and $a < b < c$.} \label{e assaf switchboard 2}
\end{align}
\end{definition}

Assaf ideals $\Iassaf{k}$ and Assaf switchboards are closely related,
as the following proposition shows; its proof is straightforward.

\begin{proposition}
\label{p assaf ideal switchboards}
For a set of words $W$ of the same length, the following are equivalent:
\begin{itemize}
\item $\sum_{\e{w}\in W} \e{w} \in \Iassafperp{k}$;
\item $W$ is the vertex set of an Assaf switchboard of level~$k$.
\end{itemize}
If these conditions hold,
then there is a unique Assaf switchboard with vertex set $W$.
\end{proposition}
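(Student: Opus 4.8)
The plan is to reduce the statement to Proposition~\ref{pr:01Irk=switchboard} together with the orthogonality identity $\Iassaf{k}^\perp = \Irk^\perp \cap (\text{span of monomials in } \Jlam{k})^\perp \cap (\text{span of the relators }\eqref{e Iassaf1}\text{--}\eqref{e Iassaf4})^\perp$, which is immediate from the fact that $\Iassaf{k}$ is generated by $\Irk$ together with these extra elements (recall $\Iassaf{k}\supset\Iaba{k}\supset\Irk$). So suppose $\gamma=\sum_{\e{w}\in W}\e{w}$ with all words of a common length $n$. First I would observe that $\gamma\in\Irk^\perp$ forces $W$ to be the vertex set of some switchboard $\Gamma$ by Proposition~\ref{pr:01Irk=switchboard}; the content of the proposition is then to pin down which switchboards arise and to establish uniqueness. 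Orthogonality to the monomials $\mathbf{u}_{\e{w}}\in\Jlam{k}$ says precisely that no vertex of $W$ is such a word, i.e.\ by Lemma~\ref{lem:nonzero-k-words} every vertex is a nonzero $k$-word. It remains to analyze orthogonality to the relators \eqref{e Iassaf1}--\eqref{e Iassaf4}, and this is where the switch-type dichotomy in \eqref{e assaf switchboard 1}--\eqref{e assaf switchboard 2} is forced.

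The key step is the local analysis at a fixed position $i$, where I would work inside the three-letter window $(\e{w}_{i-1},\e{w}_i,\e{w}_{i+1})$, all other letters being held fixed; this is legitimate because $\Irk$, $\Jlam{k}$, and the relators \eqref{e Iassaf1}--\eqref{e Iassaf4} are all homogeneous and ``local'' in the sense that a relator lives in a single window. For a vertex $\e{v}$ with exactly one descent among positions $i-1,i$ — so the relevant window is a permutation of three distinct values $a<b<c$ in one of the patterns $\e{bac},\e{bca},\e{acb},\e{cab}$ (the cases with a repeated letter in the window can be ruled out, or reduced, using that $\e{v}$ is a nonzero $k$-word) — the switchboard axiom says $\e{v}$ lies on exactly one $i$-edge, and Proposition~\ref{pr:01Irk=switchboard} already tells us this edge is a Knuth switch $\{\e{bac},\e{bca}\}\!\sim\!\{\e{acb},\e{cab}\}$ or a rotation switch $\{\e{bac},\e{acb}\}\!\sim\!\{\e{bca},\e{cab}\}$. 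Now I would test $\gamma$ against the appropriate relator: if $c-a>k$, then \eqref{e Iassaf1} resp.\ \eqref{e Iassaf2} (which are $u_bu_au_c-u_bu_cu_a$, $u_au_cu_b-u_cu_au_b$ with suitable letters substituted) must pair to zero with $\gamma$, which forces the multiset of $W$-vertices with this fixed window-context to be closed under $\e{bac}\leftrightarrow\e{bca}$ and under $\e{acb}\leftrightarrow\e{cab}$; combined with the ``exactly one $i$-edge'' axiom this forces the $i$-edge at $\e{v}$ to be the Knuth switch, i.e.\ pattern \eqref{e assaf switchboard 1}. Symmetrically, if $c-a\le k$, pairing with \eqref{e Iassaf3}--\eqref{e Iassaf4} ($u_bu_au_c-u_au_cu_b$ and $u_bu_cu_a-u_cu_au_b$) forces closure under $\e{bac}\leftrightarrow\e{acb}$ and $\e{bca}\leftrightarrow\e{cab}$, hence the $i$-edge is the rotation switch, i.e.\ pattern \eqref{e assaf switchboard 2}. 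This establishes the ``$\Rightarrow$'' direction. For the converse, a set $W$ of nonzero $k$-words that forms an Assaf switchboard of level $k$ visibly satisfies all the orthogonality conditions window-by-window — orthogonality to $\Irk$ since an Assaf switchboard is in particular a switchboard, orthogonality to the $\Jlam{k}$-monomials since all vertices are nonzero $k$-words, and orthogonality to \eqref{e Iassaf1}--\eqref{e Iassaf4} by the matching of the case $c-a>k$ with the Knuth pattern and $c-a\le k$ with the rotation pattern — so $\gamma\in\Iassaf{k}^\perp$.

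Finally, uniqueness: given the vertex set $W$, the edge set of an Assaf switchboard is completely determined. Indeed, for each position $i$ and each vertex $\e{v}$ with exactly one descent in positions $i-1,i$, the switchboard axiom says $\e{v}$ lies on exactly one $i$-edge; its other endpoint is obtained by applying the unique window-switch of the type dictated by whether the window spread is $>k$ or $\le k$ (pattern \eqref{e assaf switchboard 1} or \eqref{e assaf switchboard 2}), and — crucially — that other endpoint must already lie in $W$, since otherwise $\gamma$ would fail orthogonality against the corresponding relator \eqref{e Iassaf1}--\eqref{e Iassaf4}. Hence the edge, if it can be drawn at all, is forced, and conversely every such forced edge is realized; so the Assaf switchboard on $W$ exists and is unique.

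I expect the main obstacle to be the bookkeeping in the local window analysis when the three-letter window contains a repeated letter (so it is not one of the ``clean'' patterns $\e{bac},\e{bca},\e{acb},\e{cab}$): one must check that the nonzero-$k$-word condition on the vertices, together with the structure of $\Jlam{k}$, rules out the problematic configurations, so that the case analysis against \eqref{e Iassaf1}--\eqref{e Iassaf4} only ever needs the four three-distinct-letter patterns. Everything else is a routine, if slightly tedious, verification of the kind the paper labels ``straightforward.''
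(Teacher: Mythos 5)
The paper does not supply a proof of this proposition, merely declaring it straightforward, so there is nothing to compare your argument against; your argument is indeed the expected one and is correct in substance. Two points should nonetheless be tightened. Your opening orthogonality identity should speak of ideals, not spans: one must pair $\gamma$ against all products $\mathbf{u}_{\e{y}}\,g\,\mathbf{u}_{\e{z}}$ with $g$ a generator, not merely against the generators. For the monomial generators this makes no difference (the monomials lying in $\Jlam{k}$ are closed under left/right multiplication by monomials because $\Jlam{k}$ is an ideal, so their span already equals the ideal they generate), but for the binomial relators \eqref{e Iassaf1}--\eqref{e Iassaf4} it matters. You do work implicitly at this level of generality in the window analysis, so the substance is right, but the identity as written is not.

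The phrase ``this forces the $i$-edge at $\e{v}$ to be the Knuth switch'' in the $\Rightarrow$ direction is logically off. Proposition~\ref{pr:01Irk=switchboard} produces \emph{some} switchboard $\Gamma$ on $W$ whose edge types you do not control, and closure of $W$ under the Knuth switch does not force $\Gamma$'s $i$-edge at $\e{v}$ to be Knuth: if all four of $\e{ybacz},\e{ybcaz},\e{yacbz},\e{ycabz}$ lie in $W$, then $\Gamma$ could just as well carry the two rotation edges. What the closure argument actually yields --- and what your final paragraph articulates correctly --- is that one may \emph{construct} the Assaf switchboard directly: at each vertex with a single descent in window $i$, insert the unique Assaf-type $i$-edge; orthogonality to the relevant relator ensures the other endpoint lies in $W$, so the edge can be drawn, and the Assaf requirement forces its type, giving existence and uniqueness in one stroke. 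The $\Rightarrow$ paragraph should therefore be recast as this construction rather than as a deduction about $\Gamma$. Finally, your concern about repeated-letter windows is resolved exactly as you suspect: a nonzero $k$-word cannot repeat a letter within any contiguous window of length at most $3$, since the separating condition in the definition of nonzero $k$-word requires two positions strictly between the repeated pair and there are fewer; hence only the four patterns on three distinct letters ever arise.
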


\pagebreak[3]



The next result and its corollary relate Assaf switchboards to LLT polynomials.

\begin{proposition}
\label{p Sami LLT graphs}
Let $t$ be a nonnegative integer, and $\bm{\beta}$ a  $k$-tuple of skew shapes
in which the shifted contents of all cells lie in $\{1,\dots,N\}$.
Then there is a unique level~$k$ Assaf switchboard
$\Gamma=\Gamma_k(\bm{\beta},t)$
whose vertex set is $\{\e{v} \in \Wi{k}(\bm{\beta}) \mid
\invi{k}(\e{v}) = t\}$.
\end{proposition}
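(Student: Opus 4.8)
The plan is to reduce the statement to a single membership relation, $\gamma\in\Iassafperp{k}$, and then harvest that relation from the LLT formula already recorded in Proposition~\ref{p llt formula}. First I would set
\[
W=\{\e{v}\in\Wi{k}(\bm{\beta}) \mid \invi{k}(\e{v})=t\}, \qquad \gamma=\sum_{\e{w}\in W}\e{w}.
\]
By Proposition~\ref{p assaf ideal switchboards}, the existence of a (necessarily unique) level~$k$ Assaf switchboard with vertex set~$W$ is \emph{equivalent} to the condition $\gamma\in\Iassafperp{k}$, so the whole proposition collapses to verifying this one condition.

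To prove $\gamma\in\Iassafperp{k}$, I would start from Proposition~\ref{p llt formula}, which gives $\gamma'=\sum_{\e{v}\in\Wi{k}(\bm{\beta})}q^{\invi{k}(\e{v})}\,\e{v}\in\Jlamperp{k}$. Since $\Wi{k}(\bm{\beta})$ is finite and $\invi{k}$ is $\ZZ_{\ge0}$-valued, I can sort by the value of the $k$-inversion statistic and write $\gamma'=\sum_{s\ge0}q^s\gamma_s$, where $\gamma_s=\sum_{\e{v}\in\Wi{k}(\bm{\beta}),\,\invi{k}(\e{v})=s}\e{v}$ and in particular $\gamma=\gamma_t$. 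For any $z\in\Iassaf{k}$, Proposition~\ref{p lam and assaf} gives $1\tsr z\in\QQ(q)\tsr_\ZZ\Iassaf{k}\subset\Jlam{k}$, hence $0=\langle z,\gamma'\rangle=\sum_{s\ge0}q^s\langle z,\gamma_s\rangle$ in $\QQ(q)$. The scalars $\langle z,\gamma_s\rangle$ are integers and the powers of~$q$ are linearly independent over~$\QQ$, so each $\langle z,\gamma_s\rangle$ vanishes; in particular $\langle z,\gamma\rangle=0$. Letting $z$ range over $\Iassaf{k}$ yields $\gamma\in\Iassafperp{k}$, and Proposition~\ref{p assaf ideal switchboards} then delivers the switchboard $\Gamma_k(\bm{\beta},t)$ together with its uniqueness. (This argument also silently re-proves that every word of $W$ is a nonzero $k$-word, since $\Iassaf{k}$ contains every monomial $\mathbf{u}_{\e{w}}$ with $\e{w}$ not a nonzero $k$-word, by Definition~\ref{d I assaf} and Lemma~\ref{lem:nonzero-k-words}.)

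I do not expect a serious obstacle: essentially everything is packaged in Propositions~\ref{p llt formula}, \ref{p lam and assaf}, and~\ref{p assaf ideal switchboards}, and the only real idea is the elementary observation that a polynomial identity in~$q$ forces its coefficients to vanish, which is what ``decouples'' the grading of $\Wi{k}(\bm{\beta})$ by $\invi{k}$. The one point that needs care is the base change from $\U$ to $\U_q=\QQ(q)\tsr_\ZZ\U$: the generators of $\Iassaf{k}$ carry no~$q$, while $\gamma'$ and $\Jlam{k}$ do, so the expression $\langle z,\gamma'\rangle$ genuinely is a polynomial in~$q$ and the coefficient extraction is legitimate. A~more hands-on route --- directly checking the switchboard axioms for~$W$ by tracking how the Assaf switches \eqref{e assaf switchboard 1}--\eqref{e assaf switchboard 2} interact with the conditions $\bm{\nu}\cdot\mathbf{u}_{\e{v}}=\bm{\mu}$ and $\invi{k}(\e{v})=t$ --- is available but substantially longer, and the argument above sidesteps it completely.
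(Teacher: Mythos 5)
Your proof is correct, but it takes a genuinely different route from the paper's. The paper argues directly: for each Assaf switch~\eqref{e assaf switchboard 1}--\eqref{e assaf switchboard 2} relating $\e{w}$ and $\e{w}'$, it checks from the definitions that $\e{w}\in\Wi{k}(\bm{\beta})\Leftrightarrow\e{w}'\in\Wi{k}(\bm{\beta})$ and $\invi{k}(\e{w})=\invi{k}(\e{w}')$ (the rotation case being the only one needing a line of computation, namely $\invi{k}(\e{ybacz})=\invi{k}(\e{yabcz})+1=\invi{k}(\e{yacbz})$), then appeals to Proposition~\ref{p assaf ideal switchboards} for uniqueness. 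You instead reduce everything to the statement $\gamma_t\in\Iassafperp{k}$, and harvest it from the already-recorded containment $\gamma'=\sum_s q^s\gamma_s\in\Jlamperp{k}$ (Proposition~\ref{p llt formula}) together with $\Jlam{k}\supset\QQ(q)\tsr_\ZZ\Iassaf{k}$ (Proposition~\ref{p lam and assaf}), decoupling the coefficients by linear independence of the powers of~$q$. This is tidy and avoids any case analysis on switches, but it does shift the real combinatorial content onto Proposition~\ref{p llt formula}, whose proof (of $\gamma'\in\Jlamperp{k}$) is in turn cited from~\cite{BD0graph} and itself involves checking how the two-letter relations \eqref{e far commute}--\eqref{e q commute} interact with $\Wi{k}(\bm{\beta})$ and $\invi{k}$. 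So the work is not really avoided, just relocated to an earlier citation; the paper's proof is actually quite short and is self-contained in a way yours is not, whereas yours makes the logical dependence on Lam's setup fully transparent. One additional small merit of your route worth keeping in mind: it establishes at once that \emph{every} $\gamma_s$, not just $\gamma_t$, lies in $\Iassafperp{k}$, which is exactly the grading statement implicit in Corollary~\ref{c Sami LLT graphs}.
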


\pagebreak[3]

\begin{proof}
Suppose that words $\e{w}$ and $\e{w}'$ are related
by a switch of the form \eqref{e assaf switchboard 1} or \eqref{e
  assaf switchboard 2}. We must show that
\begin{align}
&\e{w} \in \Wi{k}(\bm{\beta}) \text{ if and only if } \e{w}' \in
  \Wi{k}(\bm{\beta});  \label{e ww' Wik}\\
&\invi{k}(\e{w}) = \invi{k}(\e{w}'). \label{e ww' invi}
\end{align}
Statement \eqref{e ww' Wik} is checked directly
from the definition~\eqref{eq:Wik-beta}.
Statement \eqref{e ww' invi} is clear if $\e{w}$ and
$\e{w}'$ are related by a switch of the form \eqref{e assaf
  switchboard 1}.
Suppose that $\{\e{w}, \e{w}'\} = \{\e{ybacz}, \e{yacbz}\}$ for $c-a \le k$ and
$a<b<c$ and (sub)words $\e{y}, \e{z}$ (the second case of \eqref{e assaf
  switchboard 2} is similar). Then
$\invi{k}(\e{ybacz}) = \invi{k}(\e{yabcz})+1 = \invi{k}(\e{yacbz})$,
which verifies~\eqref{e ww' invi}.

The uniqueness follows from
Proposition~\ref{p assaf ideal switchboards}.
\end{proof}

\begin{definition}
\emph{LLT switchboards} are the special
Assaf switchboards $\Gamma_k(\bm{\beta},t)$ described in
Proposition~\ref{p Sami LLT graphs}.
This terminology is justified by Corollary~\ref{c Sami LLT graphs} below.
\end{definition}

\begin{corollary}
\label{c Sami LLT graphs}
Let $\bm{\beta}$ a  $k$-tuple of skew shapes
in which the shifted contents of cells lie in $\{1,\dots,N\}$.
Then the associated new variant combinatorial LLT polynomial
is the $q$-generating function for the LLT switchboards $\Gamma_k(\bm{\beta},t)$:
\begin{equation}
\label{eq:LLT-Assaf}
\mathcal{G}_{\bm{\beta}}(\mathbf{x};q)
= \sum_t q^tF_{\Gamma_k(\bm{\beta},t)}(\mathbf{x}).
\end{equation}
\end{corollary}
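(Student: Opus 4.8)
The plan is to simply regroup the quasisymmetric expansion of $\mathcal{G}_{\bm{\beta}}$ according to the value of the $k$-inversion statistic, and then recognize each group as the quasisymmetric expansion of an LLT switchboard. First I would start from Definition~\ref{def:new-variant-LLT}, which gives
\[
\mathcal{G}_{\bm{\beta}}(\mathbf{x};q) = \sum_{\e{v}\in\Wi{k}(\bm{\beta})}q^{\invi{k}(\e{v})}Q_{\Des(\e{v})}(\mathbf{x}),
\]
and partition the (finite) index set $\Wi{k}(\bm{\beta})$ as the disjoint union over $t\ge 0$ of the subsets $\{\e{v}\in\Wi{k}(\bm{\beta})\mid \invi{k}(\e{v})=t\}$. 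The power of $q$ attached to each word in the $t$-th block is exactly $q^t$, so
\[
\mathcal{G}_{\bm{\beta}}(\mathbf{x};q) = \sum_{t\ge 0} q^t \sum_{\substack{\e{v}\in\Wi{k}(\bm{\beta})\\ \invi{k}(\e{v})=t}} Q_{\Des(\e{v})}(\mathbf{x}).
\]

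Next I would invoke Proposition~\ref{p Sami LLT graphs}, which tells us that for each $t$ the set $\{\e{v}\in\Wi{k}(\bm{\beta})\mid \invi{k}(\e{v})=t\}$ is precisely the vertex set of the (unique) level~$k$ Assaf switchboard $\Gamma_k(\bm{\beta},t)$, i.e.\ of the LLT switchboard by definition. Then by Definition~\ref{def:FGamma}, applied with $\gamma=\sum_{\e{v}}\e{v}$ ranging over that vertex set, together with formula~\eqref{eq:Fgamma-via-Q}, the inner sum above equals $F_{\Gamma_k(\bm{\beta},t)}(\mathbf{x})$. Substituting this identification into the displayed equation yields
\[
\mathcal{G}_{\bm{\beta}}(\mathbf{x};q) = \sum_t q^t F_{\Gamma_k(\bm{\beta},t)}(\mathbf{x}),
\]
which is~\eqref{eq:LLT-Assaf}.

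I do not anticipate any real obstacle here: the statement is essentially a bookkeeping consequence of the earlier results, the only substantive input being Proposition~\ref{p Sami LLT graphs}, which has already been proved. The one point worth a sentence of care is that the sums are genuinely finite (so that reindexing is unproblematic), which follows because every word in $\Wi{k}(\bm{\beta})$ is a rearrangement of the shifted contents of the finitely many cells of $\bm{\beta}$, hence $\invi{k}$ takes only finitely many values on $\Wi{k}(\bm{\beta})$ and each block is finite. Apart from that remark, the proof is a two-line derivation and I would present it as such.
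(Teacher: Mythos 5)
Your proof is correct and takes essentially the same route as the paper, which simply cites Propositions~\ref{p llt formula} and~\ref{p Sami LLT graphs}: the regrouping by $\invi{k}$-value is exactly the content hidden in comparing~\eqref{eq:new-variant-LLT} with~\eqref{eq:Fgamma-via-Q} (i.e.\ the first part of Proposition~\ref{p llt formula}), and the identification of each block with the vertex set of $\Gamma_k(\bm{\beta},t)$ is Proposition~\ref{p Sami LLT graphs}. Your write-up just unrolls the citation to Proposition~\ref{p llt formula} into an explicit two-line computation, which is fine.
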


\begin{proof}
This is a direct consequence of Propositions~\ref{p llt formula} and~\ref{p
  Sami LLT graphs}.
\end{proof}


\begin{remark}
Corollary~\ref{c Sami LLT graphs} is a restatement
of the connection between certain D~graphs and LLT polynomials described
in \cite[\textsection4.2]{SamiOct13}.
The switchboard $\bigsqcup_t \Gamma_k(\bm{\beta},t)$ is, after relabeling vertices,
the graph $\G^{(k)}_{c,D}$ defined in
\cite[\textsection4.2]{SamiOct13}, where
$c$ is the content vector
and $D$ is the $k$-descent set corresponding to $\bm{\beta}$ as in
\cite[Equation~4.2]{SamiOct13}
(see also  \cite[Proposition 2.6]{BLamLLT}).
\end{remark}

\begin{example}
\label{example:Assaf-stronger-than-LLT}
Let $k\!=\!3$, $\bm{\beta} \!=\! \left(\partition{&~ }\,,\partition{~}\,,\partition{~&~}\,\right)
\! =\! (2/1, 1, 2)$.
Figure~\ref{f Assaf LLT graph2} shows all nonempty LLT switchboards
$\Gamma_3(\bm{\beta},t)$ 
and their symmetric functions $F_{\Gamma_3(\bm{\beta},t)}$.
Formula~\eqref{eq:LLT-Assaf} then
yields 
\[
\mathcal{G}_{\bm{\beta}}(\mathbf{x};q)=
s_{4} + q\, s_{31} + q^2\, (s_{31}+s_{22}) +  q^3\, s_{211}.
\]
\end{example}

\begin{figure}[ht]
\begin{tikzpicture}
\begin{scope}[xscale = 2.2,yscale = 1.5, yshift = .8in]
\tikzstyle{vertex}=[inner sep=0pt, outer sep=4pt]
\tikzstyle{framedvertex}=[inner sep=3pt, outer sep=4pt, draw=gray]
\tikzstyle{aedge} = [draw, thin, ->,black]
\tikzstyle{edge} = [draw, thick, -,black]
\tikzstyle{doubleedge} = [draw, thick, double distance=1pt, -,black]
\tikzstyle{hiddenedge} = [draw=none, thick, double distance=1pt, -,black]
\tikzstyle{dashededge} = [draw, very thick, dashed, black]
\tikzstyle{LabelStyleH} = [text=black, anchor=south]
\tikzstyle{LabelStyleHn} = [text=black, anchor=north]
\tikzstyle{LabelStyleV} = [text=black, anchor=east]

\node[vertex] (v1) at (0,0){\footnotesize$\e{1235} $};
\node[vertex, anchor=west] (inv0) at (2.4,0){$F_{\Gamma_3(\bm{\beta},0)}=s_4$};
\end{scope}
\begin{scope}[xscale = 2.2,yscale = 1.5, yshift = .4in]
\tikzstyle{vertex}=[inner sep=0pt, outer sep=4pt]
\tikzstyle{framedvertex}=[inner sep=3pt, outer sep=4pt, draw=gray]
\tikzstyle{aedge} = [draw, thin, ->,black]
\tikzstyle{edge} = [draw, thick, -,black]
\tikzstyle{doubleedge} = [draw, thick, double distance=1pt, -,black]
\tikzstyle{hiddenedge} = [draw=none, thick, double distance=1pt, -,black]
\tikzstyle{dashededge} = [draw, very thick, dashed, black]
\tikzstyle{LabelStyleH} = [text=black, anchor=south]
\tikzstyle{LabelStyleHn} = [text=black, anchor=north]
\tikzstyle{LabelStyleV} = [text=black, anchor=east]

\node[vertex] (v1) at (0,0){\footnotesize$\e{1325} $};
\node[vertex] (v2) at (-1,0){\footnotesize$\e{2135} $};
\node[vertex] (v3) at (1,0){\footnotesize$\e{1253} $};
\node[vertex, anchor=west] (inv1) at (2.4,0){$F_{\Gamma_3(\bm{\beta},1)}=s_{31}$};
\draw[edge] (v1) to node[LabelStyleH]{\Tiny$\tilde{2} $} (v2);
\draw[edge] (v1) to node[LabelStyleH]{\Tiny$\tilde{3} $} (v3);
\end{scope}

\begin{scope}[xscale = 2.2,yscale = 1.5]
\tikzstyle{vertex}=[inner sep=0pt, outer sep=4pt]
\tikzstyle{framedvertex}=[inner sep=3pt, outer sep=4pt, draw=gray]
\tikzstyle{aedge} = [draw, thin, ->,black]
\tikzstyle{edge} = [draw, thick, -,black]
\tikzstyle{doubleedge} = [draw, thick, double distance=1pt, -,black]
\tikzstyle{hiddenedge} = [draw=none, thick, double distance=1pt, -,black]
\tikzstyle{dashededge} = [draw, very thick, dashed, black]
\tikzstyle{LabelStyleH} = [text=black, anchor=south]
\tikzstyle{LabelStyleHn} = [text=black, anchor=north]
\tikzstyle{LabelStyleV} = [text=black, anchor=east]

\end{scope}

\begin{scope}[xscale = 2.2,yscale = 1.5, xshift = 0in]
\tikzstyle{vertex}=[inner sep=0pt, outer sep=4pt]
\tikzstyle{framedvertex}=[inner sep=3pt, outer sep=4pt, draw=gray]
\tikzstyle{aedge} = [draw, thin, ->,black]
\tikzstyle{edge} = [draw, thick, -,black]
\tikzstyle{doubleedge} = [draw, thick, double distance=1pt, -,black]
\tikzstyle{hiddenedge} = [draw=none, thick, double distance=1pt, -,black]
\tikzstyle{dashededge} = [draw, very thick, dashed, black]
\tikzstyle{LabelStyleH} = [text=black, anchor=south]
\tikzstyle{LabelStyleHn} = [text=black, anchor=north]
\tikzstyle{LabelStyleV} = [text=black, anchor=east]

\node[vertex] (v1) at (.5,0){\footnotesize$\e{2513} $};
\node[vertex] (v2) at (1.5,0){\footnotesize$\e{2153} $};
\node[vertex, anchor=west] (inv2) at (2.4,0){$F_{\Gamma_3(\bm{\beta},2)}=s_{31}+s_{22}$};

\draw[doubleedge] (v1) to node[LabelStyleH]{\Tiny$2 $} (v2);
\draw[hiddenedge] (v1) to node[LabelStyleHn]{\Tiny$3 $} (v2);

\node[vertex] (v01) at (-.5,0){\footnotesize$\e{2351} $};
\node[vertex] (v02) at (-1.5,0){\footnotesize$\e{2315} $};
\node[vertex] (v03) at (-2.5,0){\footnotesize$\e{3125} $};
\draw[edge] (v01) to node[LabelStyleH]{\Tiny$3 $} (v02);
\draw[edge] (v02) to node[LabelStyleH]{\Tiny$\tilde{2} $} (v03);

\end{scope}

\begin{scope}[xscale = 2.2,yscale = 1.5, yshift = -.4in]
\tikzstyle{vertex}=[inner sep=0pt, outer sep=4pt]
\tikzstyle{framedvertex}=[inner sep=3pt, outer sep=4pt, draw=gray]
\tikzstyle{aedge} = [draw, thin, ->,black]
\tikzstyle{edge} = [draw, thick, -,black]
\tikzstyle{doubleedge} = [draw, thick, double distance=1pt, -,black]
\tikzstyle{hiddenedge} = [draw=none, thick, double distance=1pt, -,black]
\tikzstyle{dashededge} = [draw, very thick, dashed, black]
\tikzstyle{LabelStyleH} = [text=black, anchor=south]
\tikzstyle{LabelStyleHn} = [text=black, anchor=north]
\tikzstyle{LabelStyleV} = [text=black, anchor=east]

\node[vertex] (v1) at (0,0){\footnotesize$\e{3251} $};
\node[vertex] (v2) at (-1,0){\footnotesize$\e{2531} $};
\node[vertex] (v3) at (1,0){\footnotesize$\e{3215} $};
\node[vertex, anchor=west] (inv3) at (2.4,0){$F_{\Gamma_3(\bm{\beta},3)}=s_{211}$};
\draw[edge] (v1) to node[LabelStyleH]{\Tiny$\tilde{2} $} (v2);
\draw[edge] (v1) to node[LabelStyleH]{\Tiny$3 $} (v3);
\end{scope}

\end{tikzpicture}
\caption{\label{f Assaf LLT graph2}
LLT switchboards $\Gamma_3(\bm{\beta}, t)$ for
$\bm{\beta} 
= (2/1, 1, 2)$,
and the corresponding symmetric functions $F_{\Gamma_3(\bm{\beta},t)}$.
The two connected
components of $\Gamma_3(\bm{\beta}, 2)$ have symmetric functions $s_{31}$ (on the left) and $s_{22}$ (on the right).
}
\end{figure}

\pagebreak[3]


\begin{example}
Let $k=3$, $\bm{\beta}
= \left(\partition{& &~\\& &}\,,
\partition{&~&~\\&~&~}\,,
\partition{&~\\ & }\,\right) = (3/2, 33/11, 2/1)$.
There are four non\-empty LLT switchboards
$\Gamma_3(\bm{\beta},t)$, for $t=2,3,4,5$.
(One of them is shown in Figure~\ref{f Assaf LLT graph}.)
The corresponding symmetric functions are:
\[
F_{\Gamma_3(\bm{\beta},2)} =s_{42}, \quad
F_{\Gamma_3(\bm{\beta},3)} =s_{33}+s_{321}, \quad
F_{\Gamma_3(\bm{\beta},4)} =s_{321} + s_{222}, \quad
F_{\Gamma_3(\bm{\beta},5)} =s_{2211}.
\]
Hence by Corollary \ref{c Sami LLT graphs},
the LLT polynomial $\mathcal{G}_{\bm{\beta}}(\mathbf{x};q)$ is given by
\[
\mathcal{G}_{\bm{\beta}}(\mathbf{x};q)=
q^2 s_{42} + q^3 (s_{33}+s_{321}) +  q^4 (s_{321} + s_{222}) + q^5 s_{2211}.
\]
\end{example}

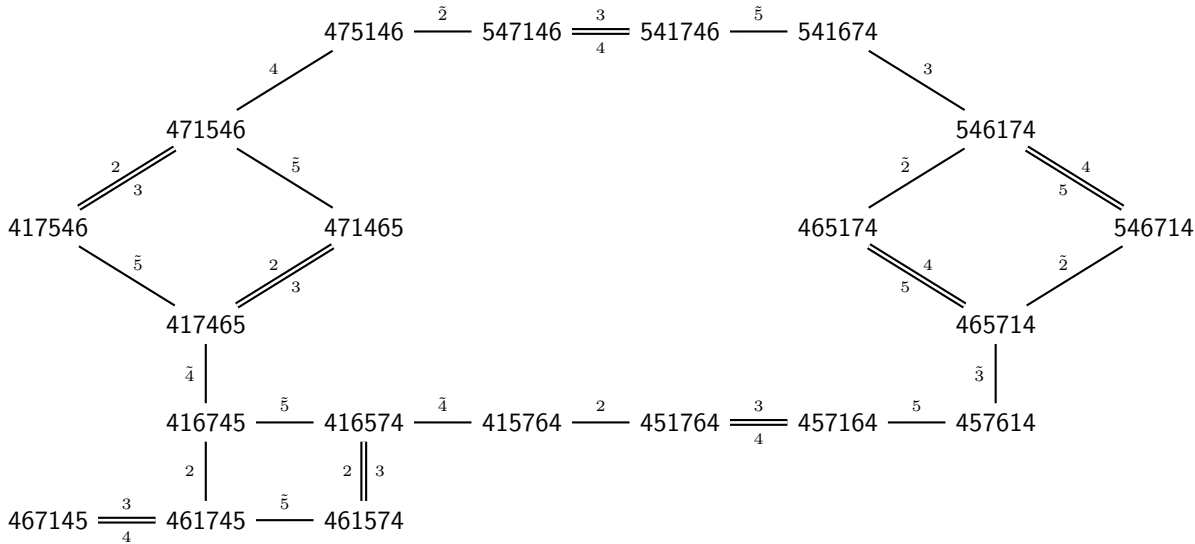
\begin{figure}[ht]
\centerfloat
\begin{tikzpicture}[xscale = 2.1,yscale = 1.3]
\tikzstyle{vertex}=[inner sep=0pt, outer sep=4pt]
\tikzstyle{framedvertex}=[inner sep=3pt, outer sep=4pt, draw=gray]
\tikzstyle{aedge} = [draw, thin, ->,black]
\tikzstyle{edge} = [draw, thick, -,black]
\tikzstyle{doubleedge} = [draw, thick, double distance=1pt, -,black]
\tikzstyle{hiddenedge} = [draw=none, thick, double distance=1pt, -,black]
\tikzstyle{dashededge} = [draw, very thick, dashed, black]
\tikzstyle{LabelStyleH} = [text=black, anchor=south]
\tikzstyle{LabelStyleHn} = [text=black, anchor=north]
\tikzstyle{LabelStyleV} = [text=black, anchor=east]
\tikzstyle{LabelStyleVn} = [text=black, anchor=west]
\tikzstyle{LabelStyleNEn} = [text=black, anchor=north east, inner sep=2pt]
\tikzstyle{LabelStyleNE} = [text=black, anchor=south west, inner sep=2pt]
\tikzstyle{LabelStyleNWn} = [text=black, anchor=north west, inner sep=2pt]
\tikzstyle{LabelStyleNW} = [text=black, anchor=south east, inner sep=2pt]

\begin{scope}
\node[vertex] (v1) at (7,3){\footnotesize$\e{465714} $};
\node[vertex] (v2) at (8,4){\footnotesize$\e{546714} $};
\node[vertex] (v3) at (7,2){\footnotesize$\e{457614} $};
\node[vertex] (v4) at (6,4){\footnotesize$\e{465174} $};
\node[vertex] (v5) at (7,5){\footnotesize$\e{546174} $};
\node[vertex] (v6) at (3,1){\footnotesize$\e{461574} $};
\node[vertex] (v7) at (3,2){\footnotesize$\e{416574} $};
\node[vertex] (v8) at (6,6){\footnotesize$\e{541674} $};
\node[vertex] (v9) at (6,2){\footnotesize$\e{457164} $};
\node[vertex] (v10) at (5,2){\footnotesize$\e{451764} $};
\node[vertex] (v11) at (4,2){\footnotesize$\e{415764} $};
\node[vertex] (v12) at (1,1){\footnotesize$\e{467145} $};
\node[vertex] (v13) at (2,1){\footnotesize$\e{461745} $};
\node[vertex] (v14) at (2,2){\footnotesize$\e{416745} $};
\node[vertex] (v15) at (3,4){\footnotesize$\e{471465} $};
\node[vertex] (v16) at (2,3){\footnotesize$\e{417465} $};
\node[vertex] (v17) at (4,6){\footnotesize$\e{547146} $};
\node[vertex] (v18) at (3,6){\footnotesize$\e{475146} $};
\node[vertex] (v19) at (5,6){\footnotesize$\e{541746} $};
\node[vertex] (v20) at (2,5){\footnotesize$\e{471546} $};
\node[vertex] (v21) at (1,4){\footnotesize$\e{417546} $};
\draw[edge] (v1) to node[LabelStyleNW]{\Tiny$\tilde{2} $} (v2);
\draw[doubleedge] (v1) to node[LabelStyleNE]{\Tiny$4 $} (v4);
\draw[hiddenedge] (v1) to node[LabelStyleNEn]{\Tiny$5 $} (v4);
\draw[edge] (v1) to node[LabelStyleV]{\Tiny$\tilde{3} $} (v3);
\draw[doubleedge] (v2) to node[LabelStyleNE]{\Tiny$4 $} (v5);
\draw[hiddenedge] (v2) to node[LabelStyleNEn]{\Tiny$5 $} (v5);
\draw[edge] (v3) to node[LabelStyleH]{\Tiny$5 $} (v9);
\draw[edge] (v4) to node[LabelStyleNW]{\Tiny$\tilde{2} $} (v5);
\draw[edge] (v5) to node[LabelStyleNE]{\Tiny$3 $} (v8);
\draw[doubleedge] (v6) to node[LabelStyleV]{\Tiny$2 $} (v7);
\draw[hiddenedge] (v6) to node[LabelStyleVn]{\Tiny$3 $} (v7);
\draw[edge] (v6) to node[LabelStyleH]{\Tiny$\tilde{5} $} (v13);
\draw[edge] (v7) to node[LabelStyleH]{\Tiny$\tilde{5} $} (v14);
\draw[edge] (v7) to node[LabelStyleH]{\Tiny$\tilde{4} $} (v11);
\draw[edge] (v8) to node[LabelStyleH]{\Tiny$\tilde{5} $} (v19);
\draw[doubleedge] (v9) to node[LabelStyleH]{\Tiny$3 $} (v10);
\draw[hiddenedge] (v9) to node[LabelStyleHn]{\Tiny$4 $} (v10);
\draw[edge] (v10) to node[LabelStyleH]{\Tiny$2 $} (v11);
\draw[doubleedge] (v12) to node[LabelStyleH]{\Tiny$3 $} (v13);
\draw[hiddenedge] (v12) to node[LabelStyleHn]{\Tiny$4 $} (v13);
\draw[edge] (v13) to node[LabelStyleV]{\Tiny$2 $} (v14);
\draw[edge] (v14) to node[LabelStyleV]{\Tiny$\tilde{4} $} (v16);
\draw[doubleedge] (v15) to node[LabelStyleNW]{\Tiny$2 $} (v16);
\draw[hiddenedge] (v15) to node[LabelStyleNWn]{\Tiny$3 $} (v16);
\draw[edge] (v15) to node[LabelStyleNE]{\Tiny$\tilde{5} $} (v20);
\draw[edge] (v16) to node[LabelStyleNE]{\Tiny$\tilde{5} $} (v21);
\draw[doubleedge] (v17) to node[LabelStyleH]{\Tiny$3 $} (v19);
\draw[hiddenedge] (v17) to node[LabelStyleHn]{\Tiny$4 $} (v19);
\draw[edge] (v17) to node[LabelStyleH]{\Tiny$\tilde{2} $} (v18);
\draw[edge] (v18) to node[LabelStyleNW]{\Tiny$4 $} (v20);
\draw[doubleedge] (v20) to node[LabelStyleNW]{\Tiny$2 $} (v21);
\draw[hiddenedge] (v20) to node[LabelStyleNWn]{\Tiny$3 $} (v21);
\end{scope}
\end{tikzpicture}
\caption{\label{f Assaf LLT graph}
The LLT switchboard 
$\Gamma_k(\bm{\beta}, t)$,
for $k=3$, $t=3$,
and $\bm{\beta} = (3/2, 33/11, 2/1)$.
The associated symmetric function is $F_{\Gamma_3(\bm{\beta}, 3)} = s_{33}+s_{321}$.
}
\end{figure}

\begin{definition}
An \emph{Assaf symmetric function} of level~$k$
is the symmetric function $F_\Gamma(\mathbf{x})$ associated to an 
Assaf switchboard~$\Gamma$ of level~$k$.
\end{definition}

\begin{conjecture}
\label{cj sami}
Assaf symmetric functions are Schur positive.
\end{conjecture}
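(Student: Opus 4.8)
The plan is to reduce Conjecture~\ref{cj sami} to a monomial‑positivity statement about noncommutative Schur functions, using the machinery assembled in Section~\ref{s main results}, and then attack that statement. By Proposition~\ref{p assaf ideal switchboards}, the vertex sets of level-$k$ Assaf switchboards are exactly the $(0,1)$-vectors lying in $\U^*_{\ge 0}\cap\Iassafperp{k}$, so by Definition~\ref{def:FGamma} the level-$k$ Assaf symmetric functions are precisely the $F_\gamma$ with $\gamma$ such a vector. More generally, any $\gamma\in\U^*_{\ge 0}\cap\Iassafperp{k}$ is --- since orthogonality to the generators of $\Iassaf{k}$ means it is supported on nonzero $k$-words and constant on the orbits of the elementary switches --- a nonnegative integer combination of the $(0,1)$-vectors $\sum_{\e{w}\in W}\e{w}$ attached to connected Assaf switchboards; as $\gamma\mapsto F_\gamma$ is linear, Conjecture~\ref{cj sami} is therefore \emph{equivalent} to the Schur positivity of all $F_\gamma$, $\gamma\in\U^*_{\ge 0}\cap\Iassafperp{k}$. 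The ideal $\Iassaf{k}$ is homogeneous and $\Iassaf{k}\supset\Iaba{k}\supset\Irk\supset\Icomm$, so the equivalence of conditions (i) and (ii) in Theorem~\ref{t intro positivity} applies with $I=\Iassaf{k}$, and the conjecture becomes equivalent to the assertion $(\star)$: for every partition $\lambda$ and every $k\ge 1$, $\mathfrak{J}_\lambda(\mathbf{u})$ is $\QQ$-monomial positive modulo $\Iassaf{k}$. Because $\Iaba{k}\subset\Iassaf{k}$, statement $(\star)$ would also follow from the stronger Conjecture~\ref{conj:monomial-positivity-modIk}, so one may aim at either. The case $k=1$ is in fact already covered: the monomials $u_a^2$ and $u_au_bu_a$ lie in $\Iassaf{1}$ (words with these repeats are never nonzero $1$-words), so the degree-$2$ generator~\eqref{quad uu 2vars again} of $\Ifgp$ collapses to $0$ modulo $\Iassaf{1}$, while~\eqref{i knuth1} and~\eqref{i knuth2} coincide up to sign with~\eqref{e Iassaf1} and~\eqref{e Iassaf2} (whose hypothesis $c-a>1$ is automatic for integers $a<b<c$); hence $\Ifgp\subseteq\Iassaf{1}$, and $\ZZ$-monomial positivity of $\mathfrak{J}_\lambda(\mathbf{u})$ modulo $\Ifgp$ (Theorem~\ref{th:FG'-positivity}) yields $(\star)$ for $k=1$.

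To establish $(\star)$ in general, I would follow the template of the proof of Theorem~\ref{th:FG'-positivity} (the case $I=\Ifgp$), refined by the more elaborate construction of~\cite{BLamLLT} (which produced an explicit $\ZZ$-monomial positive formula modulo $\Ilam{3}$; cf.\ Theorem~\ref{t sqread}). Concretely: (a) expand the determinantal formula~\eqref{eq:ncschur-via-e} into a signed sum of noncommutative monomials $\mathbf{u}_{\e{w}}$; (b) discard, using Lemma~\ref{lem:nonzero-k-words}, every monomial whose word is not a nonzero $k$-word; (c) construct a sign-reversing involution on the surviving signed monomials whose moves are precisely the relations~\eqref{e Iassaf1}--\eqref{e Iassaf4}, so that monomials of opposite sign are paired up and cancel; (d) show that the fixed points of this involution form a distinguished family of nonzero $k$-words --- a level-$k$ analogue of the square reading words of~\cite{BLamLLT} --- each of which occurs with coefficient $+1$, so that the resulting expression is manifestly $\ZZ$-monomial positive. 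The dichotomy $c-a\le k$ versus $c-a>k$ built into the generators~\eqref{e Iassaf1}--\eqref{e Iassaf4} is exactly what must govern how the cancellations in step~(c) are routed; and pinning down the canonical words in step~(d) is tantamount to producing the long-sought Littlewood--Richardson-type rule for LLT (hence Macdonald) polynomials, so one should not expect it to be easy.

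The main obstacle is steps (c)--(d) for $k\ge 3$: one needs a \emph{uniform} combinatorial description, valid for all $k$, of the canonical $k$-words and of the cancelling involution. For $k=2$ this amounts essentially to Assaf's algorithm transforming a D~graph into a dual equivalence graph, and should be within reach; for $k\ge 3$ no such algorithm is known. It must be stressed that even the case $k=3$ is \emph{not} a formal consequence of~\cite{BLamLLT}: that work gives monomial positivity modulo $\Ilam{3}$, and since $\Iassaf{3}\subsetneq\Ilam{3}$ (Proposition~\ref{p lam and assaf}), the orthogonal complement $\Iassafperp{3}$ strictly contains that of $\Ilam{3}$, so the square-reading-word expansion of~\cite{BLamLLT} need not pair correctly with a general $\gamma\in\U^*_{\ge 0}\cap\Iassafperp{3}$. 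A sensible preliminary step is therefore to check whether the straightening of~\cite{BLamLLT} in fact uses only $\Iassaf{3}$-relations --- which would settle $k=3$ --- and then to try to isolate the structural pattern that persists for arbitrary $k$. A more structural alternative --- mimicking Assaf's decomposition of an Assaf switchboard into ``dual-equivalence-like'' components, each contributing a single Schur function to $F_\Gamma$ --- runs into precisely the same wall for $k\ge 3$, so I expect the construction of the cancelling involution (equivalently, the choice of the distinguished family of $k$-words) to be the decisive difficulty.
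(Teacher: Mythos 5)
The statement you were asked to prove is a \emph{conjecture} in the paper; the authors do not prove it, and indeed they present it as the central open problem of the subject (implicit in Assaf's work and slightly weaker than Conjecture~\ref{conj:monomial-positivity-modIk}). So there is no proof in the paper to compare yours against, and your own text is a roadmap rather than a proof. That said, the roadmap is faithful to the paper's framework, and the parts of it that are actual mathematics are correct.

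Your reduction is right. Proposition~\ref{p assaf ideal switchboards} identifies $(0,1)$-vectors in $\Iassafperp{k}$ with vertex sets of Assaf switchboards, and your argument that every $\gamma\in\U^*_{\ge 0}\cap\Iassafperp{k}$ is a nonnegative integer combination of such $(0,1)$-vectors (constant on switch-connected components, zero on non-nonzero-$k$-words) is sound; together with linearity of $\gamma\mapsto F_\gamma$ this makes Conjecture~\ref{cj sami} equivalent to Schur positivity of all $F_\gamma$, $\gamma\in\U^*_{\ge0}\cap\Iassafperp{k}$, and then Theorem~\ref{t intro positivity} converts that to $\QQ$-monomial positivity of $\mathfrak{J}_\lambda(\mathbf{u})$ modulo $\Iassaf{k}$. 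This is precisely the paper's point of view: Conjecture~\ref{conj:monomial-positivity-modIk} is stated as the strengthened version. Your $k=1$ verification ($\Ifgp\subseteq\Iassaf{1}$, then Theorem~\ref{th:FG'-positivity}) is correct and a touch different from the paper's remark, which routes through $\Iplac\subset\Iassaf{1}$; both work. One small inaccuracy: you say the $k=2$ case ``should be within reach,'' but the paper records it as already proved by Assaf (\cite[Theorem~4.9]{SamiOct13}), so the first genuinely open case is $k=3$.

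The genuine gap is exactly where you flag it: steps (c)--(d), the construction of a sign-reversing involution with a distinguished family of fixed-point $k$-words, for $k\ge 3$. You are right that this does \emph{not} follow from \cite{BLamLLT}, since $\Iassaf{3}\subsetneq\Ilam{3}$ and hence $\Iassafperp{3}\supsetneq\Ilamperp{3}$; this is exactly the gap between Theorem~\ref{t sqread gen} (known) and Conjecture~\ref{cj sqread} (open). Producing the canonical $k$-words and the cancelling involution uniformly in $k$ is, as you observe, tantamount to the long-sought Littlewood--Richardson rule for LLT polynomials. Your proposal correctly reduces the conjecture and cleanly delineates the missing ingredient, but it does not supply it; what you have written should be read as a well-informed research plan, not a proof.
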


Conjecture~\ref{cj sami} is implicit in the work of Assaf,
as is its slightly stronger variant
Conjecture~\ref{conj:monomial-positivity-modIk}.
(We opted for the latter variant in Section \ref{s main results}
primarily because it was much easier to formulate.)
By Corollary~\ref{c Sami LLT graphs}, the coefficient of $q^t$
in an LLT polynomial is an Assaf symmetric function associated to a special
Assaf switchboard~$\Gamma_k(\bm{\beta},t)$, so
Conjecture~\ref{cj sami}
is a stronger statement than Schur positivity of LLT polynomials.
In a sense, it is strictly stronger,
as Example~\ref{example:Assaf-stronger-than-LLT} illustrates:
the LLT switchboard $\Gamma_3(\bm{\beta},2)$ in Figure~\ref{f Assaf LLT graph2}
is disconnected,
so $F_{\Gamma_3(\bm{\beta},2)}$ is a sum of two nonzero Assaf
symmetric functions.

We conclude this section by recalling from  \cite{BLamLLT}
the main positive result of this approach to Schur positivity of LLT
and Macdonald polynomials:
a combinatorial description of the coefficients
$\mathfrak{c}^\lambda_{\bm{\beta}}(q)$ (see~\eqref{eq:LLT-LR})
in the special case $k=3$.
Before doing so, we review earlier related work.

The LLT polynomial for $k=1$ is nothing but the skew
Schur function for the corresponding shape.
For $k=2$, an explicit combinatorial rule for the coefficients
$\mathfrak{c}^\lambda_{\bm{\beta}}(q)$ was stated by Carr\'{e}
and Leclerc~\cite{CL95}; its proof was completed by van
Leeuwen~\cite{vL00}, cf.~\cite[\textsection9]{HHL}.
Relatedly, Fishel~\cite{Fis95} gave the first combinatorial interpretation for the
coefficient of  $s_\lambda(\mathbf{x})$ in the transformed Macdonald
polynomial $\tilde{H}_\mu(\mathbf{x};q,t)$ in the case when $\mu$ has 2 columns
(using rigged configurations). Alternative formulas for this case were
given in~\cite{LM03, Zab99}.

Conjecture \ref{cj sami} is known to hold in the following special
cases (the results below are stronger than their counterparts in the
previous paragraph, but came later):
for  $k=1$, the Assaf ideal  $\Iassaf{k}$ contains the plactic ideal
(cf.\ Definition \ref{def:Iplac}), hence for any connected component  $\Gamma$
of an Assaf switchboard of level 1, the symmetric function  $F_\Gamma$
is just a Schur function, cf.\ Proposition~\ref{p plactic}.
Assaf \cite[Theorem 4.9]{SamiOct13} showed that this is also true for
connected Assaf switchboards of level~2.
Hence Conjecture \ref{cj sami} holds for  $k\in\{1,2\}$.
Roberts \cite[Theorem 4.11]{RobertsDgraph} extended the work of Assaf
to a setting that contains the  $k=2$ case,
by proving that the symmetric function of any connected Assaf
switchboard contained in $\Gamma_k(\bm{\beta},t)$ is a Schur function
whenever
$\,\max_{i\in\ZZ}
|C(\bm{\beta}) \cap [i,i+k]| \le 3\,$
where $C(\bm{\beta})$~is the set of distinct shifted contents of the
cells of $\bm{\beta}$ (cf.~\eqref{e shifted content}).
Assaf \cite[Theorem~4.10]{SamiOct13} and Novelli-Schilling
\cite[Theorem~3]{NS} gave an explicit positive formula for the Schur
expansion of the Assaf symmetric functions in the case $k \ge N-1$;
note that in this case,
Assaf switchboards have only rotation switches.

To state the main result of~\cite{BLamLLT}, we will need a couple of auxiliary notions.
For a partition~$\lambda$, let $\RSST(\lambda; N)$ denote the set  of
semistandard Young tableaux~$T$ of shape  $\lambda$ and entries in
$\{1,\dots,N\}$
satisfying the following constraints:
\begin{itemize}
\item
the entries strictly increase across the rows and down the columns;
\item
the entries increase in increments of at least~$3$ along diagonals.
\end{itemize}
For $T\in\RSST(\lambda; N)$,
we produce a word~$\sqread(T)\in\U^*$ by reading the entries of~$T$
in the following order.
Let us circle each entry~$c$ of $T$ such that the entry immediately
west of it is $c-1$.
The word $\sqread(T)$ is then obtained by reading the diagonals of $T$
one by one, starting from the southwest
corner and finishing at the northeast corner. In each diagonal, we first
read the circled entries going northwest, then the uncircled entries
going southeast.\linebreak[3]
To illustrate, the tableau
{\setlength{\cellsize}{2.55ex}
\[
T=
{ \text{\footnotesize $\tableau{
1&2&4&6\\3&4&5&7\\ 5&6&8&9
}$}
}
\text{\ \ has circled entries \, }
{\text{\footnotesize  $\tableau{
1&\encircle{2}&4&6\\3&\encircle{4}&\encircle{5}&7\\ 5&\encircle{6}&8&\encircle{9}
}$}
}
\text{ \ \ and $\sqread(T)=\e{563418952476}$.}
\]
\vspace{-3mm}
}

\begin{theorem}[{\cite[Theorem 1.1]{BLamLLT}}] \label{t sqread}
For any partition $\lambda$, the noncommutative Schur function
$\mathfrak{J}_\lambda(\mathbf{u})$ is $\ZZ$-monomial positive modulo
$\Jlam{3}$.  A monomial expansion is given by
\begin{align}
\mathfrak{J}_\lambda(\mathbf{u}) \equiv \sum_{T \in \RSST(\lambda; N)} \mathbf{u}_{\sqread(T)} \ \bmod \Jlam{3}. \label{e sqread}
\end{align}
\end{theorem}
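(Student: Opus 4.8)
The plan is to compute $\mathfrak{J}_\lambda(\mathbf{u})$ modulo $\Jlam{3}$ by expanding the Kostka--Naegelsbach determinant~\eqref{eq:ncschur-via-e} and then carrying out a noncommutative version of the Lindström--Gessel--Viennot cancellation that proves the dual Jacobi--Trudi identity. Since $\Jlam{3}\supset\QQ(q)\tsr_\ZZ\Icomm$ (Proposition~\ref{p lam and assaf}, using $\Iassaf{3}\supset\Irk\supset\Icomm$), Theorem~\ref{th:quad-Icomm} shows that the $e_k(\mathbf{u})$ commute pairwise modulo $\Jlam{3}$, so the full commutative symmetric-function calculus for $\mathfrak{J}_\lambda(\mathbf{u})$ is available modulo $\Jlam{3}$. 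The technical core is a \emph{column expansion lemma}: for any sequence $(a_1,\dots,a_t)$ of nonnegative integers,
\[
e_{a_1}(\mathbf{u})\cdots e_{a_t}(\mathbf{u})\ \equiv\ \sum_{C}q^{d(C)}\,\mathbf{u}_{\sqread(C)}\pmod{\Jlam{3}},
\]
the sum running over the fillings $C$ of the (staircased, so that contents are recorded) column diagram with successive column heights $a_1,\dots,a_t$ which are strictly increasing down columns, strictly increasing along rows, and increase by at least $3$ along each constant-content diagonal; here $\sqread(C)$ is the evident extension of the square reading word to such fillings and $d(C)\ge0$ counts the $q^{-1}$-commutations used in reaching this normal form. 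The lemma is proved by induction on $t$: multiply the expansion for $(a_1,\dots,a_{t-1})$ on the right by $e_{a_t}(\mathbf{u})=\sum_{i_1>\dots>i_{a_t}}u_{i_1}\cdots u_{i_{a_t}}$, and use the far-commutations~\eqref{e far commute} and $q$-commutations~\eqref{e q commute} to move the letters of the new column into square-reading position, while the relations $u_a^2,\ u_{a+3}u_au_{a+3},\ u_au_{a+3}u_a\in\Jlam{3}$ (equivalently Lemma~\ref{lem:nonzero-k-words}) annihilate exactly those monomials whose normal form would violate strictness in some row or column or the diagonal condition.

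Granting the lemma, substitute it into each term of~\eqref{eq:ncschur-via-e}: the $\pi$-th summand becomes $\sgn(\pi)$ times a sum over fillings of the column diagram with heights $\lambda'_1+\pi(1)-1,\dots,\lambda'_t+\pi(t)-t$, so $\mathfrak{J}_\lambda(\mathbf{u})$ is an alternating sum, over $\pi\in\S_t$ and over such fillings $C$, of $\sgn(\pi)\,q^{d(C)}\,\mathbf{u}_{\sqread(C)}$. Encode each filling as a family of $t$ lattice paths exactly as in the combinatorial proof of the dual Jacobi--Trudi identity (the $i$-th path carrying the $i$-th column and issuing from height $\pi(i)-i$); the paths are pairwise non-crossing precisely when $C$ has genuine straight shape $\lambda$, which forces $\pi=\mathrm{id}$. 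Now run the standard involution: find the first crossing of two paths and swap their tails, flipping $\sgn(\pi)$. The crucial point is that this tail-swap must lift to $\U_q/\Jlam{3}$: if the swap carries $C$ to $C'$, one needs $q^{d(C)}\mathbf{u}_{\sqread(C)}\equiv q^{d(C')}\mathbf{u}_{\sqread(C')}\pmod{\Jlam{3}}$, both as elements and with matching powers of $q$, so that the two signed terms cancel. Because the normal-form procedure uses only the local relations above, a crossing-swap disturbs the reading word within a bounded window, and the desired identity reduces to a finite comparison of the two column insertions inside that window.

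What remains after the involution is the sum over non-crossing path families, that is, over fillings of straight shape $\lambda$ that strictly increase in rows and columns and increase by at least $3$ along diagonals --- precisely the tableaux $T\in\RSST(\lambda;N)$ --- and for these $T$ the reading word produced is exactly $\sqread(T)$ as defined before the theorem (the rule ``circle $c$ when its west neighbour is $c-1$'' and the northwest-then-southeast order within a diagonal being precisely what the sorting of the concatenated column words yields). That distinct $T$ give square reading words lying in distinct $q$-commutation classes, together with $q$-freeness of $\mathfrak{J}_\lambda(\mathbf{u})$, forces the surviving powers of $q$ to vanish; one thus obtains~\eqref{e sqread}, and since each $\mathbf{u}_{\sqread(T)}$ is an honest monomial, $\mathfrak{J}_\lambda(\mathbf{u})$ is $\ZZ$-monomial positive modulo $\Jlam{3}$. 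Combining this with Proposition~\ref{p llt formula} yields the positive formula $\mathfrak{c}_{\bm{\beta}}^{\lambda}(q)=\sum_{T}\langle\mathbf{u}_{\sqread(T)},\gamma\rangle$ for the Schur coefficients of the LLT polynomial. I expect the main obstacle to be exactly the compatibility asserted in the second paragraph --- that the Lindström--Gessel--Viennot tail-swap commutes with passage to the $\Jlam{3}$-normal form, with the power of $q$ preserved --- since this is where the interplay of $u_a^2=0$, $u_au_{a\pm3}u_a=0$, and the $q^{\pm1}$-commutations has to be controlled in detail; a subsidiary difficulty is checking that the reading order emerging from the sorting is literally $\sqread$, which comes down to pinpointing the cells at which the move $u_cu_{c-1}\equiv q^{-1}u_{c-1}u_c$ is unavoidable --- exactly the circled entries.
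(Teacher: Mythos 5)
The paper cites this result from \cite{BLamLLT} without giving a proof, so there is no in-paper argument to compare against; but your sketch has a genuine gap at its crux. The Lindstr\"om--Gessel--Viennot tail-swap does \emph{not} lift to $\U_q/\Jlam{3}$ in the way you need. In the commutative proof of the dual Jacobi--Trudi identity, swapping the tails of two crossing paths leaves the product of step weights literally unchanged, so the two signed terms cancel on the nose. Here the ``weight'' of a filling $C$ is the noncommutative monomial $\mathbf{u}_{\sqread(C)}$, and a tail-swap reorders the letters. Your claim that ``a crossing-swap disturbs the reading word within a bounded window'' is false: the swap exchanges the \emph{entire} tails of two paths, hence can permute arbitrarily many cells of the two columns involved, and in the reading word these letters can be interleaved with letters of other columns at distance $<3$ (which only $q$-commute, and may fail to commute past $u_au_{a\pm3}u_a$-type obstructions). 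There is no a priori reason why $q^{d(C)}\mathbf{u}_{\sqread(C)}\equiv q^{d(C')}\mathbf{u}_{\sqread(C')}\bmod\Jlam{3}$ after a swap, and in fact the whole point of the difficulty is that the alternating sum in \eqref{eq:ncschur-via-e} does \emph{not} cancel monomial-by-monomial: the $e_k(\mathbf{u})$ commute only modulo the ideal, not termwise, so individual monomials appearing in $e_{a}(\mathbf{u})e_{b}(\mathbf{u})$ and $e_{b}(\mathbf{u})e_{a}(\mathbf{u})$ need not match up.

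Your column expansion lemma is also asserted rather than proved, and already the shape of the claim is delicate: it is not evident that the coefficients collapse to single powers of $q$ rather than more general Laurent polynomials with cancellation, nor that there is a well-defined ``square reading'' normal form for column fillings whose shape is not a straight shape. The route that actually works --- both in the proof of Theorem~\ref{th:FG'-positivity} in Section~\ref{sec:proof-of-FG'-positivity} of this paper (for the easier ideal $\Ifgp$) and in~\cite{BLamLLT} for $\Jlam{3}$ --- is not an LGV involution but a recursive computation with \emph{flagged} noncommutative Schur functions $J_\alpha(\mathbf{n})$ in which one peels off one cell or diagonal at a time, uses a recursion like $e_k(\mathbf{u}_{[m]})=u_m e_{k-1}(\mathbf{u}_{[m-1]})+e_k(\mathbf{u}_{[m-1]})$, and carefully moves the newly produced letter into reading-word position using commutation lemmas tailored to the ideal. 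That inductive architecture is what controls where the relations $u_a^2$, $u_au_{a\pm3}u_a$, far-commutation, and $q$-commutation actually get used, and there is no step where a global sign-reversing involution is invoked. To make your argument work you would have to turn the ``finite comparison inside the window'' into a genuine lemma, and I do not believe such a local lemma is true --- the failure of termwise cancellation is the reason a different inductive strategy is needed.
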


Theorem \ref{t sqread} combined with Proposition \ref{p llt formula}
yields the explicit combinatorial formula
\begin{align}
\label{e qLRcoefs}
\mathfrak{c}_{\bm{\beta}}^\lambda(q)
= \sum_{\substack{T\in\RSST(\lambda; N) \\ \sqread(T)\in\Wi{3}(\bm{\beta})}}
q^{\invi{3}(\sqread(T))}
\end{align}
for the coefficients in the Schur
expansion of an LLT polynomial indexed by a 3-tuple of skew shapes;
see \cite[Corollary 4.3]{BLamLLT} for further details.
Cf.\ also Conjecture~\ref{cj sqread}.

\pagebreak[3]

\section{Plactic, niplactic, and beyond}
\label{s bijectivizations and examples}

In this section, we recast some of the key results obtained in~\cite{FG}
in the language of ideals and switchboards.
To this end, we introduce some notation.
First, we set
\begin{equation}
\label{eq:Ifg}
\Ifg=\Ifgp+\Irk\,.
\end{equation}
Equivalently, $\Ifg$~is
the ideal in $\U$ generated by the elements
\eqref{i knuth1}--\eqref{i knuth2}
and {\eqref{i fg close}--\eqref{i fg far2}}.

Let $\SSYT(\lambda; N)$ denote the set of semistandard Young tableaux
of shape $\lambda$ and entries from $\{1,\dots,N\}$.
The \emph{column reading word} $\creading(T)\in\U^*$
of a tableau $T \in \SSYT(\lambda; N)$
is the word obtained by concatenating the columns of~$T$ (reading
each column bottom to top), starting with the leftmost column.
To illustrate, the tableau
\[
T=\tableau{1&1&2&2 \\2&2&3\\5&6&7}
\]
of shape $\lambda=433$ has column reading word $\creading(T)=\e{5216217322}$.

\begin{theorem}[\cite{FG}]
\label{t FG}
The noncommutative Schur functions $\mathfrak{J}_\lambda(\mathbf{u})$
are $\ZZ$-monomial positive modulo the ideal~$\Ifg\,$.
Explicitly,
\begin{equation}
\mathfrak{J}_\lambda(\mathbf{u}) \equiv \sum_{T \in \SSYT(\lambda; N)}
\mathbf{u}_{\creading(T)} \ \bmod \Ifg \,.
\end{equation}
\end{theorem}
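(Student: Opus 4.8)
The plan is to establish the displayed congruence $\mathfrak{J}_\lambda(\mathbf{u})\equiv\sum_{T\in\SSYT(\lambda;N)}\mathbf{u}_{\creading(T)}\bmod\Ifg$; the asserted $\ZZ$-monomial positivity modulo~$\Ifg$ is then immediate, since each $\mathbf{u}_{\creading(T)}$ is an honest noncommutative monomial. Write $s_\lambda(\mathbf{u})$ for the right-hand side (this agrees with the tableau definition of~\cite{FG}). The first step is to expand the determinantal definition of $\mathfrak{J}_\lambda(\mathbf{u})$ from Definition~\ref{d noncommutative Schur functions} combinatorially. By~\eqref{e ek def}, $e_m(\mathbf{u})$ is the sum of $\mathbf{u}_C$ over all strictly decreasing length-$m$ words $C$ over $\{1,\dots,N\}$, equivalently over all bottom-to-top reading words of single strictly increasing columns of height~$m$. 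Hence a term of $\mathfrak{J}_\lambda(\mathbf{u})$ is indexed by a pair $(\pi,(C_1,\dots,C_t))$ with $\pi\in\S_t$ and $C_i$ a strictly increasing column of height $\lambda'_i+\pi(i)-i$ (a term whose height is negative or exceeds~$N$ being zero), and it contributes $\sgn(\pi)\,\mathbf{u}_{C_1}\mathbf{u}_{C_2}\cdots\mathbf{u}_{C_t}$. For $\pi=\id$ the heights are $\lambda'_1,\dots,\lambda'_t$, and those tuples whose columns interlace into a genuine semistandard tableau~$T$ of shape~$\lambda$ contribute precisely $\mathbf{u}_{\creading(T)}$; these are exactly the terms of $s_\lambda(\mathbf{u})$.

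Next I would recall the classical sign-reversing involution behind the Kostka--N\"agelsbach identity $s_\lambda=\det(e_{\lambda'_i+j-i})$, and apply it to this combinatorial data. Place the columns $C_1,\dots,C_t$ in the plane, say with $C_i$ in the $i$-th column line occupying rows $i-\pi(i)+1,\dots,\lambda'_i$, so that (as in the Lindstr\"om--Gessel--Viennot picture) a configuration is ``non-crossing'' exactly when $\pi=\id$ and the columns form a semistandard filling. For a configuration that is crossing, locate the first crossing (in the usual lexicographic order on such data) --- the least row~$r$ at which some column~$C_i$ and a later column~$C_j$ ($i<j$) violate the semistandard conditions --- and exchange the portions of $C_i$ and $C_j$ lying in rows~$\ge r$. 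This replaces $\pi$ by $\pi\cdot(i\,j)$, hence reverses $\sgn(\pi)$, and is an involution on the combinatorial data whose fixed points are precisely the tuples $(\id,(C_1,\dots,C_t))$ with $(C_1,\dots,C_t)$ the columns of some $T\in\SSYT(\lambda;N)$.

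The substance of the theorem is the noncommutative refinement of this involution: at each tail swap the attached monomial changes only within its congruence class modulo~$\Ifg$. Performing the swap on the word $\mathbf{u}_{C_1}\cdots\mathbf{u}_{C_t}$ means commuting a block of letters out of the $C_i$-slot past the intervening factors $\mathbf{u}_{C_{i+1}},\dots,\mathbf{u}_{C_{j-1}}$ (and symmetrically for the block coming back from the $C_j$-slot), then reassembling the two columns across row~$r$. The inequalities guaranteed at a \emph{first} crossing force each elementary transposition of letters that occurs into one of the patterns generating~$\Ifg$: the three-letter Knuth moves \eqref{i knuth1}--\eqref{i knuth2} when the three letters involved are distinct, and, when two of the letters coincide or differ by~$1$, the quadratic relations \eqref{i fg close}--\eqref{i fg far2} (which subsume the relation~\eqref{quad uu 2vars}). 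Granting this, the terms of $\mathfrak{J}_\lambda(\mathbf{u})$ indexed by crossing configurations cancel in pairs modulo~$\Ifg$, and what survives is $\sum_{T\in\SSYT(\lambda;N)}\mathbf{u}_{\creading(T)}$, which is the claim. I expect this verification to be the main obstacle: one must run a case analysis on the possible entries at a first crossing (distinct / equal / consecutive, and the relative order of the remaining entries in the columns in between) and check that exactly the generators of~$\Ifg$ suffice --- genuinely more than those of~$\Icomm$, consistently with the remark after Definition~\ref{d noncommutative Schur functions} that $s_\lambda(\mathbf{u})$ and $\mathfrak{J}_\lambda(\mathbf{u})$ may differ modulo smaller ideals. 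It helps to first arrange, by a preliminary sequence of swaps, that every crossing one handles is between \emph{adjacent} columns, so that the congruence to be checked is purely local.

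As an alternative route, since $\Ifg\supset\Ifgp\supset\Icomm$, one could invoke Theorem~\ref{th:FG'-positivity}: its $\ZZ$-monomial positive expansion of $\mathfrak{J}_\lambda(\mathbf{u})$ modulo~$\Ifgp$ is a fortiori valid modulo the larger ideal~$\Ifg$, so it would remain only to check that, modulo the additional relations \eqref{i fg close}--\eqref{i fg far2} present in $\Irk\subset\Ifg$, that expansion collapses to $\sum_T\mathbf{u}_{\creading(T)}$. This reroutes the difficulty rather than removing it, since identifying the straightening moves available modulo~$\Ifg$ is precisely the crux; I would therefore present the self-contained involution as the primary argument.
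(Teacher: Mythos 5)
Your primary argument --- a noncommutative Lindstr\"om--Gessel--Viennot sign-reversing involution on the terms of the Kostka--Naegelsbach determinant \eqref{eq:ncschur-via-e} --- is a reasonable reconstruction of what the original argument of \cite{FG} might look like, but as written it has an acknowledged gap that is precisely the content of the theorem: you defer the verification that the first-crossing tail swap, after the preliminary reduction to adjacent columns, changes the monomial $\mathbf{u}_{C_1}\cdots\mathbf{u}_{C_t}$ only within its class modulo $\Ifg$. The case analysis (distinct, equal, and consecutive letters; which of \eqref{i knuth1}--\eqref{i knuth2} and \eqref{i fg close}--\eqref{i fg far2} is invoked where) is not carried out, and without it you do not have a proof.

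Your alternative route is essentially the paper's, and you misjudge it. Theorem~\ref{th:FG'-positivity} is proved in Section~\ref{sec:proof-of-FG'-positivity} not by an involution but by a column-peeling recursion for the flagged noncommutative Schur functions $J_\alpha(\mathbf{n})$, using $e_k(\mathbf{u}_{[m]}) = u_m e_{k-1}(\mathbf{u}_{[m-1]}) + e_k(\mathbf{u}_{[m-1]})$ together with Lemmas~\ref{l elem sym J0} and~\ref{l plactic inc word}; it yields the explicit expansion $\mathfrak{J}_\lambda(\mathbf{u}) \equiv \sum_T \mathbf{u}_{\diagread(T)} \bmod \Ifgp$ (Theorem~\ref{th:FG'-positivity-technical}). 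The concluding proposition of Section~\ref{sec:proof-of-FG'-positivity} then shows $\mathbf{u}_{\diagread(T)} \equiv \mathbf{u}_{\creading(T)}$ \emph{already} modulo $\Ifgp$, by peeling off the first column of $T$ one diagonal at a time via Lemma~\ref{l plactic inc word}. So the ``collapse'' to the column-reading-word expansion that you worried about needs none of the additional $\Irk$-relations of $\Ifg$ and involves no new straightening moves; since $\Ifgp \subset \Ifg$, Theorem~\ref{t FG} follows at once. Far from ``rerouting the difficulty,'' this derivation is essentially free once Theorem~\ref{th:FG'-positivity} is available, and is the route you should have taken; the self-contained involution you chose to present as primary is the genuinely harder path and is left incomplete.
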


Recall that Theorem~\ref{th:FG'-positivity} of this paper
strengthens Theorem~\ref{t FG} by replacing $\Ifg$ by~$\Ifgp$.



\begin{definition}
\label{d FG switchboards}
A switchboard~$\Gamma$ is called \emph{rotation-free}
if it has no rotation switches.
That is, every switch in~$\Gamma$ is Knuth, braid, or idempotent,
see Definition~\ref{def-switchboard} and Figure~\ref{f switches}.
\end{definition}

Rotation-free switchboards are directly related to the ideal~$\Ifg\,$:
it is not hard to check that a $(0,1)$-vector in $\Ifgperp$ is the same as
the sum of vertices of a rotation-free switchboard.
Combining this with Theorem \ref{t FG}
and Proposition~\ref{Icomm-Schur}, we obtain the following corollary.

\begin{corollary} \label{c FG}
For a rotation-free switchboard~$\Gamma$, the symmetric function
$F_\Gamma(\mathbf{x})$  is Schur positive.
The coefficient of $s_\lambda(\mathbf{x})$ in  $F_\Gamma(\mathbf{x})$ is
the number of tableaux $T\in \SSYT(\lambda;N)$ such that
the column reading word $\creading(T)$ appears as a vertex in~$\Gamma$.
\end{corollary}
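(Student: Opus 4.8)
The plan is to follow the recipe indicated in the paragraph preceding the corollary: realize the $(0,1)$-vector attached to $\Gamma$ as an element of $\Ifgperp$, substitute it into the monomial-positive formula of Theorem~\ref{t FG}, and read off the Schur expansion via Proposition~\ref{Icomm-Schur}. The first step is the translation claim: for a set $W$ of words of a common length, $\gamma:=\sum_{\e{w}\in W}\e{w}$ lies in $\Ifgperp$ if and only if $W$ is the vertex set of a rotation-free switchboard. I~would prove this exactly as Proposition~\ref{pr:01Irk=switchboard}. Writing $\Ifg$ as the two-sided ideal generated by \eqref{i knuth1}--\eqref{i knuth2} and \eqref{i fg close}--\eqref{i fg far2}, the condition $\gamma\in\Ifgperp$ is equivalent to $\langle m_1\,g\,m_2,\gamma\rangle=0$ for every generator $g$ and all noncommutative monomials $m_1,m_2$; since $\langle\mathbf{u}_{\e{v}},\e{v}'\rangle$ equals $1$ when $\e{v}=\e{v}'$ and $0$ otherwise, each such equation is a local linear relation among the coefficients $\gamma_{\e{v}}$ of words $\e{v}$ that agree outside a window of three consecutive positions. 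For a $(0,1)$-vector, a short case analysis shows that these relations hold precisely when the words of $W$ having exactly one descent in positions $i-1$ and~$i$ are matched in pairs (or in quadruples, in the equal-letter case $b=a+1$) by Knuth, braid, or idempotent switches in position~$i$ and by \emph{no} rotation switches: generators \eqref{i knuth1}--\eqref{i knuth2} produce the three-distinct-letter Knuth switches, while \eqref{i fg close}--\eqref{i fg far2} produce the equal-letter Knuth, braid, and idempotent switches. (This is exactly the feature distinguishing $\Ifg$ from $\Irk$, whose additional generator \eqref{quad uuu 3vars again} also permits rotation switches; cf.\ the Example following Proposition~\ref{p CKR set ops}.) By Definition~\ref{d FG switchboards}, such matching data is precisely a rotation-free switchboard with vertex set~$W$.

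With this at hand, let $\Gamma$ be a rotation-free switchboard with vertex set $W$ and set $\gamma=\sum_{\e{w}\in W}\e{w}$, so that $F_\Gamma=F_\gamma$ and $\gamma\in\Ifgperp\subset\Icommperp$ (using $\Ifg\supset\Ifgp\supset\Icomm$). Proposition~\ref{Icomm-Schur} then gives
\[
F_\Gamma(\mathbf{x})=\sum_\lambda s_\lambda(\mathbf{x})\,\big\langle\mathfrak{J}_\lambda(\mathbf{u}),\gamma\big\rangle .
\]
By Theorem~\ref{t FG}, $\mathfrak{J}_\lambda(\mathbf{u})\equiv\sum_{T\in\SSYT(\lambda;N)}\mathbf{u}_{\creading(T)}\ \bmod\Ifg$, and since $\gamma\in\Ifgperp$ the pairing is insensitive to this congruence; hence
\begin{align*}
\big\langle\mathfrak{J}_\lambda(\mathbf{u}),\gamma\big\rangle
&=\Big\langle\sum_{T\in\SSYT(\lambda;N)}\mathbf{u}_{\creading(T)},\,\gamma\Big\rangle
=\sum_{T\in\SSYT(\lambda;N)}\gamma_{\creading(T)}\\
&=\#\big\{T\in\SSYT(\lambda;N)\mid\creading(T)\in W\big\},
\end{align*}
the last equality because $\gamma$ is a $(0,1)$-vector. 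This is a nonnegative integer, which simultaneously shows that $F_\Gamma$ is Schur positive and that the coefficient of $s_\lambda(\mathbf{x})$ is the number of $T\in\SSYT(\lambda;N)$ whose column reading word occurs as a vertex of~$\Gamma$.

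Everything downstream of the translation claim is a formal chain built from results already established --- Theorem~\ref{t FG}, Proposition~\ref{Icomm-Schur}, the defining property of the pairing, and the $(0,1)$-nature of $\gamma$ --- so the one step that genuinely demands attention is the bookkeeping in the first paragraph: matching each family of generators of $\Ifg$ to its family of switch patterns, checking that the resulting $(0,1)$ constraints force exactly the local matching structure of Definition~\ref{d FG switchboards}, and confirming that the four-word equal-letter configuration (which a vertex set may realize through more than one choice of switchboard, always with the same $F_\Gamma$) creates no ambiguity. Since the analogous verification for the switchboard ideal $\Irk$ is Proposition~\ref{pr:01Irk=switchboard}, this is routine, but it is the part of the argument that cannot simply be quoted.
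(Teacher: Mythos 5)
Your proposal matches the paper's own (very brief) argument: the paper simply states that a $(0,1)$-vector in $\Ifgperp$ is the same as the vertex sum of a rotation-free switchboard, and then combines Theorem~\ref{t FG} with Proposition~\ref{Icomm-Schur} exactly as you do. The only substantive step you flag as needing verification—the correspondence between the generators of $\Ifg$ and the allowed switch types—is precisely the step the paper also leaves to the reader ("it is not hard to check"), so your account is faithful to the paper's reasoning.
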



Several important families of symmetric functions, some of which are reviewed below
(see \cite{FG} for additional examples),
arise from ideals containing~$\Ifg\,$.
For each of these families,
Corollary~\ref{c FG} provides a manifestly positive combinatorial rule
for the coefficients in the corresponding Schur expansions.

The following classical construction goes back to Lascoux and Sch\"utzenberger~\cite{LS}.

\begin{definition} 
\label{def:Iplac}
The \emph{plactic ideal} $\Iplac \subset \U$ is 
generated by the ``Knuth elements''
\begin{alignat}{2}
&u_au_cu_b - u_cu_au_b \qquad && \text{for $a\le b<c$;} \label{e plactic 1}\\
&u_bu_au_c - u_bu_cu_a && \text{for $a<b\le c$.}\label{e plactic 2}
\end{alignat}
It is easy to see that $\Iplac \supset \Ifg\,$.
\end{definition}

\begin{definition} 
\label{def:plactic-switchboards}
A switchboard is called \emph{plactic} if all its switches are Knuth.
A connected plactic switchboard has the words in a Knuth (=\,plactic) equivalence class as its vertices,
and Knuth switches as its edges.
For a semistandard Young tableau~$T$, we denote by $\Gamma_T$ the switchboard
whose vertex set consists of words with insertion tableau~$T$.
\end{definition}

The $(0,1)$-vectors in $\Iplacperp$ are the same as sums of distinct Knuth equivalence classes.

The switchboards  $\Gamma_T$ have been studied implicitly
in the huge body of work on the
Robinson-Schensted-Knuth correspondence and jeu de taquin.
They were studied explicitly in the context of $W$-graphs~\cite{KL,
  StembridgeWGraph}, and
their combinatorial structure was thoroughly investigated in~\cite{SamiOct13,RobertsDgraph}.
It is well known that the edge-labeled graph~$\Gamma_T$,
after forgetting vertex labels,
depends only on the shape of~$T$ and not on its entries;
these edge-labeled graphs (with slightly different conventions) are the
\emph{standard dual equivalence graphs} of~\cite{SamiOct13}.

The symmetric functions associated with connected plactic switchboards are
nothing but Schur functions.
This is just a restatement of the well known expansion of a Schur polynomial in terms of Gessel's
fundamental quasisymmetric functions:

\begin{proposition}[{\cite{GesselPPartition}, \cite[Chapter~7]{St}}]
\label{p plactic}
For $T\in\SSYT(\lambda;N)$,
we have $F_{\Gamma_T}(\mathbf{x}) \!=\!s_\lambda(\mathbf{x})$.
\end{proposition}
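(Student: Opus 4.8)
The plan is to recognize $F_{\Gamma_T}(\mathbf{x})$ directly from Definition~\ref{def:FGamma} and Gessel's theory. By Definition~\ref{def:plactic-switchboards}, the vertex set $W$ of $\Gamma_T$ consists precisely of those words in the alphabet $\{1,\dots,N\}$ whose Schensted insertion tableau equals $T$. Hence, by Definition~\ref{def:FGamma} and formula~\eqref{eq:Fgamma-via-Q},
\[
F_{\Gamma_T}(\mathbf{x})=\sum_{\substack{\e{w}\in\U^*\\ P(\e{w})=T}} Q_{\Des(\e{w})}(\mathbf{x}),
\]
where $P(\e{w})$ denotes the insertion tableau of~$\e{w}$. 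So the statement to prove is the identity $s_\lambda(\mathbf{x})=\sum_{P(\e{w})=T}Q_{\Des(\e{w})}(\mathbf{x})$.

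First I would invoke the Robinson--Schensted--Knuth correspondence: the map $\e{w}\mapsto (P(\e{w}),Q(\e{w}))$ is a bijection between words of length $n$ in $\{1,\dots,N\}$ and pairs consisting of a semistandard tableau $P$ of size $n$ with entries in $\{1,\dots,N\}$ and a standard Young tableau $Q$ of the same shape. Restricting to words with $P(\e{w})=T$, we get a bijection between $W$ and the set $\SYT(\lambda)$ of standard Young tableaux of shape $\lambda=\sh(T)$. The key compatibility fact I would use is that this bijection carries the descent set of~$\e{w}$ to the descent set of the recording tableau $Q(\e{w})$, where $i\in\Des(Q)$ means $i+1$ appears in a strictly lower row of $Q$ than $i$; this is a standard property of RSK (see \cite[Chapter~7]{St}). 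Consequently
\[
F_{\Gamma_T}(\mathbf{x})=\sum_{Q\in\SYT(\lambda)}Q_{\Des(Q)}(\mathbf{x}).
\]

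It then remains to recall the classical expansion of a Schur function into Gessel's fundamental quasisymmetric functions, $s_\lambda(\mathbf{x})=\sum_{Q\in\SYT(\lambda)}Q_{\Des(Q)}(\mathbf{x})$, which is exactly \cite[Chapter~7]{St} (originating in \cite{GesselPPartition}); combining the two displays finishes the proof. The only point requiring care—and the one I would state explicitly rather than wave at—is the descent-preservation property of RSK, i.e.\ that $\Des(\e{w})=\Des(Q(\e{w}))$; everything else is a direct assembly of cited facts. Since the paper already frames this proposition as ``just a restatement'' of the known Schur expansion, a short argument along these lines is all that is needed, with the descent compatibility being the substantive ingredient.
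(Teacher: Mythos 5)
Your proof is correct and takes the same route the paper implicitly relies on: the paper states the proposition with citations to Gessel and to Stanley, treating it as ``just a restatement'' of the expansion $s_\lambda(\mathbf{x})=\sum_{Q\in\SYT(\lambda)}Q_{\Des(Q)}(\mathbf{x})$, and gives no separate proof. You have simply spelled out the standard RSK argument behind that citation, correctly isolating the one substantive step---the descent-preservation property $\Des(\e{w})=\Des(Q(\e{w}))$ under Schensted insertion (the row bumping lemma)---that makes the restatement legitimate.
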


We next discuss the Stanley symmetric functions, also known as stable Schubert polynomials.
They are related to the following ideal~\cite{FS}.

\begin{definition}
\label{d nilCox}
The \emph{nilCoxeter ideal} $\Incox$ of  $\U$  is generated by the elements
\begin{alignat}{3}
&u_a^2 \qquad&& \text{for all~$a$;}\\
&u_au_c - u_cu_a \qquad&& \text{for $c - a \geq 2$;}\\
&u_au_{b}u_a - u_bu_au_b \qquad&& \text{for $b-a=1$.}
\end{alignat}
It is easy to see that $\Incox \supset \Ifg\,$.
\end{definition}

The monomials $\mathbf{u}_{\e{w}}\notin\Incox$ 
are labeled by \emph{reduced words} $\e{w}$ for the symmetric group~$\mathcal{S}_{N+1}$
(viewed as words in~$\U^*$).
Moreover $\mathbf{u}_{\e{v}}\equiv\mathbf{u}_{\e{w}}\bmod \Incox$ if and only if
$\e{v}$ and $\e{w}$ are reduced words for the same permutation~$\pi\in\mathcal{S}_{N+1}$.

We denote by $\operatorname{Red}(\pi)$ the set of reduced words for
$\pi\in\mathcal{S}_{N+1}$, and set
\[
\gamma(\pi) = \sum_{\e{w} \in \text{Red}(\pi)} \e{w} \in \U^*.
\]
The $(0,1)$-vectors  $\gamma(\pi)$ form a  $\ZZ$-basis of $\Incoxperp$.



The \emph{Stanley symmetric function} 
associated to the permutation $\pi\in\mathcal{S}_{N+1}$
is defined by
\begin{align}
F_{\gamma(\pi)}(\mathbf{x}) = \sum_{\e{w} \in \text{Red}(\pi)} Q_{\Des(\e{w})}(\mathbf{x}).
\end{align}
By Corollary \ref{c FG},  $F_{\gamma(\pi)}(\mathbf{x})$
is a Schur positive symmetric function~\cite{EdelmanGreene}:

\begin{corollary}
The coefficient of $s_\lambda(\mathbf{x})$ in $F_{\gamma(\pi)}(\mathbf{x})$
is the number of tableaux $T$ of shape~$\lambda$ 
whose column reading word $\creading(T)$ is a reduced word for~$\pi$.
\label{c Stanley answer}
\end{corollary}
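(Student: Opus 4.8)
The plan is to derive Corollary~\ref{c Stanley answer} as an immediate consequence of Corollary~\ref{c FG}, once we recognize the set $\operatorname{Red}(\pi)$ of reduced words for~$\pi$ as the vertex set of a rotation-free switchboard.

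First I would pass to orthogonal complements in the inclusion $\Incox\supset\Ifg$, obtaining $\Incoxperp\subset\Ifgperp$. Since $\gamma(\pi)=\sum_{\e{w}\in\operatorname{Red}(\pi)}\e{w}$ belongs to $\Incoxperp$ (in fact such vectors form a $\ZZ$-basis of it), it follows that $\gamma(\pi)\in\Ifgperp$. As $\gamma(\pi)$ is a $(0,1)$-vector and all reduced words for a fixed $\pi\in\mathcal{S}_{N+1}$ share the common length $\ell(\pi)$, the characterization recorded just before Corollary~\ref{c FG} — namely that $(0,1)$-vectors in $\Ifgperp$ are exactly the sums of vertices of rotation-free switchboards — produces a rotation-free switchboard $\Gamma$ with vertex set $\operatorname{Red}(\pi)$.

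Then, by Definition~\ref{def:FGamma}, $F_\Gamma(\mathbf{x})=F_{\gamma(\pi)}(\mathbf{x})$, so applying Corollary~\ref{c FG} to $\Gamma$ shows that $F_{\gamma(\pi)}(\mathbf{x})$ is Schur positive and that the coefficient of $s_\lambda(\mathbf{x})$ in it equals the number of $T\in\SSYT(\lambda;N)$ whose column reading word $\creading(T)$ is a vertex of $\Gamma$, i.e.\ a reduced word for~$\pi$. This is exactly the assertion of Corollary~\ref{c Stanley answer}, and it recovers the Schur positivity of Stanley symmetric functions due to Edelman and Greene.

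There is no genuine obstacle here; the entire content has already been packaged into Corollary~\ref{c FG}. The only points needing (routine) verification are the two facts invoked in passing — that $\Incox\supset\Ifg$, and that the $\gamma(\pi)$ exhaust $\Incoxperp$ — both of which are standard and are asserted in the surrounding text, together with the observation that the switchboard axioms of Definition~\ref{def-switchboard} are automatically satisfied by the vertex set $\operatorname{Red}(\pi)$, which is precisely what the $(0,1)$-vector/rotation-free switchboard correspondence guarantees.
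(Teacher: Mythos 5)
Your proof is correct and follows essentially the same route as the paper: both pass from $\gamma(\pi)\in\Incoxperp\subset\Ifgperp$, use the stated correspondence between $(0,1)$-vectors in $\Ifgperp$ and rotation-free switchboards, and then invoke Corollary~\ref{c FG}. The paper simply compresses these steps into a single appeal to Corollary~\ref{c FG}, while you spell them out explicitly.
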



\begin{definition}
\label{d I nilplactic}
The \emph{nilplactic ideal} $\Inplac$ of  $\U$ is generated by the elements
\begin{alignat}{2}
&u_a^2 \qquad&& \text{for all~$a$;} \\
&u_a u_c u_a \qquad&& \text{for $c-a\ge 2$;}\\
&u_c u_a u_c  \qquad&& \text{for $c-a\ge 2$;}\\
&u_bu_au_c - u_bu_cu_a \qquad&& \text{for $a<b<c$;}\\
&u_au_cu_b - u_cu_au_b \qquad&& \text{for $a<b<c$;}\\
&u_au_bu_a - u_bu_au_b \qquad&& \text{for $b-a=1$.}
\end{alignat}
It is easy to check that  $\Incox \supset \Inplac \supset \Ifg\,$.
A  switchboard $\Gamma$ is called \emph{nilplactic} if
\begin{itemize}
\item all switches in $\Gamma$ are either Knuth switches on 3 distinct letters, or braid~switches;
\item no word  in $\Gamma$ contains the same letter twice in succession;
\item no word  in $\Gamma$ contains 3 consecutive letters $\e{aca}$ or $\e{cac}$ with $c-a\ge 2$.
\end{itemize}
\end{definition}

The vertices of a connected nilplactic switchboard form a nilplactic equivalence class;
they are reduced words for the same permutation.
The $(0,1)$-vectors in $\Inplacperp$ are  sums of distinct niplactic classes.

It is known \cite{EdelmanGreene, LScohomologyring} that the symmetric
function  $F_\Gamma$ associated to any
connected nilplactic switchboard  $\Gamma$ is a
Schur function.
Moreover any connected nilplactic switchboard is isomorphic, as an edge-labeled graph,
to one of the standard dual equivalence graphs.

Nilplactic switchboards give  another way of understanding the Schur
expansions of Stanley symmetric functions.
The set  $\text{Red}(\pi)$ of reduced words for a
permutation~$\pi\in\mathcal{S}_{N+1}$ is the vertex set of a unique
(generally disconnected) nilplactic switchboard ~$\Gamma(\pi)$.
Hence the coefficient of  $s_\lambda(\mathbf{x})$ in
$F_{\gamma(\pi)}(\mathbf{x}) = F_{\Gamma(\pi)}(\mathbf{x})$
is the number of components of  $\Gamma(\pi)$ whose associated
symmetric function is  $s_\lambda(\mathbf{x})$.
An example is given in Figure~\ref{f stanley symmetric graph}.

%
%
%

\begin{figure}[H]
\centerfloat
\begin{tikzpicture}[xscale = 2.2,yscale = 1.5]
\tikzstyle{vertex}=[inner sep=0pt, outer sep=4pt]
\tikzstyle{framedvertex}=[inner sep=3pt, outer sep=2pt, draw=gray]
\tikzstyle{aedge} = [draw, thin, ->,black]
\tikzstyle{edge} = [draw, very thick, -,black]
\tikzstyle{doubleedge} = [draw, very thick, double distance=1.4pt, -,black]
\tikzstyle{hiddenedge} = [draw=none, thick, double distance=1.4pt, -,black]
\tikzstyle{dashededge} = [draw, thick, lightgray]
\tikzstyle{LabelStyleH} = [text=black, anchor=south]
\tikzstyle{LabelStyleHn} = [text=black, anchor=north]
\tikzstyle{LabelStyleV} = [text=black, anchor=east]

\node[framedvertex] (v1) at (-1,1){\footnotesize$\e{4212}$};
\node[vertex] (v2) at (0,1){\footnotesize$\e{4121}$};
\node[vertex] (v3) at (1,1){\footnotesize$\e{1421}$};
\node[vertex] (v4) at (1,0){\footnotesize$\e{1241}$};
\node[vertex] (v5) at (2,0){\footnotesize$\e{1214}$};
\node[framedvertex] (v6) at (3,0){\footnotesize$\e{2124}$};
\node[framedvertex] (v7) at (3,1){\footnotesize$\e{2142}$};
\node[vertex] (v8) at (4,1){\footnotesize$\e{2412}$};

\draw[edge] (v1) to node[LabelStyleH]{\Tiny$\tilde{3}$} (v2);
\draw[edge] (v2) to node[LabelStyleH]{\Tiny$2$} (v3);
\draw[dashededge] (v3) to node[LabelStyleV]{\Tiny$~$} (v4);
\draw[edge] (v4) to node[LabelStyleH]{\Tiny$3$} (v5);
\draw[edge] (v5) to node[LabelStyleH]{\Tiny$\tilde{2}$} (v6);
\draw[dashededge] (v6) to node[LabelStyleV]{\Tiny$~$} (v7);
\draw[doubleedge] (v7) to node[LabelStyleH]{\Tiny$3$} (v8);
\draw[hiddenedge] (v7) to node[LabelStyleHn]{\Tiny$2$} (v8);
\end{tikzpicture}

\caption{\label{f stanley symmetric graph} Let  $\pi = 32154 \in \S_5$.
Without the vertical edges, this is the nilplactic switchboard  $\Gamma(\pi)$
on the vertex set ~$\text{Red}(\pi)$.
Vertical edges correspond to relations  $\e{ac} \sim \e{ca}$ which are not switches.
The associated symmetric function is the Stanley symmetric function
$F_{\Gamma(\pi)} = s_{31}+s_{22}+s_{211}$.
By Corollary~\ref{c Stanley answer},
the Schur expansion of   $F_{\Gamma(\pi)}$ is determined
by the reduced $\boxed{\text{words}}$ which are column reading words of tableaux.}
\end{figure}
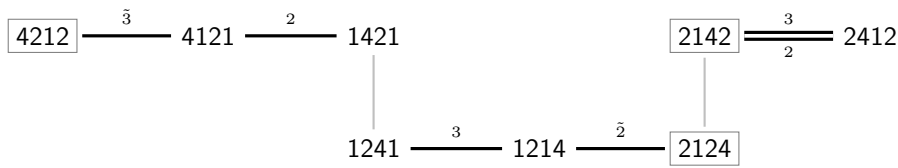

\begin{definition} 
\label{d weak 0 Hecke}
Let $\Iweakh$ be the ideal in $\U$ generated by the elements
\begin{alignat}{2}
&u_bu_au_c - u_bu_cu_a \qquad&& \text{for $a<b<c$;}\\
&u_au_cu_b - u_cu_au_b \qquad&& \text{for $a<b<c$;}\\
&u_au_bu_a - u_bu_au_b && \text{for  $b-a=1$;} \\
&u_bu_bu_a - u_bu_au_a \qquad && \text{for $b-a=1$.}
\end{alignat}
\end{definition}

\pagebreak[3]

It is easy to check that  $\Inplac\supset \Iweakh \supset \Ifg\,$.
Applying Corollary \ref{c FG} to certain switchboards associated with~$\Iweakh$ yields the
Schur expansions of \emph{stable Grothendieck polynomials},
up to a predictable sign; see \cite[Corollary~4.2]{FG} for details.
We note that connected components of these switchboards
do not have to be standard dual equivalence graphs;
this sets them apart from connected plactic or nilplactic switchboards.
An example is given in Figure~\ref{f weak 0 Hecke}.

\begin{figure}[ht]

\vspace{-.05in}

        \centerfloat
\begin{tikzpicture}[xscale = 2.2,yscale = 1.1]
\tikzstyle{vertex}=[inner sep=3pt, outer sep=4pt]
\tikzstyle{framedvertex}=[inner sep=3pt, outer sep=4pt, draw=gray]
\tikzstyle{aedge} = [draw, thin, ->,black]
\tikzstyle{edge} = [draw, thick, -,black]
\tikzstyle{doubleedge} = [draw, thick, double distance=1pt, -,black]
\tikzstyle{hiddenedge} = [draw=none, thick, double distance=1pt, -,black]
\tikzstyle{dashededge} = [draw, dashed, gray]
\tikzstyle{LabelStyleH} = [text=black, anchor=south]
\tikzstyle{LabelStyleHn} = [text=black, anchor=north]
\tikzstyle{LabelStyleV} = [text=black, anchor=east]

\node[framedvertex] (v1) at (0,0){\footnotesize$\e{2112}$};
\node[vertex] (v2) at (1,0){\footnotesize$\e{2212}$};
\node[framedvertex] (v3) at (2,0){\footnotesize$\e{2121}$};
\node[vertex] (v4) at (3,0){\footnotesize$\e{1211}$};
\node[vertex] (v5) at (4,0){\footnotesize$\e{1221}$};

\draw[edge] (v1) to node[LabelStyleH]{\Tiny$\tilde{2}$} (v2);
\draw[edge] (v2) to node[LabelStyleH]{\Tiny$\tilde{3}$} (v3);
\draw[edge] (v3) to node[LabelStyleH]{\Tiny$\tilde{2}$} (v4);
\draw[edge] (v4) to node[LabelStyleH]{\Tiny$\tilde{3}$} (v5);

\end{tikzpicture}

\vspace{.1in}

\begin{tikzpicture}[xscale = 2.2,yscale = 1.1]
\tikzstyle{vertex}=[inner sep=3pt, outer sep=4pt]
\tikzstyle{framedvertex}=[inner sep=3pt, outer sep=4pt, draw=gray]
\tikzstyle{aedge} = [draw, thin, ->,black]
\tikzstyle{edge} = [draw, thick, -,black]
\tikzstyle{doubleedge} = [draw, double, -,black]
\tikzstyle{hiddenedge} = [draw=none, double, -,black]
\tikzstyle{dashededge} = [draw, dashed, gray]
\tikzstyle{LabelStyleH} = [text=black, anchor=south]
\tikzstyle{LabelStyleHn} = [text=black, anchor=north]
\tikzstyle{LabelStyleV} = [text=black, anchor=east]

\node[framedvertex] (v1) at (0,0){\footnotesize$\e{2122}$};
\node[vertex] (v2) at (1,0){\footnotesize$\e{1212}$};
\node[vertex] (v3) at (2,0){\footnotesize$\e{1121}$};

\draw[edge] (v1) to node[LabelStyleH]{\Tiny$\tilde{2}$} (v2);
\draw[edge] (v2) to node[LabelStyleH]{\Tiny$\tilde{3}$} (v3);

\end{tikzpicture}

\vspace{.1in}

\begin{tikzpicture}[xscale = 2.2,yscale = 1.1]
\tikzstyle{vertex}=[inner sep=3pt, outer sep=4pt]
\tikzstyle{framedvertex}=[inner sep=3pt, outer sep=4pt, draw=gray]
\tikzstyle{aedge} = [draw, thin, ->,black]
\tikzstyle{edge} = [draw, thick, -,black]
\tikzstyle{doubleedge} = [draw, thick, double distance=1pt, -,black]
\tikzstyle{hiddenedge} = [draw=none, thick, double distance=1pt, -,black]
\tikzstyle{dashededge} = [draw, dashed, gray]
\tikzstyle{LabelStyleH} = [text=black, anchor=south]
\tikzstyle{LabelStyleHn} = [text=black, anchor=north]
\tikzstyle{LabelStyleV} = [text=black, anchor=east]

\node[framedvertex] (v1) at (0,0){\footnotesize$\e{2111}$};
\node[vertex] (v2) at (1,0){\footnotesize$\e{2211}$};
\node[vertex] (v3) at (2,0){\footnotesize$\e{2221}$};

\draw[edge] (v1) to node[LabelStyleH]{\Tiny$\tilde{2}$} (v2);
\draw[edge] (v2) to node[LabelStyleH]{\Tiny$\tilde{3}$} (v3);

\end{tikzpicture}

\caption{\label{f weak 0 Hecke}
Each component (hence their union) is a switchboard whose vertex sum lies in~$\Iweakhperp$.
Their symmetric functions are (top to bottom): $s_{31}+s_{22}$, $s_{31}$, $s_{31}$.
By Corollary~\ref{c FG}, these Schur expansions can be read off from the
$\boxed{\text{vertices}}$ that are column reading words.
}
\end{figure}
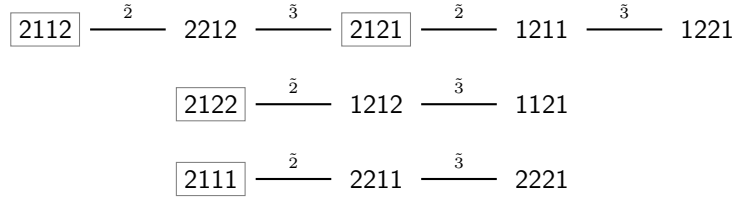

\section{Semimatched ideals}
\label{s semimatched ideals}

\begin{definition} 
\label{d semimatched ideal}
Let us partition the set of $3$-element subsets of $\{1,\dots,N\}$ into three categories,
by declaring each triple~$S\subset\{1,\dots,N\}$ to be
either a \emph{Knuth triple}, or a \emph{rotation triple}, or an \emph{undecided triple}.
Similarly, declare each subset  $\{a, a+1\} \subset \{1,\dots,N\}$ to be a \emph{Knuth pair}, a \emph{braid/idempotent pair}, or an \emph{undecided pair}.
Let $I_M$ be a monomial ideal in~$\U$.
The  \emph{semimatched ideal}~$I$ associated to this data is the ideal generated by~$I_M$,
the switchboard ideal~$\Irk$ (whose generators are
\eqref{i fg close}--\eqref{quad uuu 3vars again}),
and the elements listed below:
\begin{alignat}{2}
&u_a u_c u_b - u_c u_a u_b \qquad &&\text{ if $a < b < c$ is a Knuth triple,}  \\
&u_b u_a u_c - u_b u_c u_a \quad &&\text{ if $a < b < c$ is a Knuth triple,} \\
&u_a u_c u_b - u_b u_a u_c \quad &&\text{ if $a < b < c$ is a rotation triple,}  \\
&u_c u_a u_b - u_b u_c u_a \quad &&\text{ if $a < b < c$ is a rotation triple,} \\
&u_a u_b u_a - u_b u_a u_a \quad &&\text{ if  $b-a=1$ and $\{a, b\}$ is a Knuth pair,} \\
&u_b u_a u_b - u_b u_b u_a \quad &&\text{ if  $b-a=1$ and $\{a, b\}$ is a Knuth pair,} \\
&u_a u_b u_a - u_b u_a u_b \quad &&\text{ if  $b-a=1$ and $\{a, b\}$ is a braid/idempotent pair,} \\
&u_b u_a u_a - u_b u_b u_a \quad &&\text{ if  $b-a=1$ and $\{a, b\}$ is a braid/idempotent pair}.
\end{alignat}
%
%
%
%

For a semimatched ideal~$I$ as above,
an  \emph{$I$-switchboard} is a switchboard in which every vertex $\e{w}$ corresponds to
a monomial $u_\e{w} \notin I_M$,
and every edge corresponds to a switch
that fits one of the patterns listed below:
\begin{alignat}{3}
&\{\e{bac}, \e{bca}\} \ &&\text{or\ }  \{\e{acb}, \e{cab}\}\ \  && \text{for Knuth triples and undecided triples $a < b < c$;} \label{e I switchboard 1}\\
&\{\e{bac}, \e{acb}\} &&\text{or\ }  \{\e{bca}, \e{cab}\} && \text{for rotation triples and undecided triples $a < b < c$;}  \label{e I switchboard 2} \\
&\{\e{aba}, \e{baa}\} &&\text{or\ }  \{\e{bab}, \e{bba}\} && \text{for $b-a>1$;} \label{e I switchboard 3}\\
&\{\e{aba}, \e{baa}\} &&\text{or\ }  \{\e{bab}, \e{bba}\} && \text{for Knuth and undecided pairs $\{a, b\}$ ($b-a=1$);} \label{e I switchboard 4}\\
&\{\e{aba}, \e{bab}\} &&\text{or\ }  \{\e{baa}, \e{bba}\} && \text{for braid/idempotent and undecided pairs ($b-a=1$).}  \label{e I switchboard 5}
\end{alignat}
\end{definition}

\pagebreak[3]

In Figure~\ref{fig:some-ideals}, the following ideals are semimatched:
$\Irk$, $\Ifg$, $\Iaba{k}$, $\Iassaf{k}$, $\Iplac$, $\Iweakh$, and~$\Inplac$.
For example, the ideal  $\Ifg = \Ifgp+\Irk$ (cf.~\eqref{eq:Ifg}) is
the semimatched ideal in which all triples are Knuth, all pairs
undecided, and $I_M = \{0\}$.
The Assaf ideal $\Iassaf{k}$ is the semimatched
ideal with Knuth triples $a < b <c$ for $c-a  > k$,
rotation triples $a < b < c$ for  $c - a \le k$, no undecided triples,
all pairs Knuth, and $I_M$ generated by the monomials in~$\Jlam{k}$.

We observe that Assaf switchboards are the same as $\Iassaf{k}$-switchboards,
rotation-free switchboards are $\Ifg$-switchboards,
plactic switchboards are $\Iplac$-switchboards,
and nilplactic switchboards are $\Inplac$-switchboards.
Hence the notion of  $I$-switchboards allows us to recover
the classes of switchboards studied in Sections~\ref{s LLT}--\ref{s bijectivizations and examples}
from their corresponding ideals.


\begin{proposition}
\label{p semimatched ideal switchboards}
Let  $I$ be a semimatched ideal.
For a set of words $W$ of the same length, the following are equivalent:
\begin{itemize}
\item $\sum_{\e{w}\in W}\e{w} \in I^\perp$;
\item $W$ is the vertex set of an  $I$-switchboard.
\end{itemize}
\end{proposition}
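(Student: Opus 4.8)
The plan is to unwind the membership $\gamma:=\sum_{\e{w}\in W}\e{w}\in I^\perp$ into purely local combinatorial conditions on $W$ and match these, case by case, with the defining conditions of an $I$-switchboard.

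\emph{Reduction to generators.} Since $I$ is the two-sided ideal generated by the generators of $I_M$, by the generators \eqref{i fg close}--\eqref{quad uuu 3vars again} of $\Irk$, and by the eight ``semimatch'' elements, and since $\langle\cdot,\cdot\rangle$ is $\ZZ$-bilinear and respects the grading by word length, $\gamma\in I^\perp$ is equivalent to $\langle \mathbf{u}_{\e{x}}\,g\,\mathbf{u}_{\e{y}},\gamma\rangle=0$ for every listed generator $g$ and all words $\e{x},\e{y}$ with $|\e{x}|+\deg g+|\e{y}|=n$. For a monomial generator $g=\mathbf{u}_{\e{m}}$ of $I_M$ this reads $[\,\e{x}\e{m}\e{y}\in W\,]=0$; ranging over $\e{x},\e{y}$ this is exactly the condition that no vertex $\e{w}\in W$ has $\mathbf{u}_{\e{w}}\in I_M$, i.e.\ the first clause in the definition of an $I$-switchboard.

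\emph{Localizing the switch-type relations.} The remaining (degree $3$) generators are each $\ZZ$-combinations of monomials $\mathbf{u}_{\e{z}}$ with $\e{z}$ ranging over rearrangements of a fixed three-letter window, so $\mathbf{u}_{\e{x}}\,g\,\mathbf{u}_{\e{y}}$ only involves words that agree outside positions $i-1,i,i+1$, where $i=|\e{x}|+2$. Fix $i\in\{2,\dots,n-1\}$ and a context $(\e{x},\e{y})$ (the entries in positions $1,\dots,i-2$ and $i+2,\dots,n$), and let $V=V_{\e{x},\e{y},i}$ be the set of length-three windows $\e{z}$ with $\e{x}\e{z}\e{y}\in W$. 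Imposing $\langle \mathbf{u}_{\e{x}}\,g\,\mathbf{u}_{\e{y}},\gamma\rangle=0$ for all switch-type $g$ turns into a small $\{0,\pm1\}$-linear system on the indicator vector of $V$; as the generators that apply depend only on the order type of the window's value set, on the distance class ($b-a=1$ versus $b-a\ge 2$), and on its Knuth/rotation/undecided (resp.\ Knuth/braid-idempotent/undecided) label, this system can be resolved by a finite check. The outcome, which is the crux of the argument, is: windows in $V$ having no descent or two descents in positions $i-1,i$ are unconstrained, while the windows in $V$ having exactly one descent there are precisely those that partition into two-element ``edges'' drawn from the switch patterns allowed for that value set, namely \eqref{e I switchboard 1}--\eqref{e I switchboard 5}; equivalently, each one-descent window of $V$ lies in exactly one such allowed edge contained in $V$.

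\emph{Assembly.} Combining the two reductions, $\gamma\in I^\perp$ is equivalent to: (a) $\mathbf{u}_{\e{w}}\notin I_M$ for all $\e{w}\in W$, and (b) for every $i$ and every context, the one-descent windows occurring in $W$ at position $i$ split into allowed edges. For ``$\Leftarrow$'': an $I$-switchboard on $W$ furnishes, at each $i$ and context, exactly such a splitting by its $i$-edges, so all local equations hold and $\gamma\in I^\perp$. For ``$\Rightarrow$'': given $\gamma\in I^\perp$, build a graph by choosing for each $i$ and each context a splitting of the one-descent windows of $V_{\e{x},\e{y},i}$ into allowed edges (possible by (b); when $V$ contains all four one-descent windows of a value set labeled ``undecided'' there may be two choices, accounting for the two $I$-switchboards on the same vertex set), labeling each such edge by $i$, and taking the union over all $i$ and all contexts; Definition~\ref{def-switchboard} and the pattern list \eqref{e I switchboard 1}--\eqref{e I switchboard 5} hold by construction, and (a) ensures all vertices avoid $I_M$, so this is an $I$-switchboard with vertex set $W$. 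The only genuine labor is the finite case check in the localization step; it is routine but requires care, since one must track the four-term relations \eqref{i fg close}/\eqref{quad uuu 3vars again} from $\Irk$ together with the two-term semimatch relations, and must treat windows with a repeated letter (e.g.\ $\e{aba}$, $\e{bab}$, $\e{baa}$) on the same footing as windows with three distinct letters.
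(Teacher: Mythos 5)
Your proof is correct and supplies the argument the paper leaves implicit: the proposition is stated without proof, in the same spirit as Propositions~\ref{pr:01Irk=switchboard} and~\ref{p assaf ideal switchboards} which the paper calls straightforward (pointing to \cite{BD0graph}), and the intended verification is exactly your reduction to generators, localization to a window position and context, and finite case check sorted by value set and Knuth/rotation/undecided (resp.\ Knuth/braid-idempotent/undecided) label. One small wording slip in the localization step: ``each one-descent window of $V$ lies in exactly one allowed edge contained in $V$'' is not equivalent to ``the one-descent windows of $V$ partition into allowed edges''---for an undecided triple with all four one-descent rearrangements present, each lies in \emph{two} allowed edges contained in $V$ (one Knuth, one rotation)---but since the assembly step uses only the existence of a partition, not uniqueness of containing edges, this does not affect the proof.
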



\begin{definition}
\label{d triple biject}
A  \emph{triples ideal} is a semimatched ideal
in which there are no undecided triples, all pairs  $\{a, a+1\}$ are Knuth pairs, and $I_M=\{0\}$.
Hence the data defining a triples ideal is a map  from the set of triples in $\{1,\dots,N\}$
to the 2-element set $\{\text{Knuth, rotation}\}$.
A \emph{triples switchboard} is an $I$-switchboard associated to a triples ideal~$I$.
A \emph{triples $\Dzero$~graph} is a triples switchboard
whose words have no repeated letters;
these were studied in
\cite{BD0graph},
see \cite[Example~2.5 and Definition~7.13]{BD0graph}.
\end{definition}

\begin{example} \label{ex triples}
Let $\Gamma$ be the triples switchboard on the vertex set $\S_5$
corresponding to the triples ideal in which 124, 245, and 345 are
rotation triples and the rest are Knuth triples.
The components of~$\Gamma$ have symmetric functions
$s_{41} + s_{32} + s_{311} + s_{221}$ (see Figure~\ref{f triples}),
$s_{32} + s_{311} + s_{221} + s_{2111}$,
$s_{41} + s_{32}$,
$s_{221} + s_{2111}$,
$s_{5}$,
$s_{41}$ (two components), $s_{32}$~(two),
$s_{311}$~(four),
$s_{221}$~(two),
$s_{2111}$~(two),
and~$s_{11111}$.
\end{example}

\begin{figure}[ht]
\vspace{-.08in}
        \centerfloat
\begin{tikzpicture}[xscale = 1.9,yscale = 1.25]
\tikzstyle{vertex}=[inner sep=0pt, outer sep=4pt]
\tikzstyle{framedvertex}=[inner sep=3pt, outer sep=4pt, draw=gray]
\tikzstyle{aedge} = [draw, thin, ->,black]
\tikzstyle{edge} = [draw, thick, -,black]
\tikzstyle{doubleedge} = [draw, thick, double distance=1pt, -,black]
\tikzstyle{hiddenedge} = [draw=none, thick, double distance=1pt, -,black]
\tikzstyle{dashededge} = [draw, very thick, dashed, black]
\tikzstyle{LabelStyleH} = [text=black, anchor=south]
\tikzstyle{LabelStyleHn} = [text=black, anchor=north]
\tikzstyle{LabelStyleNE} = [text=black, anchor=south west, inner sep=2pt]
\tikzstyle{LabelStyleV} = [text=black, anchor=east]
\tikzstyle{LabelStyleVn} = [text=black, anchor=west]
\tikzstyle{LabelStyleNW} = [text=black, anchor=south east, inner sep=2pt]

\node[vertex] (v1) at (4,3){\footnotesize$\e{34521} $};
\node[vertex] (v2) at (4,2){\footnotesize$\e{35241} $};
\node[vertex] (v3) at (5,2){\footnotesize$\e{32541} $};
\node[vertex] (v4) at (6,3){\footnotesize$\e{34251} $};
\node[vertex] (v5) at (7,4){\footnotesize$\e{32451} $};
\node[vertex] (v6) at (3,2){\footnotesize$\e{35412} $};
\node[vertex] (v7) at (2,2){\footnotesize$\e{43512} $};
\node[vertex] (v8) at (1,2){\footnotesize$\e{43152} $};
\node[vertex] (v9) at (2,4){\footnotesize$\e{13452} $};
\node[vertex] (v10) at (2,5){\footnotesize$\e{13524} $};
\node[vertex] (v11) at (3,6){\footnotesize$\e{31254} $};
\node[vertex] (v12) at (2,6){\footnotesize$\e{13254} $};
\node[vertex] (v13) at (5,4){\footnotesize$\e{34215} $};
\node[vertex] (v14) at (6,5){\footnotesize$\e{32415} $};
\node[vertex] (v15) at (5,6){\footnotesize$\e{34125} $};
\node[vertex] (v16) at (4,6){\footnotesize$\e{31425} $};
\node[vertex] (v17) at (1,6){\footnotesize$\e{13425} $};
\node[vertex] (v18) at (4,5){\footnotesize$\e{32145} $};
\node[vertex] (v19) at (1,4){\footnotesize$\e{31245} $};
\node[vertex] (v20) at (1,5){\footnotesize$\e{13245} $};
\draw[edge] (v1) to node[LabelStyleV]{\Tiny$\tilde{3} $} (v2);
\draw[edge] (v2) to node[LabelStyleH]{\Tiny$2 $} (v3);
\draw[edge] (v2) to node[LabelStyleH]{\Tiny$\tilde{4} $} (v6);
\draw[edge] (v3) to node[LabelStyleNW]{\Tiny$\tilde{3} $} (v4);
\draw[edge] (v4) to node[LabelStyleNE]{\Tiny$4 $} (v13);
\draw[edge] (v4) to node[LabelStyleNW]{\Tiny$2 $} (v5);
\draw[edge] (v5) to node[LabelStyleNE]{\Tiny$4 $} (v14);
\draw[edge] (v6) to node[LabelStyleH]{\Tiny$\tilde{2} $} (v7);
\draw[doubleedge] (v7) to node[LabelStyleH]{\Tiny$3$} (v8);
\draw[hiddenedge] (v7) to node[LabelStyleHn]{\Tiny$4$} (v8);
\draw[edge] (v9) to node[LabelStyleV]{\Tiny$\tilde{4} $} (v10);
\draw[edge] (v10) to node[LabelStyleV]{\Tiny$3 $} (v12);
\draw[edge] (v11) to node[LabelStyleH]{\Tiny$\tilde{4} $} (v16);
\draw[edge] (v11) to node[LabelStyleH]{\Tiny$2 $} (v12);
\draw[edge] (v12) to node[LabelStyleH]{\Tiny$\tilde{4} $} (v17);
\draw[edge] (v13) to node[LabelStyleNW]{\Tiny$2 $} (v14);
\draw[edge] (v14) to node[LabelStyleNE]{\Tiny$\tilde{3} $} (v15);
\draw[edge] (v15) to node[LabelStyleH]{\Tiny$2 $} (v16);
\draw[edge] (v16) to node[LabelStyleV]{\Tiny$\tilde{3} $} (v18);
\draw[edge] (v17) to node[LabelStyleV]{\Tiny$3 $} (v20);
\draw[edge] (v19) to node[LabelStyleV]{\Tiny$2 $} (v20);

\end{tikzpicture}
\vspace{-2mm}
\caption{\label{f triples}
A triples switchboard for the triples ideal
in which 124, 245, and 345 are rotation triples and all other triples are Knuth.
See Example \ref{ex triples}.
}
\end{figure}
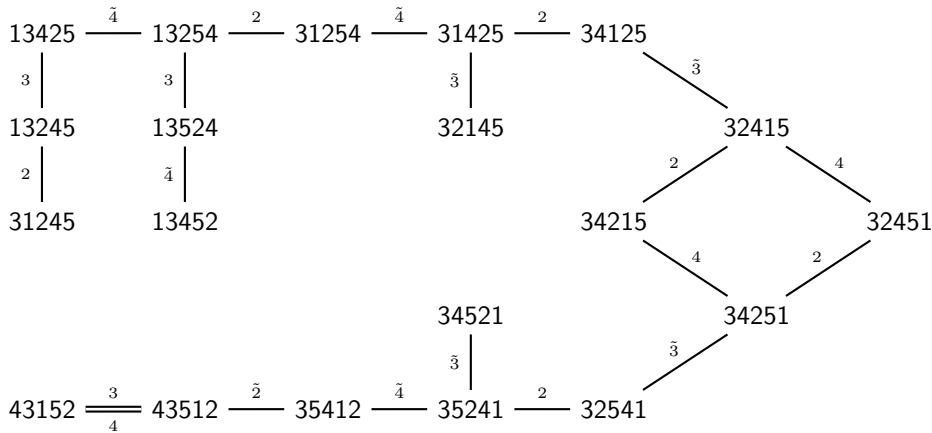
\vspace{-1mm}

\begin{problem}
\label{problem:triples}
Is the symmetric function $F_\Gamma$ Schur positive for any triples switchboard~$\Gamma$?
\end{problem}

If $\Gamma$ is a $\Dzero$~graph with  $N \le 5$, then $F_\Gamma$ is
Schur positive by \cite[Corollary 4.9]{BD0graph}.
Extensive computer tests provide supporting evidence for Schur positivity for $N\in\{6,7,8\}$.
\pagebreak[3]

\section{The Schur positivity threshold}
\label{s Schur positivity}

As of this writing, the following tantalizing questions remain unanswered:
For which ideals $I$ are the symmetric functions $F_\gamma(\mathbf{x})$ Schur positive
for all vectors \hbox{$\gamma \in \U_{\ge 0}^* \cap I^\perp$}?
For~which $I$ are the noncommutative Schur functions
$\mathfrak{J}_\lambda(\mathbf{u})$ monomial positive
modulo~$I$?
In this section, we assemble our best attempts towards answers,
including known special cases (cf.~Figure~\ref{fig:positivity-for-switchboards}) and
strategies for further exploration.
We believe it particularly instructive to examine the ``phase
transition'' between Schur positivity and non-positivity---for
instance, by looking at two ideals
which are ``as close as possible'' while requiring
$\mathfrak{J}_\lambda(\mathbf{u})$ be monomial positive modulo one
but
not the other.

Here is one specific line of inquiry.
Given ideals $I_1,\dots,I_m$ for which monomial positivity
of $\mathfrak{J}_\lambda(\mathbf{u})$ is known (or expected) to hold modulo each~$I_j$,
is there a monomial positive expression for $\mathfrak{J}_\lambda(\mathbf{u})$
which is true modulo each ideal~$I_j$, and
consequently modulo the intersection~$\bigcap I_j$?


To simplify calculations, we prefer to work with ideals which
contain both the switchboard ideal~$\Irk$
and the ideal $\Ist$ generated by monomials with repeated
entries. 
This translates into restricting from
switchboards to $\Dzero$~graphs, see
Definitions~\ref{def:dzero-graph}--\ref{def:Ist}.

We begin by looking more closely at the counterexample in Figure~\ref{f schur pos counterexample}.
For $N=6$, the noncommutative Schur function
$\mathfrak{J}_{(2,2,2)}(\mathbf{u})$
is given by (cf.~\eqref{eq:ncschur-via-e})
\[
\mathfrak{J}_{(2,2,2)}(\mathbf{u})
=e_3(\mathbf{u})^2 - e_4(\mathbf{u}) e_2(\mathbf{u})
\equiv \sum_{\substack{\e{w}\in\mathcal{S}_6\\ \Des(\e{w})=\{1,2,4,5\}}}  
\mathbf{u}_{\e{w}}
- \sum_{\substack{\e{w}\in\mathcal{S}_6\\ \Des(\e{w})=\{1,2,3,5\}}}
\mathbf{u}_{\e{w}}
\ \bmod \Ist.
\]
Note that in the last expression, the two sums have 20 and~15
monomials, respectively.
Let us see how this formula
simplifies modulo some ideals.

\begin{proposition}
\label{p four pos expressions}
Let $N=6$. Modulo the ideal $\Irk+\Ist$,
the noncommutative Schur~function $\mathfrak{J}_{(2,2,2)}(\mathbf{u})$
cannot be written as a sum of 5~monomials,
nor as a sum of 6 monomials minus a monomial.
It can be written as a sum of 7 monomials minus 2 monomials:
\begin{align}
\mathfrak{J}_{(2,2,2)}(\mathbf{u}) \equiv
    \mathbf{u}_\e{321654} + \mathbf{u}_\e{426513} + \mathbf{u}_\e{562143} + \mathbf{u}_\e{436512} +
  \mathbf{u}_\e{563412}
&+ \mathbf{u}_\e{462315} + \mathbf{u}_\e{452316}  \notag\\
&- \mathbf{u}_\e{462351} - \mathbf{u}_\e{452361} \,.
\label{e J222}
\end{align}
Modulo the ideal 
\,$\bigcap_k (\Iassaf{k}+\Ist)$,
the noncommutative Schur function
$\mathfrak{J}_{(2,2,2)}(\mathbf{u})$
has exactly four  $\ZZ$-monomial positive expressions:
\begin{align}
\mathfrak{J}_{(2,2,2)}(\mathbf{u})
&\equiv \mathbf{u}_\e{321654} + \mathbf{u}_\e{426513} +
\mathbf{u}_\e{562143} + \mathbf{u}_\e{431652} +
\mathbf{u}_\e{563412} \label{e J222 all k}
\\
    &\equiv \mathbf{u}_\e{321654} + \mathbf{u}_\e{462153} +
\mathbf{u}_\e{521643} + \mathbf{u}_\e{436512} +
\mathbf{u}_\e{563412} \notag 
\\
    &\equiv \mathbf{u}_\e{321654} + \mathbf{u}_\e{462513} +
\mathbf{u}_\e{521643} + \mathbf{u}_\e{436152} +
\mathbf{u}_\e{563412} \notag 
\\
    &\equiv \mathbf{u}_\e{321654} + \mathbf{u}_\e{462513} + \mathbf{u}_\e{526143} + \mathbf{u}_\e{431652} + \mathbf{u}_\e{563412} \,.
\notag 
\end{align}
\end{proposition}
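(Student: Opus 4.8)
The plan is to reduce the whole statement to bounded linear algebra in one finite-dimensional quotient, isolate a numerical invariant that fixes the number of monomials in any expansion, and then settle the positivity assertions by combining the known counterexample with a short exhaustive search. First I would pass to the multilinear degree-$6$ component: since every monomial with a repeated letter lies in $\Ist$ and $N=n=6$, modulo any ideal containing $\Ist$ one may work inside the $\ZZ$-span of the $720$ permutation-words of $\e{123456}$, and the submodule of relations cut out by $\Irk$ (resp.\ by $\Iassaf{k}$, resp.\ by $\bigcap_k(\Iassaf{k}+\Ist)=\bigcap_{k=1}^{5}(\Iassaf{k}+\Ist)$, the terms for $k\ge 5$ coinciding) is spanned by the explicit degree-$\le 4$ generators of Definitions~\ref{d Irk} and~\ref{d I assaf}, sandwiched between shorter words and reduced modulo $\Ist$. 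By \eqref{eq:ncschur-via-e} one has $\mathfrak{J}_{(2,2,2)}(\mathbf{u})=e_3(\mathbf{u})^2-e_4(\mathbf{u})e_2(\mathbf{u})$, which modulo $\Ist$ is the explicit integer combination of permutation-words recorded in Section~\ref{s Schur positivity}, so every assertion ``$f\equiv g$ modulo the ideal'' or ``$f$ is $\ZZ$-monomial positive modulo the ideal'' becomes a statement about residues in a fixed quotient; in particular, checking \eqref{e J222} and each of the four expressions in \eqref{e J222 all k} is a direct computation --- reduce the relevant difference (a combination of at most forty-odd words) modulo a spanning set of the appropriate relation submodule and confirm that it vanishes.

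Next I would record the invariant that fixes the size of an expansion. The vector $\sum_{\e{w}}\e{w}$, summed over all $720$ permutation-words, lies in $(\Irk+\Ist)^\perp$: it annihilates $\Ist$ because each sandwiched generator of $\Ist$ is a single word with a repeat, and it annihilates $\Irk$ because each sandwiched generator of $\Irk$ either has all of its terms non-multilinear (pairing to $0$) or all multilinear with signs $+1+1-1-1=0$. Pairing $e_3(\mathbf{u})^2-e_4(\mathbf{u})e_2(\mathbf{u})$ against this functional gives $\binom{6}{3}-\binom{6}{2}=20-15=5$, so \emph{any} expansion $\mathfrak{J}_{(2,2,2)}(\mathbf{u})\equiv\sum_{\e{w}}a_{\e{w}}\mathbf{u}_{\e{w}}$ modulo an ideal containing $\Irk+\Ist$ has $\sum_{\e{w}}a_{\e{w}}=5$; a $\ZZ$-monomial positive expansion therefore has exactly five monomials, a ``$p$ monomials minus $q$'' expansion satisfies $p-q=5$, and the hunt for a fifth monomial-positive expansion in the last claim is a bounded lattice-point problem. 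The impossibility of a purely positive expansion modulo $\Irk+\Ist$ (hence of five monomials) is then immediate: by Corollary~\ref{cor-nopos} the vector $\gamma$ of \eqref{eq:gamma-counterexample} satisfies $\langle\mathfrak{J}_{(2,2,2)}(\mathbf{u}),\gamma\rangle=-1$, and since all of its words are repeat-free we have $\gamma\in(\Irk+\Ist)^\perp$, so no nonnegative integer combination of words can be congruent to $\mathfrak{J}_{(2,2,2)}(\mathbf{u})$ modulo $\Irk+\Ist$.

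Then, for the ``$6$ monomials minus $1$'' impossibility and the ``exactly four'' count, I would combine switchboard certificates with a small search. Pairing a hypothetical expansion $\mathfrak{J}_{(2,2,2)}(\mathbf{u})\equiv\sum_{i=1}^{6}\mathbf{u}_{\sigma_i}-\mathbf{u}_{\tau}$ against the $\gamma$ of Figure~\ref{f schur pos counterexample} forces every $\sigma_i$ to avoid the $20$-element vertex set $W$ and forces $\tau\in W$; I would then eliminate each of the $20$ candidate $\tau$'s by pairing $\mathfrak{J}_{(2,2,2)}(\mathbf{u})+\mathbf{u}_{\tau}$ against a further switchboard $\Gamma'$ whose symmetric function has $s_{222}$-coefficient $-1$ and whose vertex set omits $\tau$, such switchboards being produced by local modifications of the one in Figure~\ref{f schur pos counterexample} or by computer. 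For the count modulo $\bigcap_{k=1}^{5}(\Iassaf{k}+\Ist)$, I would compute the relation submodule $R$ explicitly and enumerate all $a\in\ZZ_{\ge 0}^{720}$ with $\sum_{\e{w}}a_{\e{w}}=5$ in the residue class of $\mathfrak{J}_{(2,2,2)}(\mathbf{u})$; since each such $a$ uses at most five of the $720$ words --- and one may first restrict to words occurring with positive coefficient in some class representative --- this is a finite search, which returns precisely the four expansions in \eqref{e J222 all k}.

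The hard part will be making the third step genuinely exhaustive: assembling a complete family of switchboard certificates that kills every ``$6$ minus $1$'' candidate, and carrying out (and presenting convincingly) the lattice-point enumeration behind ``exactly four,'' rather than merely exhibiting the expansions that do work. Everything else --- the reduction to the multilinear quotient, the invariant $\sum_{\e{w}}a_{\e{w}}=5$, and the verifications of the displayed congruences --- is routine finite-dimensional linear algebra, and the non-positivity half is handed to us directly by Corollary~\ref{cor-nopos}.
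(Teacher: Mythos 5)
Your plan is sound in outline and contains two genuine improvements on the paper's presentation: the observation that pairing any candidate expansion against the functional $\sum_{\e{w}\in\S_6}\e{w}\in(\Irk+\Ist)^\perp$ forces $\sum_{\e{w}}a_{\e{w}}=\binom{6}{3}-\binom{6}{2}=5$, and the realization that the ``cannot be written as a sum of $5$ monomials'' claim already follows directly from Corollary~\ref{cor-nopos}(ii) (since the counterexample $\gamma$ of \eqref{eq:gamma-counterexample} is repeat-free and hence lies in $(\Irk+\Ist)^\perp$, not merely $\Irkperp$). The paper itself concedes this and only reproves it by brute force to set up the later parts.

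However, you have missed the one idea that makes the remaining brute force \emph{tractable}, and as a result steps three and four of your proposal are not actually algorithms. The paper observes that any congruence modulo $\Irk+\Ist$ persists modulo the larger ideal $\Iplac+\Ist$, and that by Theorem~\ref{t FG} the residue of $\mathfrak{J}_{(2,2,2)}(\mathbf{u})$ modulo $\Iplac+\Ist$ is the sum of column reading words of the five standard Young tableaux of shape $(2,2,2)$, one from each of five distinct Knuth classes of size $5$. This immediately forces any five-monomial representative to consist of one word from each of those five classes, cutting the search to $5^5$ explicitly listable candidates; and the same device constrains the ``$6$ minus $1$'' and the ``exactly four'' searches. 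By contrast, your elimination of ``$6$ minus $1$'' requires producing, for each of the $20$ candidate negative words $\tau\in W$, a \emph{new} switchboard $\Gamma'$ with $s_{222}$-coefficient $-1$ avoiding $\tau$; you have not shown such switchboards exist, you give no recipe for them, and nothing in the paper guarantees them. Likewise your lattice-point enumeration is over an unbounded-looking set (nonnegative integer vectors in $\ZZ^{720}$ summing to $5$ in a fixed residue class mod $R$), and your proposed restriction ``to words occurring with positive coefficient in some class representative'' is not a well-defined finite pruning without the $\Iplac$-reduction supplying the list of admissible words. In short, you have correctly identified where the computation lives, but you replaced the paper's single structural reduction with an open-ended certificate hunt, which you yourself flag as ``the hard part'' --- and that part is exactly what the paper's appeal to $\Iplac+\Ist\supset\Irk+\Ist$ and Theorem~\ref{t FG} discharges.
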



\begin{remark}
We feel that in this example,
the ideals $\Irk+\Ist$ and $\bigcap_k (\Iassaf{k}+\Ist)$
are very close to (but on different sides of) the threshold
where Schur positivity breaks.
Formula \eqref{e J222}, with only two minus signs, suggests that
$\mathfrak{J}_{(2,2,2)}(\mathbf{u})$ is
quite close to being  monomial positive modulo \hbox{$(\Irk+\Ist)$}. 
At the same time, the fact that there are only
four monomial positive expressions for $\mathfrak{J}_{(2,2,2)}(\mathbf{u})$
modulo $\bigcap_k (\Iassaf{k}+\Ist)$ suggests that monomial positivity just barely holds here.
Indeed, for an ideal $I$ generated by monomials and binomials of the form
$\mathbf{u}_{\e{v}}-\mathbf{u}_{\e{w}}$,
the number of monomial positive expressions
for~$\mathfrak{J}_\lambda(\mathbf{u})$ modulo~$I$
(assuming they exist)
is the product of the sizes of the equivalence classes $\bmod \,I$
to which these monomials belong
(assuming those equivalence classes are distinct).
This number is typically quite large,
e.g., it is equal to~$5^5$ for the plactic ideal in this example, cf.\ the proof
below.
\end{remark}

\begin{proof}[Proof of Proposition~\ref{p four pos expressions}]
We first describe a (somewhat brute force) alternative way of showing that
$\mathfrak{J}_{(2,2,2)}(\mathbf{u})$ is not $\ZZ$-monomial positive
modulo  $\Irk+\Ist$ (which implies the same for~$\Irk$,
cf.\ Corollary~\ref{cor-nopos}{\rm (ii)}).
Assume the contrary, i.e.,
\begin{equation}
\label{eq:J222=sum-of-five-monomials}
\mathfrak{J}_{(2,2,2)}(\mathbf{u})\equiv
\mathbf{u}_{\e{w}^1} + \mathbf{u}_{\e{w}^2}+\mathbf{u}_{\e{w}^3}+
\mathbf{u}_{\e{w}^4}+ \mathbf{u}_{\e{w}^5}
\ \bmod (\Irk+\Ist),
\end{equation}
for some monomials
$\mathbf{u}_{\e{w^1}},\dots,\mathbf{u}_{\e{w^5}}\in\U$.
Then the same congruence also holds
modulo $\Iplac+\Ist\supset \Irk+\Ist$, where  $\Iplac$ is the
plactic ideal from Definition~\ref{def:Iplac}.
Since $\Iplac+\Ist \supset \Ifg\,$, Theorem \ref{t FG} says that
\begin{equation}
\label{eq:J222=sum-of-column-words}
\mathfrak{J}_{(2,2,2)}(\mathbf{u})
\equiv \sum_{T \in \SSYT(\lambda; 6)} \mathbf{u}_{\creading(T)}
\equiv \sum_{T \in \SYT(\lambda; 6)} \mathbf{u}_{\creading(T)}
\ \bmod (\Iplac+\Ist),
\end{equation}
where  $\SYT(\lambda; N)$ is the set of standard Young tableaux
of shape $\lambda=(2,2,2)$. 
In order for the right-hand sides of
\eqref{eq:J222=sum-of-five-monomials} and
\eqref{eq:J222=sum-of-column-words}
to be congruent to each other $\bmod(\Iplac+\Ist)$,
the five words $\e{w}^1,\dots,\e{w}^5$ must belong (in some order)
to the Knuth equivalence classes of the five standard Young tableaux
of shape~$(2,2,2)$.
Each of these equivalence classes consists of $5$~words (indeed,
permutations of $\e{1,2,3,4,5,6}$).
We then checked, using a noncommutative Gr\"obner basis calculation in \texttt{Magma}~\cite{Magma},
that none of these $5^5$ possibilities is congruent to
$\mathfrak{J}_{(2,2,2)}(\mathbf{u})$ modulo~$\Irk+\Ist$.

A similar brute force calculation shows that there is no expression
for $\mathfrak{J}_{(2,2,2)}(\mathbf{u})$ as a sum of six monomials
minus another monomial that holds modulo $\Irk+\Ist$.
Further calculations produce expressions with only two minus signs,
including~\eqref{e J222}.


The second part of the proposition
is established by a noncommutative Gr\"obner basis calculation similar
to that above: for each of the  $5^5$ expressions as
in~\eqref{eq:J222=sum-of-five-monomials}, we checked
whether it holds modulo $\bigcap_k (\Iassaf{k}+\Ist)$.
Just as the argument above used $\Iplac+\Ist \supset \Irk+\Ist$,
this one relies on the inclusion
$\Iplac+\Ist = \Iassaf{1}+\Ist \supset \bigcap_k (\Iassaf{k}+\Ist)$.
\end{proof}


\begin{remark}
To better understand the transition from monomial positivity to non-positivity,
it is instructive to see how the expression in \eqref{e J222}
collapses to \eqref{e J222 all k} once we impose the relations
corresponding to $\bigcap_k (\Iassaf{k}+\Ist)$.
Let us rewrite~\eqref{e J222} as
\[
\mathfrak{J}_{(2,2,2)}(\mathbf{u}) \equiv
  \mathbf{u}_\e{321654} + \mathbf{u}_\e{426513} + \mathbf{u}_\e{562143} + \mathbf{u}_\e{436512} +
  \mathbf{u}_\e{563412} + \mathbf{u}_\e{4623[1,5]} + \mathbf{u}_\e{4523[1,6]} \bmod (\Irk+\Ist)
\]
where we used the notation
\[
\mathbf{u}_\e{v[a,b]w}
= \mathbf{u}_\e{v} u_a u_b \mathbf{u}_\e{w}
 -\mathbf{u}_\e{v} u_b u_a \mathbf{u}_\e{w}\,.
\]
Consider the term $\mathbf{u}_\e{4523[1,6]}$.
If $k \geq 5$, then $\mathbf{u}_\e{4523[1,6]} \equiv
\mathbf{u}_\e{4352[1,6]} \bmod (\Iassaf{k}\!+\!\Ist)$.
If $k < 5$, then $\mathbf{u}_\e{4523[1,6]} \equiv 0 \equiv
\mathbf{u}_\e{4352[1,6]} \bmod (\Iassaf{k}\!+\!\Ist)$.
Hence, computing here and below modulo $\bigcap_k (\Iassaf{k}\!+\!\Ist)$, we~get
$\mathbf{u}_\e{4523[1,6]} \equiv \mathbf{u}_\e{4352[1,6]} \equiv
\mathbf{u}_\e{43[1,6]52}$. 
Similar arguments yield
$\mathbf{u}_\e{4623[1,5]} \equiv \mathbf{u}_\e{4362[1,5]} \equiv
\mathbf{u}_\e{436[1,5]2}$.
Consequently the difference of \eqref{e J222} and \eqref{e J222 all k}
is equal to
\begin{align*}
\mathbf{u}_\e{436512}  + \mathbf{u}_\e{4623[1,5]} &+ \mathbf{u}_\e{4523[1,6]}
- \mathbf{u}_\e{431652}\\
&\equiv  \mathbf{u}_\e{436512} + \mathbf{u}_\e{436[1,5]2} +
\mathbf{u}_\e{43[1,6]52} - \mathbf{u}_\e{431652}
=\mathbf{u}_\e{436152} - \mathbf{u}_\e{436152}
=0.
\end{align*}
\end{remark}

In light of the above discussion, it is tempting to conjecture that
$\mathfrak{J}_\lambda(\mathbf{u})$ is always monomial positive modulo the
intersection $\bigcap_k (\Iassaf{k}\!+\!\Ist)$.
Unfortunately this turns out to be false,
as Proposition~\ref{p D0graph counterex} below shows.

Recall the ideals $\Ilam{k}=\Jlam{k}\cap\U$ from Definition~\ref{d Lam
  algebra}.

\pagebreak[3]

\begin{proposition}[{\cite[Corollaries 7.2--7.3]{BD0graph}}]
\label{p D0graph counterex}
Let $N=8$.
There exists a $\Dzero$~graph~$\Gamma$ 
on a vertex set~$W$ of cardinality 39696
such that
\begin{itemize}
\item
$\sum_{\e{w}\in W} \e{w}
\in  \big((\Iplac+\Ist) \cap (\Ilam{3}+\Ist) \cap (\Ilam{7}+\Ist) \big)^{\!\perp}
\subset  \big(\bigcap_k (\Iassaf{k}+\Ist)\big)^{\!\perp}
\subset  \big(\bigcap_k \Iassaf{k}\big)^{\!\perp}$;
\vspace{-3mm}
\item
the coefficient of
$s_{(2,2,2,2)}(\mathbf{x})$ in $F_\Gamma(\mathbf{x})$ 
is equal to~$-1$.
\end{itemize}
%
%
Consequently, $\mathfrak{J}_{(2,2,2,2)}(\mathbf{u})$ is not
$\QQ$-monomial positive  modulo $\bigcap_k (\Iassaf{k}+\Ist)$.
\end{proposition}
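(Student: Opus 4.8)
The plan is to obtain the required $\Dzero$~graph $\Gamma$ directly from \cite[Corollaries 7.2--7.3]{BD0graph}; essentially all of the substance lies there, and what remains is a translation of conventions, a short chain of ideal inclusions, and an appeal to the refined form of Theorem~\ref{t intro positivity}. Accordingly the argument will be brief and will not reproduce the computer search underlying \cite{BD0graph}.

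First I would recall, via the dictionary between the $\Dzero$~graphs of \cite{BD0graph} and the switchboards of Definition~\ref{def-switchboard} (cf.\ the remark following that definition), the $\Dzero$~graph $\Gamma$ on a vertex set $W\subseteq\S_8$ with $|W|=39696$ whose existence and the following two properties are established in \cite[Corollaries 7.2--7.3]{BD0graph}: (a) the vector $\gamma:=\sum_{\e{w}\in W}\e{w}$ lies in $\bigl((\Iplac+\Ist)\cap(\Ilam{3}+\Ist)\cap(\Ilam{7}+\Ist)\bigr)^{\!\perp}$; and (b) the coefficient of $s_{(2,2,2,2)}(\mathbf{x})$ in $F_\Gamma(\mathbf{x})$ equals $-1$. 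Being a $\Dzero$~graph, $\Gamma$ is in particular a switchboard (Definition~\ref{def:dzero-graph}), so $\gamma\in\U^*_{\ge 0}$ and $F_\Gamma=F_\gamma$ in the sense of Definition~\ref{def:FGamma}.

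Next I would chain the ideal inclusions of Figure~\ref{fig:some-ideals}. Using $\Iplac+\Ist=\Iassaf{1}+\Ist$ (as in the proof of Proposition~\ref{p four pos expressions}) and $\Iassaf{k}\subseteq\Ilam{k}$ (the consequence of Proposition~\ref{p lam and assaf}), one has
\[
\bigcap_k(\Iassaf{k}+\Ist)\ \subseteq\ (\Iassaf{1}+\Ist)\cap(\Iassaf{3}+\Ist)\cap(\Iassaf{7}+\Ist)\ \subseteq\ (\Iplac+\Ist)\cap(\Ilam{3}+\Ist)\cap(\Ilam{7}+\Ist),
\]
and moreover $\bigcap_k\Iassaf{k}\subseteq\bigcap_k(\Iassaf{k}+\Ist)$. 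Since $I\subseteq J$ forces $J^\perp\subseteq I^\perp$, property (a) then gives
\[
\gamma\ \in\ \Bigl(\bigcap_k(\Iassaf{k}+\Ist)\Bigr)^{\!\perp}\ \subseteq\ \Bigl(\bigcap_k\Iassaf{k}\Bigr)^{\!\perp},
\]
exactly the membership asserted in the statement. For the final sentence, set $I=\bigcap_k(\Iassaf{k}+\Ist)$: this ideal is homogeneous ($\Iassaf{k}$ and $\Ist$ are generated by homogeneous elements, and sums and intersections of homogeneous ideals are homogeneous) and contains $\Icomm$ (each $\Iassaf{k}\supseteq\Iaba{k}\supseteq\Irk\supseteq\Icomm$). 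By Proposition~\ref{Icomm-Schur} the coefficient of $s_{(2,2,2,2)}(\mathbf{x})$ in $F_\gamma(\mathbf{x})$ equals $\langle\mathfrak{J}_{(2,2,2,2)}(\mathbf{u}),\gamma\rangle$, which by (b) is $-1<0$; since $\gamma\in\U^*_{\ge 0}\cap I^\perp$, the ``stronger statement'' at the end of Theorem~\ref{t intro positivity}, applied with $\lambda=(2,2,2,2)$, shows that $\mathfrak{J}_{(2,2,2,2)}(\mathbf{u})$ is not $\QQ$-monomial positive modulo $I$. I expect the genuine obstacle to lie entirely in \cite{BD0graph}: constructing the explicit $39696$-vertex graph and certifying property (b) require a sizeable computer search and a Gr\"obner-basis computation; the portion reproduced here is only the translation of conventions together with the formal deduction from Theorem~\ref{t intro positivity}.
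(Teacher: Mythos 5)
Your proposal is correct, and since the paper provides no proof here — the proposition is attributed entirely to \cite[Corollaries 7.2--7.3]{BD0graph} — the argument you have reconstructed (translate conventions, chain the ideal inclusions $\Iassaf{1}+\Ist=\Iplac+\Ist$ and $\Iassaf{k}\subseteq\Ilam{k}$, pass to orthogonal complements, apply the refined version of Theorem~\ref{t intro positivity} with $I=\bigcap_k(\Iassaf{k}+\Ist)$) is exactly what the citation leaves the reader to supply. No discrepancy to report.
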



\begin{remark}
It is conceivable that $\mathfrak{J}_\lambda(\mathbf{u})$ is
$\ZZ$-monomial positive modulo $(\Iassaf{k} \cap \Iassaf{k+1})$ for
any~$k$,
and that the number of such monomial positive expressions
is reasonably small.
Then this may be a good route, at least
experimentally, towards finding
a monomial positive expression for
$\mathfrak{J}_\lambda(\mathbf{u})$
modulo $\Iassaf{k}$ for each particular~$k$.
\end{remark}

Let $\mathcal{I}_\lambda$ be the poset of all ideals $I\supset\Icomm$, ordered by containment,
such that $\mathfrak{J}_\lambda(\mathbf{u})$ is $\ZZ$-monomial positive
modulo~$I$.
What are the minimal elements of $\mathcal{I}_\lambda$?
Are there finitely many such minimal~ideals?
If $I$ is such a minimal ideal, is there a unique monomial positive
expression for $\mathfrak{J}_\lambda(\mathbf{u})$ modulo~$I$,
or can we at least control the number of such expressions?
%
%
Our investigations suggest that these questions are difficult.
One concrete reason is the following surprising
corollary of Proposition~\ref{p D0graph counterex}.

\begin{corollary}
\label{c no unique smallest}
There is no unique smallest ideal $I$ containing $\Icomm$ such that
$F_\gamma(\mathbf{x})$ is Schur positive for every $\gamma \in
\U^*_{\ge 0} \cap I^\perp$.
Similarly, there is no unique smallest ideal $I$ containing $\Icomm$ such that,
for all   $\lambda$,
$\mathfrak{J}_\lambda(\mathbf{u})$ is  $\ZZ$-monomial positive modulo  $I$.
\end{corollary}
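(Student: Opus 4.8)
The plan is to suppose that such a unique smallest ideal exists and to contradict Proposition~\ref{p D0graph counterex}. Fix $N=8$, let $\Gamma$ and $W$ be the $\Dzero$~graph and its vertex set furnished by that proposition, put $\gamma=\sum_{\e{w}\in W}\e{w}\in\U^*_{\ge 0}$, and set
\[
J=(\Iplac+\Ist)\cap(\Ilam{3}+\Ist)\cap(\Ilam{7}+\Ist).
\]
Each factor of $J$ contains $\Icomm$ (recall $\Ifgp\subset\Iplac\subset\Iplac+\Ist$ with $\Ifgp\supset\Icomm$, and $\Ilam{k}=\Jlam{k}\cap\U\supset\Irk\supset\Icomm$), so $J\supset\Icomm$. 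By Proposition~\ref{p D0graph counterex}, $\gamma\in J^\perp$ and the coefficient of $s_{(2,2,2,2)}(\mathbf{x})$ in $F_\gamma(\mathbf{x})=F_\Gamma(\mathbf{x})$ equals $-1$; hence $F_\gamma$ is not Schur positive, and, since $\gamma\in J^\perp\subset\Icommperp$, Proposition~\ref{Icomm-Schur} gives $\langle\mathfrak{J}_{(2,2,2,2)}(\mathbf{u}),\gamma\rangle=-1$. Thus $J$ is \emph{bad}: it fails both properties appearing in the corollary.

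The remaining ingredient is that each of the three ideals $\Iplac+\Ist$, $\Ilam{3}+\Ist$, $\Ilam{7}+\Ist$ is \emph{good} — that is, $\mathfrak{J}_\lambda(\mathbf{u})$ is $\ZZ$-monomial positive modulo each of them for every partition~$\lambda$, and consequently (by the pairing computation in the proof of the implication (ii)$\Rightarrow$(i) of Theorem~\ref{t intro positivity}) $F_\gamma$ is Schur positive for every $\gamma\in\U^*_{\ge 0}$ in their orthogonal complements. For $\Iplac+\Ist$ this follows from Theorem~\ref{th:FG'-positivity}, since $\Ifgp\subset\Iplac+\Ist$ and $\ZZ$-monomial positivity of $\mathfrak{J}_\lambda(\mathbf{u})$ modulo an ideal is inherited by every larger ideal. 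For $\Ilam{3}+\Ist$ it follows from Theorem~\ref{t sqread}: the difference of the two sides of the congruence~\eqref{e sqread} lies in $\Jlam{3}$ and also in $\U$, hence in $\Jlam{3}\cap\U=\Ilam{3}\subset\Ilam{3}+\Ist$. For $\Ilam{7}+\Ist$, since $N=8$ and $k=7=N-1$, it follows from the formula of Assaf and Novelli--Schilling for the case $k\ge N-1$ recalled in Section~\ref{s LLT}, which supplies a $\ZZ$-monomial positive expression for $\mathfrak{J}_\lambda(\mathbf{u})$ modulo $\Iassaf{7}\subset\Ilam{7}\subset\Ilam{7}+\Ist$. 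These three positivity facts are exactly what the choice of $J$ in Proposition~\ref{p D0graph counterex} exploits.

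With this in hand, both assertions follow identically. Suppose, for contradiction, that $I_0\supset\Icomm$ is the smallest ideal for which $F_\gamma(\mathbf{x})$ is Schur positive for all $\gamma\in\U^*_{\ge 0}\cap I_0^\perp$. Being smallest, $I_0$ is contained in each of the three good ideals, hence $I_0\subset J$, so $J^\perp\subset I_0^\perp$; then the vector $\gamma$ of the first paragraph lies in $\U^*_{\ge 0}\cap I_0^\perp$, forcing $F_\gamma$ to be Schur positive and contradicting that it is not. Likewise, suppose $I_0\supset\Icomm$ is the smallest ideal for which $\mathfrak{J}_\lambda(\mathbf{u})$ is $\ZZ$-monomial positive modulo $I_0$ for all~$\lambda$. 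Again $I_0\subset J$, so $\mathfrak{J}_{(2,2,2,2)}(\mathbf{u})$ is $\ZZ$-monomial positive modulo~$J$; writing $\mathfrak{J}_{(2,2,2,2)}(\mathbf{u})\equiv\sum_{\e{w}}a_{\e{w}}\mathbf{u}_{\e{w}}\bmod J$ with all $a_{\e{w}}\ge 0$ and pairing with $\gamma\in\U^*_{\ge 0}\cap J^\perp$ yields $\langle\mathfrak{J}_{(2,2,2,2)}(\mathbf{u}),\gamma\rangle=\sum_{\e{w}}a_{\e{w}}\gamma_{\e{w}}\ge 0$, contradicting the value $-1$ computed above. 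The argument is entirely formal once Proposition~\ref{p D0graph counterex} is granted — one bad ideal $J$ sitting below three good ones precludes any ideal contained in all three from being good — so the only step demanding genuine care is the second paragraph: confirming that Theorems~\ref{th:FG'-positivity} and~\ref{t sqread} and the $k\ge N-1$ results really do deliver $\ZZ$-monomial positivity of \emph{all} $\mathfrak{J}_\lambda(\mathbf{u})$ modulo the enlarged ideals, with the passages from $\Jlam{3}$ to $\Ilam{3}$ and from $\Iassaf{7}$ to $\Ilam{7}$ handled correctly.
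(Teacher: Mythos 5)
Your proof is correct and follows the same overall strategy as the paper: the ``bad'' ideal $J=(\Iplac+\Ist)\cap(\Ilam{3}+\Ist)\cap(\Ilam{7}+\Ist)$ furnished by Proposition~\ref{p D0graph counterex} is shown to contain a would-be unique smallest ``good'' ideal, which is impossible. The difference is in how the ``goodness'' of the three factors is certified.

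The paper's proof handles the first statement by citing Schur positivity of LLT polynomials (giving $\Ilam{k}\in\mathcal{I}$ for all $k$) and plactic theory (giving $\Iplac\in\mathcal{I}$), then for the second statement upgrades $\QQ$- to $\ZZ$-monomial positivity via the observation that $\Ilam{k}$ is generated by monomials and binomials of the form $\mathbf{u}_\e{v}-\mathbf{u}_\e{w}$. You instead establish $\ZZ$-monomial positivity of $\mathfrak{J}_\lambda(\mathbf{u})$ directly modulo each of the three enlarged ideals, which handles both statements at once; your use of Theorem~\ref{th:FG'-positivity} (for $\Iplac+\Ist$) and Theorem~\ref{t sqread} passed from $\Jlam{3}$ to $\Ilam{3}$ (for $\Ilam{3}+\Ist$) is clean and actually more constructive than the paper's appeal to Kazhdan--Lusztig-theoretic LLT positivity, since it produces explicit monomial expansions. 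The one spot where your argument is looser than the paper's is the $\Ilam{7}+\Ist$ case: you cite the Assaf/Novelli--Schilling result for $k\ge N-1$ as ``supplying a $\ZZ$-monomial positive expression for $\mathfrak{J}_\lambda(\mathbf{u})$ modulo $\Iassaf{7}$,'' but as stated in Section~\ref{s LLT} that result gives a positive formula for the Schur expansion of Assaf symmetric functions, not directly a monomial expansion of $\mathfrak{J}_\lambda(\mathbf{u})$. Converting from one to the other requires routing through the (i)$\Rightarrow$(ii) direction of Theorem~\ref{t intro positivity} and then the monomial/binomial $\QQ$-to-$\ZZ$ upgrade that the paper isolates — or else a direct appeal to the cited references which you'd need to verify. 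The paper sidesteps this by invoking general LLT positivity uniformly for all $\Ilam{k}$ and then applying the $\QQ$-to-$\ZZ$ upgrade once. This is a small imprecision, not a fatal gap; with that caveat, the proof is sound.
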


\begin{proof}
Let $\mathcal{I}$ be the poset of all ideals $I\supset\Icomm$, ordered by containment,
such that $F_\gamma(\mathbf{x})$ is Schur positive for every $\gamma \in \U^*_{\ge 0} \cap I^\perp$.
We know that  $\Ilam{k} \in \mathcal{I}$ for all  $k$ by the Schur
positivity of LLT polynomials.
We also know that $\Iplac \in \mathcal{I}$.  If  $\mathcal{I}$ has a
unique smallest element  $J$, then
$J \subset  \bigcap_k \Ilam{k} \cap \Iplac \subset (\Iplac+\Ist) \cap
(\Ilam{3}+\Ist) \cap (\Ilam{7}+\Ist)$,
hence the latter intersection must lie in~$\mathcal{I}$.
This however contradicts Proposition~\ref{p D0graph counterex}.
%

The proof of the second statement is similar. We only need the additional fact that
$\mathfrak{J}_\lambda(\mathbf{u})$ is  $\ZZ$-monomial positive modulo  $\Ilam{k}$.
This follows from the easy fact that for an ideal  $I$ generated by monomials
and binomials of the form  $\mathbf{u}_\e{v}- \mathbf{u}_\e{w}$,  $\ZZ$- and $\QQ$-monomial positivity of $\mathfrak{J}_\lambda(\mathbf{u})$ modulo  $I$ are equivalent.
\end{proof}

One may hope to gain insight into the Schur
positivity phenomenon by studying \emph{nested} sequences of ideals,
and identifying the places where a transition from positivity to
non-positivity occurs.
Two examples are discussed below.

\begin{definition}
Let $\Ilam{\le k}$ denote the ideal in  $\U$ generated by the elements
\begin{alignat}{3}
& &&u_au_c - u_cu_a \qquad &&\text{for $c-a > k$,} \label{e Ilam le A}\\
& &&u_bu_au_c+u_cu_au_b-u_bu_cu_a - u_au_cu_b \qquad &&\text{for $c-a
    \le k$ and $a < b < c$,} \label{e Ilam le B}
\end{alignat}
together with all monomials $\mathbf{u}_{\e{w}}\in\Jlam{k}$.
Then $\Ilam{k} \supset \Ilam{\le k} \supset \Irk$ and $\Jlam{k}
\supset \QQ(q) \tsr_{\ZZ} \Ilam{\le k}$.
\end{definition}

The ideals $\Ilam{\le k}$ are nested: 
$\Ilam{\le k}\subset \Ilam{\le m}$ for $k\ge m$.
Monomial positivity of $\mathfrak{J}_{(2,2,2)}(\mathbf{u})$
fails modulo~$\Ilam{\le 5}$,
since it fails modulo $\Irk+\Ist$ by Corollary~\ref{cor-nopos}, and
$\Ilam{\le 5} \subset \Ilam{\le 5}+\Ist=\Irk+\Ist$ for $N=6$.
On~the other hand, Theorem~\ref{t sqread} can be strengthened as follows.

\begin{theorem}[{\cite[\textsection 5.2]{BLamLLT}}] \label{t sqread gen}
For any partition $\lambda$, the noncommutative Schur function $\mathfrak{J}_\lambda(\mathbf{u})$
is $\ZZ$-monomial positive modulo $\Ilam{\le 3}$.
A monomial expansion is given by
\begin{align}
\mathfrak{J}_\lambda(\mathbf{u}) \equiv \sum_{T \in \RSST(\lambda; N)}
\mathbf{u}_{\sqread(T)} \ \bmod \Ilam{\le 3}. \label{e sqread gen}
\end{align}
\end{theorem}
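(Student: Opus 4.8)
The plan is to reduce Theorem~\ref{t sqread gen} to Theorem~\ref{t sqread} by establishing that the right-hand side of~\eqref{e sqread gen} is well-defined modulo the smaller ideal~$\Ilam{\le 3}$, and that the congruence in~\eqref{e sqread}, originally proved modulo $\Jlam{3}$, already holds modulo the $\ZZ$-form $\Ilam{\le 3}$ together with~$\Ist$. First I would recall the inclusions noted in the excerpt: $\Ilam{3} \supset \Ilam{\le 3} \supset \Irk$ and $\Jlam{3} \supset \QQ(q)\tsr_\ZZ \Ilam{\le 3}$. The key point is that, by Proposition~\ref{Icomm-Schur}, any $\ZZ$-monomial positive expression for $\mathfrak{J}_\lambda(\mathbf{u})$ modulo an ideal $I\supset\Icomm$ yields a manifestly positive formula for the Schur expansion of $F_\gamma$ for all $\gamma\in\U^*_{\ge0}\cap I^\perp$; so to \emph{strengthen} Theorem~\ref{t sqread} from $\Jlam{3}$ to $\Ilam{\le 3}$ it suffices to verify that the \emph{same} expansion $\sum_{T\in\RSST(\lambda;N)} \mathbf{u}_{\sqread(T)}$ represents $\mathfrak{J}_\lambda(\mathbf{u})$ modulo this smaller ideal.

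The main step is the following: examine the proof of Theorem~\ref{t sqread} in~\cite{BLamLLT} and observe that all the relations invoked there are in fact consequences of the generators~\eqref{e Ilam le A}--\eqref{e Ilam le B} of $\Ilam{\le 3}$ (for $k=3$) together with the monomial relations $\mathbf{u}_{\e{w}}$ for $\e{w}\in\Jlam{3}$ — rather than requiring the full strength of the $q$-commutation relations~\eqref{e q commute} in $\Jlam{3}$. Concretely, the relation~\eqref{e q commute} in $\Jlam{3}$ says $u_au_b \equiv q^{-1}u_bu_a$ for $0<b-a<3$; one never uses the scalar $q^{-1}$ itself in establishing a monomial \emph{identity} in $\U$ that is homogeneous of the appropriate degree, only the fact that $u_au_bu_c + u_cu_bu_a$ type combinations collapse. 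I would check that the elements~\eqref{e Ilam le B}, namely $u_bu_au_c + u_cu_au_b - u_bu_cu_a - u_au_cu_b$ for $c-a\le 3$, are exactly the ``$\ZZ$-shadows'' of the $q$-relations: modulo $\QQ(q)\tsr_\ZZ\Ilam{\le 3}$ one has $u_au_c \equiv q^{-1}u_cu_a$ when $0<c-a<3$, and in the boundary case $c-a = 3$ the relevant three-letter element is forced by the presence of the monomial $u_au_{a+3}u_a\in\Jlam{3}$ (relation~\eqref{e no repeat Lam}) together with the far-commutation~\eqref{e far commute}. Thus $\Ilam{\le 3}$ already ``sees'' every rewriting used to prove~\eqref{e sqread}, and the proof goes through verbatim with $\Jlam{3}$ replaced by $\Ilam{\le 3}$.

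The remaining ingredient is well-definedness of the sum: I would note that $\RSST(\lambda;N)$ is a finite set and that $\sqread$ is a well-defined map into $\U^*$, so $\sum_{T\in\RSST(\lambda;N)}\mathbf{u}_{\sqread(T)}$ is an honest element of~$\U$ (not merely of $\U/\Ilam{\le 3}$), and there is nothing to check on that side. What makes the strengthening meaningful is that $\Ilam{\le 3}^\perp$ is strictly larger than $\Ilam{3}^\perp$ — indeed $\Ilam{\le 3}\subsetneq\Ilam{3}$ — so the class of symmetric functions $F_\gamma$ covered by the positivity conclusion (via Theorem~\ref{t intro positivity} and Proposition~\ref{Icomm-Schur}) genuinely grows. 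I would phrase the conclusion as: for all $\gamma\in\U^*_{\ge0}\cap\Ilam{\le 3}^\perp$, the coefficient of $s_\lambda(\mathbf{x})$ in $F_\gamma(\mathbf{x})$ equals $\sum_{T\in\RSST(\lambda;N)}\gamma_{\sqread(T)}\ge 0$.

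The hard part will be the careful bookkeeping in the second paragraph: verifying that none of the rewriting steps in the (fairly involved) proof of Theorem~\ref{t sqread} in~\cite{BLamLLT} secretly requires a relation that lies in $\Jlam{3}$ but not in $\QQ(q)\tsr_\ZZ\Ilam{\le 3}$. In practice I expect this to be an audit of each lemma cited there — in particular the ``switch''-type moves on words and the local straightening rules for $\RSST$ readings — matching each against the generator list~\eqref{e Ilam le A}--\eqref{e Ilam le B} plus the monomial ideal. Since the authors state Theorem~\ref{t sqread gen} as a strengthening already proved in \cite[\textsection 5.2]{BLamLLT}, I anticipate the audit succeeds cleanly; the subtlety, if any, is confined to the $c-a = 3$ boundary case where one must use the vanishing monomials rather than a $q$-scaling to kill the unwanted terms.
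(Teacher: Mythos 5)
The paper does not reprove Theorem~\ref{t sqread gen}; as the attribution \cite[\textsection 5.2]{BLamLLT} in the theorem header indicates, the result (together with its proof) lives in the companion paper, and the present paper simply quotes it. So there is no internal argument to compare against.

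That said, your proposal is a proof \emph{plan}, not a proof, and the plan leaves the entire content of the theorem unverified. The substance of Theorem~\ref{t sqread gen} as a strengthening of Theorem~\ref{t sqread} is precisely the claim that the rewriting used in~\cite{BLamLLT} to establish~\eqref{e sqread} never requires a relation that lies in $\Jlam{3}$ but outside $\QQ(q)\tsr_\ZZ\Ilam{\le 3}$. You state this claim (``observe that all the relations invoked there are in fact consequences of the generators~\eqref{e Ilam le A}--\eqref{e Ilam le B}\ldots''), acknowledge it requires ``careful bookkeeping,'' and then conclude only that you ``anticipate the audit succeeds cleanly.'' That is exactly the step one cannot wave at: the two-letter $q$-commutations~\eqref{e q commute} are genuinely stronger than the four-term elements~\eqref{e Ilam le B} — the former rewrite a single monomial to a single monomial (times a power of $q$), whereas the latter only relate \emph{sums} of two monomials — so ``$\ZZ$-shadow'' is a suggestive slogan, not an argument. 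Whether the inductive straightening in the proof of~\eqref{e sqread} factors through the weaker four-term relations is a case-by-case verification that must actually be performed, and it is the heart of what \cite[\textsection 5.2]{BLamLLT} does.

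One further caution about your heuristic ``one never uses the scalar $q^{-1}$ itself in establishing a monomial identity'': it is not self-evident. A chain of $q$-commutations relating two monomials can net to a trivial power of $q$ (forced by $\invi{3}$-preservation) while still passing through intermediate two-letter swaps that the four-term relations alone cannot reproduce. The boundary case $c-a=3$ that you flag is real — there one leans on the monomials $u_a u_{a+3} u_a$, $u_{a+3} u_a u_{a+3} \in \Jlam{3}$ together with far commutation — but confirming this suffices throughout the argument is exactly the missing work. Until that audit is written out, the proposal remains a reduction-to-citation with a plausibility argument attached, not a self-contained proof.
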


As a parallel to our discussion of the ideals $\Jlam{k}$ and  $\Iassaf{k}$ in Section~\ref{s LLT},
it is natural to also study the nested family of ideals
$\Ilam{\le k} \cap \Iassaf{k}$ (as $k$ varies).
The ideal  $\Ilam{\le k} \cap \Iassaf{k}$
contains the elements \eqref{e Iassaf1}--\eqref{e Iassaf2}, \eqref{e Ilam le B}, and all
monomials $\mathbf{u}_{\e{w}}\in\Jlam{k}$.
Just as above, monomial positivity of $\mathfrak{J}_{(2,2,2)}(\mathbf{u})$
fails modulo~$\Ilam{\le 5} \cap \Iassaf{5}$.
However, we conjecture the following strengthening of Theorem \ref{t
  sqread gen} (this is a slight variant of a
conjecture in {\cite[\textsection 5.2]{BLamLLT}}).

\begin{conjecture} \label{cj sqread}
For any partition $\lambda$, the noncommutative Schur function
$\mathfrak{J}_\lambda(\mathbf{u})$ is \linebreak[3] $\ZZ$-monomial positive modulo
$\Ilam{\le 3} \cap \Iassaf{3}$.  A monomial expansion is given by
\begin{align}
\mathfrak{J}_\lambda(\mathbf{u}) \equiv \sum_{T \in \RSST(\lambda; N)}
\mathbf{u}_{\sqread(T)} \ \bmod \Ilam{\le 3} \cap \Iassaf{3}. \label{e cj sqread}
\end{align}
\end{conjecture}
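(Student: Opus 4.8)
The plan is to reduce Conjecture~\ref{cj sqread} to a single new congruence and then attack that one by adapting the machinery of~\cite{BLamLLT}. The starting observation is purely formal: for $z\in\U$ and ideals $I,J\subset\U$ one has $z\in I\cap J$ if and only if $z\in I$ and $z\in J$. Applying this to
\[
z=\mathfrak{J}_\lambda(\mathbf{u})-\sum_{T\in\RSST(\lambda;N)}\mathbf{u}_{\sqread(T)}\in\U,
\]
we see that~\eqref{e cj sqread} is equivalent to the conjunction of the congruence modulo $\Ilam{\le 3}$ and the congruence modulo $\Iassaf{3}$. The first of these is precisely Theorem~\ref{t sqread gen}. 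Furthermore, once~\eqref{e cj sqread} is proved, the $\ZZ$-monomial positivity claimed in the conjecture follows for free, because the right-hand side of~\eqref{e cj sqread} is by definition a nonnegative integer combination of noncommutative monomials, hence an element of $\U_{\ge 0}$. Thus the whole content of Conjecture~\ref{cj sqread} is the congruence
\begin{equation}
\label{eq:sqread-assaf}
\mathfrak{J}_\lambda(\mathbf{u})\equiv\sum_{T\in\RSST(\lambda;N)}\mathbf{u}_{\sqread(T)}\ \bmod\Iassaf{3}.
\end{equation}
This is a genuine strengthening of Theorem~\ref{t sqread gen}: the ideals $\Iassaf{3}$ and $\Ilam{\le 3}$ are incomparable, since $\Ilam{\le 3}$ contains the far commutations~\eqref{e Ilam le A} (absent from $\Iassaf{3}$) while $\Iassaf{3}$ contains the strong three-term relations~\eqref{e Iassaf3}--\eqref{e Iassaf4}, which are strictly stronger than~\eqref{e Ilam le B}.

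To prove~\eqref{eq:sqread-assaf} I would first dualize it. Since $\Iassaf{3}$ is generated by monomials together with binomials $\mathbf{u}_{\e{v}}-\mathbf{u}_{\e{w}}$, its orthogonal complement $\Iassafperp{3}$ is spanned, degree by degree, by $(0,1)$-vectors, and by Proposition~\ref{p assaf ideal switchboards} these are exactly the vertex sums $\gamma=\sum_{\e{w}\in W}\e{w}$ of level~$3$ Assaf switchboards~$\Gamma$. For such a $\gamma$ one has $\langle\mathbf{u}_{\sqread(T)},\gamma\rangle=1$ or $0$ according as $\sqread(T)$ is or is not a vertex of $\Gamma$, while by Proposition~\ref{Icomm-Schur} and Definition~\ref{def:FGamma}, $\langle\mathfrak{J}_\lambda(\mathbf{u}),\gamma\rangle$ is the coefficient of $s_\lambda(\mathbf{x})$ in $F_\Gamma(\mathbf{x})$. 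Hence~\eqref{eq:sqread-assaf} is equivalent to: for every level~$3$ Assaf switchboard $\Gamma$ with vertex set $W$,
\begin{equation}
\label{eq:schur-coeff-assaf}
[s_\lambda(\mathbf{x})]\,F_\Gamma(\mathbf{x})=\#\{T\in\RSST(\lambda;N)\mid\sqread(T)\in W\},
\end{equation}
which is the exact analogue for level-$3$ Assaf switchboards of Corollary~\ref{c FG} for rotation-free switchboards. The strategy is then to revisit the straightening argument of~\cite[\textsection 5.2]{BLamLLT} that proves Theorem~\ref{t sqread gen}: there one rewrites the alternating sum~\eqref{eq:ncschur-via-e} defining $\mathfrak{J}_\lambda(\mathbf{u})$ into $\sum_T\mathbf{u}_{\sqread(T)}$ by a sequence of elementary moves, and the goal is to check that each move can be carried out modulo $\Iassaf{3}$ --- that is, using only the monomial relations shared by $\Iassaf{3}$ and $\Jlam{3}$ (which kill words that are not nonzero $3$-words) together with the three-term relations~\eqref{e Iassaf1}--\eqref{e Iassaf4}. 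Whenever the argument of~\cite{BLamLLT} transposes two far-apart letters $a<c$ with $c-a>3$ in the presence of a third letter $b$ with $a<b<c$ adjacent to the pair, one may substitute~\eqref{e Iassaf1} or~\eqref{e Iassaf2}; the remaining uses of~\eqref{e Ilam le A} must be rerouted so that no far transposition is ever performed without such a witnessing letter.

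The main obstacle is exactly this rerouting. Modulo $\Iassaf{3}$ there is strictly less freedom than modulo $\Ilam{\le 3}$, a rigidity already on display in Proposition~\ref{p four pos expressions} and the remark following it, where $\mathfrak{J}_{(2,2,2)}(\mathbf{u})$ has only four $\ZZ$-monomial positive expressions modulo $\bigcap_k(\Iassaf{k}+\Ist)$ versus $5^5$ modulo the plactic ideal. Concretely, one must show that the straightening path of~\cite{BLamLLT} (or a variant of it) can be chosen so that every pair of far-apart letters is only ever commuted across a letter lying strictly between them; I expect this to require maintaining, as an inductive invariant on $\lambda$ (say, on the number of diagonals of $\lambda$), that all monomials produced along the way are indexed by nonzero $3$-words and that the diagonal-by-diagonal reading order underlying $\sqread$ supplies the needed witness at each step. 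The bookkeeping that each $\sqread(T)$ with $T\in\RSST(\lambda;N)$ is itself a nonzero $3$-word --- so that no term on the right of~\eqref{eq:sqread-assaf} is annihilated --- is routine and follows from the constraints defining $\RSST(\lambda;N)$ (in particular the diagonal increment of at least $3$), cf.~\cite{BLamLLT}. Should a uniform rewriting prove too delicate, a fallback is to first establish~\eqref{eq:schur-coeff-assaf} for the special LLT switchboards $\Gamma_3(\bm{\beta},t)$, where formula~\eqref{e qLRcoefs} already supplies the right count, and then attempt to bootstrap to arbitrary level~$3$ Assaf switchboards via their decomposition into connected components (Proposition~\ref{p CKR set ops}); but controlling the connected components of a general Assaf switchboard is itself a hard combinatorial problem, closely tied to the $k>2$ case of Assaf's open algorithm.
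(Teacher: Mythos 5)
This statement is labeled as a \emph{conjecture} in the paper; it has no proof there, and the paper explicitly describes it as a slight variant of a conjecture in~\cite{BLamLLT}. So there is no proof of the authors' to compare against.

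Your reduction of the conjecture is correct and cleanly stated: since $z\in I\cap J$ iff $z\in I$ and $z\in J$, and since the congruence modulo $\Ilam{\le 3}$ is Theorem~\ref{t sqread gen}, the conjecture is indeed equivalent (given that theorem) to the single congruence modulo~$\Iassaf{3}$; and the $\ZZ$-monomial-positivity clause is automatic because the right-hand side of~\eqref{e cj sqread} is by construction a sum of monomials. Your observations that $\Ilam{\le 3}$ and $\Iassaf{3}$ are incomparable, that~\eqref{e Ilam le B} is the difference of~\eqref{e Iassaf3} and~\eqref{e Iassaf4}, and that the dual formulation is the Assaf-switchboard analogue of Corollary~\ref{c FG}, are all accurate. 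This matches the paper's own discussion, which notes that Conjecture~\ref{cj sqread} implies the level-3 case of Conjecture~\ref{cj sami}.

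The gap is that nothing beyond this reduction is actually proven. You write that the straightening argument of~\cite{BLamLLT} ``must be rerouted so that no far transposition is ever performed without such a witnessing letter,'' but you do not exhibit such a rerouting, and you explicitly flag the rerouting as ``the main obstacle.'' The fallback via LLT switchboards $\Gamma_3(\bm{\beta},t)$ and decomposition into connected components is also only sketched, and you yourself note that controlling connected components of general Assaf switchboards ``is itself a hard combinatorial problem.'' In short, what you have written is a correct and useful reduction plus a plausible research plan, but the heart of the conjecture --- the congruence $\mathfrak{J}_\lambda(\mathbf{u})\equiv\sum_T\mathbf{u}_{\sqread(T)}\bmod\Iassaf{3}$ --- remains unproved. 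Since this is precisely what is open, the proposal does not resolve the conjecture.
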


Conjecture \ref{cj sqread} is stronger than the same statement
modulo~$\Iassaf{3}$,
so it would imply that the Assaf symmetric functions of level 3 are
Schur positive (cf.\ Conjecture \ref{cj sami}).

We conclude this section with a summary of our current knowledge of
Schur positivity for symmetric functions associated to switchboards,
see Figure~\ref{fig:positivity-for-switchboards}.

\newcommand{\Tstrut}{\rule{0pt}{2.6ex}}
\begin{figure}[ht]
\begin{tabular}{llll}
Class of switchboards~$\Gamma$ & All $F_\Gamma$ Schur positive?
                                         & Explanation/proof \\[.5mm]
\hline
$\Dzero$ graphs                & No      &
Proposition~\ref{pr:456213+...} \Tstrut\\[.5mm] 
Switchboards
with $\gamma\in \big(\bigcap_k \Iassaf{k}\big)^\perp$
                               & No      & Proposition~\ref{p D0graph counterex} \\[.5mm]
Triples switchboards           & Unknown & Problem \ref{problem:triples}\\[.5mm]
Assaf switchboards             & Unknown & Conjecture~\ref{cj sami}\\[.5mm]
Switchboards with $\gamma\in (\Ilam{\le 3} \cap \Iassaf{3})^\perp$ & \text{Unknown}             & \text{Conjecture \ref{cj sqread}}  \\[.5mm]
Switchboards with $\gamma\in \Ilamleperp{3}$ & Yes & Theorem~\ref{t sqread gen} \cite{BLamLLT} \\[.5mm]
LLT switchboards               & Yes  & LLT positivity \cite{LT00, GH}\\[.5mm]
Rotation-free switchboards     & Yes  & Corollary~\ref{c FG} \cite{FG}\\[.5mm]
\end{tabular}
\caption{\label{fig:positivity-for-switchboards}
Schur positivity of symmetric functions~$F_\Gamma$ for various classes of
switchboards~$\Gamma$.
Here $\gamma
$
denotes the sum of the words appearing in~$\Gamma$.
}
\end{figure}


\section{Proof of Theorem~\ref{t intro positivity}
}
\label{s monomial positivity}

We will need the following basic convex geometry lemma 
(cf., e.g., \cite[Theorem~4.1]{GRMonoids}),
which we restate in the form convenient for our purposes.

\begin{lemma}[Minkowski-Farkas' Lemma]
\label{lem:farkas}
Suppose $\mathbf{a}^1, \mathbf{a}^2, \ldots, \mathbf{a}^r \in \QQ^m$
and $\mathfrak{J}\in \QQ^m$.
Let $\langle  \cdot , \cdot \rangle$ be the
standard
inner product on $\QQ^m$. Then the following are equivalent:
\begin{itemize}
\item $\mathfrak{J}$ lies in the cone $\QQ_{\geq 0}\{\mathbf{a}^1, \ldots, \mathbf{a}^r\}$.
\item
for any $\gamma \in \QQ^m$ satisfying $\langle \mathbf{a}^i, \gamma \rangle
\geq 0$ for all~$i$, we have $\langle \mathfrak{J}, \gamma \rangle \ge 0$.
\end{itemize}
Moreover, in the last line, we can replace $\gamma\in\QQ^m$ by
$\gamma\in\ZZ^m$.
\end{lemma}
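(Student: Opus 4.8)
The plan is to deduce the equivalence from the separating hyperplane theorem for finitely generated cones (equivalently, to invoke Farkas' lemma over the ordered field $\QQ$; this is essentially the content of the cited reference). The implication ``first bullet $\Rightarrow$ second bullet'' is immediate from bilinearity: writing $\mathfrak{J}=\sum_i c_i\mathbf{a}^i$ with all $c_i\ge 0$, any $\gamma$ with $\langle\mathbf{a}^i,\gamma\rangle\ge 0$ for every $i$ satisfies $\langle\mathfrak{J},\gamma\rangle=\sum_i c_i\langle\mathbf{a}^i,\gamma\rangle\ge 0$. The real work is the converse, which I would prove by contraposition.

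So suppose $\mathfrak{J}\notin C:=\QQ_{\ge 0}\{\mathbf{a}^1,\dots,\mathbf{a}^r\}$. First I would record that $C$ is a closed convex cone: by Weyl's theorem a finitely generated cone is polyhedral, hence closed, and because the $\mathbf{a}^i$ and $\mathfrak{J}$ are rational, $\mathfrak{J}\notin C$ forces $\mathfrak{J}$ outside the real cone $C_\RR$ spanned by the $\mathbf{a}^i$ in $\RR^m$ (the system $\sum_i c_i\mathbf{a}^i=\mathfrak{J},\ c_i\ge 0$ has rational data, so it is solvable over $\RR$ only if solvable over $\QQ$). Applying the separating hyperplane theorem to the closed convex set $C_\RR$ and the point $\mathfrak{J}\notin C_\RR$ yields $\gamma_0\in\RR^m$; using $0\in C_\RR$ to normalize the separating value to $0$ and using that $C_\RR$ is a cone to propagate the inequality, one obtains $\langle c,\gamma_0\rangle\ge 0$ for all $c\in C_\RR$ (so in particular $\langle\mathbf{a}^i,\gamma_0\rangle\ge 0$ for each $i$) together with $\langle\mathfrak{J},\gamma_0\rangle<0$. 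Thus the implication in the second bullet fails for $\gamma=\gamma_0$.

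It then remains to pass from $\gamma_0\in\RR^m$ to an integral witness, which simultaneously establishes the ``moreover'' clause. The set $\{\gamma\in\RR^m:\langle\mathbf{a}^i,\gamma\rangle\ge 0\ \text{for all }i,\ \langle\mathfrak{J},\gamma\rangle\le -1\}$ is a nonempty rational polyhedron (nonempty after rescaling $\gamma_0$), hence contains a rational point; clearing a common denominator preserves all of the homogeneous sign conditions and produces $\gamma\in\ZZ^m$ with $\langle\mathbf{a}^i,\gamma\rangle\ge 0$ for all $i$ and $\langle\mathfrak{J},\gamma\rangle<0$. Finally, since $\ZZ^m\subseteq\QQ^m$, the implication in the second bullet for all $\gamma\in\QQ^m$ trivially gives it for all $\gamma\in\ZZ^m$, and the contrapositive just produced shows the reverse; so the first bullet, the second bullet quantified over $\QQ^m$, and the second bullet quantified over $\ZZ^m$ are all equivalent. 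The only substantive inputs are the closedness of finitely generated cones and the separation theorem; I expect the one step needing genuine care to be the $\QQ$-versus-$\RR$ bookkeeping — checking that a rational point outside the rational cone is outside the real cone and that a rational (then integral) separating functional exists — all of which reduces to the standard fact that a consistent rational system of linear equalities and inequalities has a rational solution.
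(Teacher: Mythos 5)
Your proof is correct. Note that the paper does not prove this lemma at all: it simply cites \cite[Theorem~4.1]{GRMonoids} (Rosales--Garc\'{\i}a-S\'anchez) and records the statement in a convenient form, so there is no in-paper argument to compare against. Your route---the easy direction by bilinearity, and the converse by contraposition via closedness of finitely generated cones (Weyl) plus the separating hyperplane theorem, followed by the standard ``rational data $\Rightarrow$ rational (hence, after clearing denominators, integral) solution'' bookkeeping to get the $\ZZ^m$ strengthening---is the classical proof and all the steps you flag as needing care (rationality of the separating functional, that $\mathfrak{J}\notin C$ over $\QQ$ implies $\mathfrak{J}\notin C_\RR$) are indeed exactly the points one must check, and each reduces to the rational-solvability of rational linear systems as you say. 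One small remark: you do not actually need to pass through $\RR$ at all if you instead run Fourier--Motzkin elimination directly over $\QQ$, which gives the $\QQ$-Farkas statement and the rational separating $\gamma$ in one stroke; but your argument is complete as written.
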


Let $\U_d$ denote the degree $d$ homogeneous component of $\U$.
For a homogeneous ideal $I$ of $\U$, let $I_d = \U_d \cap I$ be the
degree $d$ homogeneous component of $I$.

\begin{theorem}
\label{t linear program}
Let $I$ be a homogeneous ideal in~$\U$.
For an element $\mathfrak{J} \in \U_d$, the following are equivalent:
\begin{itemize}
\item[{\rm (a)}] $\mathfrak{J}$ is $\QQ$-monomial positive modulo~$I$;
\item[{\rm (b)}] for any $\gamma \in \U_{\ge 0}^* \cap
  I_d^\perp$, there holds $ \langle \mathfrak{J}, \gamma \rangle
  \ge 0$.
\end{itemize}
\end{theorem}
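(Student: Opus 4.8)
The plan is to recognize Theorem~\ref{t linear program} as a direct application of the Minkowski--Farkas Lemma (Lemma~\ref{lem:farkas}), once everything is transported into the finite-dimensional degree-$d$ setting. First I would restrict attention to the homogeneous component $\U_d$, which is freely spanned over~$\ZZ$ by the $N^d$ noncommutative monomials $\mathbf{u}_{\e{w}}$ indexed by words $\e{w}$ of length~$d$. Using this basis I identify $\QQ\otimes_\ZZ\U_d\cong\QQ^m$ with $m=N^d$, and simultaneously identify the degree-$d$ part of $\U^*$ with $\QQ^m$ via the dual basis of words, so that $\langle\cdot,\cdot\rangle$ becomes the standard inner product. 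Since $I$ is homogeneous, $I_d=\U_d\cap I$ is a genuine $\ZZ$-sublattice of $\U_d$; I would fix a finite $\ZZ$-basis $z^1,\dots,z^s$ of $I_d$, which in particular spans $\QQ\otimes_\ZZ I_d$ over~$\QQ$.

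Next I would unwind condition~(a). By definition $\mathfrak{J}$ is $\QQ$-monomial positive modulo~$I$ iff $c\,\mathfrak{J}\in\U_{\ge 0}+I$ for some positive integer~$c$; since $\mathfrak{J}\in\U_d$ and $I$ is homogeneous, only the degree-$d$ parts matter, and clearing a common denominator shows this is equivalent to $\mathfrak{J}$ lying in the rational cone $C\subseteq\QQ^m$ generated by the monomials $\mathbf{u}_{\e{w}}$ (over words of length~$d$) together with $\pm z^1,\dots,\pm z^s$. Indeed this cone equals $\QQ_{\ge 0}\{\mathbf{u}_{\e{w}}\}+\QQ\otimes_\ZZ I_d$, because a linear subspace is the cone on any spanning set together with its negatives. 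I would then apply Lemma~\ref{lem:farkas} with $(\mathbf{a}^1,\dots,\mathbf{a}^r)$ the list of all such $\mathbf{u}_{\e{w}}$ and all $\pm z^j$, and with $\mathfrak{J}$ as the target vector: $\mathfrak{J}\in C$ iff every $\gamma\in\ZZ^m$ satisfying $\langle\mathbf{a}^i,\gamma\rangle\ge 0$ for all~$i$ also satisfies $\langle\mathfrak{J},\gamma\rangle\ge 0$, where I invoke the integral refinement stated in the last line of the lemma.

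It remains to translate the dual constraints. The inequalities $\langle\mathbf{u}_{\e{w}},\gamma\rangle\ge 0$ over all words $\e{w}$ of length~$d$ say exactly that $\gamma$, written as an integer combination of words, has nonnegative coefficients, i.e.\ $\gamma\in\U^*_{\ge 0}$; the inequalities $\langle\pm z^j,\gamma\rangle\ge 0$ force $\langle z^j,\gamma\rangle=0$ for all~$j$, and since the $z^j$ span $\QQ\otimes_\ZZ I_d$ this is equivalent to $\gamma\in I_d^\perp$. (As $I_d\subseteq\U_d$ pairs trivially with $\U^*_{d'}$ for $d'\ne d$, only the degree-$d$ component of $\gamma$ is constrained, so letting $\gamma$ range over $\U^*_{\ge 0}\cap I_d^\perp$ loses nothing.) Hence $\mathfrak{J}\in C$ iff $\langle\mathfrak{J},\gamma\rangle\ge 0$ for all $\gamma\in\U^*_{\ge 0}\cap I_d^\perp$, which together with the reformulation of~(a) as ``$\mathfrak{J}\in C$'' gives the asserted equivalence.

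The only genuinely delicate point---and the main obstacle---is the bookkeeping between the ``integer multiple / integer combination modulo the $\ZZ$-ideal $I$'' formulation built into the definition of $\QQ$-monomial positivity and the purely rational cone~$C$: one must check both that clearing a common denominator simultaneously converts a nonnegative \emph{rational} combination of monomials into a nonnegative \emph{integer} one and an element of $\QQ\otimes_\ZZ I_d$ back into $I_d$, and, dually, that the Farkas separating functional can be taken integral, so that it actually lies in $\U^*_{\ge 0}$ rather than merely in $\QQ_{\ge 0}\otimes\U^*$. Both are routine given that $I_d$ is a finitely generated free $\ZZ$-module and that Lemma~\ref{lem:farkas} already supplies the integral version; everything else is a matter of matching definitions.
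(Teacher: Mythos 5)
Your proposal is correct and takes essentially the same route as the paper: restrict to degree~$d$, apply the Minkowski--Farkas Lemma to the cone generated by the degree-$d$ monomials together with $\pm$(a basis of $\QQ I_d$), and translate the dual constraints into $\gamma\in\U^*_{\ge 0}\cap I_d^\perp$. The paper uses a $\QQ$-basis of $\QQ I_d$ where you use a $\ZZ$-basis of $I_d$, and it compresses the $\ZZ$-vs-$\QQ$ bookkeeping to ``easily seen,'' but these are cosmetic differences.
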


\begin{proof}
Let $\mathbf{u}^1, \ldots, \mathbf{u}^m$ be the complete list of
degree~$d$ monomials in~$\U_d$.
Let $\mathbf{g}^1, \ldots, \mathbf{g}^t$ be a $\QQ$-basis of
  $\QQ I_d$.
We start by observing that $\U_{\ge 0}^* \cap
  I_d^\perp$ is precisely the set of vectors $\gamma\in\U^*$
satisfying $\langle\mathbf{u}^i,\gamma\rangle\ge0$,
$\langle\mathbf{g}^j,\gamma\rangle\ge0$, and
$\langle-\mathbf{g}^j,\gamma\rangle\ge0$ for all $i\le m$ and $j\le
t$.
Hence, by Lemma~\ref{lem:farkas}, condition~(b) is equivalent to
$\mathfrak{J} \in \QQ_{\geq 0}\{\mathbf{u}^1, \ldots,
\mathbf{u}^m,
\mathbf{g}^1, \ldots, \mathbf{g}^t, -\mathbf{g}^1, \ldots,
-\mathbf{g}^t\}$,
which is easily seen to be equivalent to condition~(a).
\end{proof}

We now prove Theorem~\ref{t intro positivity},
the stronger statement where $\lambda$ is fixed.
Let $\lambda$ be a partition of~$d$.
By Proposition~\ref{Icomm-Schur}, the following conditions are equivalent:
\begin{itemize}
\item[{\rm (i)}] for any $\gamma\in\U_{\ge 0}^*\cap I^\perp$,
the coefficient of $s_\lambda(\mathbf{x})$ in 
$F_\gamma(\mathbf{x})$ is nonnegative;
\item[{\rm (i${}'$)}] for any $\gamma\in\U_{\ge 0}^*\cap I^\perp$, there holds
$ \langle \mathfrak{J}_\lambda(\mathbf{u}), \gamma \rangle \ge 0$.
\end{itemize}
Furthermore, by Theorem~\ref{t linear program}, (i${}'$) is equivalent to
\begin{itemize}
\item[{\rm (ii)}] $\mathfrak{J}_\lambda(\mathbf{u})$ is $\QQ$-monomial positive modulo~$I$,
\end{itemize}
establishing the desired equivalence (i)$\Leftrightarrow$(ii). \hfill \qed

\section{Proof of Theorem \ref{th:FG'-positivity}}
\label{sec:proof-of-FG'-positivity}

We use the shorthand $[m]=\{1,\dots,m\}$ throughout.
By~convention, $[0]=\varnothing$,
so for example  $e_k(\mathbf{u}_{[0]})=0$ for $k\neq 0$, and $e_0(\mathbf{u}_{[0]})=1$.
Recall  (cf. \eqref{e ek def}) that  $e_k(\mathbf{u}_S)$ denotes the noncommutative elementary symmetric function in the variables $\{u_i\}_{i \in S}$.

We will prove Theorem \ref{th:FG'-positivity} by an inductive
computation of $\mathfrak{J}_\lambda(\mathbf{u})$
which  involves peeling off the diagonals of the shape~$\lambda$
and incorporating them into a tableau.
This requires the following flagged generalization of the noncommutative Schur functions.

\begin{definition}
For 
$\alpha\!=\!(\alpha_1,\dots,\alpha_l)\!\in\!\ZZ_{\ge 0}^l$
and 
$\mathbf{n}\!=\!(n_1,\dots,n_l) \!\in\! \{0,1,\ldots, N\}^l$,
we define the \emph{noncommutative column-flagged Schur function}
$J_{\alpha}(\mathbf{n})=J_{\alpha}(\mathbf{n})(\mathbf{u})$ by
\begin{align}
J_{\alpha}(\mathbf{n})
=\sum_{\pi\in \S_{l}}
\sgn(\pi) \, e_{\alpha_1+\pi(1)-1}(\mathbf{u}_{[n_1]})
\,e_{\alpha_2+\pi(2)-2}(\mathbf{u}_{[n_2]})\cdots
e_{\alpha_{l}+\pi(l)-l}(\mathbf{u}_{[n_l]}). \label{e flag schur}
\end{align}
These specialize to the noncommutative Schur functions $\mathfrak{J}_{\lambda}(\mathbf{u})$
via
$\mathfrak{J}_{\lambda}(\mathbf{u}) = J_{\lambda'}(N,\ldots,N)$.
\end{definition}

\begin{remark}
When the $u_i$ commute, 
$J_{\lambda}(\mathbf{n})$ becomes the column-flagged Schur function $S_\lambda^*(\mathbf{1},\mathbf{n})$ studied in \cite{Wachs}, where $\mathbf{1}=(1,\dots,1)\in\ZZ^l$.
\end{remark}

\begin{definition}
\label{d diagread}
Let $\mathbf{\alpha}=(\alpha_1,\ldots,\alpha_l)\in \ZZ_{\ge 0}^l$
be a composition satisfying $\alpha_{j+1}\le\alpha_j+1$ for all~$j$.
We denote by $\alpha'$ the
diagram whose  $j$th column contains the cells in rows $1,\dots,\alpha_j$.
Formally, $\alpha'=\{(i,j)|j\in [l], i\in [\alpha_j]\}\subset
\ZZ_{\ge 1} \times \ZZ_{\ge 1}$.
%
Now, for $\mathbf{n}=(n_1,\ldots,n_l)\in\ZZ_{\ge 0}^l$,
let $\SSYT(\alpha')^{\mathbf{n}}$ denote the set of
fillings \linebreak[3]
$T: \ZZ_{\ge 1} \times \ZZ_{\ge 1} \to [N]$ of the diagram~$\alpha'$ such that
\begin{itemize}
\item the entries weakly increase across the rows and strictly increase down the columns;
\item the entries in column  $j$ lie in $[n_j]$;
\item if $\alpha_j < \alpha_{j+1}$, then $T(\alpha_{j+1},j+1) > n_j$.
\end{itemize}
The \emph{diagonal reading word} of $T \in \SSYT(\alpha')^\mathbf{n}$, denoted $\diagread(T)$,
is obtained by concatenating the diagonals of~$T$
(reading each diagonal in the southeast  direction), starting with the
southwesternmost diagonal of~$\alpha'$.
For example,
\[
\diagread\Bigg( \, \text{\footnotesize
$\tableau{1&1&2&2 \\2&2 & 3\\ &  & 7}$
}
\, \Bigg) = \e{21271322}.
\]
\end{definition}

Theorem~\ref{th:FG'-positivity-technical} below is a generalization of
Theorem~\ref{th:FG'-positivity}.
To obtain Theorem~\ref{th:FG'-positivity}
from it, set  $n_1 = \cdots = n_l = N$ and $\alpha = \lambda'$.

\begin{theorem}
\label{th:FG'-positivity-technical}
Let  $\alpha=(\alpha_1,\dots, \alpha_l)\in\ZZ_{\ge 0}^l$ satisfy
$\alpha_{j+1}\le \alpha_j+1$ for all~$j$,
and let $0\leq n_1\leq \cdots\leq n_l \leq N$.
Then
\begin{align}\label{et:FG'-positivity-technical}
J_\alpha(\mathbf{n})
=J_{\alpha}(n_1, \ldots, n_l) \equiv
\sum_{ T \in \SSYT(\alpha')^{\mathbf{n}}} \mathbf{u}_{\diagread(T)} \bmod \Ifgp.
\end{align}
\end{theorem}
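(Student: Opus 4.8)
The plan is to prove the more general identity \eqref{et:FG'-positivity-technical} by induction, peeling off one column at a time from the right (i.e., on $l$, the number of columns of $\alpha'$). The base case $l=0$ is trivial: both sides equal~$1$. For the inductive step, I would isolate the last elementary symmetric factor in~\eqref{e flag schur}. Writing $\alpha=(\beta,\alpha_l)$ with $\beta=(\alpha_1,\dots,\alpha_{l-1})$ and $\mathbf{n}=(\mathbf{m},n_l)$ with $\mathbf{m}=(n_1,\dots,n_{l-1})$, the Laplace expansion of the determinant in~\eqref{e flag schur} along its last row/column gives a signed sum of column-flagged Schur functions $J_{\beta^{(i)}}(\mathbf{m})$ (for various ``shifted'' compositions $\beta^{(i)}$ obtained by modifying $\beta$) multiplied on the right by factors $e_k(\mathbf{u}_{[n_l]})$. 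By the inductive hypothesis each $J_{\beta^{(i)}}(\mathbf{m})$ is congruent mod~$\Ifgp$ to a sum of $\mathbf{u}_{\diagread(T)}$ over the appropriate flagged tableaux. So the task reduces to a purely combinatorial statement about how to append a new (partial) column of length~$\alpha_l$ with entries in~$[n_l]$ to the existing tableaux, with the constraint $T(\alpha_l, l) > n_{l-1}$ when $\alpha_{l-1}<\alpha_l$, matching the cancellation produced by the Laplace expansion.

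The mechanism making this work is a Pieri-type / Bender–Knuth style argument carried out \emph{inside} the noncommutative ring modulo~$\Ifgp$. Concretely, I would establish a key lemma: for a diagonal reading word $\mathbf{u}_{\diagread(T)}$ of a flagged tableau~$T$ of shape $\beta'$, right-multiplication by $e_k(\mathbf{u}_{[n]})$ is congruent mod~$\Ifgp$ to the sum of $\mathbf{u}_{\diagread(T')}$ over all ways of inserting a vertical strip of $k$ cells into the last column region, with entries from $[n]$ respecting the strict-increase-down-columns and weak-increase-across-rows conditions; and that the signed sum over $\pi$ effects exactly the inclusion–exclusion that enforces the column-strict condition between the new column and the previous one, together with the flag condition at the bottom cell. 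This is the noncommutative analogue of the standard proof that $\det(e_{\lambda'_i+j-i})$ is the tableau generating function, but one must track that every rewriting step used to move letters into diagonal-reading order is a consequence of the relations~\eqref{quad uu 2vars again}, \eqref{i knuth1}, \eqref{i knuth2} generating~$\Ifgp$. The relations \eqref{i knuth1}--\eqref{i knuth2} (the ``$3$-letter Knuth elements'') are precisely what allow reordering of non-adjacent-value letters in the presence of a third letter, which is the combinatorial content of passing from a column-by-column reading to a diagonal reading; the quadratic relation~\eqref{quad uu 2vars again} handles the two-variable interactions. I would lean on the analogous computation in~\cite{FG} (proof of their Lemma~3.2, which is the $\Ifg$ version) and on the fact, noted in the excerpt, that $\Ifgp\subset\Ifg$, checking carefully that no relation involving non-adjacent indices (present in $\Ifg$ but not $\Ifgp$) is actually invoked.

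The main obstacle, as I see it, is exactly this last point: verifying that the entire inductive rewriting can be done using only the generators of~$\Ifgp$ and never secretly uses a Knuth relation on non-adjacent indices. In the $\Ifg$ setting of~\cite{FG} one freely commutes $u_a u_b \equiv u_b u_a$ whenever $|a-b|\ge 2$ inside a suitable third letter; here such moves must instead be realized through chains of the permitted relations, or the reading-word conventions (diagonal reading, and the precise flag condition $T(\alpha_{j+1},j+1)>n_j$) must be chosen so that these forbidden moves simply never arise. I expect the diagonal reading order was designed for precisely this reason, so the heart of the proof is a careful bookkeeping lemma showing that when we insert the new column's letters one diagonal at a time, each transposition needed is ``local'' in the sense required by~\eqref{quad uu 2vars again}--\eqref{i knuth2}. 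A secondary technical point is handling the boundary cases $\alpha_{j+1}=\alpha_j+1$ versus $\alpha_{j+1}\le\alpha_j$ and the degenerate flags $n_j=0$, but these should be routine once the main lemma is in place. Finally, specializing $n_1=\cdots=n_l=N$ and $\alpha=\lambda'$ recovers Theorem~\ref{th:FG'-positivity}, and $\ZZ$-monomial positivity together with Proposition~\ref{Icomm-Schur} (and $\Ifgp\supset\Icomm$) gives the Schur positivity of the $F_\gamma$ for $\gamma\in\U^*_{\ge0}\cap\Ifgpperp$.
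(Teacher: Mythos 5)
Your proposal takes a genuinely different route from the paper, and it has a gap that you yourself flag but do not close.

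The paper does \emph{not} induct on the number of columns via Laplace expansion along the last column. Instead, it introduces \emph{augmented} noncommutative column-flagged Schur functions (Definition~\ref{def:Jalpha-augmented}), in which a word $\e{w}$ may be interpolated between two consecutive $e$-factors, and proves the stronger Theorem~\ref{t new statement fgprime} by induction on $l$ \emph{and on the flag entries~$n_i$ simultaneously}. The inductive step peels off a single letter $u_{n_j}$ from the $j$th factor, where $j$ is the \emph{peeling index} (the smallest $j$ with $\alpha_j\ge\alpha_{j+1}$), using the elementary recursion $e_k(\mathbf{u}_{[m]})=u_m e_{k-1}(\mathbf{u}_{[m-1]})+e_k(\mathbf{u}_{[m-1]})$. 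The peeled letter is then absorbed into the interpolated word $\e{w}$ via the single reordering Lemma~\ref{l plactic inc word}, which moves a strictly decreasing word past a strictly increasing word when both are ``shielded'' on the left by a larger letter. The point of the augmentation and the peeling index is that the partially built diagonal is always stored exactly where it will be consumed, so every rewrite is realized as a short chain of three-letter Knuth moves on adjacent positions; no long-distance transport of letters is ever needed. Lemma~\ref{l elem sym J0} handles the vanishing boundary cases.

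Your plan instead isolates the rightmost factor $e_k(\mathbf{u}_{[n_l]})$ and appeals to a Pieri-type key lemma: that $\mathbf{u}_{\diagread(T)}\,e_k(\mathbf{u}_{[n_l]})$ reduces mod~$\Ifgp$ to a sum of diagonal reading words of larger tableaux, with the alternating sum over $\pi(l)$ effecting inclusion--exclusion. This is exactly where the argument breaks: the monomials of $e_k(\mathbf{u}_{[n_l]})$ land at the \emph{far right} of the word, but in $\diagread(T')$ the new column's entries must be interleaved throughout, one per diagonal. Transporting them into place requires commutations across the entire length of $\diagread(T)$, and the generators of $\Ifgp$ are local three-letter relations; there is no far-commutation relation $u_au_c\equiv u_cu_a$ in $\Ifgp$ (that is precisely the extra content of $\Ifg$). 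Lemma~\ref{l plactic inc word} gives only a very controlled long-range move, and only in the presence of a shielding letter; it is not nearly strong enough to push an arbitrary decreasing word from the right end of $\diagread(T)$ to its diagonal positions. You identify this as ``the main obstacle'' and ``the heart of the proof,'' which is correct, but the proposal does not supply the mechanism (augmented $J$ plus peeling index plus one-letter-at-a-time recursion) that the paper introduces to dissolve it. A secondary issue is that the Laplace minors $M_k$ are not themselves of the form $J_{\beta}(\mathbf{m})$ for a composition $\beta$ with $\beta_{j+1}\le\beta_j+1$, so the inductive hypothesis would not apply to them without further massaging.
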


We will prove Theorem~\ref{th:FG'-positivity-technical} by
establishing its more technical variant
Theorem~\ref{t new statement fgprime}, see below.
To state the latter, we will need the following definition.

\begin{definition}
\label{def:Jalpha-augmented}
The \emph{augmented noncommutative column-flagged Schur function}
labeled by a weak composition $\alpha\!=\!(\alpha_1,\dots,\alpha_l)\!\in\!\ZZ_{\ge
  0}^l$,
an $l$-tuple $\mathbf{n}\!=\!(n_1,\dots,n_l) \!\in\! \{0,1,\ldots, N\}^l$,
and words
$\e{w}^1, \ldots, \e{w}^{l-1} \in \U^*$
is defined by
\begin{align}
&J_{\alpha}(n_1\Jnot{\e{w}^1}n_2\Jnot{\e{w}^2}\cdots\Jnot{\e{w}^{l-1}}n_l)  \label{e
    augmented J1} \\
&=\sum_{\pi\in \S_{l}}
\sgn(\pi) \ e_{\alpha_1+\pi(1)-1}(\mathbf{u}_{[n_1]}) \, \mathbf{u}_{\e{w}^1} \,
e_{\alpha_2+\pi(2)-2}(\mathbf{u}_{[n_2]})\, \mathbf{u}_{\e{w}^2}\cdots
\mathbf{u}_{\e{w}^{l-1}} \, e_{\alpha_{l}+\pi(l)-l}(\mathbf{u}_{[n_l]}). \label{e
  augmented J2}
\end{align}
We omit  $\e{w}^j$ from the notation in \eqref{e augmented J1} if the word $\e{w}^j$ is empty.
%
\end{definition}

The \emph{peeling index} of $\alpha\!=\!(\alpha_1,\dots, \alpha_l)\in\ZZ_{\ge 0}^l$
is the smallest $j$ such that $\alpha_j\geq \alpha_{j+1}$.
Here $\alpha_{l+1}=0$ by convention.

For a word  $\e{w}$ and a filling  $T$ of a diagram  $\alpha'$ (as in Definition \ref{d diagread}),
define the word $\diagread(\e{w};T)$ as follows:
if  $\alpha_1 = \alpha_2 = 0$, set $\diagread(\e{w};T) = \e{w}\,\diagread(T)$.
 Otherwise, $\diagread(\e{w};T) = \e{d^1 w d^2  \cdots d^t}$, where
$\e{d^1, d^2, \ldots, d^t}$ are the strictly increasing words
 corresponding to the (nonempty) diagonals of~$T$,
numbered starting with the southwesternmost diagonal.
(Hence $\diagread(\e{w};T) = \diagread(T)$ when  $\e{w}$ is empty.)

\begin{theorem}
\label{t new statement fgprime}
Let  $\alpha=(\alpha_1,\dots, \alpha_l)\in\ZZ_{\ge 0}^l$ be such that
$\alpha_{i+1}\le \alpha_i+1$ for all~$i$.
Let $j$ be the peeling index of~$\alpha$.
If $0 \le n_1\leq n_2\leq \cdots\leq n_l$, and $\e{w}$ is a strictly
increasing word in letters $> n_j$, then
\begin{align}
\label{e claim diagread}
 J_{\alpha}(n_1, \dots, n_{j-1}, n_j \Jnot{\e{w}}n_{j+1},\dots) \equiv
 \sum_{ T \in \SSYT(\alpha')^\mathbf{n}} \mathbf{u}_{\diagread(\e{w};T)}
 \bmod \Ifgp.
\end{align}
\end{theorem}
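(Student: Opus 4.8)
The plan is to descend through the reductions indicated above. Theorem~\ref{th:FG'-positivity} is the special case $n_1=\cdots=n_l=N$, $\alpha=\lambda'$ of Theorem~\ref{th:FG'-positivity-technical}, and Theorem~\ref{th:FG'-positivity-technical} is the case of empty $\e{w}$ in Theorem~\ref{t new statement fgprime} (note that $\diagread(\e{w};T)=\diagread(T)$ for empty $\e{w}$). So it is enough to prove Theorem~\ref{t new statement fgprime}, which I would do by induction on the number of cells $|\alpha|=\alpha_1+\cdots+\alpha_l$ of the diagram $\alpha'$, with the degenerate configurations $\alpha_1=\alpha_2=0$ normalized away first (see below). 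In the base case $|\alpha|=0$, i.e.\ $\alpha=(0,\dots,0)$, only $\pi=\mathrm{id}$ yields a nonzero summand in~\eqref{e augmented J2}, so the left-hand side equals $\mathbf{u}_{\e{w}}$, while the right-hand side has the single term coming from the empty filling, with $\diagread(\e{w};T)=\e{w}$.

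For the inductive step, let $j$ be the peeling index of $\alpha$. By definition $\alpha_1<\alpha_2<\cdots<\alpha_j$, which together with $\alpha_{i+1}\le\alpha_i+1$ forces $\alpha_{i+1}=\alpha_i+1$ for $i<j$, and $\alpha_j\ge\alpha_{j+1}$; hence, once the leading zeros have been cleared, the southwesternmost diagonal $D$ of $\alpha'$ is exactly the set of bottom cells $(\alpha_i,i)$ of the first $j$ columns (all of content $1-\alpha_1$), or of columns $2,\dots,j$ when $\alpha_1=0$. Removing $D$ produces the diagram of $\hat\alpha=(\alpha_1-1,\dots,\alpha_j-1,\alpha_{j+1},\dots,\alpha_l)$, which again satisfies $\hat\alpha_{i+1}\le\hat\alpha_i+1$ and has $|\hat\alpha|<|\alpha|$. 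The heart of the argument is the congruence, modulo $\Ifgp$,
\[
J_{\alpha}\big(n_1,\dots,n_j\Jnot{\e{w}}n_{j+1},\dots\big)\ \equiv\ \sum_{\e{d}}\ \mathbf{u}_{\e{d}}\ J_{\hat\alpha}\big(d_1-1,\dots,d_j-1,n_{j+1},\dots\big),
\]
where $\e{d}=(d_1<\cdots<d_j)$ ranges over the strictly increasing words admissible as readings of $D$ (so $d_i\le n_i$, subject also to the boundary restriction when $\alpha_j<\alpha_{j+1}$), the flag of each peeled column $i\le j$ becomes $[d_i-1]$, and the pending word $\e{w}$ is repositioned just after the elementary factor indexed by the peeling index of $\hat\alpha$. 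Granting this, the inductive hypothesis applies to $J_{\hat\alpha}$ (one checks the inserted word is still strictly increasing in letters exceeding the flag at the peeling index of $\hat\alpha$), and prepending $D$ to each filling $\hat T$ of $\hat\alpha'$ recovers exactly $\sum_{T\in\SSYT(\alpha')^{\mathbf n}}\mathbf{u}_{\diagread(\e{w};T)}$: the flag $[d_i-1]$ on column $i$ of $\hat T$ is precisely what makes $T$ strictly increase down that column, and the boundary conditions transform correctly.

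The main obstacle is establishing the displayed congruence, and this is where the choice of the ideal $\Ifgp$ is forced. The natural route is to strip the bottom cell off each of the first $j$ columns of every term of~\eqref{e augmented J2} using the column-splitting identity $e_k(\mathbf{u}_{[n]})=\sum_{a\ge k}\mathbf{u}_a\,e_{k-1}(\mathbf{u}_{[a-1]})$, and then to carry the stripped letters $\mathbf{u}_{d_1},\dots,\mathbf{u}_{d_j}$ to the front in increasing order using the three-letter Knuth relations~\eqref{i knuth1}--\eqref{i knuth2} and the quadratic relation~\eqref{quad uu 2vars again} generating $\Ifgp$. This produces a large signed sum, and most of its terms are not legitimate diagonal readings --- either the stripped letters fail to strictly increase, or the resulting filling violates the weak-increase-in-rows or the flag conditions of $\SSYT(\alpha')^{\mathbf n}$ --- so one must show that all such ``bad'' terms cancel modulo $\Ifgp$. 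These cancellations mix contributions from different $\pi\in\S_l$, and making them match while simultaneously keeping track of the nested flags $0\le n_1\le\cdots\le n_l$ and the side condition ``$\alpha_j<\alpha_{j+1}\Rightarrow T(\alpha_{j+1},j+1)>n_j$'' is the delicate part. I would package this as a standalone lemma about a ``staircase head'' --- the extraction of a single diagonal from a product of $j$ consecutive noncommutative elementary symmetric functions with nested flags --- and prove it by a secondary induction on $j$, peeling column~$1$ first and matching its $[d_1-1]$-flag against the shorter-staircase statement. The remaining ingredients --- the preliminary normalization of configurations with $\alpha_1=\alpha_2=0$ (where $\mathbf{u}_{\e{w}}$ factors off the augmented determinant), the degenerate cases $j=l$ and empty $\e{w}$, and the verification that the defining conditions of $\SSYT(\alpha')^{\mathbf n}$ restrict to those of $\SSYT(\hat\alpha')^{\hat{\mathbf n}}$ under peeling, with the inserted word correctly repositioned relative to the (possibly larger) peeling index of $\hat\alpha$ --- are routine but must be handled with care, since these are exactly the points where the precise shape of the statement is needed for the induction to close. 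Compared with the proof of Theorem~\ref{t FG} in~\cite{FG}, which extracted column reading words modulo the larger ideal $\Ifg$, the extra work here is precisely this diagonal straightening modulo the strictly smaller ideal $\Ifgp$.
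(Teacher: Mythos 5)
Your proposal correctly identifies the two preliminary reductions (Theorem~\ref{th:FG'-positivity} from Theorem~\ref{th:FG'-positivity-technical}, and the latter as the empty-$\e{w}$ case of Theorem~\ref{t new statement fgprime}), and your base cases for $\alpha_1=\alpha_2=0$ and $\alpha=(0,\dots,0)$ are fine. However, the core of your argument is not established. You reduce everything to a ``staircase head'' congruence that peels off the whole southwest diagonal at once,
\[
J_{\alpha}(n_1,\dots,n_j\Jnot{\e{w}}n_{j+1},\dots)\ \equiv\ \sum_{\e{d}}\ \mathbf{u}_{\e{d}}\,J_{\hat\alpha}(d_1-1,\dots,d_j-1,n_{j+1},\dots)\ \bmod\,\Ifgp,
\]
and you yourself flag this as ``the main obstacle'' and ``the delicate part,'' offering only a sketch of how the cancellations across $\pi\in\S_l$ might be handled by a secondary induction on~$j$. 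As written, that lemma is neither stated precisely (which flags survive, how $\e{w}$ is repositioned when the peeling index of $\hat\alpha$ changes, what happens when $\alpha_1=0$) nor proved, so the induction does not close. There is also a structural issue with your outer induction parameter: inducting on $|\alpha|$ alone cannot absorb the other half of the recursion you would naturally want, because after splitting the $j$-th elementary factor via $e_k(\mathbf{u}_{[m]})=u_m e_{k-1}(\mathbf{u}_{[m-1]})+e_k(\mathbf{u}_{[m-1]})$, the second summand has the \emph{same} $\alpha$ and only a decremented flag.

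The paper's proof sidesteps your lemma entirely by choosing a finer recursion. It inducts jointly on $l$ and the flag parameters $n_1,\dots,n_l$, and at each step performs a single $j$-expansion of just one factor using $e_k(\mathbf{u}_{[m]})=u_m e_{k-1}(\mathbf{u}_{[m-1]})+e_k(\mathbf{u}_{[m-1]})$. This splits $J$ into exactly two sub-instances: one where a single new letter $n_j$ (the case $T(\alpha_j,j)=n_j$) has been peeled and attached to the pending increasing word, and one where the same shape is kept with $n_j$ decreased by $1$ (the case $T(\alpha_j,j)<n_j$). Each sub-instance is strictly smaller in the chosen induction order. The only two lemmas needed are that the noncommutative elementary symmetric functions commute mod $\Ifgp$, which forces the term $J\equiv 0$ whenever a degenerate flag arises (Lemma~\ref{l elem sym J0}), and a simple three-letter Knuth commutation (Lemma~\ref{l plactic inc word}) to slide the pending increasing word past the current decreasing segment. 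No cancellation between terms indexed by different $\pi$ ever has to be exhibited explicitly: it is absorbed into the inductive hypothesis. Your ``secondary induction peeling column~1 first'' is in spirit the same variable-by-variable peeling the paper does, but the paper interleaves it with the outer recursion rather than packaging the full diagonal extraction as a standalone statement, and this is precisely what makes the bookkeeping tractable. If you want to salvage your plan, the honest fix is to abandon the diagonal-at-once lemma, change the induction parameter to $(l,\mathbf n)$, and carry the augmentation word $\e{w}$ through a one-variable peeling step at the peeling index, which at that point is essentially the paper's proof.
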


\begin{proof}
Write $J$ for $J_{\alpha}(n_1,\dots,n_j\Jnot{\e{w}}n_{j+1},\dots)$.
The proof is by induction on  $l$ and the~$n_i$.

We first address the base cases $(\alpha_1 = 0, j=1)$ and $(n_1 = 0, j=1)$.
Suppose $\alpha_1 = 0$ and $j=1$.
Since $\alpha_{i+1} \leq \alpha_{i}+1$ for all  $i$
and $\alpha_1 = \alpha_2 = 0$, $J_{\alpha}(n_1,\dots, n_l)$ is  a noncommutative version of
the determinant of a matrix whose first column, read downwards, is a 1 followed by~0's.
Then by Definition~\ref{def:Jalpha-augmented} and induction,
\begin{align}\label{e j1 base case}
J= \mathbf{u}_{\e{w}}J_{\hat{\alpha}}(\mathbf{\hat{n}}) \equiv
\mathbf{u}_{\e{w}}\sum_{ T \in \SSYT(\hat{\alpha}')^\mathbf{\hat{n}}} \mathbf{u}_{\diagread(T)}
 = \sum_{ T \in \SSYT(\alpha')^\mathbf{n}} \mathbf{u}_{\diagread(\e{w};T)} \ \bmod \Ifgp,
\end{align}
where  $\hat{\alpha} = (\alpha_2,\dots, \alpha_l)$ and $\mathbf{\hat{n}} = (n_2,\dots, n_l)$.
Now suppose $n_1 = 0$ and  $j=1$.
By the previous case, we may assume  $\alpha_1 > 0$.
Then  $J_{\alpha}(n_1,\dots, n_l)$ is a noncommutative version of the
determinant of a matrix whose first row is~0, hence $J=0$.
Since  $n_1 =0$ implies that $\SSYT(\alpha')^\mathbf{n}$ is empty, the right-hand side of \eqref{e claim diagread} is also 0.

The argument for the induction step requires the following three lemmas.

\begin{lemma}
\label{l elem sym J0}
If   $\alpha_{j+1}=\alpha_j+1 $ and  $n_{j+1}=n_j$, then
$J_\alpha(\mathbf{n}) \equiv 0 \bmod \Ifgp\,$.
More generally, 
this holds in the augmented case provided $\e{w}^j$ is empty.
\end{lemma}

\begin{proof}
This follows from the fact that the noncommutative elementary symmetric functions commute modulo~$\Ifgp$.
\end{proof}

\begin{lemma}
\label{l plactic inc word}
If the word $\e{v} \!= \! \e{v}_1 \cdots \e{v}_s$  is strictly decreasing
(i.e., $v_1>\cdots>v_s$) and
$\e{w}  \!= \! \e{w}_1 \cdots \e{w}_t$ is strictly increasing,
and moreover $v_1 < b < w_1$, then
$u_b \mathbf{u}_\e{v} \mathbf{u}_\e{w} \equiv u_b \mathbf{u}_\e{w} \mathbf{u}_\e{v}  \bmod \Ifgp$.
\end{lemma}

\begin{proof}
The case  $t=1$ follows immediately from  $s$ applications of the
plactic relation with three distinct letters.
Applying this special case $t$ times gives the result:
$u_b \mathbf{u}_\e{v} u_{w_1}\cdots u_{w_t} \equiv  u_b  u_{w_1} \mathbf{u}_\e{v}
u_{w_2}\cdots u_{w_t} \equiv u_b u_{w_1} u_{w_2} \mathbf{u}_\e{v} u_{w_3}\cdots
u_{w_t}
\equiv \cdots \equiv  u_b \mathbf{u}_\e{w} \mathbf{u}_\e{v}   \bmod
\Ifgp$.
\end{proof}

\begin{lemma}
Let $m\in\ZZ_{>0}$ and $k \in \ZZ$. Then
\begin{equation}
\label{e ek induction}
e_k(\mathbf{u}_{[m]}) =u_{m} e_{k-1}(\mathbf{u}_{[m-1]})+e_k(\mathbf{u}_{[m-1]}).
\end{equation}
\end{lemma}

\begin{proof}
This is immediate from the definition of $e_k(\mathbf{u}_{[m]})$.
\end{proof}

For an index  $j \in [l]$, we will apply \eqref{e ek induction} to  $J_\alpha(n_1, \ldots, n_l)$
and its variants by expanding
$e_{\alpha_j+\pi(j)-j}(\mathbf{u}_{[n_j]})$ in \eqref{e flag schur}
using \eqref{e ek induction} (so that \eqref{e ek induction} is
applied once to each of the  $l!$ terms in the sum in \eqref{e flag
  schur}).
We refer to this as a \emph{$j$-expansion of  $J_\alpha(n_1,\ldots, n_l)$
using~\eqref{e ek induction}}.

Set $\alpha_- = (\alpha_1,\dots, \alpha_{j-1}, \alpha_j-1, \alpha_{j+1},\dots)$ and
$\mathbf{n}_- = (n_1,\ldots,n_{j-1},n_j-1,n_{j+1},\ldots)$.
By the base cases above, we may assume that either ($\alpha_1 > 0$ and  $n_1 > 0$ and  $j=1$) or  $j >1$.
We first handle the former case.
A $1$-expansion of $J$ using \eqref{e ek induction} yields
\begin{align}
J&=u_{n_1}J_{\alpha_-}(n_1-1\Jnot{\e{w}}n_2,\dots, n_l)
+J_{\alpha}(n_1-1\Jnot{\e{w}}n_2,\dots, n_l) \notag 
\\
&\equiv
u_{n_1}\mathbf{u}_{\e{w}}J_{\alpha_-}(n_1-1,n_2,\dots, n_l)
+J_{\alpha}(n_1-1\Jnot{\e{w}}n_2,\dots, n_l)
\label{e un2}\\
&\equiv  ~u_{n_1}\mathbf{u}_{\e{w}}\sum_{ T \in \SSYT(\alpha_-')^{\mathbf{n}_-}}  \mathbf{u}_{\diagread(T)}
+ \sum_{ T \in \SSYT(\alpha')^{\mathbf{n}_-}}  \mathbf{u}_{\diagread(\e{w};T)} \ \bmod \Ifgp. \label{e diag rec0}
\end{align}
The first congruence is by Lemma~\ref{l plactic inc word} and the second is by induction.
Now \eqref{e diag rec0} is equal to the right-hand side of
\eqref{e claim diagread} because \eqref{e diag rec0} is the
result of partitioning the set of tableaux appearing in
\eqref{e claim diagread} into two, depending on whether or not  $T(\alpha_j,j)$
 is equal to or less than  $n_j$.
Note that if  $\alpha_j = \alpha_{j+1}$, then for $T \in \SSYT(\alpha')^{\mathbf{n}}$ with $T(\alpha_j,j) = n_j$,
the condition $T(\alpha_j,j) \le T(\alpha_{j+1},j+1)$ (which holds since the rows of  $T$ are weakly increasing)
corresponds  to the condition $n_j -1 = (n_-)_j < T'((\alpha_-)_{j+1},j+1) = T'(\alpha_{j+1},j+1)$ required of any $T' \in \SSYT(\alpha'_-)^{\mathbf{n}_-}$.

Now let $j > 1$.
By definition of the peeling index,
 $\alpha_{j}=\alpha_{j-1}+1$ and
$\alpha_j\geq \alpha_{j+1}$.
If $n_{j-1}=n_{j}$, then
$J\equiv 0 \bmod \Ifgp$ by Lemma~\ref{l elem sym J0};
in this case $\SSYT(\alpha')^\mathbf{n}$ is empty, so the right-hand side of \eqref{e claim diagread} is also 0.
So we may assume that $n_{j-1}<n_j$.
Now by a  $j$-expansion of  $J$ using \eqref{e ek induction}, we have
\begin{align}
J=&~J_{\alpha_-}(n_1,\dots, n_{j-1} \Jnot{\e{n_j}} n_j-1 \Jnot{\e{w}} n_{j+1},\dots) \notag\\
&+J_{\alpha}(n_1, \dots, n_{j-1}, n_j-1 \Jnot{\e{w}}n_{j+1},\dots)\notag\\
\equiv &~J_{\alpha_-}(n_1,\dots, n_{j-1}\Jnot{\e{n_jw}}n_j-1,n_{j+1},\dots) \label{e unj3}\\
&~+J_{\alpha}(n_1,\dots, n_{j-1}, n_j-1\Jnot{\e{w}}n_{j+1},\dots)
\label{e unj4}\\
\equiv  &~\sum_{ T \in \SSYT(\alpha_-')^{\mathbf{n}_-}}  \mathbf{u}_{\diagread(\e{n_jw};T)}
+ \sum_{ T \in \SSYT(\alpha')^{\mathbf{n}_-}}  \mathbf{u}_{\diagread(\e{w};T)} \ \bmod \Ifgp. \label{e diag rec2}
\end{align}
The first congruence is by Lemma~\ref{l plactic inc word}.
We now justify the second congruence.
Since the tuple $\mathbf{n}_-$ is weakly increasing, $j-1$ is the peeling index of
$\alpha_-$, and $\e{n}_j\e{w}$ is a strictly increasing word in letters $> n_{j-1}$,
the summand \eqref{e unj3} satisfies the conditions of
Theorem~\ref{t new statement fgprime}, hence by induction, it is congruent ($\bmod \ \Ifgp$) to the first sum in \eqref{e diag rec2}.
Since the tuple $\mathbf{n}_-$ is weakly increasing, the summand \eqref{e unj4} is congruent to
the second sum in \eqref{e diag rec2} by induction.
Finally, \eqref{e diag rec2} is equal to the right-hand side of
\eqref{e claim diagread} by the same argument given in the  $j=1$ case.
\end{proof}

We conclude by reconciling the difference between the reading words in
Theorems \ref{t FG} and~\ref{th:FG'-positivity-technical}.

\begin{proposition}
For any SSYT $T$, $\mathbf{u}_{\creading(T)} \equiv \mathbf{u}_{\diagread(T)} \bmod \, \Ifgp.$
Hence either the diagonal reading word or the column reading word can
be used in Theorems \ref{t FG} and \ref{th:FG'-positivity-technical}.
\end{proposition}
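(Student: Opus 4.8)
The plan is to prove the congruence by induction on the number of columns of~$T$, peeling off the leftmost column and then threading it through the diagonal reading word of the rest of the tableau by means of Lemma~\ref{l plactic inc word}. We may assume $T$ is a semistandard filling of a column-convex diagram~$\alpha'$ as in Definition~\ref{d diagread} (which covers the straight shapes of Theorem~\ref{t FG}), with column heights $\alpha=(\alpha_1,\dots,\alpha_m)$ satisfying $\alpha_{j+1}\le\alpha_j+1$.

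Write $C_1=a_1 a_2\cdots a_{\alpha_1}$ for the first column of~$T$ read from bottom to top, so that $a_1>a_2>\cdots>a_{\alpha_1}$ and $a_k=T(\alpha_1+1-k,1)$ lies on the diagonal of content $k-\alpha_1$; let $T'$ be~$T$ with its first column deleted (a semistandard filling of a smaller column-convex diagram). The base case $m\le 1$ is immediate: for a single column, reading bottom to top is the same as reading the diagonals, each a singleton, in increasing order of content. For the inductive step, $\creading(T)=C_1\,\creading(T')$, so by the induction hypothesis $\mathbf{u}_{\creading(T)}\equiv \mathbf{u}_{C_1}\,\mathbf{u}_{\diagread(T')}\bmod\Ifgp$, and it remains to show $\mathbf{u}_{C_1}\,\mathbf{u}_{\diagread(T')}\equiv\mathbf{u}_{\diagread(T)}\bmod\Ifgp$.

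For each integer~$c$, let $d_c$ denote the strictly increasing (possibly empty) word obtained by reading the content-$c$ cells of~$T'$ in the southeast direction; then $\diagread(T')=d_{1-\alpha_1}\,d_{2-\alpha_1}\cdots$, while $\diagread(T)=a_1\,d_{1-\alpha_1}\;a_2\,d_{2-\alpha_1}\cdots a_{\alpha_1}\,d_0\;d_1\,d_2\cdots$, since on a diagonal of content $c$ with $1-\alpha_1\le c\le 0$ the column-$1$ cell $a_{c+\alpha_1}$ is the southwesternmost one, and diagonals of positive content are disjoint from column~$1$. So starting from $a_1 a_2\cdots a_{\alpha_1}\,d_{1-\alpha_1}\,d_{2-\alpha_1}\cdots$ we must push the decreasing word $C_1$ to the right until each letter $a_k$ sits directly in front of $d_{k-\alpha_1}$. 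I would do this one diagonal at a time, from smallest content up: at the step for content~$c\in\{1-\alpha_1,\dots,0\}$ the word reads $P\;a_{c+\alpha_1}\,a_{c+\alpha_1+1}\cdots a_{\alpha_1}\;d_c\,d_{c+1}\cdots$, where $P$ has already been correctly threaded through diagonal~$c-1$, and a single application of Lemma~\ref{l plactic inc word} --- with separator letter $a_{c+\alpha_1}$, strictly decreasing word $a_{c+\alpha_1+1}\cdots a_{\alpha_1}$, and strictly increasing word $d_c$ --- slides that decreasing block to the right of $d_c$, leaving $a_{c+\alpha_1}$ immediately in front of $d_c$. After the steps $c=1-\alpha_1,\dots,0$ the word has become $\diagread(T)$; each congruence used may be performed inside the ambient word since $\Ifgp$ is two-sided.

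The one hypothesis of Lemma~\ref{l plactic inc word} requiring verification is $a_{c+\alpha_1}<(d_c)_1$ when $d_c\neq\varnothing$, where $(d_c)_1$ is the first letter of $d_c$ (the other inequality $a_{c+\alpha_1+1}<a_{c+\alpha_1}$ is automatic). Here the main subtlety --- and, I expect, the only real obstacle --- is a potential ``gap'': a priori the southwesternmost column-$\ge 2$ cell on diagonal~$c$ could lie several columns to the right of column~$1$, and then there is no obvious chain of tableau inequalities linking its entry to $T(1-c,1)=a_{c+\alpha_1}$. But the constraint $\alpha_{j+1}\le\alpha_j+1$ rules this out: if column~$1$ and some column $\ge 2$ both meet diagonal~$c$, then column~$2$ must meet it as well (otherwise the first column $\ge 2$ meeting diagonal~$c$ would have height exceeding that of its left neighbor by more than~$1$), so $(d_c)_1=T(2-c,2)$, and then $T(2-c,2)>T(1-c,2)\ge T(1-c,1)=a_{c+\alpha_1}$ by strict increase down column~$2$ and weak increase along row~$1-c$, all the relevant cells lying in~$T$ by column-convexity. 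This settles the hypothesis and completes the induction, with all congruences holding modulo~$\Ifgp$. (Peeling the rightmost column and pushing leftward would work equally well.)
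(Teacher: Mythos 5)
Your proof is correct and takes essentially the same route as the paper's: induct by peeling off the leftmost column, and move its entries through the diagonals of the remaining tableau using repeated applications of Lemma~\ref{l plactic inc word}. The paper carries out the same chain of equivalences in the opposite direction, starting from $\diagread(T)$ and pulling the column-$1$ letters to the front; and it sidesteps your ``gap'' analysis entirely by observing that, in the lemma's hypothesis $b<w_1$, the letters $b$ and $w_1$ are simply the first two entries of a single diagonal $d^i$ of $T$ itself, which is a strictly increasing word --- no comparison between column~$1$ of $T$ and the first nonempty column of $T'$ is needed, so the careful argument that column~$2$ must meet the diagonal is valid but superfluous.
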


\begin{proof}
The proof is by induction on the number of cells of $T$.
Let $\e{d^1, d^2, \ldots, d^t}$ be the strictly increasing words corresponding to the diagonals of  $T$,
so that $\diagread(T) = \e{d^1d^2\cdots d^t}$.
Let $r$ be the number of rows of $T$.
For each $i=1,2,\ldots,r$, set $\e{d^i = c_i\hat{d}^i}$ so that $\e{c_i}$ is the first letter of $\e{d^i}$ and  $\e{\hat{d}^i}$ is the remainder of the word $\e{d^i}$.
The key computation is
\begin{align}
\mathbf{u}_{\e{d^1d^2\cdots d^r}} &= \mathbf{u}_{\e{c_1\hat{d}^1c_2\hat{d}^2\cdots c_r\hat{d}^r}} \equiv \mathbf{u}_{\e{c_1\hat{d}^1c_2\hat{d}^2\cdots c_{r-1}c_r\hat{d}^{r-1}\hat{d}^r }} \equiv \cdots \equiv \mathbf{u}_{\e{c_1c_2\cdots c_r\hat{d}^1\cdots\hat{d}^r}} \ \bmod \Ifgp, \label{e column diag}
\end{align}
where each congruence is by Lemma \ref{l plactic inc word} (whose hypotheses are satisfied since the $\e{d^i}$ are strictly increasing and $c_1 > c_2 > \cdots > c_r$).
Now let $T'$ be the tableau obtained from $T$ by removing the first column.
Note that $\e{c_1\cdots c_r}$ is the first column of $T$ read bottom to top, and $\e{\hat{d}^1\cdots\hat{d}^rd^{r+1}\cdots d^t}$ is the diagonal reading word of $T'$.
Hence by \eqref{e column diag} and induction,
we have
$\mathbf{u}_{\diagread(T)}\equiv \mathbf{u}_{\e{c_1\cdots c_r}}\mathbf{u}_{\diagread(T')} \equiv \mathbf{u}_{\e{c_1\cdots c_r}}\mathbf{u}_{\creading(T')} \equiv \mathbf{u}_{\creading(T)}
\bmod \Ifgp$.
\end{proof}



\end{document}